\newcommand {\A}{{\mathcal{A}}}
\newcommand\ang[1]{\langle #1 \rangle}
\newcommand {\bmo}{{\mathrm{bmo}}}
\newcommand {\BMO}{{\mathrm{BMO}}}
\newcommand {\C}{{\mathbb C}}
\newcommand\cf{\alpha_{S^{n-1}}}
\newcommand {\con}{{M}}
\newcommand {\Cr}{{C^{r}_{-}}}
\newcommand {\Crtwo}{{C^{2}_{-}}}
\newcommand {\D}{D}
\newcommand {\Da}{\mathcal{J}}
\newcommand {\ud}{\mathrm{d}}
\newcommand {\eps}{\varepsilon}
\newcommand {\veps}{\varepsilon}
\newcommand {\ex}{\mathbf{e}}
\newcommand {\F}{{\mathcal{F}}}
\newcommand {\hchi}{\hat{\chi}}
\newcommand {\HT}{\mathcal{H}}
\newcommand {\Hp}{\mathcal{H}^{p}_{FIO}(\Rn)}
\newcommand {\Hps}{\mathcal{H}^{s,p}_{FIO}(\Rn)}  
\newcommand {\Hpsm}{\mathcal{H}^{s-1,p}_{FIO}(\Rn)}   
\newcommand {\Hpsmm}{\mathcal{H}^{s-2,p}_{FIO}(\Rn)} 
\newcommand {\Hpsp}{\mathcal{H}^{s(p),p}_{FIO}(\Rn)}
\newcommand {\ind}{{\mathbf{1}}}
\newcommand {\rb}{\rangle}
\newcommand {\lb}{{\langle}}
\newcommand {\La}{{\mathcal{L}}}
\newcommand {\loc}{{\mathrm{loc}}}
\newcommand {\N}{{{\mathbb N}}}
\newcommand {\ph}{{\varphi}}
\newcommand {\R}{{\mathbb R}}
\newcommand {\Rn}{{\mathbb{R}^{n}}}
\newcommand {\supp}{{\mathrm{supp}}}
\newcommand {\Sp}{S^{*}\Rn}
\newcommand {\Spp}{S^{*}_{+}\Rn}
\newcommand {\Sw}{\mathcal{S}}
\newcommand {\Tp}{T^{*}\Rn}
\newcommand{\Tentps}{T^{p}_{s}(\Sp)}   
\newcommand{\Tentpsm}{T^{p}_{s-1}(S^*\Rn)}
\newcommand {\w}{{\omega}}
\newcommand {\Z}{{{\mathbb Z}}}
\newcommand {\vanish}[1]{\relax}
\newcommand\xihat{{\hat \xi}}
\newcommand\Proj{P_{\xihat}^\perp}
\newcommand\bhom{a^{\mathrm{hom}}}
\newcommand\ahom{a}
\newcommand\Vhom{V^{\mathrm{hom}}}
\newcommand\Phihom{\Phi^{\mathrm{hom}}}
\newcommand\mcD{\mathcal{D}}
\newcommand{\wh}{\widehat}
\newcommand{\wt}{\widetilde}
\DeclareFontFamily{U}{mathx}{\hyphenchar\font45}
\DeclareFontShape{U}{mathx}{m}{n}{
      <5> <6> <7> <8> <9> <10>
      <10.95> <12> <14.4> <17.28> <20.74> <24.88>
      mathx10
      }{}
\DeclareSymbolFont{mathx}{U}{mathx}{m}{n}
\DeclareMathAccent{\widecheck}{0}{mathx}{"71}
\newcommand {\cs}{\mathbf{c}}
\newcommand {\sn}{\mathbf{s}}
\newtheorem{theorem}{Theorem}[section]
\newtheorem{lemma}[theorem]{Lemma}
\newtheorem{proposition}[theorem]{Proposition}
\newtheorem{corollary}[theorem]{Corollary}
\theoremstyle{definition}
\newtheorem{definition}[theorem]{Definition}
\newtheorem{remark}[theorem]{Remark}
\numberwithin{equation}{section}
\protected\def\ignorethis#1\endignorethis{}
\let\endignorethis\relax
\title[$L^{p}$ and $\HT^{p}_{FIO}$ regularity for rough wave equations]{$L^{p}$ and $\HT^{p}_{FIO}$ regularity for wave equations with rough coefficients}
\author{Andrew Hassell}
\address{Mathematical Sciences Institute\\ Australian National University\\Acton ACT 2601, Australia}
\email{Andrew.Hassell@anu.edu.au}
\author{Jan Rozendaal}
\address{Institute of Mathematics, Polish Academy of Sciences\\
ul.~\'{S}niadeckich 8\\
00-656 Warsaw\\
Poland}
\email{jrozendaal@impan.pl}
\subjclass[2020]{Primary 35R05. Secondary 35L05, 42B37, 35A27}
\thanks{This research was supported by grant DP160100941 of the Australian Research Council.}
\begin{document}

\begin{abstract}
We consider wave equations with time-independent coefficients that have $C^{1,1}$ regularity in space.
 We show that, for nontrivial ranges of $p$ and $s$, the standard inhomogeneous initial value problem for the wave equation is well posed in Sobolev spaces $\Hps$ over the Hardy spaces $\Hp$ for Fourier integral operators introduced recently by the authors and Portal, following work of Smith.
   In spatial dimensions $n = 2$ and $n=3$, this includes the full range $1 < p < \infty$. 
As a corollary, we obtain the optimal fixed-time $L^{p}$ regularity for such equations, generalizing work of Seeger, Sogge and Stein in the case of smooth coefficients. 
\end{abstract}

\maketitle

\tableofcontents

\section{Introduction}

In this article, we develop the fixed-time $L^p$ theory of rough wave equations. 

In 1991 Seeger, Sogge and Stein \cite{SeSoSt91} determined the sharp $L^{p}$ mapping properties of Fourier integral operators, by showing that such operators lose $(n-1)|\frac{1}{2} - \frac{1}{p}|$ derivatives on $L^{p}(\R^n)$. As a consequence, they obtained the optimal fixed-time $L^{p}$ regularity for wave equations with smooth coefficients. Until now, there has been no corresponding result for wave equations with rough coefficients. 

In fact, until recently, there was no obvious strategy to approach the fixed-time $L^{p}$ theory of rough wave equations.
Existing work on wave equations with rough coefficients by Smith \cite{Smith98b,Smith06,Smith14} and Tataru \cite{Tataru00, Tataru01, Tataru02} proceeds by first 
smoothing the coefficients of the equation. An approximate solution to the smoothed equation is then constructed using microlocal analysis, and the solution of the original equation is obtained via a 
method of successive approximations. 
On $L^{p}(\Rn)$ this procedure leads to a loss of $(n-1)|\frac{1}{2}-\frac{1}{p}|$ derivatives in each iteration step, and the approach thus breaks down fundamentally for $p\neq 2$.

Recently, the authors, in collaboration with Portal and building on an earlier construction by Smith \cite{Smith98a}, introduced a scale of Hardy spaces for Fourier integral operators \cite{HaPoRo20}. These $\Hp$ spaces embed into the $L^{p}(\Rn)$-based Sobolev scale, and Fourier integral operators of order zero act on them \emph{without loss of derivatives}. This suggests that these spaces can also be used to analyze rough wave equations, by providing a framework for the method of iterative approximation. 

In the present article, we implement this strategy. We show that for every dimension $n$ there is a nontrivial range of $p$ around $2$ for which wave equations with $C^{1,1}$ coefficients are well posed on suitable Sobolev spaces $\Hps$ over $\Hp$. By combining this with the Sobolev embeddings for $\Hp$, one obtains the optimal $L^{p}$ regularity for such equations. This $L^{p}$ regularity is, however, significantly weaker than the well-posedness of these equations on Hardy spaces for Fourier integral operators, and we wish to promote the viewpoint that fixed-time $L^p$ analysis of wave equations is best carried out directly on $\Hp$; these are the `natural' spaces for the $L^{p}$ theory. The second author's recent use \cite{Rozendaal22b} of the Hardy spaces for Fourier integrals to improve the Bourgain-Demeter local smoothing estimates from \cite{Bourgain-Demeter15} provides further evidence for this viewpoint.

\subsection{Setting} 

We study the inhomogeneous initial value problem for the wave equation in $n+1$ dimensions: 
\begin{equation}\begin{gathered}
D_{t}^{2} u(t,x)-\sum_{i,j=1}^{n} D_{i} (a_{i,j}(x) D_{j} u)(t,x) = F(t,x) , \quad u:\R^{n+1} \to \C,\\
u(0,x) = f(x), \qquad D_t u(0,x) = g(x), \qquad D = -i \partial. 
\end{gathered}\label{eq:wave-eqn}\end{equation}
Here the $a_{i,j}$ are uniformly elliptic, bounded and real-valued, and `rough' in the sense that they possess a limited number of derivates.

We briefly recall part of the regularity theory of solutions to \eqref{eq:wave-eqn}, and the associated harmonic analysis. In the case of the Laplacian, where $a_{i,j}=\delta_{ij}$ for $1\leq i,j\leq n$, the solution operators $\cos(t\sqrt{-\Delta})$ and $\sin(t\sqrt{-\Delta})/\sqrt{-\Delta}$ to \eqref{eq:wave-eqn} are bounded on $L^{2}(\Rn)$ for all $t\in\R$, as follows from either Plancherel's theorem or spectral calculus. Correspondingly, \eqref{eq:wave-eqn} is well posed on $L^{2}(\Rn)$. For more general smooth coefficients $a_{i,j}$, the regularity theory of \eqref{eq:wave-eqn} reduces to the study of mapping properties of Fourier integral operators, a class of oscillatory integral operators that includes the solution operators to \eqref{eq:wave-eqn}. Since suitable Fourier integral operators of order zero are bounded on $L^{2}(\Rn)$, \eqref{eq:wave-eqn} is well posed in $L^{2}(\Rn)$ if the coefficients $a_{i,j}$ are smooth.

The regularity theory for \eqref{eq:wave-eqn} becomes more involved when considering initial data in Sobolev spaces over $L^{p}(\Rn)$ for $p\neq 2$. Indeed, an examination of the kernel of $\cos(t\sqrt{-\Delta})$ shows that this operator is not bounded on $L^{p}(\Rn)$ unless $p=2$, $n=1$ or $t=0$. In fact, it was shown by Peral~\cite{Peral80} and Miyachi~\cite{Miyachi80a} that 
\begin{equation}\label{eq:classicalwave}
\cos(t\sqrt{-\Delta}):W^{2s(p),p}(\Rn)\to L^{p}(\Rn)
\end{equation}
for $1<p<\infty$ and 
\begin{equation}
s(p):=\frac{n-1}{2}\Big|\frac{1}{2}-\frac{1}{p}\Big|,
\label{sp}\end{equation} 
and this `loss of $2s(p)$ derivatives' cannot be improved. The same mapping property \eqref{eq:classicalwave} holds more generally for compactly supported Fourier integral operators of order zero associated to a canonical transformation, as was proved by Seeger, Sogge and Stein in their celebrated paper~\cite{SeSoSt91}. Moreover, a corresponding result holds for the endpoints $p=1$ and $p=\infty$ upon replacing $L^{p}(\Rn)$ by the local Hardy space $\HT^{1}(\Rn)$ and $\bmo(\Rn)$, respectively. As a result, although \eqref{eq:wave-eqn} is not well posed on $L^{p}(\Rn)$ for $p\neq 2$ and $n>1$, for smooth coefficients the fixed-time $L^{p}$ regularity theory is well understood.

On the other hand, motivated by applications to regularity theory, a classical problem in harmonic analysis is to determine the $L^{p}$ mapping properties of the solution operators to partial differential equations with rough coefficients. For example, it is not merely the class of pseudodifferential operators, which includes the solution operators to smooth elliptic equations, that is bounded on $L^{p}(\Rn)$ for all $1<p<\infty$. It has also long been known that their rough analogues, Calderon-Zygmund operators, are bounded on $L^{p}(\Rn)$ for all $1<p<\infty$ provided they are bounded on $L^{2}(\Rn)$ \cite{Stein93}. Similarly, for the parabolic theory, the solution operators $(e^{tL})_{t \geq 0}$ to the equation $\partial_{t}u=Lu$ are bounded on $L^{p}(\R^{n})$ for a range of $p$ around $2$, if $L$ is a uniformly elliptic operator in divergence form with $L^{\infty}$ coefficients \cite{Auscher07}. 

In this respect, elliptic and parabolic theory differs from hyperbolic theory, and specifically from the theory of the wave equation \eqref{eq:wave-eqn}, in two ways. One is that the solution operators to elliptic and parabolic equations are typically bounded on $L^{p}(\Rn)$ for $1<p<\infty$, even if the coefficients of the equation have very limited smoothness. The other difference is that surprisingly little is known about the optimal $L^{p}$ regularity theory for wave equations with rough coefficients. Our goal in this article is to address this deficiency by extending the sharp $L^{p}$ regularity theory for wave equations with smooth coefficients to equations with rough coefficients.

\subsection{Hardy spaces for Fourier integral operators} 

An effective method for studying rough wave equations was developed by Smith in~\cite{Smith98b}. Combining paradifferential methods due to Bony~\cite{Bony81} with wave packet transforms, he constructed a useful microlocal parametrix on $L^{2}(\Rn)$ and then corrected to the exact solution using an iterative procedure. This strategy was exploited by Smith, as well as  Tataru, to obtain powerful results on rough wave equations, such as Strichartz estimates~\cite{Metcalfe-Tataru12,Smith98b,Tataru00,Tataru01,Tataru02}, propagation of singularities~\cite{Smith14}, the related spectral cluster estimates~\cite{Smith06}, and well-posedness of nonlinear wave equations with rough initial data \cite{Smith-Tataru05}.

It is then natural to attempt to apply these techniques to the fixed-time $L^{p}$ regularity theory for wave equations with rough coefficients. However, here one immediately runs into a formidable obstacle: no reasonable approximation of the solution operators to a rough wave equation can be expected to behave better than the solution operators to smooth wave equations, so in particular one cannot expect them to be bounded on $L^{p}(\Rn)$ for $p\neq 2$. This means that for the type of iterative constructions that are typically used to construct a parametrix, on $L^{p}(\Rn)$ one will lose a fixed number of derivatives in each iteration step, and the iterative `loop' cannot be closed.

On the other hand, in~\cite{Smith98a} Smith introduced a substitute for the classical local Hardy space $\HT^{1}(\Rn)$ which is adapted to Fourier integral operators. His space $\HT^{1}_{FIO}(\Rn)$ is invariant under compactly supported Fourier integral operators of order zero associated with a canonical transformation, and it satisfies Sobolev embeddings  that allow one to recover the results of Seeger, Sogge and Stein. Smith's construction was subsequently extended to a full range $(\Hp)_{1\leq p\leq \infty}$ of invariant spaces by the authors and Portal in~\cite{HaPoRo20}. These Hardy spaces for Fourier integral operators are invariant under suitable Fourier integral operators of order zero, and they satisfy the following Sobolev embeddings into the $L^{p}$ scale:
\begin{equation}\label{eq:Sobolevintro}
W^{s(p),p}(\Rn)\subseteq \Hp\subseteq W^{-s(p),p}(\Rn), \quad 1 < p < \infty,
\end{equation}
where $s(p)$ is as in \eqref{sp},  with the natural modifications involving $\HT^{1}(\Rn)$ and $\bmo(\Rn)$ for $p=1$ and $p=\infty$, respectively. In particular, by considering the Sobolev space $\Hpsp$ over $\Hp$, one recovers as a special case \eqref{eq:classicalwave} and the optimal $L^{p}$ regularity for Fourier integral operators. 

However, beyond merely recovering existing results, the Hardy spaces for Fourier integral operators provide us with a framework to apply the existing techniques for rough wave equations to the fixed-time $L^{p}$ regularity theory, by proving that such equations are solvable over $\Hp$ and then using \eqref{eq:Sobolevintro}.

\subsection{Main results}

The Sobolev spaces $\Hps=\lb D\rb^{-s}\Hp$ over the Hardy spaces for Fourier integral operators are introduced in Definition \ref{def:HardyFIO}. We say that \eqref{eq:wave-eqn} is well posed in $\Hps$ for given $p\in(1,\infty)$ and $s\in\R$ if, for each $f\in\Hps$, $g\in\Hpsm$ and $F\in L^{1}_{\loc}(\R;\Hpsm)$, there exists a unique
\[
u \in C(\R;\Hps)\cap C^{1}(\R;\Hpsm)\cap W^{2,1}_{\loc}(\R;\Hpsmm)
\]
such that \eqref{eq:wave-eqn} holds as an identity in $\Hpsmm$ for almost all $t\in\R$. 

By combining a slightly simplified version of our main result, Theorem \ref{thm:divwave}, with the Sobolev embeddings in \eqref{eq:Sobolevintro}, one obtains the following result.

\begin{theorem}\label{thm:mainintro}
Suppose that $a_{i,j}\in C^{1,1}(\Rn)$ is bounded and real-valued for all $1\leq i,j\leq n$, and that $\sum_{i,j=1}^{n}D_{i}(a_{i,j}D_{j})$ is uniformly elliptic. Then \eqref{eq:wave-eqn} is well posed in $\Hps$ for all $p\in(1,\infty)$ such that $2s(p)<1$ and all $-1+s(p)\leq s\leq 2-s(p)$. In particular, the solution operators $(U(t))_{t\in\R}$ to \eqref{eq:wave-eqn} satisfy
\begin{equation}\label{eq:Lpreg}
U(t):W^{s+s(p),p}(\Rn)\to W^{s-s(p),p}(\Rn)
\end{equation}
for all $t\in\R$.

If $a_{i,j}\in C^{r}(\Rn)$ for some $r>2$ and all $1\leq i,j\leq n$, then \eqref{eq:wave-eqn} is well posed in $\Hps$ for all $p\in(1,\infty)$ such that $2s(p)<r-1$ and all $-r+s(p)+1<s< r-s(p)$. In particular, for such $p$ and $r$, \eqref{eq:Lpreg} holds for all $t\in\R$.
\end{theorem}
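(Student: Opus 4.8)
The plan is to reduce the well-posedness statement in $\Hps$ to a fixed-point argument, and then to extract the $L^p$ regularity \eqref{eq:Lpreg} from the Sobolev embeddings \eqref{eq:Sobolevintro}. First I would rewrite \eqref{eq:wave-eqn} as a first-order system in the unknown $(u, D_t u)$, so that the solution operator is governed by a generator of the form $\begin{pmatrix} 0 & I \\ -L & 0 \end{pmatrix}$ with $L = \sum_{i,j} D_i(a_{i,j}D_j)$, and the natural energy space is $\Hps \oplus \Hpsm$. The key input is that, after the paradifferential/wave-packet smoothing of the coefficients $a_{i,j}$ (which is available since $a_{i,j}\in C^{1,1}$, resp.\ $C^r$), the half-wave propagators associated to the smoothed operator are, modulo acceptable errors, Fourier integral operators of order zero associated to a canonical transformation, and hence act boundedly on $\Hp$ \emph{without loss of derivatives}. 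This is precisely the feature that fails on $L^p$ and that makes the iteration close here.

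The key steps, in order, would be: (i) conjugate the problem by $\lb D\rb^{s}$ so that without loss of generality one works at the level $s=0$, i.e.\ on $\Hp$ itself; (ii) split $L = L_{\sharp} + (L - L_{\sharp})$, where $L_{\sharp}$ has coefficients smoothed at the appropriate dyadic scale, and record that $L - L_{\sharp}$ is a perturbation whose order is controlled by the modulus of continuity of the coefficients — this is where the condition $2s(p) < 1$ (resp.\ $2s(p) < r-1$) enters, since the perturbation must cost strictly fewer than the $1$ (resp.\ $r-1$) derivatives of regularity one has to spare relative to $\Hp$; (iii) construct the parametrix for the smoothed equation as a (microlocally localized) Fourier integral operator, using the $\Hp$-boundedness of order-zero FIOs from \cite{HaPoRo20} together with the mapping properties of $\lb D\rb$ on $\Hps$; (iv) set up Duhamel's formula for the exact equation, treating $(L-L_{\sharp})u$ and $F$ as inhomogeneous terms, and close the iteration by a Gronwall/contraction estimate on $C(I;\Hps)\cap C^1(I;\Hpsm)$ over a fixed time interval $I$, then patch in $t$; (v) deduce uniqueness from the same a priori estimate applied to the difference of two solutions; (vi) finally, feed $W^{s+s(p),p}\subseteq \Hps$ and $\Hpsm\subseteq W^{s-1-s(p),p} \subseteq W^{s-s(p)-1,p}$ — more precisely the two-sided embedding \eqref{eq:Sobolevintro} at the relevant Sobolev levels — into the well-posedness in $\Hps$ to obtain \eqref{eq:Lpreg}. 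The admissible range $-1+s(p)\le s\le 2-s(p)$ (resp.\ $-r+s(p)+1<s<r-s(p)$) is exactly the band of Sobolev exponents on which the perturbative term $(L-L_{\sharp})$, which raises the order by essentially $2-(\text{regularity})$, maps $\Hps$ into a space still controlled by the propagator, together with the two derivatives lost by $L$ itself.

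The main obstacle, and the technical heart of the argument, is step (iii)–(iv): showing that the parametrix for the \emph{smoothed} equation really is an order-zero FIO (or a sum of such, after a second dyadic decomposition in frequency) to which the invariance results of \cite{HaPoRo20} apply, \emph{and} that the error terms generated both by the parametrix construction and by replacing $L$ with $L_{\sharp}$ are of strictly negative order on the $\Hps$ scale, uniformly in the smoothing scale, so that the Duhamel iteration is a contraction on a sufficiently short time interval. This is where the $C^{1,1}$ (or $C^r$) hypothesis is used quantitatively, via Bony-type paradifferential estimates, and where the restriction $2s(p)<1$ is genuinely needed rather than merely convenient: a loss of exactly $1$ derivative in the perturbation would leave no room to absorb it, so one needs the strict inequality to have a positive power of the dyadic parameter to sum. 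Everything else — the reduction to first order, the conjugation by $\lb D\rb^s$, the passage from a short time interval to all of $\R$, and the extraction of the $L^p$ statement — is routine given the machinery of \cite{HaPoRo20} and the cited parametrix constructions of Smith.
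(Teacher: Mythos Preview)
Your outline captures the broad architecture correctly—paradifferential splitting, parametrix for the smoothed part, Duhamel iteration treating the rough remainder as a forcing term, then Sobolev embeddings for the $L^p$ statement—and this is indeed how the paper proceeds. But step (iii) contains a genuine gap that is the technical heart of the paper.

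The smoothed symbol $b$ that one extracts as an approximate square root of $L_\sharp$ lies in $\A^{2}S^{1}_{1,1/2}$ and is only \emph{asymptotically} homogeneous of degree one; it is \emph{not} homogeneous. Consequently the solution operators to $(D_t - b(x,D))u=0$ are not Fourier integral operators in the sense of \cite{HaPoRo20}, and one cannot simply invoke the FIO invariance theorem there. The paper is explicit about this obstruction (see the discussion in the introduction). Instead, the parametrix is built as $U_t = W^*\F_t W$, where $W$ is the anisotropic wave packet transform and $\F_t$ is pullback by the Hamiltonian flow of $b$ on $T^*\R^n$. One must then show (a) that these inhomogeneous flow maps act boundedly on the weighted tent spaces $T^p_s(S^*\R^n)$, which requires comparing the flow of $b$ to that of its homogeneous limit and controlling the discrepancy in the contact metric, and (b) that the error $(D_t - b(x,D))U_t$ is order zero on $\Hps$, via explicit phase-space kernel estimates exploiting the asymptotic homogeneity and the $\A^2$ structure of $b$. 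This is the content of Sections 8 and 9 and is where the $C^{1,1}$ hypothesis is used most sharply. Your proposal to treat the smoothed propagator as an order-zero FIO and cite \cite{HaPoRo20} would not go through as stated.

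Two smaller points. First, the condition $2s(p)<r-1$ does not arise from a dyadic summability argument in the parametrix but from the separate result (Theorem~\ref{thm:roughpseudo}, proved in the companion papers) that rough pseudodifferential operators in $C^r_-S^m_{1,1/2}$ map $\Hps$ appropriately; this is what makes $L_2:\Hps\to\Hpsm$ hold. Second, the full Sobolev interval for $s$ is not obtained directly from the iteration: the existence argument only gives a restricted range $0\le s<r-s(p)$, and the paper then uses a duality/semigroup argument (exploiting that $L$ is self-adjoint) to extend to the stated interval. Uniqueness is likewise obtained by solving an adjoint problem with controlled time support rather than by a direct energy estimate.
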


Note that the first two statements apply to all $p\in(1,\infty)$ if $n\leq 3$. For $n\geq 4$, the last two statements apply to all $p\in(1,\infty)$ if $r\geq (n+1)/2$. Explicitly, for $n\geq4$ the condition on $p$ in the first statement is that
\[
2\frac{n-1}{n+1}<p<2\frac{n-1}{n-3}.
\]
If $r>2$ is an integer, then one may replace $C^{r}(\Rn)$ by $C^{r-1,1}(\Rn)$ in the second statement, and one may also include the endpoints of the Sobolev interval.  

We obtain a similar result as in Theorem \ref{thm:mainintro} for operators $\sum_{i,j=1}^{n}a_{i,j}D_{i}D_{j}$ in standard form, with the same assumptions on $p$, but with a Sobolev interval for $s$ that is shifted by $1$ (see Theorem \ref{thm:mainwave}). We consider only pure second order operators in this article, for simplicity, but in Remark \ref{rem:lowerorder} we indicate how our techniques can be used to include lower order terms. 
In Appendix \ref{app:regularity} we detail how one can improve the second statement, for $r>2$, by making regularity assumptions outside of the H\"{o}lder scale. This also explains why the case where $r$ is an integer is special in Theorem \ref{thm:mainintro}.

To the authors' best knowledge, Theorem \ref{thm:mainintro} is the first instance where the fixed-time $L^{p}$ regularity theory for wave equations has been extended beyond the seminal work for smooth equations by Seeger, Sogge and Stein in 1991 to a general class of rough wave equations. Results for even rougher wave equations have been available through the theory of spectral multipliers for much longer~\cite{DuOuSi02}; however, these yield a sub-optimal loss of at least $n|\frac{1}{2}-\frac{1}{p}|$ derivatives even in the case of the Laplacian, cf.~\eqref{eq:fracLaplace}. In \cite{DosSantosFerreira-Staubach14,Rodriguez-Staubach13} one may find results on the $L^{p}$ regularity of oscillatory integral operators with rough phase functions and symbols, some of which involve a loss of $(n-1)|\frac{1}{2}-\frac{1}{p}|+\veps$ derivatives. However, the very procedure which expresses the solution operators to a wave equation locally as a sum of oscillatory integral operators plus a smoothing term, namely solving an eikonal equation and then iteratively transport equations, requires the coefficients of the equation to be smooth. We also refer to \cite{Frey-Portal20} for a very recent contribution to this area, which treats equations with even rougher coefficients that are small perturbations of commuting differential operators.

It is important to stress that the assumption of $C^{1,1}$ regularity in Theorem \ref{thm:mainintro} is common in the analysis of wave equations. In fact, although both Strichartz and spectral cluster estimates hold in the classical sense for wave equations with $C^{1,1}$ coefficients~\cite{Smith98b,Tataru01,Tataru02,Smith06}, it is known that they only hold in a weaker sense for $C^{1,\alpha}$ coefficients with $\alpha<1$~\cite{Smith-Sogge92,Smith-Tataru02}. The fact that this level of regularity is critical manifests itself in multiple ways, as is explained below. 

In this article we do not treat time-dependent coefficients. This is not due to an inherent limitation of our methods. In fact, the Strichartz estimates obtained by Smith and Tataru using similar methods also hold for time-dependent coefficients, and one may even allow for time dependence of slightly lower regularity than the spatial regularity (see also~\cite{MaMeTa08}). The same applies to propagation of singularities~\cite{Smith14}, and our results can also be extended to coefficients that depend on time in a rough sense. However, doing so introduces various subtle technical issues that tend to obfuscate the main ideas for anyone who is not already familiar with them. This manifests itself in various ways, such as in the concept of a solution (see e.g.~\cite{Smith98b}), in the proof of our main results, and in Sections \ref{sec:flow} and \ref{sec:parametrix} (see e.g.~Remark \ref{rem:time-dep}). We believe that there is considerable value in illustrating the key ideas to a wider audience by considering a slightly simpler setting. In a follow-up article, we plan to deal with time-dependent coefficients, and treat propagation of singularities in $\Hps$. 

Finally, it is illustrative to compare our results to the $L^{p}$ theory for Schr\"{o}dinger equations. Invariant spaces for Schr\"{o}dinger propagators have been used in time-frequency analysis for much longer; these are the modulation spaces (see e.g.~\cite{CoFaRo10}). However, results about the fixed-time $L^{p}$ regularity for Schr\"{o}dinger equations have been obtained using more classical methods from harmonic analysis, and under the much weaker regularity assumption of Gaussian heat kernel bounds~\cite{ChDuLiYa20}. On the other hand, it is well known that the fixed-time $L^{p}$ regularity theory for wave equations is more involved than that for Schr\"{o}dinger equations, as is already evident for the flat Laplacian. Indeed, cf.~\cite{Miyachi80b}, for all $\alpha>0$ one has
\begin{equation}\label{eq:fracLaplace}
e^{it(-\Delta)^{\alpha}}:W^{2\alpha n|\frac{1}{2}-\frac{1}{p}|,p}(\Rn)\to L^{p}(\Rn)
\end{equation}
for $p\in(1,\infty)$ and $t\in\R$, and this exponent is sharp for $t\neq 0$ unless $\alpha=1/2$, in which case the improved estimate \eqref{eq:classicalwave} holds. From a microlocal viewpoint, this phenomenon can be understood by observing that for $t\neq 0$ the projection of the canonical relation $\{(y-2\alpha t|\eta|^{2\alpha-1}\hat{\eta},\eta,y,\eta)\mid (y,\eta)\in\Tp\setminus o\}$ of $e^{it(-\Delta)^{\alpha}}$ onto the base spaces $\Rn\times\Rn$ has rank $2n-1$ if $\alpha=1/2$, and rank $2n$ otherwise. Moreover, it is known that the $L^{p}$ mapping properties of oscillatory integral operators improve when the rank of this projection drops~\cite{SeSoSt91,Ruzhansky99}, with operators for which the projection has rank $n$, such as pseudodifferential operators, being bounded. Hence sharp fixed-time $L^{p}$ regularity results for wave equations need to take into account such subtle microlocal aspects, and the theory is more involved. On the other hand, the space-time Strichartz estimates have been obtained for Schr\"{o}dinger equations with rough coefficients in~\cite{Tataru08} using a similar combination of wave packet transforms and parametrices as described above.

\subsection{Overview of the proof}

The general strategy of our proof is similar to that of previous results for wave equations with rough coefficients. Here we sketch this approach, and we indicate where our implementation differs from the existing instances of this method. 

Suppose that $a_{i,j}\in C^{1,1}(\Rn)$ for all $1\leq i,j\leq n$. Using a paradifferential smoothing procedure which goes back to Bony~\cite{Bony81} (see also e.g.~Meyer~\cite{Meyer81a, Meyer81b} and Taylor~\cite{Taylor91,Taylor00}), one separates the low and the high frequencies of the coefficients $a_{i,j}$. This decomposes $L:=\sum_{i,j=1}^{n}D_{i}(a_{i,j}D_{j})$ as a sum of a smooth pseudodifferential operator $L_{1}$ with symbol in H\"{o}rmander's $S^{2}_{1,1/2}$ class, and a rough pseudodifferential operator $L_{2}$ with principal symbol in $C^{1,1}S^{1}_{1,1/2}$. The latter class consists of symbols that behave like elements of $S^{1}_{1,1/2}$, except that they have the same spatial regularity as the coefficients $a_{i,j}$. Note, however, that the operator $L_{2}$ is of order $1$. Hence it is reasonable to expect that any $u\in C(\R;\Hps)$ satisfies  $L_{2}u\in C(\R;\Hpsm)$, in which case $L_{2}u$ has the same `strength' as the inhomogeneous term $F$. Then, heuristically, we can rewrite \eqref{eq:wave-eqn} as $(D_{t}^{2}-L_{1})u=F-L_{2}u$, and use Duhamel's principle to reduce to the equation $(D_{t}^{2}-L_{1})u=0$. In fact, for technical reasons it turns out to be more convenient to solve the first order equation $(D_{t}-b(x,D))u=0$, where $b\in S^{1}_{1,1/2}$ and $b(x,D)$ is an approximate square root of $L_{1}$. 

Now, to solve $(D_{t}-b(x,D))u=0$ one cannot directly rely on standard tools from microlocal analysis. Indeed, the symbol $b$ of the pseudodifferential operator $b(x,D)$ is not homogeneous, and therefore the solution operators to this equation are not Fourier integral operators. Instead, following an idea which was already present in the work of Cordoba and Fefferman~\cite{Cordoba-Fefferman78}, one can use wave packet transforms to lift the equation to phase space, where the quantum correspondence principle suggests that the solution operators are well approximated by the bicharacteristic flow maps associated with $b$. By conjugating these flow maps with wave packet transforms, and using an iterative construction to deal with error terms that arise, one then obtains a parametrix for the equation $(D_{t}-b(x,D))u=0$ (see Remark \ref{rem:parametrix}). In turn, to make the heuristics involving $L_{2}u$ precise, one uses another iterative construction to solve the original equation \eqref{eq:wave-eqn}. 

Apart from providing a method to solve \eqref{eq:wave-eqn}, the idea of conjugating flow maps on phase space with wave packet transforms results in a parametrix that is both conceptually elegant and technically convenient. Moreover, one can show that this parametrix has various useful properties, which in turn are transferred to the original equation and allow one to derive Strichartz estimates, or propagation of singularities, for example. 

We note that, in the procedure above, the assumption that $a_{i,j}\in C^{1,1}(\Rn)$ for $1\leq i,j\leq n$ is used in three important ways. Firstly, it guarantees that the operator $L_{2}$ has order $1$. In general, if $a_{i,j}\in C^{r}(\Rn)$ for some $r>0$ (or $a_{i,j}\in C^{r-1,1}(\Rn)$ if $r\in\N$), then $L_{2}$ has order $2-r/2$.  Moreover, the regularity of the $a_{i,j}$ is used to construct the flow parametrix for $(D_{t}-b(x,D))u=0$. Indeed, the symbol $b$ in fact has slightly better properties than a general $S^{1}_{1,1/2}$ symbol, and one can take two spatial derivatives without incurring blow-up in the fiber variable. This fact ensures, secondly, that the flow associated with $b$ is well behaved (see Remark \ref{rem:C11needed}), and, thirdly, that one can obtain useful error bounds when applying $b(x,D)$ to the conjugated flow (see Remark \ref{rem:sharpreg}). In fact, the regularity of the $a_{i,j}$ also leads to the condition that $2s(p)<1$, via Theorem \ref{thm:roughpseudo}, but here the $C^{1,1}$ assumption only determines the size of the interval of $p$ in Theorem \ref{thm:mainintro}.

Our incarnation of this method differs from the existing ones in three main ways.  The \emph{first} is that we work with the Hardy spaces $\Hps$ for Fourier integral operators, so we require estimates for the rough term $L_{2}$ on these spaces. By contrast, in the existing works on rough wave equations it suffices to obtain $L^{2}(\R^n)$ estimates (e.g.~by a $TT^{*}$ argument). Estimates for rough pseudodifferential operators on $L^{p}(\R^n)$ are classical~\cite{Bourdaud82,Marschall88,Taylor91}, but they are new for $\Hp$. These estimates are proved in the companion papers \cite{Rozendaal20,Rozendaal22} by the second author, and they are of independent interest. They can be found in Theorem \ref{thm:roughpseudo} of the present article, and we stress that these mapping properties are proved in a very different manner than the results in this article, relying in particular on Littlewood--Paley theory, equivalent characterizations of $\Hp$ (see~\cite{Rozendaal21,FaLiRoSo19} and \eqref{eq:equivchar}), and interpolation of rough symbol classes. It should also be noted that it is the rough operator $L_{2}$ which leads to the restrictions on $p$ and $s$ in Theorem \ref{thm:mainintro}, whereas our results for the smooth term hold for all $p\in[1,\infty)$ and $s\in\R$ (we will not consider $p=1$ or $p=\infty$ in this article). 
On the other hand, by making regularity assumptions outside of the H\"{o}lder scale, slightly less than $C^{1,1}$ regularity in fact suffices to deal with this operator (see Appendix \ref{app:regularity}). 

In the present article we solve the smoothed equation $(D_{t}-b(x,D))u=0$. Then we combine this solution with the rough term, treated as an additional inhomogeneity, to prove solvability for \eqref{eq:wave-eqn} on $\Hps$. For the latter step we rely in an essential manner on the fact that $\Hps$ is invariant under the solution operators to $(D_{t}-b(x,D))u=0$. This allows for iterative arguments that would not be possible by directly working on $L^{p}(\Rn)$, and it also explains why it is necessary to obtain estimates for the rough term $L_{2}$ on $\Hps$.

To construct a parametrix for the equation $(D_{t}-b(x,D))u=0$, we conjugate flow maps with wave packet transforms. However, unlike many of the existing methods, we cannot rely on isotropic wave packet transforms such as the FBI transform~\cite{Tataru99}, as doing so would not lead to a sharp loss of derivatives. The \emph{second} way in which our method differs from most previous ones is that we use an anisotropic wave packet transform $W$ which already appeared in our previous work~\cite{HaPoRo20}. This transform has its roots in~\cite{Smith98a} and can also be found in~\cite{Geba-Tataru07}. It captures the dyadic-parabolic, or second dyadic, decomposition of phase space which goes back to Fefferman~\cite{Fefferman71,Fefferman73b} and which is a key tool in the work of Segger, Sogge and Stein~\cite{SeSoSt91} on the $L^{p}$ regularity of Fourier integral operators.

The \emph{third} way in which our method differs from the existing ones is that we combine flow maps on phase space with the theory of tent spaces. Tent spaces, introduced in~\cite{CoMeSt85}, provide a powerful framework in harmonic analysis that has proved to be particularly useful when dealing with rough elliptic and parabolic equations~\cite{HoLuMiMiYa11}. Tent spaces $T^{p}(\Sp)$ over the cosphere bundle $\Sp=\Rn\times S^{n-1}$ are function spaces on phase space, $T^* \R^{n}$, and they incorporate the conical square functions that were originally used by Smith in~\cite{Smith98a} to define $\HT^{1}_{FIO}(\Rn)$. In our previous work~\cite{HaPoRo20}, $\Hp$ was defined by embedding it into $T^{p}(\Sp)$ using the wave packet transform $W$, and to deal with the Sobolev spaces $\Hps$ we work in the present article with weighted tent spaces $T^{p}_{s}(\Sp)$. As a result, to prove mapping properties of operators on $\Hps$, one can conjugate them with $W$ and prove estimates in the $T^{p}_{s}(\Sp)$ norm. In the setting of the present article, this means that one has to prove that bicharacteristic flow maps are bounded on weighted tent spaces, and one has to obtain appropriate kernel bounds on phase space for error terms in the parametrix. This leads to additional difficulties that are not present in the $L^{2}$ theory. For example, although bicharacteristic flow maps, being symplectomorphisms, are automatically bounded on $L^{2}(\Tp)$, it is more difficult to prove boundedness of flow maps on tent spaces. The situation is further complicated by the fact that the symbol $b$ is not homogeneous.

It should be noted that one can also solve the equation $(D_{t}-b(x,D))u=0$ using methods from either~\cite{Smith98b} or~\cite{Geba-Tataru07}. However, our approach differs from these works in several ways. Namely, although the wave packet transform used by Smith in~\cite{Smith98b} is also anisotropic, it is a discrete transform that decomposes functions using a curvelet basis and views operators as infinite matrices with respect to this basis, as opposed to lifting functions and operators to phase space (see also~\cite{Candes-Demanet03}). Accordingly, the flow parametrix does not arise from an actual bicharacteristic flow on phase space. We believe that it is useful to construct an anisotropic flow parametrix on phase space involving genuine bicharacteristic flows, both for technical reasons and for conceptual simplicity, as the author himself did using isotropic transforms in his later work~\cite{Smith06,Smith14}. 

On the other hand, in~\cite{Geba-Tataru07} Geba and Tataru directly obtain appropriate kernel bounds for the solution operators to the equation $(D_{t}-b(x,D))u=0$. However, a key idea in our previous work~\cite{HaPoRo20} is to apply the existing theory of tent spaces to the $L^{p}$ theory for Fourier integral operators, and in the present article we develop this idea further by building parametrices using flows maps on tent spaces. Since the theory of tent spaces has so far mostly been restricted to (rough) elliptic and parabolic equations, where propagation of singularities plays no role, we believe that there is value in demonstrating how tent spaces can be combined with flow parametrices to study the fixed-time $L^{p}$ theory of rough wave equations.

\subsection{Organization of this article} This article is organized as follows. In Section \ref{sec:tentspaces} we collect some background on the weighted tent spaces $T^{p}_{s}(\Sp)$. Apart from the introduction of weights, one difference with respect to our earlier work in~\cite{HaPoRo20} is that we take an alternative viewpoint on tent spaces over the cosphere bundle, which effectively corresponds to parametrizing phase space using Cartesian coordinates as opposed to spherical coordinates (see Remark \ref{rem:differenttent}). The former is more useful when dealing with bicharacteristic flows, particularly for symbols that are not homogeneous. In Section \ref{sec:HpFIO} we introduce our wave packet transform and the Hardy spaces for Fourier integral operators. We also take a different viewpoint on this transform and the Hardy spaces for FIOs, although the resulting spaces are the same (see Remark \ref{rem:otherHpFIO}). 

In Section \ref{sec:roughsymb} we introduce the relevant rough symbol classes, and the symbol smoothing procedure which plays a key role in this article. In Section \ref{subsec:smoothand rough} we collect our main results for the smooth and rough terms in this symbol decomposition, Theorem \ref{thm:mainsmooth} and Theorem \ref{thm:roughpseudo}. Theorem \ref{thm:mainsmooth} is proved in Section \ref{sec:firstorder}, and Theorem \ref{thm:roughpseudo} is proved in~\cite{Rozendaal20,Rozendaal22}. Section \ref{sec:divform} then contains our main results for operators in divergence form, and in particular Theorem \ref{thm:mainintro}, and Section \ref{sec:standardform} contains our results for operators in standard form. The proofs of both results are very similar, with some minor technical differences.

Section \ref{sec:firstorder} reduces the proof of our main result for smooth first order equations to Theorem \ref{thm:parametrix}, which asserts that a suitable parametrix exists for such equations. Theorem \ref{thm:parametrix} is in turn proved in Sections \ref{sec:flow} and \ref{sec:parametrix}, with Section \ref{sec:flow} showing that the relevant flow maps are bounded on tent spaces, and Section \ref{sec:parametrix} dealing with the error bounds in the parametrix.

Finally, Appendix \ref{app:kernel} contains a statement about kernel bounds for oscillatory integral operators which is used frequently, and Appendix \ref{app:regularity} explains how subtle regularity assumptions on the coefficients can be used to improve some of our results. 

\subsection{Notation}\label{subsec:notation}

The natural numbers are $\N=\{1,2,\ldots\}$, and $\Z_{+}:=\N\cup\{0\}$. Throughout this article we fix $n\in\N$ with $n\geq2$. Our techniques can also be applied for $n=1$, but in that case the results can be improved, cf.~\cite{Frey-Portal20}. 

For $1\leq j\leq n$ we denote the $j$-th standard basis vector of $\Rn$ by $e_{j}$. For $\xi\in\Rn$ we set $\lb\xi\rb=(1+|\xi|^{2})^{1/2}$, and $\hat{\xi}=\xi/|\xi|$ if $\xi\neq0$. We use multi-index notation, where $\partial_{\xi}:=(\partial_{\xi_{1}},\ldots,\partial_{\xi_{n}})$, $\partial^{\alpha}_{\xi}=\partial^{\alpha_{1}}_{\xi_{1}}\ldots\partial^{\alpha_{n}}_{\xi_{n}}$ and $\xi^{\alpha}=\xi_{1}^{\alpha_{1}}\ldots\xi_{n}^{\alpha_{n}}$ for $\xi=(\xi_{1},\ldots,\xi_{n})\in\Rn$ and $\alpha=(\alpha_{1},\ldots,\alpha_{n})\in\Z_{+}^{n}$. Moreover, $D_{j}:=-i\partial_{x_{j}}$ and $D_{t}:=-i\partial_{t}$.

The duality between a Schwartz function $f\in\Sw(\Rn)$ and a tempered distribution $g\in\Sw'(\Rn)$ is denoted by $\lb f,g\rb$. 
The Fourier transform of $f\in\Sw'(\Rn)$ is denoted by $\F f$ or $\widehat{f}$, and the Fourier multiplier with symbol $\ph\in\Sw'(\Rn)$ is denoted by $\ph(D)$. 

The H\"{o}lder conjugate of $p\in[1,\infty]$ is $p'$, and $s(p)=\frac{n-1}{2}|\frac{1}{2}-\frac{1}{p}|$. The volume of a measurable subset $B$ of a measure space $(\Omega,\mu)$ is $|B|$. For $F\in L^{1}(B)$, we set
\[
\fint_{B}F(x)\ud\mu(x)=\frac{1}{|B|}\int_{B}F(x)\ud\mu(x)
\]
if $|B|<\infty$. The space of bounded linear operators on a Banach space $X$ is $\La(X)$. 

We write $f(s)\lesssim g(s)$ to indicate that $f(s)\leq Mg(s)$ for all $s$ and a constant $\con>0$ independent of $s$, and similarly for $f(s)\gtrsim g(s)$ and $g(s)\eqsim f(s)$.

\section{Tent spaces}\label{sec:tentspaces}

In this section we collect some background on weighted tent spaces.

\subsection{Definitions}\label{subsec:definitions}

Our tent spaces are function spaces on phase space. In~\cite{HaPoRo20} these spaces were defined using an implicit spherical coordinate system on phase space, as arises from the classical theory of tent spaces in e.g.~\cite{CoMeSt85, Amenta14}. More precisely, one parametrizes phase space using the cosphere bundle, which is convenient for dealing with flows that are associated with homogeneous symbols. However, in the present article we consider flows associated with symbols that are merely asymptotically homogeneous, and such flows do not project down to the cosphere bundle. Hence it is more convenient to use a Cartesian coordinate system. This leads to spaces that look different from those in~\cite{HaPoRo20}, but this difference is only apparent, cf.~Remark \ref{rem:differenttent}. We also work in the more general setting of weighted tent spaces, which allows us to deal with Sobolev spaces over the Hardy spaces for Fourier integral operators.  

Let $\Tp$ be the cotangent bundle of $\Rn$, identified with $\Rn\times\Rn$ and endowed with the symplectic form $d\xi \cdot dx$ and the Liouville measure $\ud x\ud\xi$.  
Let $o:=\Rn\times\{0\}$ be the zero section in $\Tp$, and let $\Sp=\Rn\times S^{n-1}$ be the cosphere bundle over $\Rn$. We shall denote elements of $S^{n-1}$ by either $\hat \xi=\xi/|\xi|$, for $\xi \in \Rn \setminus \{ 0 \}$, or by $\omega$ or $\nu$. We also denote the standard Riemannian metric on $S^{n-1}$ by $g_{S^{n-1}}$, and the standard measure on $S^{n-1}$ by $\ud \omega$. 
We will endow $\Sp$ with a metric $d$ that arises from contact geometry. For readers unfamiliar with contact geometry, we note that we mostly use only two properties of the metric:
\begin{itemize}
\item that it has a convenient equivalent expression, in \eqref{eq:metric};
\item that $(\Sp,d,\ud x\ud\w)$ is a doubling metric measure space, cf.~\eqref{eq:doubling}.
\end{itemize}  
The cosphere bundle $\Sp$ is a contact manifold with respect to the standard contact form $\alpha_{S^{n-1}}:= \hat \xi \cdot dx$, the kernel of which is a smooth distribution of codimension $1$ subspaces of the tangent bundle $T(\Sp)$ of $\Sp$. Set
\begin{equation}\label{eq:defd}
d((x, \omega), (y, \nu)) := \inf_{\gamma} \int_0^1 |\gamma'(s)|\ud s
\end{equation}
for $(x,\w),(y,\nu)\in\Sp$, where $|\gamma'(s)|$ is the length of the tangent vector $\gamma'(s)$ with respect to the product metric $dx^2 + g_{S^{n-1}}$, and the infimum is taken over all horizontal Lipschitz\footnote{
In \cite{HaPoRo20} the infimum was taken over all piecewise $C^{1}$ curves, but this modification makes no difference, and it will be convenient for us in Section \ref{sec:flow} (see footnote \ref{foot:Lipschitz}).\label{foot:curves}} curves $\gamma : [0,1] \to \Sp$ such that $\gamma(0) = (x, \omega)$ and $\gamma(1) = (y, \nu)$. Here horizontal means that $\cf(\gamma'(s)) = 0$ for almost all $s \in [0,1]$. Throughout, we let $B_{\tau}(x,\w)\subseteq\Sp$ be the open ball around $(x,\w)\in\Sp$ of radius $\tau>0$ with respect to $d$. 

We will often use an equivalent analytic expression for $d$. By~\cite[Lemma 2.1]{HaPoRo20},
\begin{equation}\label{eq:metric}
d((x,\w),(y,\nu))\eqsim \big(|x-y|^{2}+|\w\cdot (x-y)|+|\w-\nu|^{2}\big)^{1/2}
\end{equation}
for all $(x,\w),(y,\nu)\in\Sp$, where the implicit constants only depend on $n$. By~\cite[Lemma 2.3]{HaPoRo20}, there exists an $\con>0$ such that for all $(x,\w)\in\Sp$ one has
\begin{equation}\label{eq:volume}
\frac{1}{\con}\tau^{2n}\leq |B_{\tau}(x,\w)|\leq \con\tau^{2n}
\end{equation}
if $0<\tau<1$, and
\begin{equation}\label{eq:volume2}
\frac{1}{\con}\tau^{n}\leq |B_{\tau}(x,\w)|\leq \con\tau^{n}
\end{equation}
if $\tau\geq 1$. In particular, 
\begin{equation}\label{eq:doubling}
|B_{c\tau}(x,\w)|\leq \con^{2}c^{2n}|B_{\tau}(x,\w)|
\end{equation}
for all $\tau>0$ and $c\geq1$, and $(\Sp,d,\ud x\ud\w)$ is a doubling metric measure space. Moreover, the volume $|B_{\tau}(x,\w)|>0$ of the ball $B_{\tau}(x,\w)$ only depends on $\tau$, as follows from the invariance of $d$ under translations and rotations in $\Sp$. Hence 
\begin{equation}\label{eq:defmu}
\mu(\lambda):=|B_{\lambda^{-1/2}}(x,\w)|^{-1}\quad(\lambda>0)
\end{equation}
is independent of the choice of $(x,\w)\in\Sp$.

For $(x,\w)\in\Sp$ set
\[
\Gamma(x,\w):=\{(y,\eta)\in\Tp\setminus o\mid (y,\hat{\eta})\in B_{|\eta|^{-1/2}}(x,\w)\},
\]
and for a ball $B\subseteq\Sp$ set
\[
T(B):=\{(y,\eta)\in\Tp\setminus o\mid d((y,\hat{\eta}),B^{c})\geq |\eta|^{-1/2}\}.
\]
Note that the projection of $\Gamma$ onto the $\eta$ variable is approximately a paraboloid in the direction of $\w$, and the projection of $\Gamma\cap \{(y,\eta)\in \Tp\mid |\eta|=c\}$ onto the $x$ variable is an anisotropic ball around $x$, with anisotropy in the direction of $\w$, for each $c>0$. Similar statements hold for $T(B)$. Next, for $s\in\R$ and $F\in L^{2}_{\loc}(\Tp)$ set 
\begin{equation}\label{eq:Afunctional}
\begin{aligned}
\A_{s} F(x,\w):=&\Big(\int_{\Gamma(x,\w)}|F(y,\eta)|^{2}\frac{|\eta|^{2s}\ud y\ud\eta}{|B_{|\eta|^{-1/2}}(x,\w)|}\Big)^{1/2}\\
=&\Big(\int_{\Gamma(x,\w)}|F(y,\eta)|^{2}\mu(|\eta|)|\eta|^{2s}\ud y\ud\eta\Big)^{1/2},
\end{aligned}
\end{equation}
where $\mu$ is as in \eqref{eq:defmu}, and
\begin{equation}\label{eq:Cfunctional}
\mathcal{C}_{s}F(x,\w):=\sup_{B}\Big(\frac{1}{|B|}\int_{T(B)}|F(y,\eta)|^{2}|\eta|^{2s}\ud y\ud \eta\Big)^{1/2},
\end{equation}
where the supremum is taken over all balls $B\subseteq \Sp$ containing $(x,\w)$. 

\begin{definition}\label{def:tentspaces}
For $p\in[1,\infty)$, $s\in\R$, the \emph{(weighted) tent space} $T^{p}_{s}(\Sp)$ consists of all $F\in L^{2}_{\loc}(\Tp)$ such that $\A_{s} F\in L^{p}(\Sp)$, endowed with the norm
\[
\|F\|_{T^{p}_{s}(\Sp)}:=\|\A_{s} F\|_{L^{p}(\Sp)}.
\]
Also, $T^{\infty}_{s}(\Sp)$ consists of all $F\in L^{2}_{\loc}(\Tp)$ such that $\mathcal{C}_{s}F\in L^{\infty}(\Sp)$, with 
\[
\|F\|_{T^{\infty}_{s}(\Sp)}:=\|\mathcal{C}_{s}F\|_{L^{\infty}(\Sp)}.
\]
\end{definition}

\begin{remark}\label{rem:differenttent}
In~\cite{HaPoRo20} the tent space $T^{p}(\Sp)$, for $p\in[1,\infty)$, was defined to consist of all $F\in L^{2}_{\loc}(\Sp\times(0,\infty),\ud x\ud\w\ud\sigma/\sigma)$ such that 
\[
\Big(\int_{\Sp}\Big(\int_{0}^{\infty}\fint_{B_{\sqrt{\sigma}}(x,\w)}|F(y,\nu,\sigma)|^{2}\ud y\ud\nu\frac{\ud\sigma}{\sigma}\Big)^{p/2}\ud x\ud\w\Big)^{1/2}<\infty.
\]
At first sight this seems quite different from Definition \ref{def:tentspaces}, but by changing to a spherical coordinate system via $\pi(y,\eta):= (y,\hat{\eta},|\eta|^{-1})$, one obtains an isometric isomorphism $\pi^{*}:T^{p}(\Sp)\to T^{p}_{-n/2}(\Sp)$ via pull-back. A similar remark applies for $p=\infty$. Hence the spaces in Definition \ref{def:tentspaces} are weighted tent spaces over the spaces from~\cite{HaPoRo20}. Definition \ref{def:tentspaces} is more convenient for our purposes than the one in~\cite{HaPoRo20}, because in Section \ref{sec:parametrix} we will work with bicharacteristic flows associated to symbols that are merely asymptotically homogeneous. It is also natural from a phase space perspective, in that $T^{2}_{0}(\Sp)=L^{2}(\Tp)$ isometrically. Indeed, 
\begin{align*}
&\|F\|_{T^{2}_{0}(\Sp)}^{2}=\int_{\Sp}\int_{\Gamma(x,\w)}|F(y,\eta)|^{2}\mu(|\eta|)\ud y\ud\eta\ud x\ud\w\\
&=\int_{\Sp}\int_{\Tp}\ind_{B_{|\eta|^{-1/2}}(x,\w)}(y,\hat{\eta})|F(y,\eta)|^{2}|B_{|\eta|^{-1/2}}(y,\hat{\eta})|^{-1}\ud y\ud\eta\ud x\ud\w\\
&=\int_{\Tp}\int_{\Sp}\ind_{B_{|\eta|^{-1/2}}(y,\hat{\eta})}(x,\w)|B_{|\eta|^{-1/2}}(y,\hat{\eta})|^{-1}\ud x\ud\w|F(y,\eta)|^{2}\ud y\ud\eta\\
&=\int_{\Tp}|F(y,\eta)|^{2}\ud y\ud\eta=\|F\|_{L^{2}(\Tp)}^{2}.
\end{align*}
\end{remark}

For more on weighted tent spaces we refer the reader to~\cite{Amenta18}. There the spaces are weighted in a slightly different manner which in our setting would amount to replacing the factor $|\eta|^{2s}$ in \eqref{eq:Afunctional} by $|B_{|\eta|^{-1/2}}(x,\w)|^{-2s/n}$. The difference between these two choices is not significant, and for each $\kappa>0$ the two choices coincide on
\[
\{F\in T^{p}_{s}(\Sp)\mid F(y,\eta)=0\text{ for all }(y,\eta)\in\Tp\text{ with }|\eta|<\kappa\},
\]
due to \eqref{eq:volume} and \eqref{eq:volume2}. 

A minor role will be played in this article by a class of test functions on $\Tp$ and the associated distributions. Let $\Da(\Tp)$ consist of those $F\in L^{\infty}(\Tp)$ such that $[(x,\xi)\mapsto (1+|x|+\max(|\xi|,|\xi|^{-1}))^{N}F(x,\xi)]\in L^{\infty}(\Tp)$ for all $N\geq0$, endowed with the topology generated by the corresponding weighted $L^{\infty}$ norms. Let $\Da'(\Tp)$ be the space of continuous linear $G:\Da(\Tp)\to \C$, endowed with the topology induced by $\Da(\Tp)$. We denote the duality between $F\in\Da(\Tp)$ and $G\in\Da'(\Tp)$ by $\lb F,G\rb_{\Tp}$. If $G\in L^{1}_{\loc}(\Tp)$ is such that $F\mapsto \int_{\Tp}F(x,\xi)\overline{G(x,\xi)}\ud x\ud\xi$ defines an element of $\Da'(\Tp)$, then we write $G\in\Da'(\Tp)$. One has 
\begin{equation}\label{eq:testfunctions}
\Da(\Sp)\subseteq T^{p}_{s}(\Sp)\subseteq\Da'(\Tp)
\end{equation}
for all $p\in[1,\infty]$ and $s\in\R$, and the first embedding is dense for $p<\infty$. This follows from~\cite[Lemma 2.10]{HaPoRo20} and Remark \ref{rem:differenttent} for $s=-n/2$, and the statement for general $s$ then follows directly.

\subsection{Results about tent spaces}\label{subsec:tentspaceresults}

First, for use in Section \ref{sec:flow} we generalize \eqref{eq:Afunctional} slightly. For $\alpha>0$ and $(x,\w)\in\Sp$ we set
\begin{equation}
\Gamma_{\alpha}(x,\w):=\{(y,\eta)\in\Tp\setminus o\mid (y,\hat{\eta})\in B_{\alpha|\eta|^{-1/2}}(x,\w)\}.
\label{Gammadefn}\end{equation}
For $s\in\R$ and $F\in L^{2}_{\loc}(\Tp)$, let\[
\A_{s}^{\alpha} F(x,\w):=\Big(\int_{\Gamma_{\alpha}(x,\w)}|F(y,\eta)|^{2}\mu(|\eta|)|\eta|^{2s}\ud y\ud \eta\Big)^{1/2}.
\]
The following lemma shows that this modification of \eqref{eq:Afunctional} does not lead to any significant changes to the theory.

\begin{lemma}\label{lem:aperture}
Let $p\in[1,\infty)$, $s\in\R$ and $\alpha>0$. Then there exists an $\con>0$ such that the following holds for all $F\in L^{2}_{\loc}(\Tp)$. One has $F\in T^{p}_{s}(\Sp)$ if and only if $\A^{\alpha}_{s}F\in L^{p}(\Sp)$, with
\[
\frac{1}{\con}\|\A^{\alpha}_{s}F\|_{L^{p}(\Sp)}\leq \|F\|_{T^{p}_{s}(\Sp)}\leq \con\|\A^{\alpha}_{s}F\|_{L^{p}(\Sp)}.
\]
\end{lemma}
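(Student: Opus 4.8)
The plan is to prove Lemma~\ref{lem:aperture} by the classical ``change of aperture'' argument for tent spaces, adapted to the anisotropic doubling space $(\Sp,d,\ud x\ud\w)$ and incorporating the weight $|\eta|^{2s}$. The key observation is that, by the pull-back isometry $\pi^{*}:T^{p}(\Sp)\to T^{p}_{-n/2}(\Sp)$ discussed in Remark~\ref{rem:differenttent}, together with the fact that the weight $|\eta|^{2s}$ only changes the normalization by a factor depending on $|\eta|$ (hence, via \eqref{eq:volume}--\eqref{eq:volume2}, on the ``height'' variable), one may reduce to the unweighted statement; there the result is a known fact about tent spaces over doubling metric measure spaces (see~\cite{Amenta14,Amenta18}). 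So the first step in the plan is to record this reduction, noting that changing $\alpha$ corresponds exactly to the standard change-of-aperture for tent spaces in the spherical-coordinate picture of~\cite{HaPoRo20}, to which our spaces are isomorphic.

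Alternatively, and perhaps more cleanly for the reader, I would give a direct argument. One inequality is trivial: if $\alpha\geq 1$ then $\Gamma(x,\w)\subseteq\Gamma_{\alpha}(x,\w)$, so $\A_{s}F\leq\A^{\alpha}_{s}F$ pointwise and hence $\|F\|_{T^{p}_{s}}\leq\|\A^{\alpha}_{s}F\|_{L^{p}}$ (and symmetrically for $\alpha\leq 1$). For the reverse inequality, the plan is the usual averaging trick: for fixed $(y,\eta)\in\Tp\setminus o$, the set of $(x,\w)\in\Sp$ with $(y,\eta)\in\Gamma_{\alpha}(x,\w)$ is precisely $B_{\alpha|\eta|^{-1/2}}(y,\hat\eta)$, which by the doubling property \eqref{eq:doubling} has measure comparable (with constant depending only on $n$ and $\alpha$) to $|B_{|\eta|^{-1/2}}(y,\hat\eta)|$. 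Using this, one writes $\A^{\alpha}_{s}F(x,\w)^{2}$ as an integral and applies a Fubini/averaging estimate: for a suitable Hardy--Littlewood-type maximal function $\Ma$ on $\Sp$ one shows the pointwise bound
\[
\A^{\alpha}_{s}F(x,\w)\lesssim \big(\Ma\big((\A_{s}F)^{r}\big)(x,\w)\big)^{1/r}
\]
for some $r\in(0,p)$, by covering $B_{\alpha|\eta|^{-1/2}}(x,\w)$ by boundedly many (in a scale-invariant sense) balls $B_{|\eta|^{-1/2}}(x',\w')$ and exploiting that for $(x',\w')$ in such a ball one has $(y,\eta)\in\Gamma_{2}(x',\w')$, say, plus a further iteration to pass from aperture $2$ back to aperture $1$ via a good-$\lambda$ or a vector-valued maximal estimate. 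Then the $L^{p/r}$-boundedness of $\Ma$ on the doubling space $(\Sp,d,\ud x\ud\w)$ (valid since $p/r>1$) yields $\|\A^{\alpha}_{s}F\|_{L^{p}}\lesssim\|\A_{s}F\|_{L^{p}}=\|F\|_{T^{p}_{s}}$.

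The weight $|\eta|^{2s}$ requires no extra work in this scheme: it is carried along inside the innermost integral over $\Gamma_{\alpha}(x,\w)$ and never interacts with the maximal function on $\Sp$, since for a fixed annulus $\{|\eta|\sim 2^{k}\}$ it is simply a constant comparable to $2^{2ks}$; the anisotropic geometry is entirely encoded in the balls $B_{\tau}$ and their volumes via \eqref{eq:volume}--\eqref{eq:volume2}, \eqref{eq:doubling}. I expect the main technical obstacle to be the passage from aperture $\alpha$ down to aperture $1$ when $\alpha>1$ (the ``narrowing'' direction): here one genuinely needs the maximal-function/good-$\lambda$ machinery rather than a trivial inclusion, and one must be slightly careful that the covering of a large cone by thin cones is done at each height consistently and with multiplicity controlled uniformly in the height, using precisely the two-regime volume bounds \eqref{eq:volume} (for $\tau<1$) and \eqref{eq:volume2} (for $\tau\geq1$). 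Once that covering lemma is in place the rest is routine, and in fact the cleanest write-up may well be simply to invoke the corresponding statement from~\cite{Amenta14} or~\cite{Amenta18} via the isomorphism of Remark~\ref{rem:differenttent}.
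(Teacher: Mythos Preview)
Your proposal is correct, and the first route you sketch---reducing to the unweighted tent-space change-of-aperture result in \cite{Amenta14} via the isomorphism of Remark~\ref{rem:differenttent}---is exactly the paper's proof. The paper makes the reduction from general $s$ to $s=-n/2$ concrete by replacing $F$ with $F_{s}(x,\xi):=F(x,\xi)|\xi|^{s+n/2}$, which is the precise implementation of your remark that the weight ``only changes the normalization by a factor depending on $|\eta|$''; your alternative direct maximal-function argument is not needed.
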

For a collection of $\alpha$ which are uniformly bounded from above and below, the corresponding constants $M=M_{\alpha}$ are uniformly bounded.

\begin{proof}
The case where $s=-n/2$ is contained in~\cite[Proposition 2.1]{Amenta14} (see also~\cite[Lemma 2.2]{Rozendaal21}), by Remark \ref{rem:differenttent} and \eqref{eq:doubling}. For a general $s\in\R$ one applies this special case with $F$ replaced by $F_{s}(x,\xi):=F(x,\xi)|\xi|^{s+\frac{n}{2}}$ for $(x,\xi)\in\Tp\setminus o$.
\end{proof}

One frequently needs to prove boundedness of integral operators on tent spaces. To this end, the following proposition is a powerful tool. The kernel bounds in this proposition were called \emph{off-singularity bounds} in~\cite{HaPoRo20}, where they were expressed in spherical coordinates, and similar bounds can be found in~\cite[Lemma 3.14]{Smith98a} and~\cite[Definition 5.3]{Geba-Tataru07}. We will only use the case where $\hat{\chi}$ is the identity map, although the more general case can be used for an alternative approach to the results about flows in Section \ref{sec:flow}. Here and throughout, we write
\begin{equation}\label{eq:Upsilon}
\Upsilon(t):=\min(t,t^{-1}),\quad t>0.
\end{equation}

\begin{proposition}\label{prop:offsingbound}
Let $\hat{\chi}:\Sp\to\Sp$ be bi-Lipschitz with respect to the metric $d$, and let $K:\Tp\times\Tp\to \C$ be measurable and such that for all $N\geq0$ there exists an $\con\geq0$ with
\[
|K(x,\xi,y,\eta)|\leq \con\Upsilon\big(\tfrac{|\xi|}{|\eta|}\big)^{N}\big(1+\rho^{-1}d((x,\hat{\xi}),\hchi(y,\hat{\eta}))^{2}\big)^{-N}
\]
for all $(x,\xi),(y,\eta)\in\Tp\setminus o$, where $\rho:=\min(|\xi|^{-1},|\eta|^{-1})$. Then the integral operator with kernel $K$ is bounded on $T^{p}_{s}(\Sp)$ for all $p\in[1,\infty]$ and $s\in\R$.
\end{proposition}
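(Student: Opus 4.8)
The plan is to use Schur's test adapted to the quasi-metric measure space structure of $\Sp$, combined with the dyadic-in-frequency structure encoded by the factor $\Upsilon(|\xi|/|\eta|)^{N}$. First I would decompose the operator according to dyadic frequency annuli: write $K = \sum_{j,k\in\Z} K_{j,k}$, where $K_{j,k}$ is the restriction of $K$ to $\{2^{j}\leq |\xi| < 2^{j+1}\}\times\{2^{k}\leq |\eta| < 2^{k+1}\}$. The bound $\Upsilon(|\xi|/|\eta|)^{N}\leq 2^{-N|j-k|+N}$ gives rapid off-diagonal decay in $|j-k|$, so it suffices to bound each $K_{j,k}$ with a constant that is summable in $j-k$; in fact by the usual Cotlar–Stein-type or crude triangle-inequality argument on tent spaces (using that $T^{p}_{s}$ has a lattice/retract structure and that the pieces act between fixed-frequency bands), it is enough to handle the diagonal $j=k$ with a uniform constant and pick up a geometric series from the off-diagonal terms.

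For the diagonal block, after the substitution $F_{s}(x,\xi)=F(x,\xi)|\xi|^{s+n/2}$ as in the proof of Lemma~\ref{lem:aperture}, one reduces the weighted case to $s=-n/2$, i.e. to $T^{p}(\Sp)$ in the sense of Remark~\ref{rem:differenttent}, equivalently $T^{p}_{0}(\Sp)=L^{2}(\Tp)$ at $p=2$. By complex interpolation and duality for tent spaces (the Coifman–Meyer–Stein $T^{p}$–$T^{p'}$ duality, available since $(\Sp,d,\ud x\ud\w)$ is doubling, cf.~\eqref{eq:doubling}), it suffices to prove boundedness on $T^{1}(\Sp)$, on $T^{\infty}(\Sp)$, and on $L^{2}(\Tp)$. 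The $L^{2}$ bound is a plain Schur estimate: the kernel bound with $N>n$ gives $\sup_{(x,\xi)}\int |K(x,\xi,y,\eta)|\,\ud y\,\ud\eta \lesssim \con$ and symmetrically, using \eqref{eq:volume}–\eqref{eq:volume2} to evaluate $\int (1+\rho^{-1}d((x,\hat\xi),\hchi(y,\hat\eta))^{2})^{-N}\ud y\,\ud\eta$ over a single frequency band — here bi-Lipschitz invariance of $\hchi$ with respect to $d$ guarantees the change of variables costs only a constant. For $T^{1}(\Sp)$ one argues via atoms: a $T^{1}$-atom supported in a tent $T(B)$ over a ball $B=B_{\tau}(x_{0},\w_{0})$ is mapped by the integral operator into a function whose $\A$-functional is controlled, on $cB$ by the $L^{2}\to L^{2}$ bound (Cauchy–Schwarz plus the volume estimate for $B$), and on $(cB)^{c}$ by the off-singularity decay of $K$, which forces $d((x,\hat\xi),\hchi(y,\hat\eta))\gtrsim d((x,\w_{0}),B^{c})$ and produces an integrable tail; this is precisely the "off-singularity bounds $\Rightarrow$ $T^{1}$-boundedness" mechanism of~\cite[Lemma~3.14]{Smith98a} and~\cite[Definition~5.3]{Geba-Tataru07}, now phrased intrinsically on $\Sp$. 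The $T^{\infty}(\Sp)$ bound is the Carleson-measure dual statement, proved by the same ball-by-ball splitting into a local piece (handled by $L^{2}$) and a nonlocal piece (handled by decay), exploiting \eqref{eq:doubling}.

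The main obstacle, and the step deserving the most care, is the geometric bookkeeping in the atomic/$T^{1}$ estimate: one must show that pushing the integration variable $(y,\hat\eta)$ through the bi-Lipschitz map $\hchi$ and through the anisotropic "parabolic" balls $B_{|\eta|^{-1/2}}$ still yields the correct tent-over-a-dilated-ball containment and the correct decay in the distance to $B^{c}$, uniformly across all frequency scales. This requires combining the equivalent expression \eqref{eq:metric} for $d$, the two-regime volume bounds \eqref{eq:volume}–\eqref{eq:volume2}, and the quasi-triangle inequality for $d$ to compare $d((x,\hat\xi),\hchi(y,\hat\eta))$ with $d((x,\w),B^{c})$ and with $\rho^{-1/2}$-scaled balls; the $N$ can be taken as large as needed at each step, so no sharpness is lost, but the arrangement of the estimates must be done cleanly to keep all constants independent of the frequency band. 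Everything else is routine once this geometric core is in place.
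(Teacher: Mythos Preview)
Your proposal is correct in outline and would work, but it takes a substantially more laborious route than the paper. The paper's proof is a two-line reduction: first it reduces general $s$ to $s=-n/2$ by observing that boundedness of $K$ on $T^{p}_{s}(\Sp)$ is equivalent to boundedness on $T^{p}_{-n/2}(\Sp)$ of the modified kernel $K_{s}(x,\xi,y,\eta):=K(x,\xi,y,\eta)(|\xi|/|\eta|)^{s+n/2}$, which still satisfies the hypothesized off-singularity bounds because $\Upsilon(|\xi|/|\eta|)\leq |\eta|/|\xi|$; then, via the spherical-to-Cartesian pullback of Remark~\ref{rem:differenttent}, it transfers the problem to the tent spaces $T^{p}(\Sp)$ of \cite{HaPoRo20} and simply invokes \cite[Theorem~3.7]{HaPoRo20} as a black box. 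What you have sketched---dyadic frequency decomposition, Schur test on $L^{2}$, atomic estimates on $T^{1}$, Carleson duality on $T^{\infty}$, and interpolation---is essentially a reproof of that cited theorem from scratch. Your approach is more self-contained and exposes the mechanism (atoms plus off-singularity decay), whereas the paper's is shorter and cleanly isolates the new content (the weight reduction) from the existing machinery. One small slip: your identification ``$T^{p}_{0}(\Sp)=L^{2}(\Tp)$ at $p=2$'' mixes up the weights---the reduction is to $s=-n/2$, and it is $T^{2}_{0}$, not $T^{2}_{-n/2}$, that equals $L^{2}(\Tp)$; this does not affect the argument, since the Schur test works on whichever $L^{2}$ space you land in.
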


We note that $RF(x,\xi):=\int_{\Tp}K(x,\xi,y,\eta)F(y,\eta)\ud y\ud\eta$ is initially well defined for $F\in \Da(\Tp)$. A more precise wording of Proposition \ref{prop:offsingbound} would be that, using \eqref{eq:testfunctions}, $R$ extends uniquely to a bounded operator on $T^{p}_{s}(\Sp)$ for all $p\in[1,\infty)$ and $s\in\R$, and then by duality to $T^{\infty}_{s}(\Sp)$.

\begin{proof}
We first consider $s=-n/2$. By Remark \ref{rem:differenttent}, it suffices to show that $\pi_{*}R\pi^{*}$ is bounded on the space $T^{p}(\Sp)$ from~\cite{HaPoRo20}, where $\pi_{*}:=(\pi^{*})^{-1}:T^{p}_{-n/2}(\Sp)\to T^{p}(\Sp)$ is push-forward via $\pi$. One can check that $\pi_{*}R\pi^{*}$ is an integral operator with kernel $\tilde{K}$ given by
\[
\tilde{K}(x,\w,\sigma,y,\nu,\tau):=\tau^{-n}K(x,\w\sigma^{-1},y,\nu\tau^{-1})
\] 
for all $(x,\w,\sigma),(y,\nu,\tau)\in \Sp\times(0,\infty)$. Then
\[
|\tilde{K}(x,\w,\sigma,y,\nu,\tau)|\lesssim \tilde{\rho}^{-n}\Upsilon(\tfrac{\sigma}{\tau})^{N}(1+\tilde{\rho}^{-1}d((x,\w),\hchi(y,\nu))^{2})^{-N}
\]
for each $N\geq0$, where $\tilde{\rho}:=\min(\sigma,\tau)$. Now~\cite[Theorem 3.7]{HaPoRo20} concludes the proof for $s=-n/2$.

For general $s\in\R$ we note that the integral operator $R$ with kernel $K$ is bounded on $T^{p}_{s}(\Sp)$ if and only if the integral operator with kernel $K_{s}$ is bounded on $T^{p}_{n/2}(\Sp)$, where $K_{s}(x,\xi,y,\eta):=K(x,\xi,y,\eta)\big(\tfrac{|\xi|}{|\eta|}\big)^{s+\frac{n}{2}}$ for $(x,\xi),(y,\eta)\in\Tp\setminus o$. Moreover, for each $N\geq0$ one has
\begin{align*}
|K_{s}(x,\xi,y,\eta)|&\leq \con\Upsilon\big(\tfrac{|\xi|}{|\eta|}\big)^{N+|s+\frac{n}{2}|}(1+\rho^{-1}d((x,\hat{\xi}),\hchi(y,\hat{\eta}))^{2})^{-N-|s+\frac{n}{2}|}\big(\tfrac{|\xi|}{|\eta|}\big)^{s+\frac{n}{2}}\\
&\leq \con\Upsilon\big(\tfrac{|\xi|}{|\eta|}\big)^{N}(1+\rho^{-1}d((x,\hat{\xi}),\hchi(y,\hat{\eta}))^{2})^{-N},
\end{align*}
since $\Upsilon\big(\frac{|\xi|}{|\eta|}\big)\leq \frac{|\eta|}{|\xi|}$, which concludes the proof.
\end{proof}

\section{Wave packet transforms and Hardy spaces for Fourier integral operators}\label{sec:HpFIO}

In this section we introduce the wave packets and the wave packet transform that will be used in this article, and then we use them to define the Hardy spaces for Fourier integral operators.

\subsection{The wave packet transform}\label{subsec:wavepackettransform}

In this subsection we first introduce the relevant wave packets for this article, and we derive some of their properties. These wave packets coincide with those in~\cite{HaPoRo20} up to a normalization factor, which in turn arises by switching from spherical to Cartesian coordinates, and similar wave packets can be found in~\cite{Geba-Tataru07}. Then we introduce our wave packet transform, which in turn is a modified version of the transform from~\cite{HaPoRo20}. A similar transform was also used in~\cite{Geba-Tataru07}.

Throughout, fix a real-valued radial $\ph\in\Sw(\Rn)$ such that $\ph(\zeta)=1$ in a neighborhood of zero, and $\ph(\zeta)=0$ if $|\zeta|\geq1$. For $\xi\in\Rn$ set
\begin{equation}\label{eq:csigma}
c_{|\xi|}:=\Big(\int_{S^{n-1}}\ph(|\xi|^{1/2}(e_{1}-\nu))^{2}\ud\nu\Big)^{-1/2}.
\end{equation}
Note that the first basis vector $e_{1}$ can be replaced by any element of $S^{n-1}$. Fix a real-valued radial $\Psi\in C^{\infty}_{c}(\Rn)$ such that $\supp(\Psi)\subseteq\{\zeta\in\Rn\mid |\zeta|\in[\frac{1}{2},2]\}$ and 
\begin{equation}\label{eq:Psiintegral}
\int_{0}^{\infty}\Psi(\sigma\zeta)^{2}\frac{\ud \sigma}{\sigma}=1\quad(\zeta\neq0).
\end{equation}
Now, for $\xi,\zeta\in\Rn\setminus \{0\}$ let\begin{equation*}
\psi_{\xi}(\zeta)=|\xi|^{-n/2}c_{|\xi|}\Psi(|\xi|^{-1}\zeta)\ph(|\xi|^{1/2}(\hat{\zeta}-\hat{\xi}))
\end{equation*}
and $\psi_{\xi}(0):=0$. Finally, let
\begin{equation}
q(\zeta):=\Big(\frac{1}{|B_{1}(0)\setminus B_{1/2}(0)|}\int_{B_{1}(0)}\psi_{\xi}(\zeta)^{2}\ud\xi\Big)^{1/2}
\label{r-defn}\end{equation}
for $\zeta\in\Rn$. Then 
\begin{equation}
q(\zeta) = 0 \text{ if } |\zeta| \geq 2\quad\text{and}\quad q(\zeta) = |B_{1}(0)\setminus B_{1/2}(0)|^{-1/2}  \text{ if } |\zeta| \leq \tfrac{1}{2},
\label{r-support}\end{equation}
as follows from \eqref{r-defn} and the support property of $\Psi$. Moreover, $q\in C^{\infty}_{c}(\Rn)$, as can be checked by showing that the derivatives of $q$ vanish where $q(\zeta)=0$, using that $q$ is radial. We collect some properties of these wave packets, one of which involves the vector quantity $\Omega_{\xi}(\zeta)$, whose $j$th component ($1 \leq j \leq n$) is defined by 
\begin{equation}
(\Omega_{\xi}(\zeta))_j:=\partial_{\xi_j}\psi_{\xi}(\zeta)+\tfrac{|\zeta|}{|\xi|}\partial_{\zeta_j}\psi_{\xi}(\zeta)
\label{Omega-defn}\end{equation}
for $\xi\in\Rn\setminus 0$ and $\zeta\in\Rn$. Recall from Section \ref{subsec:notation} that we write $\partial_{\xi}=(\partial_{\xi_{1}},\ldots,\partial_{\xi_{n}})$ and $ \partial^{\alpha}_{\xi}=\partial^{\alpha_{1}}_{\xi_{1}}\ldots\partial^{\alpha_{n}}_{\xi_{n}}$, for $\xi=(\xi_{1},\ldots,\xi_{n})\in\Rn$ and $\alpha=(\alpha_{1},\ldots,\alpha_{n})\in\Z_{+}^{n}$.

\begin{lemma}\label{lem:packets}
For all $\xi\in\Rn\setminus\{0\}$ one has $\psi_{\xi}\in C^{\infty}_{c}(\Rn)$. Each $\zeta\in\supp(\psi_{\xi})$ satisfies $\frac{1}{2}|\xi|\leq |\zeta|\leq 2|\xi|$ and $|\hat{\zeta}-\hat{\xi}|\leq |\xi|^{1/2}$. Moreover, for each $\zeta\in\Rn\setminus\{0\}$ one has
\[
\int_{\Rn}\psi_{\xi}(\zeta)^{2}\ud\xi=1.
\]
For all $\alpha\in\Z_{+}^{n}$ and $\beta\in\Z_{+}$ there exists an $\con\geq0$ such that one has
\begin{align}
\label{eq:packetbounds1}|(\hat{\xi}\cdot \partial_{\zeta})^{\beta}\partial_{\zeta}^{\alpha}\psi_{\xi}(\zeta)|&\leq \con|\xi|^{-\frac{n+1}{4}-\frac{|\alpha|}{2}-\beta},\\
\label{eq:packetbounds2}|(\hat{\xi}\cdot \partial_{\zeta})^{\beta}\partial_{\zeta}^{\alpha}\partial_{\xi}\psi_{\xi}(\zeta)|&\leq \con|\xi|^{-\frac{n+3}{4}-\frac{|\alpha|}{2}-\beta},\\
\label{eq:packetbounds3}|(\hat{\xi}\cdot\partial_{\zeta})^{\beta}\partial_{\zeta}^{\alpha}\Omega_{\xi}(\zeta)|&\leq \con|\xi|^{-\frac{n+1}{4}-1-\frac{|\alpha|}{2}-\beta},
\end{align}
for all $\xi\in\Rn\setminus\{0\}$ and $\zeta\in\Rn$. 
\end{lemma}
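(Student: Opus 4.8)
The plan is to treat the three factors of $\psi_{\xi}$ separately: the scalar $|\xi|^{-n/2}c_{|\xi|}$, the dyadic cutoff $\Psi(|\xi|^{-1}\zeta)$, and the angular (second‑dyadic) cutoff $\ph(|\xi|^{1/2}(\hat{\zeta}-\hat{\xi}))$. The qualitative claims are immediate. Since $\supp\Psi\subseteq\{|\zeta|\in[\tfrac12,2]\}$, the function $\Psi(|\xi|^{-1}\cdot)$ is supported in the closed shell $\{|\zeta|\in[|\xi|/2,2|\xi|]\}$, on which $\zeta\mapsto\hat{\zeta}$ is smooth; hence $\psi_{\xi}$ is a product of smooth functions there, and since $\Psi(|\xi|^{-1}\cdot)$ and all its derivatives vanish off that shell, $\psi_{\xi}$ extends by zero to an element of $C^{\infty}_{c}(\Rn)$. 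The support statements are then read off from the supports of $\Psi$ and $\ph$. For the identity $\int_{\Rn}\psi_{\xi}(\zeta)^{2}\ud\xi=1$ (with $\zeta\neq0$ fixed) I would pass to polar coordinates $\xi=\sigma\nu$: the Jacobian $\sigma^{n-1}$ cancels $|\xi|^{-n}$ down to $\sigma^{-1}$, the $\nu$‑integral $\int_{S^{n-1}}\ph(\sigma^{1/2}(\hat{\zeta}-\nu))^{2}\ud\nu$ equals $c_{\sigma}^{-2}$ by \eqref{eq:csigma} and the remark following it (replace $e_{1}$ by $\hat{\zeta}$, using rotational invariance of $\ud\nu$), and what is left is $\int_{0}^{\infty}\Psi(\sigma^{-1}\zeta)^{2}\tfrac{\ud\sigma}{\sigma}=1$ by \eqref{eq:Psiintegral} after the substitution $\sigma\mapsto\sigma^{-1}$.

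For the three derivative bounds it is enough to take $|\xi|\geq1$, these being the packets used in the sequel (the low‑frequency part of phase space being handled separately by $q$). The first ingredient is the elementary estimate $c_{|\xi|}\eqsim|\xi|^{(n-1)/4}$ for $|\xi|\geq1$: the integrand in \eqref{eq:csigma} is supported in the cap $\{|e_{1}-\nu|\lesssim|\xi|^{-1/2}\}$, of measure $\eqsim|\xi|^{-(n-1)/2}$, on which $\ph$ is bounded above and, on a smaller cap of comparable measure, below. Together with the prefactor this gives $|\psi_{\xi}(\zeta)|\lesssim|\xi|^{-(n+1)/4}$, i.e.\ \eqref{eq:packetbounds1} for $\alpha=\beta=0$. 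The second ingredient is a ``cost'' rule, established by the chain rule and the homogeneity bounds $\partial_{\zeta}^{\alpha}\hat{\zeta}=O(|\zeta|^{-|\alpha|})=O(|\xi|^{-|\alpha|})$ on the shell: a plain $\partial_{\zeta}$ applied to $\psi_{\xi}$ or any of its derivatives costs at most $|\xi|^{-1/2}$, the bottleneck being $\partial_{\zeta}[\ph(|\xi|^{1/2}(\hat\zeta-\hat\xi))]=|\xi|^{1/2}(\partial_{\zeta}\hat{\zeta})\!\cdot\!(\nabla\ph)(\cdots)=O(|\xi|^{-1/2})$, whereas the contact derivative $\hat{\xi}\cdot\partial_{\zeta}$ costs a full $|\xi|^{-1}$, because the support constraint $|\hat{\zeta}-\hat{\xi}|\lesssim|\xi|^{-1/2}$ improves $(\hat{\xi}\cdot\partial_{\zeta})\hat{\zeta}_{k}=\tfrac1{|\zeta|}\big(\hat{\xi}_{k}-(\hat{\xi}\cdot\hat{\zeta})\hat{\zeta}_{k}\big)$ from $O(|\xi|^{-1})$ to $O(|\xi|^{-1}|\hat{\zeta}-\hat{\xi}|)=O(|\xi|^{-3/2})$. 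Iterating these via the Leibniz rule yields \eqref{eq:packetbounds1} in general. For \eqref{eq:packetbounds2} one applies $\partial_{\xi}$ and inspects the summands: $\partial_{\xi}$ landing on $|\xi|^{-n/2}$, on $c_{|\xi|}$, on $\Psi(|\xi|^{-1}\zeta)$, or on the scalar $|\xi|^{1/2}$ inside $\ph$ (the last using $|\hat{\zeta}-\hat{\xi}|\lesssim|\xi|^{-1/2}$) each gains a factor $|\xi|^{-1}$, while $\partial_{\xi}$ landing on $\hat{\xi}$ inside $\ph$ gains only $|\xi|^{-1/2}$ (since $|\xi|^{1/2}\partial_{\xi}\hat{\xi}=O(|\xi|^{-1/2})$); this single $|\xi|^{-1/2}$ produces the shift from $-(n{+}1)/4$ to $-(n{+}3)/4$, and the remaining $\partial_{\zeta}$'s and $\hat{\xi}\cdot\partial_{\zeta}$'s are governed by the cost rule.

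The one substantive point is \eqref{eq:packetbounds3}, where a cancellation is needed: $\Omega_{\xi}$ contains $\partial_{\xi}\psi_{\xi}$, which by \eqref{eq:packetbounds2} is only $|\xi|^{-1/2}$ smaller than $\psi_{\xi}$, yet we claim $\Omega_{\xi}$ is a full $|\xi|^{-1}$ smaller. The mechanism is the exact identity, valid on the shell,
\[
-\partial_{\xi_{j}}\hat{\xi}_{k}+\tfrac{|\zeta|}{|\xi|}\,\partial_{\zeta_{j}}\hat{\zeta}_{k}
=-\tfrac1{|\xi|}\big(\delta_{jk}-\hat{\xi}_{j}\hat{\xi}_{k}\big)+\tfrac1{|\xi|}\big(\delta_{jk}-\hat{\zeta}_{j}\hat{\zeta}_{k}\big)
=\tfrac1{|\xi|}\big(\hat{\xi}_{j}\hat{\xi}_{k}-\hat{\zeta}_{j}\hat{\zeta}_{k}\big),
\]
in which the weight $|\zeta|/|\xi|$ is exactly what makes the $\delta_{jk}$ terms cancel; since $|\hat{\xi}_{j}\hat{\xi}_{k}-\hat{\zeta}_{j}\hat{\zeta}_{k}|\lesssim|\hat{\zeta}-\hat{\xi}|\lesssim|\xi|^{-1/2}$ on the shell, the only ``bad'' contribution to $\Omega_{\xi}$ — namely $\partial_{\xi}$, respectively $\tfrac{|\zeta|}{|\xi|}\partial_{\zeta}$, hitting $\hat{\xi}$, respectively $\hat{\zeta}$, inside $\ph$ — is in fact $O(|\xi|^{-1})$ relative to $\psi_{\xi}$, and every other contribution was already $O(|\xi|^{-1})$; this gives \eqref{eq:packetbounds3} for $\alpha=\beta=0$. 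For general $\alpha,\beta$ one differentiates the displayed identity: once a $\partial_{\zeta}$ has acted the $\hat{\xi}_{j}\hat{\xi}_{k}$ term drops out and one is reduced to differentiating $\hat{\zeta}_{j}\hat{\zeta}_{k}$, to which the cost rule applies verbatim, so each further $\partial_{\zeta}$ still costs $|\xi|^{-1/2}$ and each further $\hat{\xi}\cdot\partial_{\zeta}$ still costs $|\xi|^{-1}$. I expect the main obstacle to be exactly this bookkeeping — organizing the Leibniz expansion of $(\hat{\xi}\cdot\partial_{\zeta})^{\beta}\partial_{\zeta}^{\alpha}\Omega_{\xi}$ so that no term violates the claimed bound — but with the cancellation identity and the two cost estimates in hand it reduces to a routine induction on $|\alpha|+\beta$.
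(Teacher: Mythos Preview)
Your proposal is correct and follows essentially the same approach as the paper: the cap estimate $c_{|\xi|}\eqsim|\xi|^{(n-1)/4}$, the chain-rule ``cost'' accounting for $\partial_{\zeta}$ versus $\hat\xi\cdot\partial_{\zeta}$, and --- for \eqref{eq:packetbounds3} --- the cancellation between the $\partial_{\xi}\hat\xi$ and $\tfrac{|\zeta|}{|\xi|}\partial_{\zeta}\hat\zeta$ contributions to the angular factor. The paper organizes \eqref{eq:packetbounds2} and \eqref{eq:packetbounds3} by an explicit term-by-term decomposition (writing $\partial_{\xi}\psi_{\xi}=I_{1}+I_{2}+I_{3}$ and $\partial_{\zeta}\psi_{\xi}=J_{1}+J_{2}$, then exhibiting $\Omega_{\xi}=I_{1}+I_{2}+\tfrac{|\zeta|}{|\xi|}J_{1}+J_{3}$ after the $I_{3}$-cancellation), whereas you package the same computation as your displayed identity $-\partial_{\xi_{j}}\hat\xi_{k}+\tfrac{|\zeta|}{|\xi|}\partial_{\zeta_{j}}\hat\zeta_{k}=\tfrac{1}{|\xi|}(\hat\xi_{j}\hat\xi_{k}-\hat\zeta_{j}\hat\zeta_{k})$ together with a uniform cost rule; these are two presentations of the same argument.
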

\begin{proof}
The second statement follows immediately from the support conditions on $\Psi$ and $\ph$. The first statement follows subsequently from the smoothness of $\Psi$ and $\ph$. For the third statement, switch to spherical coordinates and use \eqref{eq:Psiintegral} to write
\begin{align*}
&\int_{\Rn}\psi_{\xi}(\zeta)^{2}\ud\xi=\int_{0}^{\infty}\int_{S^{n-1}}\psi_{\tau\nu}(\zeta)^{2}\tau^{n}\ud\nu\frac{\ud\tau}{\tau}=\int_{0}^{\infty}\int_{S^{n-1}}\psi_{\sigma^{-1}\nu}(\zeta)^{2}\sigma^{-n}\ud\nu\frac{\ud\sigma}{\sigma}\\
&=\int_{0}^{\infty}\Psi(\sigma\zeta)^{2}c_{\sigma^{-1}}^{2}\int_{S^{n-1}}\ph(\sigma^{-1/2}(\hat{\zeta}-\nu))^{2}\ud\nu\frac{\ud\sigma}{\sigma}=\int_{0}^{\infty}\Psi(\sigma\zeta)^{2}\frac{\ud\sigma}{\sigma}=1
\end{align*}
for $\zeta\neq0$. 

For \eqref{eq:packetbounds1}, \eqref{eq:packetbounds2} and \eqref{eq:packetbounds3}, by the second statement of the lemma, we may fix $\xi,\zeta\in\Rn\setminus\{0\}$ and suppose in the remainder that $\tfrac{1}{2}|\xi|\leq |\zeta|\leq 2|\xi|$ and $|\hat{\zeta}-\hat{\xi}|\leq |\xi|^{-1/2}$. We also note, for later use, that
\begin{equation}\label{eq:diffidentities}
\partial_{\zeta}|\zeta|=\frac{\zeta}{|\zeta|}=\hat{\zeta}\quad\text{and}\quad\partial_{\zeta_{i}}\hat{\zeta}_{j}=\partial_{\zeta_{i}}\frac{\zeta_{j}}{|\zeta|}=\frac{\delta_{ij}-\hat{\zeta}_{i}\hat{\zeta}_{j}}{|\zeta|}
\end{equation}
for $1\leq i,j\leq n$, where $\delta_{ij}$ is the Kronecker delta, and that
\begin{equation}\label{eq:hatidentity}
|\xi||\hat{\zeta}-\hat{\xi}|^{2}=2|\xi|(1-\hat{\zeta}\cdot\hat{\xi})=2(|\xi|-\hat{\zeta}\cdot\xi).
\end{equation}

Now, the bounds in \eqref{eq:packetbounds1} in fact follow directly from~\cite[Lemma 4.1]{HaPoRo20}, but we  sketch the argument here since it will be useful for \eqref{eq:packetbounds2} and \eqref{eq:packetbounds3} as well. Let $\kappa>0$ be such that $\ph(\eta)=1$ for $|\eta|\leq \kappa$, and set $E_{\xi}:=\{\nu\in S^{n-1}\mid |e_{1}-\nu|\leq \kappa|\xi|^{-1/2}\}$ and $F_{\xi}:=\{\nu\in S^{n-1}\mid |e_{1}-\nu|\leq |\xi|^{-1/2}\}$. Then 
\begin{equation}
|\xi|^{-\frac{n-1}{2}}\eqsim \int_{E_{\xi}}\ud\nu\leq \int_{S^{n-1}}\ph\big(|\xi|^{1/2}(e_{1}-\nu))^{2}\ud\nu\lesssim\int_{F_{\xi}}\ud\nu\eqsim |\xi|^{-\frac{n-1}{2}}.
\end{equation}
Hence $c_{|\xi|}\eqsim |\xi|^{(n-1)/4}$. Upon multiplication with $|\xi|^{-n/2}$, this yields the factor $|\xi|^{-(n+1)/4}$ in \eqref{eq:packetbounds1}. Moreover, 
\[
|\partial_{\zeta}^{\alpha}\Psi(|\xi|^{-1}\zeta)|=|\xi|^{-|\alpha|}|(\partial_{\zeta}^{\alpha}\Psi)(|\xi|^{-1}\zeta)|\lesssim |\xi|^{-|\alpha|}.
\]
Finally, for the derivatives of $\ph(|\xi|^{1/2}(\hat{\zeta}-\hat{\xi}))$ one writes $\hat{\xi}\cdot\partial_{\zeta}=( \hat{\xi}-\hat{\zeta})\cdot\partial_{\zeta}+\hat{\zeta}\cdot\partial_{\zeta}$. Since $\ph(|\xi|^{1/2}(\hat{\zeta}-\hat{\xi}))$ is positively homogeneous of degree $0$ in $\zeta$, one has $\hat{\zeta}\cdot\partial_{\zeta}\ph(|\xi|^{1/2}(\hat{\zeta}-\hat{\xi}))=0$. Hence \eqref{eq:diffidentities} and the assumptions on $\xi$ and $\zeta$ yield
\begin{align*}
&|(\hat{\xi}\cdot\partial_{\zeta})^{\beta}\partial_{\zeta}^{\alpha}\ph(|\xi|^{1/2}(\hat{\zeta}-\hat{\xi}))|\\
&\lesssim \sum_{\alpha'\leq \alpha,\beta'\leq \beta}|\xi|^{\frac{|\alpha'|+\beta'}{2}}\big|\big(((\hat{\xi}-\hat{\zeta})\cdot\partial_{\zeta})^{\beta'}\partial_{\zeta}^{\alpha'}\ph\big)(|\xi|^{1/2}(\hat{\zeta}-\hat{\xi}))\big|\,|\zeta|^{-|\alpha|-\beta}\lesssim |\xi|^{-\frac{|\alpha|}{2}-\beta}.
\end{align*}
All of this combined proves \eqref{eq:packetbounds1}.

For \eqref{eq:packetbounds2} and \eqref{eq:packetbounds3}, set $\tilde{\Psi}(t):=\Psi\big(\tfrac{1}{t}e_{1}\big)$ for $t>0$, and $\tilde{\ph}(t):=\ph(\sqrt{2t}e_{1})$ for $t\geq0$. Since $\Psi$ and $\ph$ are radial, we can then write $\partial_{\xi}\psi_{\xi}(\zeta)=I_{1}+I_{2}+I_{3}$, where
\begin{align*}
I_{1}&:=\partial_{\xi}(|\xi|^{-n/2}c_{|\xi|})\Psi(|\xi|^{-1}\zeta)\ph(|\xi|^{1/2}(\hat{\zeta}-\hat{\xi})),\\
I_{2}&:=|\xi|^{-n/2}c_{|\xi|}\partial_{\xi}\big(\tilde{\Psi}\big(\tfrac{|\xi|}{|\zeta|}\big)\big)\ph(|\xi|^{1/2}(\hat{\zeta}-\hat{\xi})),\\
I_{3}&:=|\xi|^{-n/2}c_{|\xi|}\Psi(|\xi|^{-1}\zeta)\partial_{\xi}(\tilde{\ph}(\tfrac{1}{2}|\xi||\hat{\zeta}-\hat{\xi}|^{2})).
\end{align*}
We will prove \eqref{eq:packetbounds2} by obtaining estimates for each of these terms separately.

First consider $I_{1}$. We use \eqref{eq:diffidentities} to write 
\[
I_{1}=-\tfrac{n}{2}|\xi|^{-1}\psi_{\xi}(\zeta)\hat{\xi}+|\xi|^{-n/2}\partial_{\xi}(c_{|\xi|})\Psi(|\xi|^{-1}\zeta)\ph(|\xi|^{1/2}(\hat{\zeta}-\hat{\xi})).
\]
It directly follows from \eqref{eq:packetbounds1} that
\begin{equation}\label{eq:boundI1one}
\begin{aligned}
&\big|(\hat{\xi}\cdot\partial_{\zeta})^{\beta}\partial_{\zeta}^{\alpha}\big(\partial_{\xi}(|\xi|^{-n/2})c_{|\xi|}\Psi(|\xi|^{-1}\zeta)\ph(|\xi|^{1/2}(\hat{\zeta}-\hat{\xi}))\big)\big|\\
&=\big|(\hat{\xi}\cdot\partial_{\zeta})^{\beta}\partial_{\zeta}^{\alpha}\big(\tfrac{n}{2}|\xi|^{-1}\psi_{\xi}(\zeta)\hat{\xi}\big)\big|\lesssim |\xi|^{-\frac{n+1}{4}-1-\frac{|\alpha|}{2}-\beta}
\end{aligned}
\end{equation}
for all $\alpha\in\Z_{+}^{n}$ and $\beta\in\Z_{+}$. Moreover, differentiating \eqref{eq:csigma} and using \eqref{eq:hatidentity} yields
\[
\partial_{\xi}(c_{|\xi|})=\tfrac{1}{2}c_{|\xi|}^{3}\int_{S^{n-1}}\ph(|\xi|^{1/2}(e_{1}-\nu))(\partial_{\xi}\ph)(|\xi|^{1/2}(e_{1}-\nu))\cdot (e_{1}-\nu)\ud\nu |\xi|^{-1/2}\hat{\xi}.
\]
As before, with $F_{\xi}:=\{\nu\in S^{n-1}\mid |e_{1}-\nu|\leq |\xi|^{-1/2}\}$, we obtain
\begin{align*}
|\partial_{\xi}(c_{|\xi|})|&\lesssim c_{|\xi|}^{3}\int_{F_{\xi}}|\ph(|\xi|^{1/2}(e_{1}-\nu))(\partial_{\xi}\ph)(|\xi|^{1/2}(e_{1}-\nu))|\,|e_{1}-\nu|\ud\nu |\xi|^{-1/2}\\
&\lesssim c_{|\xi|}^{3}\int_{F_{\xi}}|\xi|^{-1/2}\ud\nu|\xi|^{-1/2}\eqsim c_{|\xi|}^{3}|\xi|^{-(n-1)/2}|\xi|^{-1}.
\end{align*}
We already showed that $c_{|\xi|}\eqsim |\xi|^{(n-1)/4}$, so \eqref{eq:packetbounds1} yields
\begin{align*}
&\big|(\hat{\xi}\cdot\partial_{\zeta})^{\beta}\partial_{\zeta}^{\alpha}\big(|\xi|^{-n/2}\partial_{\xi}(c_{|\xi|})\Psi(|\xi|^{-1}\zeta)\ph(|\xi|^{1/2}(\hat{\zeta}-\hat{\xi}))\big)\big|\\
&=\frac{|\partial_{\xi}(c_{|\xi|})|}{c_{|\xi|}}|(\hat{\xi}\cdot\partial_{\zeta})^{\beta}\partial_{\zeta}^{\alpha}\psi_{\xi}(\zeta)\big|\lesssim |\xi|^{-\frac{n+1}{4}-1-\frac{|\alpha|}{2}-\beta}.
\end{align*}
Combined with \eqref{eq:boundI1one}, this shows that $|(\hat{\xi}\cdot\partial_{\zeta})^{\beta}\partial_{\zeta}^{\alpha}I_{1}|\lesssim |\xi|^{-\frac{n+1}{4}-1-\frac{|\alpha|}{2}-\beta}$.

For $I_{2}$ we use \eqref{eq:diffidentities}:
\begin{equation}\label{eq:I2}
I_{2}=|\xi|^{-n/2}c_{|\xi|}\tilde{\Psi}'\big(\tfrac{|\xi|}{|\zeta|}\big)\ph(|\xi|^{1/2}(\hat{\zeta}-\hat{\xi}))|\zeta|^{-1}\hat{\xi}.
\end{equation}
Now one can repeat the arguments that yielded \eqref{eq:packetbounds1}. The only difference is that $\Psi(|\xi|^{-1}\zeta)$ is replaced by $\tilde{\Psi}'\big(\frac{|\xi|}{|\zeta|}\big)$, and there is an additional factor of $|\zeta|^{-1}\hat{\xi}$. Differentiating $\tilde{\Psi}'\big(\frac{|\xi|}{|\zeta|}\big)$ yields factors of $-\frac{|\xi|}{|\zeta|^{2}}\hat{\zeta}$, and since $|\zeta|\eqsim |\xi|$, we obtain 
\begin{equation}\label{eq:I2bound}
|(\hat{\xi}\cdot\partial_{\zeta})^{\beta}\partial_{\zeta}^{\alpha}I_{2}|\lesssim |\xi|^{-\frac{n+1}{4}-1-\frac{|\alpha|}{2}-\beta}.
\end{equation}

Finally, for $I_{3}$ we use \eqref{eq:hatidentity} to write
\begin{align*}
I_{3}&=|\xi|^{-n/2}c_{|\xi|}\Psi(|\xi|^{-1}\zeta)\partial_{\xi}(\tilde{\ph}(|\xi|-\hat{\zeta}\cdot\xi))\\
&=|\xi|^{-n/2}c_{|\xi|}\Psi(|\xi|^{-1}\zeta)\tilde{\ph}'(|\xi|-\hat{\zeta}\cdot\xi)(\hat{\xi}-\hat{\zeta}).
\end{align*}
The only difference between this expression and $\psi_{\xi}(\zeta)$ is that $\ph(|\xi|^{1/2}(\hat{\zeta}-\hat{\xi}))$ is replaced by $\tilde{\ph}'(|\xi|-\hat{\zeta}\cdot\xi)(\hat{\xi}-\hat{\zeta})$. Nonetheless, one can repeat the arguments that yielded \eqref{eq:packetbounds1}, and the term $\hat{\xi}-\hat{\zeta}$ is responsible for an additional decay factor of $|\xi|^{-1/2}$. The resulting estimate is $|(\hat{\xi}\cdot\partial_{\zeta})^{\beta}\partial_{\zeta}^{\alpha}I_{3}|\lesssim |\xi|^{-\frac{n+3}{4}-\frac{|\alpha|}{2}-\beta}$, and combined with the estimates for $I_{1}$ and $I_{2}$, this yields \eqref{eq:packetbounds2}.

It remains to prove \eqref{eq:packetbounds3}. To this end, we decompose $\partial_{\zeta}\ph_{\xi}(\zeta)$ in a similar manner as was done for $\partial_{\xi}\ph_{\xi}(\zeta)$. One has $\partial_{\zeta}\psi_{\xi}(\zeta)=J_{1}+J_{2}$, where
\begin{align*}
J_{1}:=&|\xi|^{-n/2}c_{|\xi|}\partial_{\zeta}\big(\tilde{\Psi}\big(\tfrac{|\xi|}{|\zeta|}\big)\big)\ph(|\xi|^{1/2}(\hat{\zeta}-\hat{\xi})),\\
J_{2}:=&|\xi|^{-n/2}c_{|\xi|}\Psi(|\xi|^{-1}\zeta)\partial_{\zeta}(\tilde{\ph}(\tfrac{1}{2}|\xi||\hat{\zeta}-\hat{\xi}|^{2})).
\end{align*}
We calculate, using \eqref{eq:diffidentities}, 
\[
\frac{|\zeta|}{|\xi|}J_{1}=-|\xi|^{-n/2}c_{|\xi|}\tilde{\Psi}'\big(\tfrac{|\xi|}{|\zeta|}\big)\ph(|\xi|^{1/2}(\hat{\zeta}-\hat{\xi}))|\zeta|^{-1}\hat{\zeta}.
\]
The only difference between this and $-I_{2}$ is that the final $\hat{\xi}$ in \eqref{eq:I2} has been replaced by $\hat{\zeta}$. Nonetheless, it satisfies the same estimates as $I_{2}$ in \eqref{eq:I2bound}, namely $\big|(\hat{\xi}\cdot\partial_{\zeta})^{\beta}\partial_{\zeta}^{\alpha}\big(\frac{|\zeta|}{|\xi|}J_{1}\big)\big|\lesssim |\xi|^{-\frac{n+1}{4}-1-\frac{|\alpha|}{2}-\beta}$. Next, using \eqref{eq:diffidentities} and \eqref{eq:hatidentity} we calculate
\begin{align*}
\frac{|\zeta|}{|\xi|}J_{2}&=\frac{|\zeta|}{|\xi|}|\xi|^{-n/2}c_{|\xi|}\Psi(|\xi|^{-1}\zeta)\partial_{\zeta}(\tilde{\ph}(|\xi|-\hat{\zeta}\cdot\xi))\\
&=-\frac{|\zeta|}{|\xi|}|\xi|^{-n/2}c_{|\xi|}\Psi(|\xi|^{-1}\zeta)\tilde{\ph}'(|\xi|-\hat{\zeta}\cdot\xi)\frac{|\xi|}{|\zeta|}(\hat{\xi}-(\hat{\xi}\cdot\hat{\zeta})\hat{\zeta})\\
&=-I_{3}+|\xi|^{-n/2}c_{|\xi|}\Psi(|\xi|^{-1}\zeta)\tilde{\ph}'(|\xi|-\hat{\zeta}\cdot\xi)(1-\hat{\xi}\cdot\hat{\zeta})\hat{\zeta}.
\end{align*}
Denote the second term on the final line by $J_{3}$. Then one has $|(\hat{\xi}\cdot\partial_{\zeta})^{\beta}\partial_{\zeta}^{\alpha}J_{3}|\lesssim |\xi|^{-\frac{n+1}{4}-1-\frac{|\alpha|}{2}-\beta}$. This is shown in the same way as \eqref{eq:packetbounds1}, with the improvement of a factor of $|\xi|^{-1}$ over \eqref{eq:packetbounds1} coming from the term $(1-\hat{\xi}\cdot\hat{\zeta})\hat{\zeta}=\frac{1}{2}|\hat{\xi}-\hat{\zeta}|^{2}\lesssim |\xi|^{-1}$. So
\begin{align*}
\Omega_{\xi}(\zeta)&=I_{1}+I_{2}+I_{3}+\frac{|\zeta|}{|\xi|}J_{1}+\frac{|\zeta|}{|\xi|}J_{2}=I_{1}+I_{2}+\frac{|\zeta|}{|\xi|}J_{1}+J_{3},
\end{align*}
and our estimates for the four terms on the right conclude the proof.
\end{proof}

We are now ready to introduce our wave packet transform $W$. For $f\in \Sw'(\Rn)$ and $(x,\xi)\in\Tp$, it is given by
\begin{equation}\label{eq:deftransform}
Wf(x,\xi):=\begin{cases}\psi_{\xi}(D)f(x)&\text{if }|\xi|>1,\\
\ind_{[1/2,1]}(|\xi|)q(D)f(x)&\text{if }|\xi|
\leq 1\end{cases}
\end{equation}
where $q$ is defined in \eqref{r-defn}. 
Then $W:L^{2}(\Rn)\to L^{2}(\Tp)$ is an isometry, as follows from Lemma \ref{lem:packets}:
\begin{align*}
&\|Wf\|_{L^{2}(\Tp)}^{2}=\int_{\Rn}\int_{\Rn}|Wf(x,\xi)|^{2}\ud x\ud\xi\\
&=\int_{\Rn}|\wh{f}(\zeta)|^{2}\Big(\int_{\Rn\setminus B_{1}(0)}|\psi_{\xi}(\zeta)|^{2}\ud\xi+\int_{B_{1}(0)\setminus B_{1/2}(0)}|q(\xi)|^{2}\ud\xi\Big)\ud\zeta\\
&=\int_{\Rn}|\wh{f}(\zeta)|^{2}\int_{\Rn}|\psi_{\xi}(\zeta)|^{2}\ud\xi\ud\zeta=\int_{\Rn}|\wh{f}(\zeta)|^{2}\ud\zeta=\|f\|_{L^{2}(\Rn)}^{2}
\end{align*}
for all $f\in L^{2}(\Rn)$. Moreover, it follows from~\cite[Proposition 4.3]{HaPoRo20} that $W$ maps $\Sw(\Rn)$ into the class of test functions $\Da(\Tp)$ from Section \ref{subsec:definitions}, and that $W:\Sw'(\Rn)\to\Da'(\Tp)$. The adjoint $W^{*}$ of $W$ is given by
\begin{equation}\label{eq:defadjoint}
\begin{aligned}
W^{*}F(x)&=\!\int_{\Rn\setminus B_{1}(0)}\psi_{\eta}(D)F(\cdot,\eta)(x)\ud\eta+\!\int_{B_{1}(0)\setminus B_{1/2}(0)}q(D)F(\cdot,\eta)(x)\ud\eta
\end{aligned}
\end{equation}
for $F\in L^{2}(\Spp)$ and $x\in\Rn$. It has similar mapping properties with respect to test functions and distributions, and one has
\begin{equation}\label{eq:WstarW}
W^{*}Wf=f\quad(f\in\Sw'(\Rn).
\end{equation}

\subsection{Hardy spaces for Fourier integral operators}\label{subsec:Hardyspaces}

In this subsection we define the Hardy spaces for Fourier integral operators, as well as the associated Sobolev spaces, and we collect some of their basic properties. Recall the definition of the tent space $T^{p}_{0}(\Sp)$ from Definition \ref{def:tentspaces}.

\begin{definition}\label{def:HardyFIO}
Let $p\in[1,\infty]$. Then $\Hp$ consists of all $f\in\Sw'(\Rn)$ such that $Wf\in T^{p}_{0}(\Sp)$, with
\[
\|f\|_{\Hp}:=\|Wf\|_{T^{p}_{0}(\Sp)}\quad(f\in \Hp).
\]
Moreover, for $s\in\R$ the space $\Hps:=\lb D\rb^{-s}\Hp$ consists of all $f\in\Sw'(\Rn)$ such that $\lb D\rb^{s}f\in\Hp$, endowed with the norm
\[
\|f\|_{\Hps}:=\|\lb D\rb^{s}f\|_{\Hp}\quad(f\in \Hps).
\] 
\end{definition}

\begin{remark}\label{rem:otherHpFIO}
In~\cite{HaPoRo20} the Hardy spaces for Fourier integral operators were defined in a manner that, at first sight, appears different from the present definition. Nonetheless, the resulting spaces coincide. Indeed, in~\cite{HaPoRo20} the Hardy spaces for Fourier integral operators were defined to consist of all $f\in\Sw'(\Rn)$ such that $W'f\in T^{p}(\Sp)$, endowed with the norm $\|W'f\|_{T^{p}(\Sp)}$. Here $T^{p}(\Sp)$ is as in Remark \ref{rem:differenttent}, and for $(x,\w,\sigma)\in\Sp\times(0,\infty)$ one has
\[
W'f(x,\w,\sigma)=\begin{cases}\sigma^{-n/2}\psi_{\sigma^{-1}\w}(D)f(x)&\text{if }\sigma<1,\\
\frac{|B_{1}(0)\setminus B_{1/2}(0)|^{1/2}}{|S^{n-1}|^{1/2}}\ind_{[1,e]}(\sigma)q(D)f(x)&\text{if }\sigma\geq1.\end{cases}
\]
Using Remark \ref{rem:differenttent}, it is straightforward to check that the resulting space coincides with $\Hp$ as introduced in Definition \ref{def:HardyFIO}, with equivalence of norms. In fact, for any $f\in\Sw'(\Rn)$ with $\supp(\wh{f}\,)\subseteq\{\xi\in\Rn\mid |\xi|\geq 2\}$ one has $\|Wf\|_{T^{p}_{0}(\Sp)}=\|W'f\|_{T^{p}(\Sp)}$. 
\end{remark}

It follows from~\cite[Theorem 7.4]{HaPoRo20} that the following Sobolev embeddings hold for all $p\in[1,\infty]$ and $s\in\R$, with $s(p)=\frac{n-1}{2}|\frac{1}{2}-\frac{1}{p}|$:
\begin{equation}\label{eq:Sobolev}
\HT^{s+s(p),p}(\Rn)\subseteq\Hps\subseteq \HT^{s-s(p),p}(\Rn),
\end{equation}
and the exponents in these embeddings are sharp (see~\cite[Remark 7.9]{HaPoRo20} and~\cite[Remark 6.5]{FaLiRoSo19}). Here and throughout, for simplicity of notation we write
\begin{equation}\label{eq:classical}
\HT^{s,p}(\Rn):=
\begin{cases}
W^{s,p}(\Rn)&\text{if }p\in(1,\infty),\\
\lb D\rb^{-s}\HT^{1}(\Rn)&\text{if }p=1,\\
\lb D\rb^{-s}\bmo(\Rn)&\text{if }p=\infty,
\end{cases}
\end{equation}
with the associated norm $\|f\|_{\HT^{s,p}(\Rn)}:=\|\lb D\rb^{s}f\|_{\HT^{0,p}(\Rn)}$. By $\HT^{1}(\Rn)$ we denote the local Hardy space from~\cite{Goldberg79}. Given a function $q_{0}\in C^{\infty}_{c}(\Rn)$ such that $q_{0}(\zeta)=1$ for $|\zeta|\leq 2$, and writing $H^{1}(\Rn)$ for the classical Hardy space, it consists of those $f\in\Sw'(\Rn)$ such that $q_{0}(D)f\in L^{1}(\Rn)$ and $(1-q_{0})(D)f\in H^{1}(\Rn)$, with norm
\[
\|f\|_{\HT^{1}(\Rn)}:=\|q_{0}(D)f\|_{L^{1}(\Rn)}+\|(1-q_{0})(D)f\|_{H^{1}(\Rn)}.
\]
Moreover, recall that $\bmo(\Rn)$ consists of those $f\in\Sw'(\Rn)$ such that $q_{0}(D)f\in L^{\infty}(\Rn)$ and $(1-q_{0})(D)f\in \BMO(\Rn)$, endowed with the norm
\[
\|f\|_{\bmo(\Rn)}:=\|q_{0}(D)f\|_{L^{\infty}(\Rn)}+\|(1-q_{0})(D)f\|_{\BMO(\Rn)}.
\]

For later use we note that the following duality property holds: 
\begin{equation}\label{eq:duality}
(\Hps)^{*}=\HT^{-s,p'}_{FIO}(\Rn)
\end{equation}
for all $p\in[1,\infty)$ and $s\in\R$, where the duality pairing is given by the standard duality pairing between $f\in\Sw(\Rn)\subseteq \Hps$ and $g\in\HT^{-s,p'}_{FIO}(\Rn)\subseteq\Sw'(\Rn)$. This is shown in~\cite[Proposition 6.8]{HaPoRo20} for $s=0$, from which the case of general $s\in\R$ follows directly.

Next, we relate the Sobolev spaces $\Hps$ over the Hardy spaces for Fourier integral operators to the weighted tent spaces from Section \ref{sec:tentspaces}.

\begin{proposition}\label{prop:transformsandweights}
Let $p\in[1,\infty]$ and $s\in\R$. Then there exists a constant $\con>0$ such that the following holds for all $f\in\Sw'(\Rn)$. One has $f\in\Hps$ if and only if $Wf\in T^{p}_{s}(\Sp)$, and
\[
\frac{1}{\con}\|Wf\|_{T^{p}_{s}(\Sp)}\leq \|f\|_{\Hps}\leq \con\|Wf\|_{T^{p}_{s}(\Sp)}
\]
for all $f\in\Hps$. Moreover, $W^{*}:T^{p}_{s}(\Sp)\to\Hps$ is bounded and surjective.
\end{proposition}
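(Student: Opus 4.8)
The plan is to reduce everything to the unweighted case $s=0$ (where $T^p_0(\Sp)=T^p(\Sp)$ up to the isometry from Remark~\ref{rem:differenttent}) by conjugating with the fractional operator $\lb D\rb^{s}$, and then to invoke the corresponding results from \cite{HaPoRo20}. Concretely, since $\|f\|_{\Hps}=\|\lb D\rb^{s}f\|_{\Hp}$ by Definition~\ref{def:HardyFIO}, it suffices to show that $f\mapsto Wf$ and $f\mapsto W(\lb D\rb^{s}f)$ are comparable in the appropriate tent-space norms; that is, that the map $W\lb D\rb^{-s}W^{*}$ (or rather its compression to the range of $W$) differs from the "multiply by $|\eta|^{-s}$" operator on $T^p_0(\Sp)$ by a bounded operator, with bounded inverse. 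The key point is that $\lb D\rb^{s}$ is a Fourier multiplier that is, on each dyadic frequency annulus $|\zeta|\eqsim 2^{k}$, comparable to $2^{ks}$, and the wave packet $\psi_\xi$ is supported in $\{\tfrac12|\xi|\le|\zeta|\le 2|\xi|\}$ by Lemma~\ref{lem:packets}; so heuristically $W\lb D\rb^{s}f(x,\xi)\approx |\xi|^{s}Wf(x,\xi)$, which is exactly the weight appearing in $\A_s$ versus $\A_0$.

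To make this rigorous I would proceed as follows. First, dispose of the low-frequency part $|\xi|\le1$: there $\lb D\rb^{s}$ is a nice bounded Fourier multiplier (with bounded inverse) on the relevant frequency-localized pieces, and this contributes only a bounded perturbation, so one may assume $\wh f$ is supported in $\{|\xi|\ge 2\}$ and work with the clean identity $\|Wf\|_{T^p_0}=\|W'f\|_{T^p(\Sp)}$ from Remark~\ref{rem:otherHpFIO}. Second, for the high-frequency part, write $W\lb D\rb^{s}W^{*}$ as an integral operator on $\Tp$ with kernel $K(x,\xi,y,\eta)$ built from $\psi_\xi(D)\lb D\rb^{s}\psi_\eta(D)$, rescale out the factor $(|\xi|/|\eta|)^{s}$ (legitimate since $|\xi|\eqsim|\eta|$ on the support), and check that the remaining kernel satisfies the off-singularity bounds of Proposition~\ref{prop:offsingbound}: rapid decay in $\Upsilon(|\xi|/|\eta|)$ is immediate from the disjointness of supports when $|\xi|/|\eta|$ is far from $1$, and the decay in $\rho^{-1}d((x,\hat\xi),(y,\hat\eta))^2$ follows from non-stationary phase estimates on the kernel of $\psi_\xi(D)\lb D\rb^{s}\psi_\eta(D)$, using the derivative bounds \eqref{eq:packetbounds1} in Lemma~\ref{lem:packets} together with the smoothness of $\lb\zeta\rb^{s}$ away from the origin. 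This is precisely the kind of estimate already carried out in \cite{HaPoRo20} for the composition $WW^{*}$ and for Fourier integral operators of order zero, and here $\lb D\rb^{s}|D|^{-s}$ is a (limited-smoothness) symbol of order zero on the relevant frequency range, so the same machinery applies. By Proposition~\ref{prop:offsingbound} this operator and its inverse are bounded on $T^p_0(\Sp)$, which gives the norm equivalence $\|f\|_{\Hps}\eqsim\|Wf\|_{T^p_s(\Sp)}$.

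For the final assertion that $W^{*}:T^p_s(\Sp)\to\Hps$ is bounded and surjective: boundedness follows by writing $\lb D\rb^{s}W^{*}=W^{*}(W\lb D\rb^{s}W^{*})$ acting on $T^p_s$, reducing via the same rescaling to the $s=0$ statement $W^{*}:T^p_0(\Sp)\to\Hp$ bounded, which is \cite[Theorem 6.9 / Proposition 6.8]{HaPoRo20} (or its tent-space reformulation via Remark~\ref{rem:differenttent}). Surjectivity is immediate from $W^{*}W=\id$ on $\Sw'(\Rn)$ by \eqref{eq:WstarW}: given $f\in\Hps$ we have $Wf\in T^p_s(\Sp)$ by the first part, and $W^{*}(Wf)=f$, so $f$ is in the range.

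The main obstacle is the kernel estimate for $W\lb D\rb^{s}W^{*}$ on the overlapping frequency annuli, i.e.\ establishing the off-singularity bound near $|\xi|\eqsim|\eta|$ with the correct anisotropic (dyadic-parabolic) spatial decay; the rapid decay away from the diagonal in frequency is easy, but near it one must genuinely exploit the second-dyadic geometry through the packet estimates \eqref{eq:packetbounds1} and an integration-by-parts argument, exactly as in the composition estimates of \cite{HaPoRo20}. Everything else is bookkeeping: separating low and high frequencies, the rescaling $F\mapsto F(x,\xi)|\xi|^{s}$ that trades $T^p_s$ for $T^p_0$, and quoting \cite{HaPoRo20} for the endpoint $s=0$.
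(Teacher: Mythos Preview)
Your proposal is correct and takes essentially the same approach as the paper. The paper formalizes your ``rescale out $|\xi|^{s}$'' step by defining $R_{1}F(x,\xi):=|\xi|^{-s}W\lb D\rb^{s}W^{*}F(x,\xi)$ and $R_{2}F(x,\xi):=|\xi|^{s}W\lb D\rb^{-s}W^{*}F(x,\xi)$, observes that these are modified wave packet transforms $\wt{W}W^{*}$ with packets $\wt{\psi}_{\xi}(\zeta)=|\xi|^{\mp s}\lb\zeta\rb^{\pm s}\psi_{\xi}(\zeta)$, and then invokes the pre-packaged kernel estimate of Proposition~\ref{prop:oscilbdd} (via Remark~\ref{rem:transformbounded}) rather than redoing the off-singularity bounds by hand; your argument for $W^{*}$ and the surjectivity via $W^{*}W=\id$ matches the paper exactly.
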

\begin{proof}
First note that, by Proposition \ref{prop:oscilbdd} (see Remark \ref{rem:transformbounded}), 
\[
R_{1}F(x,\xi):=|\xi|^{-s}W\lb D\rb^{s}W^{*}F(x,\xi)
\]
and 
\[
R_{2}F(x,\xi):=|\xi|^{s}W\lb D\rb^{-s}W^{*}F(x,\xi)
\]
define bounded operators $R_{1},R_{2}:T^{p}_{s}(\Sp)\to T^{p}_{s}(\Sp)$. Now, by definition, an $f\in\Sw'(\Rn)$ satisfies $f\in \Hps$ if and only if $W\lb D\rb^{s}f\in T^{p}_{0}(\Sp)$, which in turn holds if and only if the function $(x,\xi)\mapsto |\xi|^{-s}W\lb D\rb^{s}f(x,\xi)$ is an element of $T^{p}_{s}(\Sp)$. Using \eqref{eq:WstarW}, for all $(x,\xi)\in\Tp$ we can write
\begin{equation}\label{eq:R1expression}
|\xi|^{-s}W\lb D\rb^{s}f(x,\xi)=|\xi|^{-s}\big(W\lb D\rb^{s}W^{*}W f\big)(x,\xi)=R_{1}Wf(x,\xi),
\end{equation}
and the boundedness of $R_{1}$ implies half of the first statement. On the other hand, $\lb D\rb^{s}$ commutes with $W^{*}W$, as follows from \eqref{eq:deftransform} and \eqref{eq:defadjoint}. Hence \eqref{eq:WstarW} yields
\[
Wf(x,\xi)=|\xi|^{s}|\xi|^{-s}\big(WW^{*}W\lb D\rb^{-s}\lb D\rb^{s}W^{*}Wf\big)(x,\xi)=R_{2}R_{1}Wf(x,\xi).
\]
Now the boundedness of $R_{2}$ and \eqref{eq:R1expression} imply the other half of the first statement.

Finally, the boundedness of $W^{*}:T^{p}_{s}(\Sp)\to \Hps$ follows from what we have already shown, since $WW^{*}\in\La(T^{p}_{s}(\Sp))$ by Corollary \ref{cor:transformbounded}. Similarly, for each $f\in\Hps$ one has $Wf\in T^{p}_{s}(\Sp)$ and $W^{*}Wf=f$, by \eqref{eq:WstarW}.
\end{proof}

As a corollary we obtain the following extension of~\cite[Proposition 6.7]{HaPoRo20}. It shows that the Sobolev spaces over the Hardy spaces for Fourier integral operators form a complex interpolation scale.

\begin{corollary}\label{cor:interpolation}
Let $p_{1},p_{2}\in[1,\infty]$, $s_{1},s_{2}\in\R$ and $\theta\in[0,1]$ be given. Let $p\in[1,\infty]$ be such that $\frac{1}{p}=\frac{1-\theta}{p_{1}}+\frac{\theta}{p_{2}}$, and set $s:=(1-\theta)s_{1}+\theta s_{2}$. Then
\begin{equation}\label{eq:interpolation}
[\HT^{s_{1},p_{1}}_{FIO}(\Rn),\HT^{s_{2},p_{2}}_{FIO}(\Rn)]_{\theta}=\Hps
\end{equation}
with equivalent norms.
\end{corollary}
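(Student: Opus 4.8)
The plan is to deduce the complex interpolation identity \eqref{eq:interpolation} from the known interpolation theory for the weighted tent spaces, transported to the Hardy spaces via the retraction–coretraction pair $(W,W^{*})$ furnished by Proposition \ref{prop:transformsandweights}. First I would recall that $W^{*}W=\id$ on $\Sw'(\Rn)$ by \eqref{eq:WstarW}, and that by Proposition \ref{prop:transformsandweights} the map $W$ restricts to an isomorphism of $\HT^{s_{i},p_{i}}_{FIO}(\Rn)$ onto the closed subspace $W(\HT^{s_{i},p_{i}}_{FIO}(\Rn))$ of $T^{p_{i}}_{s_{i}}(\Sp)$, with $W^{*}$ a bounded left inverse defined on all of $T^{p_{i}}_{s_{i}}(\Sp)$. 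Since $(W^{*}W)$ is the identity and $WW^{*}$ is a bounded projection on $T^{p_{i}}_{s_{i}}(\Sp)$ (by Corollary \ref{cor:transformbounded}, invoked in the proof of Proposition \ref{prop:transformsandweights}), the pair $(W,W^{*})$ exhibits each $\HT^{s_{i},p_{i}}_{FIO}(\Rn)$ as a complemented subspace — indeed a retract — of $T^{p_{i}}_{s_{i}}(\Sp)$.

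Next I would invoke the interpolation theorem for weighted tent spaces: for $p_{1},p_{2}\in[1,\infty]$ (with the usual caveat at $\infty$, which is why the paper mostly works with $p<\infty$), $s_{1},s_{2}\in\R$ and $\theta\in[0,1]$, one has $[T^{p_{1}}_{s_{1}}(\Sp),T^{p_{2}}_{s_{2}}(\Sp)]_{\theta}=T^{p}_{s}(\Sp)$ with $\frac1p=\frac{1-\theta}{p_{1}}+\frac{\theta}{p_{2}}$ and $s=(1-\theta)s_{1}+\theta s_{2}$. For the unweighted case $s_{1}=s_{2}=-n/2$ this is the classical tent space interpolation of Coifman–Meyer–Stein and Amenta (cited in the excerpt via Remark \ref{rem:differenttent} and~\cite{Amenta14,Amenta18}); the weighted case reduces to it by the same multiplication trick $F\mapsto F_{s}(x,\xi)=F(x,\xi)|\xi|^{s+n/2}$ used throughout Section \ref{sec:tentspaces} (e.g.\ in the proofs of Lemma \ref{lem:aperture} and Proposition \ref{prop:offsingbound}), which is an isometric isomorphism $T^{p}_{s}(\Sp)\to T^{p}_{-n/2}(\Sp)$ intertwining the relevant weights — one must check the weight exponents match up linearly in $\theta$, which they do since $s=(1-\theta)s_{1}+\theta s_{2}$. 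The paper also cites~\cite[Proposition 6.7]{HaPoRo20} as the $s_{1}=s_{2}=0$, general $p$ predecessor, so the weighted generalization is exactly the content being added here.

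Then I would apply the standard abstract principle that complex interpolation commutes with retracts: if $(W,W^{*})$ is a uniform retraction–coretraction pair between the couple $(\HT^{s_{1},p_{1}}_{FIO},\HT^{s_{2},p_{2}}_{FIO})$ and the couple $(T^{p_{1}}_{s_{1}},T^{p_{2}}_{s_{2}})$ — meaning $W:\HT^{s_{i},p_{i}}_{FIO}\to T^{p_{i}}_{s_{i}}$ and $W^{*}:T^{p_{i}}_{s_{i}}\to\HT^{s_{i},p_{i}}_{FIO}$ are bounded for $i=1,2$ with $W^{*}W=\id$ — then $[\HT^{s_{1},p_{1}}_{FIO},\HT^{s_{2},p_{2}}_{FIO}]_{\theta}=W^{*}\big([T^{p_{1}}_{s_{1}},T^{p_{2}}_{s_{2}}]_{\theta}\big)$, with equivalence of norms, and more precisely it equals $\{f : Wf\in[T^{p_{1}}_{s_{1}},T^{p_{2}}_{s_{2}}]_{\theta}\}$ with the induced norm. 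Combining this with the tent space identity gives $[\HT^{s_{1},p_{1}}_{FIO},\HT^{s_{2},p_{2}}_{FIO}]_{\theta}=\{f : Wf\in T^{p}_{s}(\Sp)\}$, and by Proposition \ref{prop:transformsandweights} the latter is exactly $\Hps$ with equivalent norms, which is \eqref{eq:interpolation}. I should note the mild subtlety that the interpolation couples need a common ambient space for the sum/intersection to make sense; here all the spaces $\HT^{s_{i},p_{i}}_{FIO}(\Rn)$ sit continuously inside $\Sw'(\Rn)$ (and the tent spaces inside $\Da'(\Tp)$ by \eqref{eq:testfunctions}), $W$ and $W^{*}$ are compatible with these ambient inclusions, and $\Sw(\Rn)$ is dense, so the couples are regular and the retract principle applies cleanly.

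The main obstacle is not conceptual but bookkeeping: one must make sure the weighted tent space interpolation statement is available in precisely the generality needed (both parameters $p$ and $s$ varying, and at least one of $p_{1},p_{2}$ possibly $\infty$), and that the weight-rescaling reduction to the unweighted case is genuinely isometric and genuinely intertwines the complex interpolation functor — this is where a careless exponent could go wrong. A secondary point requiring a line of care is the behavior at $p=\infty$: the tent space $T^{\infty}_{s}(\Sp)$ is defined via the $\Ca_{s}$ functional and the retract argument still works, but one may prefer to restrict to $p_{1},p_{2}<\infty$ as elsewhere in the paper, or to invoke the $p=\infty$ tent space interpolation result separately. Everything else — boundedness of $W$, $W^{*}$ on the relevant spaces, $W^{*}W=\id$, $WW^{*}$ a projection — is already in hand from Proposition \ref{prop:transformsandweights} and Corollary \ref{cor:transformbounded}.
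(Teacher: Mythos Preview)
Your proposal is correct and follows essentially the same route as the paper: establish the weighted tent space interpolation identity $[T^{p_{1}}_{s_{1}}(\Sp),T^{p_{2}}_{s_{2}}(\Sp)]_{\theta}=T^{p}_{s}(\Sp)$ (the paper cites \cite[Theorem 2.1]{Amenta18} together with Remark \ref{rem:differenttent}), then transfer it to the Hardy spaces via the retraction--coretraction pair $(W,W^{*})$ using the interpolation-of-complemented-subspaces principle (the paper cites \cite[Theorem 1.17.1.1]{Triebel78}). Your discussion of the weight-rescaling reduction and the ambient-space compatibility is more explicit than the paper's, but the underlying argument is the same.
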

\begin{proof}
As in~\cite[Theorem 2.1]{Amenta18}, using also Remark \ref{rem:differenttent}, one can show that 
\begin{equation}\label{eq:tentinter}
[T^{p_{1}}_{s_{1}}(\Sp),T^{p_{2}}_{s_{2}}(\Sp)]_{\theta}=T^{p}_{s}(\Sp).
\end{equation}
We use a slightly different weight than in~\cite{Amenta18}, but this does not affect the argument, which relies on viewing $T^{p}_{s}(\Sp)$ as a subspace of a weighted $L^{p}$ space with values in a Banach space, and on results about interpolation of weighted $L^{p}$ spaces. 

Next, it follows from Proposition \ref{prop:transformsandweights} that $WW^{*}:T^{p_{0}}_{t}(\Sp)\to T^{p_{0}}_{t}(\Sp)$ is bounded for all $p_{0}\in[1,\infty]$ and $t\in\R$, and that the range of $WW^{*}$ in $T^{p_{0}}_{t}(\Sp)$ is isomorphic to $\HT^{t,p_{0}}_{FIO}(\Rn)$. Now one can rely on known results about interpolation of complemented subspaces (see~\cite[Theorem 1.17.1.1]{Triebel78}) to conclude the proof.
\end{proof}

To conclude this section we note that the Hardy spaces for Fourier integral operators can be characterized in an alternative manner, using parabolic frequency cut-offs. For $w\in S^{n-1}$ and $\zeta\in\Rn$ set $\ph_{\w}(\zeta):=\int_{0}^{4}\psi_{\tau^{-1}\w}(\zeta)\frac{\ud\tau}{\tau^{1+n/2}}$. These functions localize to a paraboloid in the direction of $\w$ (see~\cite[Remark 3.3]{Rozendaal21}). Let $q_{0}\in C^{\infty}_{c}(\Rn)$ be such that $q_{0}(\zeta)=1$ for $|\zeta|\leq 2$. It was shown in \cite{Rozendaal21} for $1<p<\infty$, and in \cite{FaLiRoSo19} for $p=1$, that $f\in\Hps$ if and only if $q_{0}(D)f\in L^{p}(\Rn)$, $\ph_{\w}(D)f\in \HT^{s,p}(\Rn)$ for almost all $\w\in S^{n-1}$, and $(\int_{S^{n-1}}\|\ph_{\w}(D)f(x)\|^{p}_{\HT^{s,p}(\Rn)}\ud\w)^{1/p}<\infty$.
Moreover, 
\begin{equation}\label{eq:equivchar}
\|f\|_{\Hps}\eqsim \|q_{0}(D)f\|_{L^{p}(\Rn)}+\Big(\int_{S^{n-1}}\|\ph_{\w}(D)f(x)\|^{p}_{\HT^{s,p}(\Rn)}\ud\w\Big)^{1/p}
\end{equation}
for implicit constants independent of $f\in\Hps$. This characterization in turn allows one to apply tools from Littlewood-Paley theory to the Hardy spaces for Fourier integral operators. It will not play an explicit role in this article, but it was crucial in the proof in~\cite{Rozendaal20,Rozendaal22} of Theorem \ref{thm:roughpseudo} below, and that theorem is in turn essential for the main results of this article.

\section{Rough symbols}\label{sec:roughsymb}

In this section we collect some background on the rough symbol classes that are used throughout this article. We also introduce a symbol smoothing procedure which plays a key role in the proofs of our main results. Then we state our main results for the smooth and rough terms in this symbol smoothing procedure.

\subsection{Symbol classes and symbol smoothing}\label{subsec:roughsymbols}

Throughout this article, we work with symbols $a(x,\xi)$ that have limited smoothness in the $x$-variable. We will measure the degree of smoothness of such a symbol using spaces of continuous functions, and to simplify notation it will be convenient to make the following definition.

\begin{definition}\label{def:functions}
For $r>0$, write $r=l+s$ with $l\in\Z_{+}$ and $s\in(0,1]$. Then $\Cr(\Rn)$ consists of those $l$ times continuously differentiable $f:\Rn\to\C$ such that $\partial_{x}^{\alpha}f$ is bounded for each $\alpha\in\Z_{+}^{n}$ with $|\alpha|\leq l$, and if $|\alpha|=l$ then $\partial^{\alpha}_{x}f$ is additionally H\"{o}lder continuous with exponent $s$. For $f\in \Cr(\Rn)$ we set
\[
\|f\|_{\Cr(\Rn)}:=\max_{|\alpha|\leq l}\sup_{x\in\Rn}|\partial_{x}^{\alpha}f(x)|+\max_{|\alpha|=l}\sup_{x\neq y}\frac{|\partial_{x}^{\alpha}f(x)-\partial_{x}^{\alpha}f(y)|}{|x-y|^{s}}.
\]
\end{definition}

For $r\in\N$, $\Cr(\Rn)$ can be equivalently described as the space of $r$ times weakly differentiable $f:\Rn\to\C$ such that $\partial_{x}^{\alpha}f\in L^{\infty}(\Rn)$ for all $\alpha\in\Z_{+}^{n}$ with $|\alpha|\leq r$. This notation is inspired by the fact that 
\[
C^{r}(\Rn)\cap L^{\infty}(\Rn)\subseteq \Cr(\Rn)\subseteq C^{r-\veps}(\Rn)\cap L^{\infty}(\Rn)
\]
for all $\veps>0$, where the first inclusion is strict for $r\in\N$.

We can now introduce the rough symbol classes that appear in this article. Recall that, for $m\in\R$ and $\delta\in[0,1]$, $S^{m}_{1,\delta}$ is the class of $a\in C^{\infty}(\R^{2n})$ such that for all $\alpha,\beta\in\Z_{+}^{n}$ there exists an $\con_{\alpha,\beta}\geq0$ such that
\[
|\partial_{\xi}^{\alpha}\partial_{x}^{\beta}a(x,\xi)|\leq \con_{\alpha,\beta}\lb \xi\rb^{m-|\alpha|+|\beta|\delta}\quad(x,\xi\in\Rn).
\]

\begin{definition}\label{def:rough}
Let $r>0$, $m\in\R$ and $\delta\in[0,1]$. Then $\Cr S^{m}_{1,\delta}$ is the collection of $a:\R^{2n}\to\C$ such that for each $\alpha\in\Z_{+}^{n}$ there exists an $\con_{\alpha}\geq 0$ with the following properties:
\begin{enumerate}
\item\label{it:symbol1} For all $x,\xi\in\Rn$ one has $a(x,\cdot)\in C^{\infty}(\Rn)$ and $|\partial_{\eta}^{\alpha}a(x,\xi)|\leq \con_{\alpha}\lb\xi\rb^{m-|\alpha|}$. 
\item\label{it:symbol2} For all $\xi\in\Rn$ one has $\partial_{\xi}^{\alpha}a(\cdot,\xi)\in \Cr(\Rn)$ and
\[
\|\partial_{\xi}^{\alpha}a(\cdot,\xi)\|_{\Cr(\Rn)}\leq \con_{\alpha}\lb\xi\rb^{m-|\alpha|+r\delta},
\]
and for each $j\in\Z_{+}$ with $0\leq j\leq r$ one has
\[
\|\partial_{\xi}^{\alpha}a(\cdot,\xi)\|_{C^{j}_{-}(\Rn)}\leq \con_{\alpha}\lb\xi\rb^{m-|\alpha|+j\delta}.
\]
\end{enumerate} 
Moreover, $\A^{r}S^{m}_{1,\delta}$ consists of those $a\in S^{m}_{1,\delta}$ such that for all $\alpha\in\Z_{+}^{n}$ and $s\geq0$ there exists an $\con_{\alpha,s}\geq 0$ such that
\[
\|\partial_{\xi}^{\alpha}a(\cdot,\xi)\|_{C^{r+s}_{-}(\Rn)}\leq \con_{\alpha,s}\lb\xi\rb^{m-|\alpha|+\delta s}
\] 
for all $\xi\in\Rn$.
\end{definition}

Note that $\A^{r}S^{m}_{1,\delta}\subseteq S^{m}_{1,\delta}\cap \Cr S^{m}_{1,0}$ for all $r>0$, $m\in\R$ and $\delta\in[0,1]$. These symbols arise naturally when applying a symbol smoothing procedure to functions of limited regularity, cf.~Lemma \ref{lem:smoothing} and the proof of our main result in Section \ref{sec:divform}.

 Next, the pseudodifferential operator $a(x,D):\Sw(\Rn)\to\Sw'(\Rn)$ with symbol $a\in \Cr S^{m}_{1,\delta}$ is given by
\[
a(x,D)f(x):=\frac{1}{(2\pi)^{n}}\int_{\Rn}e^{ix\xi}a(x,\xi)\wh{f}(\xi)\ud\xi
\]
for $f\in\Sw(\Rn)$ and $x\in\Rn$. One says that $a\in \Cr S^{m}_{1,\delta}$ is \emph{elliptic} if there exist $\kappa,R>0$ such that $|a(x,\xi)|>\kappa|\xi|^{m}$ for all $x,\xi\in\Rn$ with $|\xi|\geq R$.

An $a\in \Cr S^{m}_{1,\delta}$ is \emph{homogeneous (of degree $m$) for $|\xi|\geq1$} if $a(x,\lambda\xi)=\lambda^{m}a(x,\xi)$ for all $x,\xi\in\Rn$ and $\lambda\geq1$ with $|\xi|\geq1$. 
Note that this holds if and only if $(\xi\cdot \partial_{\xi}-m)a(x,\xi)=0$ for $|\xi|\geq 1$. 
We find this notion of homogeneity more useful than complete homogeneity for all $\xi \neq 0$, as this is too rigid a requirement. For example, if $m=1$, then the only functions homogeneous of degree $1$ in $\Crtwo S^1_{1,1/2}$ are functions linear in $\xi$, while if $1 < m < 2$, there are no nonzero functions homogeneous of degree $m$, due to the blowup of second $\xi$-derivatives at $\xi = 0$. 

The following weaker version of homogeneity plays a key role in this article.

\begin{definition}\label{def:asymphom}
Let $r>0$, $m\in\R$, $\delta\in[0,1]$ and $b\in S^{m}_{1,\delta}$, and let $a\in \Cr S^{m}_{1,0}$ be homogeneous of degree $m$ for $|\xi|\geq1$. Then $b\in S^{m}_{1,\delta}$ is \emph{asymptotically homogeneous of degree $m$ with limit} $a$ if the following conditions hold:
\begin{subequations}\label{eq:asymphom}
\begin{align}
& [(x,\xi)\mapsto (\xi\cdot \partial_{\xi}-m)b(x,\xi)]\in S^{m-1}_{1,\delta};  \label{it:asymphom1}  \\
& a-b\in \Cr S^{m-1}_{1,\delta}.  \label{it:asymphom2} 
\end{align}
\end{subequations}
Moreover, $b$ is \emph{asymptotically homogeneous of degree $m$} if there exists an $a\in \Cr S^{m}_{1,0}$ that is homogeneous for $|\xi|\geq1$ such that $b$ is asymptotically homogeneous of degree $m$ with limit $a$.
\end{definition}

\begin{remark}\label{rem:homlimit}
Given \eqref{it:asymphom1}, one can show that the limit 
\begin{equation}
\lim_{t \to \infty} \frac{b(x, t\xi)}{t^m} \quad (|\xi| \geq 1)
\label{a-limit}\end{equation}
exists pointwise and defines a function $\tilde a(x, \xi)$ that is homogeneous of degree $m$ for $|\xi|\geq 1$, and that  
\[
\tilde a - b = O(\ang{\xi}^{m-1}).
\]
Since there can be at most one function $a$ that is homogeneous of degree $m$ and such that 
$a-b = O(\ang{\xi}^{m-1})$ for $|\xi|\geq1$, as implied by \eqref{it:asymphom2},  it follows that $a$ is given by \eqref{a-limit} for $|\xi| \geq 1$. It is in this sense that $a$ is a `limit' of the function $b$.  
\end{remark}

\begin{remark}\label{rem:asymphom} 
It may appear at first that either condition in this definition implies the other, but that is not the case. In one direction, although, using the previous remark, $a$ is determined for $|\xi| \geq 1$ by \eqref{a-limit}, it need not be the case that this limit, for given $b\in S^{m}_{1,\delta}$ should lie in the space $\Cr S^{m}_{1,0}$. In the other direction, \eqref{it:asymphom2} does not imply \eqref{it:asymphom1}, even though $(\xi\cdot \partial_{\xi}-m)a(x,\xi)=0$ for $|\xi|\geq 1$, since \eqref{it:asymphom2} only provides information about a finite number of $x$-derivatives of $a-b$. The subtlety is that, although $b$ is infinitely differentiable, the $a$ to which it is asymptotic may be rough. In fact, that is exactly what happens in the case of interest; see Proposition~\ref{prop:squareroot}. 
\end{remark}

We next describe a symbol smoothing procedure that decomposes a rough symbol into a sum of smooth part and a rough part with additional decay. Let $(\psi_{j})_{j=0}^{\infty}\subseteq C^{\infty}_{c}(\Rn)$ be a fixed Littlewood--Paley decomposition. That is, one has
\begin{equation}\label{eq:LitPaldecomp}
\sum_{j=0}^{\infty}\psi_{j}(\xi)=1\quad(\xi\in\Rn),
\end{equation}
$\psi_{0}(\xi)=0$ for $|\xi|>1$, $\psi_{1}(\xi)=0$ if $|\xi|\notin [1/2,2]$, and $\psi_{j}(\xi)=\psi_{1}(2^{-j+1}\xi)$ for all $j>1$ and $\xi\in\Rn$. In fact, we may suppose that 
\begin{equation}\label{eq:LitPalchoice}
\psi_{j}(\xi)=\psi_{0}(2^{-j}\xi)-\psi_{0}(2^{1-j}\xi)
\end{equation}
for all $j\geq1$ and $\xi\in\Rn$.  Let $a\in \Cr S^{m}_{1,0}$ for $r>0$ and $m\in\R$, and recall from Section \ref{subsec:wavepackettransform} that $\ph\in C^{\infty}_{c}(\Rn)$ satisfies $\phi\equiv1$ near zero. For $x,\xi\in\Rn$ set
\begin{equation}\label{eq:asharp}
a^{\sharp}(x,\xi):=\sum_{k=0}^{\infty}\big(\ph(2^{-k/2}D)a(\cdot,\xi)\big)(x)\psi_{k}(\xi)
\end{equation}
and
\[
a^{\flat}(x,\xi):=a(x,\xi)-a^{\sharp}(x,\eta)=\sum_{k=0}^{\infty}\big((1-\ph)(2^{-k/2}D)a(\cdot,\xi)\big)(x)\psi_{k}(\xi).
\]
The decomposition $a=a^{\sharp}+a^{\flat}$ then has the following properties.

\begin{lemma}\label{lem:smoothing}
Let $r>0$, $m\in\R$ and $a\in \Cr S^{m}_{1,0}$. Then $a^{\sharp}\in \A^{r}S^{m}_{1,1/2}$ and $a^{\flat}\in \Cr S^{m-r/2}_{1,1/2}$. If $a$ is elliptic then $a^{\sharp}$ is elliptic as well. If $a$ is homogeneous of degree $m$ for $|\xi|\geq1$ and if $r\geq2$, then $a^{\sharp}$ is asymptotically homogeneous of degree $m$ with limit $a$.
\end{lemma}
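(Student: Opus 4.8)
The plan is to establish the four assertions in order, since each builds on the previous ones. \emph{First,} to show $a^{\sharp}\in\A^{r}S^{m}_{1,1/2}$, I would fix $\alpha\in\Z_{+}^{n}$ and $s\geq0$, and estimate $\|\partial_{\xi}^{\alpha}a^{\sharp}(\cdot,\xi)\|_{C^{r+s}_{-}(\Rn)}$ for $|\xi|\sim 2^{k}$. Because the frequency cut-off $\ph(2^{-k/2}D)$ restricts to frequencies $\lesssim 2^{k/2}$, a Bernstein-type inequality shows that applying $\ell$ additional $x$-derivatives to $\ph(2^{-k/2}D)a(\cdot,\xi)$ costs a factor $2^{k\ell/2}=\ang{\xi}^{\ell/2}$ beyond the $C^{j}_{-}$-norm bounds on $a(\cdot,\xi)$ coming from Definition~\ref{def:rough}(\ref{it:symbol2}); summing the finitely many $k$ with $2^{k}\sim\ang{\xi}$ (the $\psi_{k}(\xi)$ have bounded overlap) gives the required bound $\ang{\xi}^{m-|\alpha|+\delta s}$ with $\delta=1/2$. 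The $\xi$-derivatives $\partial_{\xi}^{\alpha}$ either hit $\ph(2^{-k/2}D)a(\cdot,\xi)$, producing factors $\ang{\xi}^{-|\alpha|}$ via part (\ref{it:symbol1}), or hit $\psi_{k}(\xi)$, producing harmless $\ang{\xi}^{-(\text{number of derivatives})}$ factors; one also needs the standard fact that $\partial_\xi$ commutes past the $x$-Fourier multiplier. \emph{Second,} for $a^{\flat}\in\Cr S^{m-r/2}_{1,1/2}$, the point is that $(1-\ph)(2^{-k/2}D)$ kills frequencies $\lesssim 2^{k/2}$, so using the characterization of $\Cr(\Rn)$ via Littlewood--Paley pieces (frequency $\gtrsim 2^{k/2}$ contributes size $\lesssim 2^{-kr/2}\|\,\cdot\,\|_{\Cr}$), one gains exactly $\ang{\xi}^{-r/2}$; the extra $x$-derivatives up to order $r$ again cost $\ang{\xi}^{\ell/2}$, yielding membership in $\Cr S^{m-r/2}_{1,1/2}$. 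The $C^{j}_{-}$-bounds in Definition~\ref{def:rough}(\ref{it:symbol2}) for $a^{\flat}$ follow the same way.

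\emph{Third,} ellipticity of $a^{\sharp}$: write $a^{\sharp}=a-a^{\flat}$. Since $a^{\flat}\in\Cr S^{m-r/2}_{1,1/2}\subseteq S^{m-\veps}_{1,1}$ for some $\veps>0$ (because $r>0$), for $|\xi|$ large $|a^{\flat}(x,\xi)|\leq\tfrac{\kappa}{2}|\xi|^{m}$, so the lower bound $|a(x,\xi)|>\kappa|\xi|^{m}$ is inherited by $a^{\sharp}$ after enlarging $R$.

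\emph{Fourth}—and this is where I expect the main work to lie—the asymptotic homogeneity when $a$ is homogeneous of degree $m$ for $|\xi|\geq1$ and $r\geq2$. Here I must verify the two conditions of Definition~\ref{def:asymphom} for $b:=a^{\sharp}$ with limit $a$. Condition~\eqref{it:asymphom2}, namely $a-a^{\sharp}=a^{\flat}\in\Cr S^{m-1}_{1,1/2}$, is already in hand \emph{provided} $r/2\geq 1$, i.e.\ $r\geq2$, which is exactly the hypothesis; one must also check the two bulleted $C^{j}_{-}$ estimates in Definition~\ref{def:rough}(\ref{it:symbol2}), but these come out of the second step with $m$ replaced by $m-r/2\leq m-1$. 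The real issue is \eqref{it:asymphom1}: one needs $(\xi\cdot\partial_{\xi}-m)a^{\sharp}\in S^{m-1}_{1,1/2}$. The idea is that $(\xi\cdot\partial_{\xi}-m)a=0$ for $|\xi|\geq1$, so $(\xi\cdot\partial_{\xi}-m)a^{\sharp}=-(\xi\cdot\partial_{\xi}-m)a^{\flat}$, and then one shows that $(\xi\cdot\partial_{\xi}-m)a^{\flat}\in S^{m-1}_{1,1/2}$ directly from the dyadic sum defining $a^{\flat}$: the operator $\xi\cdot\partial_{\xi}$ applied to $(1-\ph)(2^{-k/2}D)a(\cdot,\xi)\psi_{k}(\xi)$ either differentiates $a(\cdot,\xi)$ in $\xi$ (giving a term one order lower, handled as before), differentiates $\psi_{k}(\xi)$ (giving $\xi\cdot(\partial_{\xi}\psi_{k})(\xi)$, which is again a Littlewood--Paley-type bump and in particular $O(1)$ with bounded overlap), or differentiates the cut-off scale through the $k$-dependence of $2^{-k/2}D$—but since $\xi\cdot\partial_\xi$ does not act on the $x$-frequency variable, this last possibility does not arise once one is careful about which variable is which. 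What needs care is making the formal cancellation $(\xi\cdot\partial_\xi - m)a = 0$ rigorous given that $a$ is merely $\Cr$ in $x$, and checking that the resulting expression for $(\xi\cdot\partial_\xi-m)a^\flat$ genuinely gains a full order (not just $r/2$) and gains it \emph{smoothly} in $x$, i.e.\ lands in $S^{m-1}_{1,1/2}$ rather than only $\Cr S^{m-1}_{1,1/2}$; this should follow because every term carries the mollifier $(1-\ph)(2^{-k/2}D)$ or its derivative in $\xi$, which smooths in $x$, together with a $\psi_k$-type factor localizing $|\xi|\sim 2^k$ so that the extra $2^{k/2}$ from each $x$-derivative is absorbed. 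Finally I would invoke Remark~\ref{rem:homlimit} to confirm that the $a$ recovered as the pointwise limit \eqref{a-limit} of $a^{\sharp}$ agrees with the given $a$, closing the argument.
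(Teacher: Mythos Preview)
Your treatment of the first three assertions is fine and matches the paper's: the $\A^{r}S^{m}_{1,1/2}$ and $\Cr S^{m-r/2}_{1,1/2}$ memberships are standard (the paper cites Taylor), and ellipticity follows exactly from $a^{\sharp}=a-a^{\flat}$ as you say. For condition~\eqref{it:asymphom2} in the asymptotic homogeneity, your observation that $a-a^{\sharp}=a^{\flat}\in\Cr S^{m-r/2}_{1,1/2}\subseteq\Cr S^{m-1}_{1,1/2}$ once $r\geq2$ is also the paper's argument.

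The gap is in your verification of condition~\eqref{it:asymphom1}. You propose to compute $(\xi\cdot\partial_{\xi}-m)a^{\flat}$ and claim the result lies in $S^{m-1}_{1,1/2}$ because ``every term carries the mollifier $(1-\ph)(2^{-k/2}D)$\dots which smooths in $x$''. This is incorrect: $(1-\ph)(2^{-k/2}D)$ is a \emph{high}-pass filter, not a mollifier; it does not smooth, and its output on a $\Cr$ function is still only $\Cr$. Consequently the individual terms $((1-\ph)(2^{-k/2}D)a)(x,\xi)\,\xi\cdot\partial_{\xi}\psi_{k}(\xi)$ admit at most $r$ bounded $x$-derivatives, and your claim that ``the extra $2^{k/2}$ from each $x$-derivative is absorbed'' has no basis, since the $x$-frequency of such a term is bounded \emph{below} by $c\,2^{k/2}$, not above. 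Simply knowing $a^{\flat}\in\Cr S^{m-r/2}_{1,1/2}$ only yields $(\xi\cdot\partial_{\xi}-m)a^{\flat}\in\Cr S^{m-r/2}_{1,1/2}$, which is not the smooth class $S^{m-1}_{1,1/2}$ required by Definition~\ref{def:asymphom}.

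What is missing is an Abel summation. Using $\psi_{k}(\xi)=\psi_{0}(2^{-k}\xi)-\psi_{0}(2^{1-k}\xi)$, one has $\xi\cdot\partial_{\xi}\psi_{k}=g_{k}-g_{k-1}$ with $g_{k}(\xi):=\xi\cdot\partial_{\xi}\psi_{0}(2^{-k}\xi)$. Summing by parts (on either the $a^{\sharp}$ or $a^{\flat}$ side; the paper works on the $a^{\sharp}$ side) turns the sum into
\[
(\xi\cdot\partial_{\xi}-m)a^{\sharp}(x,\xi)=\sum_{k\geq0}\big((\ph(2^{-k/2}D)-\ph(2^{-(k+1)/2}D))a(\cdot,\xi)\big)(x)\,g_{k}(\xi),
\]
and now each coefficient \emph{is} a compactly supported band-pass in $x$-frequency at scale $\sim2^{k/2}$. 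This produces smooth output, with $L^{\infty}$ norm $\lesssim 2^{-k}\lb\xi\rb^{m}$ (here $r\geq2$ is used) and each $x$-derivative costing exactly $2^{k/2}\eqsim\lb\xi\rb^{1/2}$, giving membership in $S^{m-1}_{1,1/2}$. Without this resummation the smoothness is invisible. (The final appeal to Remark~\ref{rem:homlimit} is unnecessary: once \eqref{it:asymphom1} and \eqref{it:asymphom2} are verified for the given $a$, the definition is satisfied.)
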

\begin{proof}
For the fact that $a^{\sharp}\in \A^{r}S^{m}_{1,1/2}$ see~\cite[Proposition 3.1.C]{Taylor91}, where the notation $\A^{r}_{0}S^{m}_{1,1/2}$ is used. To see that $a^{\flat}\in \Cr S^{m-r/2}_{1,1/2}$, one uses the estimate
\begin{equation}\label{eq:infestimate}
\begin{aligned}
&\|(1-\ph)(2^{-k/2}D)a(\cdot,\xi)\|_{L^{\infty}}\leq\!\sum_{j\geq-M+k/2}\!\|(1-\ph)(2^{-k/2}D)\psi_{j}(D)a(\cdot,\xi)\|_{L^{\infty}}\\
&\lesssim\!\sum_{j\geq-M+k/2}\!\|\psi_{j}(D)a(\cdot,\xi)\|_{L^{\infty}}\lesssim\!\sum_{j\geq-M+k/2}2^{-jr}\!\|a(\cdot,\xi)\|_{\Cr}\lesssim 2^{-kr/2}\lb\xi\rb^{m}
\end{aligned}
\end{equation} 
for some $M>0$ and all $k\geq0$, $\xi\in\Rn$. Choosing $k$ such that $\lb\xi\rb\eqsim 2^{k}$, and using similar estimates for the derivatives of $a$, one obtains the desired result.
For more details see~\cite[Proposition 1.3.E]{Taylor91}. 

If $a$ is elliptic then, since $a^{\flat}\in \Cr S^{m-r/2}_{1,1/2}$, it follows by writing $a^{\sharp}=a-a^{\flat}$ that $|a^{\sharp}(x,\xi)|\gtrsim \lb\xi\rb^{m}$ for $x,\xi\in\Rn$ with $|\xi|$ sufficiently large, i.e.~that $a^{\sharp}$ is elliptic. 

Finally, suppose that $a$ is homogeneous of degree $m$ for $|\xi|\geq1$, and that $r\geq2$. By what we have already shown, $a-a^{\sharp}=a^{\flat}\in \Cr S^{m-r/2}_{1,1/2}\subseteq \Cr S^{m-1}_{1,1/2}$, so it remains to show that $(x,\xi)\mapsto (\xi \cdot \partial_\xi - m) a^\sharp(x, \xi)$ is a symbol in $S^{m-1}_{1, 1/2}$. By \eqref{eq:LitPalchoice},
\[
\xi \cdot \partial_{\xi} \psi_{k}(\xi) = \xi \cdot \partial_{\xi} \psi_{0}(2^{-k} \xi) - \xi \cdot \partial_{\xi} \psi_{0}(2^{-(k-1)} \xi)
\]
for all $k\geq1$ and $\xi\in\Rn$. Let $x,\xi\in\Rn$ with $|\xi|\geq1$. Then, by assumption, $(\xi \cdot \partial_\xi - m)a(x,\xi)=0$. Hence, by definition of $a^{\sharp}$, 
\[
(\xi \cdot \partial_{\xi} - m) a^\sharp(x, \xi) = \sum_{k=0}^{\infty} \big((\ph(2^{-k/2}D) - \ph(2^{-(k+1)/2} D))a(\cdot,\xi)\big)(x) \xi \cdot \partial_{\xi} \big(\psi_{0}(2^{-k}\xi)\big).
\]
Now, using that $r\geq2$, it follows in a similar manner as in \eqref{eq:infestimate} that
\[
\|(\ph(2^{-k/2}D) - \ph(2^{-(k+1)/2} D))a(\cdot,\xi)\|_{L^{\infty}}\lesssim 2^{-k}\lb \xi\rb^{m}
\]
for all $k\geq0$. To conclude the proof, one again chooses $k$ such that $\lb\xi\rb\eqsim 2^{k}$ and uses similar estimates for the derivatives of $a$. Each $\xi$-derivative yields a factor $2^{-k}\eqsim\lb\xi\rb^{-1}$, and each $x$-derivative, applied to $2^{kn/2}\F^{-1}(\ph)(2^{k/2}\cdot)$, yields a factor $2^{k/2}\eqsim \lb\xi\rb^{1/2}$. The relevant estimates for $|\xi|<1$ are automatic.
\end{proof}

\subsection{Results for the smooth and rough terms}\label{subsec:smoothand rough}

This subsection contains our main results for the smooth and rough terms in the symbol splitting. We also prove a proposition that will allow us to work with first order equations instead of second order equations in the next sections, and we collect some lemmas on pseudodifferential calculus. 

The following theorem is our main result for smooth first order equations. It will be proved in Section \ref{sec:firstorder}, based in turn on results from Sections \ref{sec:flow} and \ref{sec:parametrix}.

\begin{theorem}\label{thm:mainsmooth}
Let $b\in \A^{2}S^{1}_{1,1/2}$ be real-valued, elliptic and asymptotically homogeneous of degree $1$ with real-valued limit $a\in \Crtwo S^{1}_{1,0}$. Then there exists a unique family $(\ex_{t})_{t\in\R}$ of operators on $\Sw'(\Rn)$ such that, for all $p\in[1,\infty)$, $s\in\R$, $k\in\Z_{+}$ and $t_{0}>0$, there exists an $\con>0$ such that the following properties hold for all $f\in\Hps$: 
\begin{enumerate}
\item\label{it:smooth1} $[t\mapsto \ex_{t}f]\in C^{k}(\R;\HT^{s-k,p}_{FIO}(\Rn))$;
\item\label{it:smooth2} $\|\partial_{t}^{k}\ex_{t}f\|_{\HT^{s-k,p}_{FIO}(\Rn)}\leq \con\|f\|_{\Hps}$ for all $t\in[-t_{0},t_{0}]$;
\item\label{it:smooth3} $\ex_{0}f=f$, and $D_{t}\ex_{t}f=b(x,D)\ex_{t}f$ for all $t\in\R$.
\end{enumerate}
\end{theorem}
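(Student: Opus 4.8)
The plan is to construct the propagators $\ex_t$ via a wave-packet parametrix on phase space, as sketched in the introduction, and then to verify properties \eqref{it:smooth1}--\eqref{it:smooth3}. First I would lift the equation $(D_t-b(x,D))u=0$ to phase space using the wave packet transform $W$ from Section \ref{subsec:wavepackettransform}: by \eqref{eq:WstarW} the operator $e_t := W^* \widetilde{U}(t) W$ will serve as $\ex_t$, provided one produces a suitable family $\widetilde{U}(t)$ of operators on $T^p_s(\Sp)$ approximating the conjugated flow. The heuristic is that, to leading order, $\widetilde{U}(t)$ should be given by composition with the bicharacteristic flow $\chi_t$ of the Hamiltonian $b$ (using that $b\in\A^2S^1_{1,1/2}$ is real-valued and elliptic, so the flow is complete and $C^1$ in a suitable anisotropic sense; this is where $C^{1,1}$ regularity, i.e. two $x$-derivatives of $b$ without fiber blow-up, is essential — cf. the remarks promised in the paper). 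Since $b$ is only asymptotically homogeneous of degree $1$, with limit $a\in\Crtwo S^1_{1,0}$, the flow does not descend to the cosphere bundle, which is exactly why the Cartesian tent spaces of Section \ref{sec:tentspaces} were introduced.

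The main steps, in order, would be: (i) state precisely the flow map $\chi_t$ on $\Tp\setminus o$ and record that the conjugated operator $\Phi_t F := F\circ\chi_t^{-1}$ (with an appropriate Jacobian/amplitude factor so that it is an $L^2$-isometry, or close to one) is bounded on each $T^p_s(\Sp)$ uniformly for $t$ in compact sets — this is the content of the "flow maps bounded on tent spaces" part referenced as Section \ref{sec:flow}, and Proposition \ref{prop:offsingbound} is the natural tool, once one shows the kernel of $W\ex_t W^*$ (or of the conjugated flow) satisfies the off-singularity bounds with $\hchi$ a bi-Lipschitz map derived from $\chi_t$; (ii) compute the error $R_t := (D_t - b(x,D))W^*\Phi_t W$ and show, via the symbol estimates in Lemma \ref{lem:packets} (especially \eqref{eq:packetbounds2}, \eqref{eq:packetbounds3}) together with pseudodifferential calculus for $\A^2 S^1_{1,1/2}$, that $R_t$ gains a derivative, i.e. maps $\Hps\to C(\R;\HT^{s,p}_{FIO}(\Rn))$ boundedly on compact time intervals; (iii) correct the parametrix to an exact solution by the Duhamel/Neumann-series iteration $\ex_t = W^*\Phi_t W + \int_0^t \ex_{t-\tau}\,(\text{error at }\tau)\,\mathrm{d}\tau$, whose convergence is guaranteed precisely because each $\HT^{s,p}_{FIO}(\Rn)$ is invariant under the parametrix and the error gains a derivative, so the iteration is a contraction on $C([-t_0,t_0];\Hps)$ for $t_0$ small, and globally by composing; (iv) read off \eqref{it:smooth1}--\eqref{it:smooth3}: the $C^k$ regularity in $t$ and the estimate $\|\partial_t^k\ex_t f\|_{\HT^{s-k,p}_{FIO}}\lesssim\|f\|_{\Hps}$ follow by differentiating the equation $D_t\ex_t f = b(x,D)\ex_t f$ repeatedly and using that $b(x,D):\HT^{s,p}_{FIO}(\Rn)\to\HT^{s-1,p}_{FIO}(\Rn)$ boundedly (boundedness of smooth order-one operators on the $\Hp$ scale, essentially the invariance of $\Hp$ under FIOs of order zero from \cite{HaPoRo20}); uniqueness follows from an energy/Gronwall argument in $T^2_0(\Sp)=L^2(\Tp)$ combined with density, since $\Sw(\Rn)\subseteq\Hps$.

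For uniqueness of the family $(\ex_t)$: two candidate families both satisfying \eqref{it:smooth3} have difference solving the homogeneous equation with zero data, so by the $L^2$ well-posedness of $(D_t-b(x,D))u=0$ — which holds because $b$ is a real-valued, elliptic, essentially self-adjoint-modulo-lower-order symbol of order $1$, so $b(x,D)$ generates a strongly continuous group on $L^2(\Rn)$ — the difference vanishes on $L^2\cap\Hps$, hence on all of $\Hps$ by density and the a priori bound \eqref{it:smooth2}.

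The hard part will be step (i)–(ii): establishing that the conjugated bicharacteristic flow is bounded on the \emph{weighted} tent spaces $T^p_s(\Sp)$, and that the parametrix error genuinely gains a full derivative, both uniformly in $t$ on compact intervals. The difficulty is twofold — the flow $\chi_t$ is a symplectomorphism (hence automatically unitary on $L^2(\Tp)=T^2_0$) but tent-space boundedness is a genuinely subtler metric statement, requiring that $\chi_t$ induce a bi-Lipschitz map on $\Sp$ with respect to the contact metric $d$ of \eqref{eq:metric}; and the asymptotic (rather than exact) homogeneity of $b$ means the flow mixes fiber scales slightly, so one must carefully track the $\Upsilon(|\xi|/|\eta|)$-type decay in Proposition \ref{prop:offsingbound} and check it survives the non-homogeneous perturbation. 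The analogous $L^2$ estimates in the existing literature (via $TT^*$) do not suffice here and this is where the bulk of the work in Sections \ref{sec:flow}–\ref{sec:parametrix} lies.
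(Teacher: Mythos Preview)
Your overall strategy matches the paper's: build the parametrix $U_t = W^*\F_t W$ via the bicharacteristic flow, show the error $(D_t - b(x,D))U_t$ is order zero on $\Hps$, iterate it away, and deduce \eqref{it:smooth1}--\eqref{it:smooth3} by repeatedly applying $b(x,D)$. There are minor tactical differences: for boundedness of the flow on $T^p_s(\Sp)$ you invoke Proposition~\ref{prop:offsingbound} with the bi-Lipschitz $\hchi$ induced by the flow, whereas the paper argues more directly via the change-of-aperture Lemma~\ref{lem:aperture}, showing $\Phi_t$ maps each cone $\Gamma_1(x,\omega)$ into a dilated cone $\Gamma_{\alpha(t)}(\chi_t(x,\omega))$ by comparing $\Phi_t$ with the genuinely homogeneous flow of the limit $a$ and controlling the discrepancy in the contact metric by a Gronwall argument; and your Duhamel integral uses $\ex_{t-\tau}$ (a fixed-point formulation) where the paper uses the parametrix $U_{t-\tau}$, obtaining explicit factorial bounds $M_0^{j+1}|t|^j/j!$ on the iterates and hence global-in-time convergence without the ``small $t_0$, then compose'' step.

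Your uniqueness argument differs more substantively and, as written, hides a gap. The paper does \emph{not} use an $L^2$ energy/Gronwall argument; it applies the existence construction to the symbol $-b_1$, where $b_1(x,D)=b(x,D)^*$ and $b_1\in\A^2S^1_{1,1/2}$ by Lemma~\ref{lem:Aduality}, solves the adjoint equation backward from a final time, and pairs against the putative solution. Your sketch can be completed, but the phrase ``essentially self-adjoint modulo lower order'' is doing real work: it is precisely the $\A^2$ hypothesis, not merely $b\in S^1_{1,1/2}$ real-valued, that forces $b(x,D)-b(x,D)^*$ to lie in $\mathrm{Op}(S^0_{1,1/2})$ and hence be $L^2$-bounded; for a generic real symbol in $S^1_{1,1/2}$ the difference is order $1/2$ and the energy inequality $\partial_t\|u\|_{L^2}^2\lesssim\|u\|_{L^2}^2$ does not close. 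You also need a regularization step to make sense of $\partial_t\|u(t)\|_{L^2}^2$ when $u$ has only the regularity $C(\R;L^2)\cap C^1(\R;H^{-1})$.
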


Note that the $\ex_{t}$, $t\in\R$, are the solution operators to the equation $D_{t}w(t)=b(x,D)w(t)$. We prove uniqueness of the solutions to this equation because it is of independent interest. However, for the results in Sections \ref{sec:divform} and \ref{sec:standardform} we only need the existence of such a family.

The following theorem is our main result for pseudodifferential operators with rough coefficients. It was proved in~\cite{Rozendaal22}, based in turn on results from \cite{Rozendaal20}. Recall that $s(p)=\frac{n-1}{2}|\frac{1}{2}-\frac{1}{p}|$ for $p\in[1,\infty]$.

\begin{theorem}\label{thm:roughpseudo}
Let $r>0$, $m\in\R$, $p\in(1,\infty)$ and $a\in \Cr S^{m}_{1,1/2}$. For $\veps\in(0,r/2]$, set
\[
\rho:=\begin{cases}
0&\text{if }2s(p)<r/2,\\
2s(p)-r/2+\veps&\text{if }2s(p)\geq r/2.
\end{cases}
\]
Then 
\begin{equation}\label{eq:roughpseudo}
a(x,D):\HT^{s+m+\rho,p}_{FIO}(\Rn)\to \Hps
\end{equation}
for $-r/2+s(p)-\rho<s<r-s(p)$. If $r\in\N$, then \eqref{eq:roughpseudo} also holds for $s=r-s(p)$.
\end{theorem}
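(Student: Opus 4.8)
The plan is to combine the parabolic frequency cut-off characterization \eqref{eq:equivchar} of $\Hps$ with the symbol smoothing of Lemma~\ref{lem:smoothing} and classical $L^{p}$ estimates for rough pseudodifferential operators. First I would reduce to the case $m=0$: since the symbol of $a(x,D)\lb D\rb^{-m}$ is $a(x,\xi)\lb\xi\rb^{-m}\in\Cr S^{0}_{1,1/2}$ and $\lb D\rb^{m}\colon\HT^{s+m+\rho,p}_{FIO}(\Rn)\to\HT^{s+\rho,p}_{FIO}(\Rn)$ is an isometry by Definition~\ref{def:HardyFIO}, it suffices to prove $a(x,D)\colon\HT^{s+\rho,p}_{FIO}(\Rn)\to\Hps$ for $a\in\Cr S^{0}_{1,1/2}$. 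Then I would apply the splitting $a=a^{\sharp}+a^{\flat}$ of \eqref{eq:asharp}, so that $a^{\sharp}\in\A^{r}S^{0}_{1,1/2}$ and $a^{\flat}\in\Cr S^{-r/2}_{1,1/2}$.

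The remainder $a^{\flat}$ has negative order and can be handled crudely, bypassing the $\HT_{FIO}$-structure entirely: by the classical boundedness $\Cr S^{-r/2}_{1,1/2}\colon W^{\sigma-r/2,p}(\Rn)\to W^{\sigma,p}(\Rn)$ for $-r/2<\sigma<r$ \cite{Bourdaud82,Marschall88,Taylor91}, applied with $\sigma=s+s(p)$, together with the Sobolev embeddings \eqref{eq:Sobolev}, one obtains $a^{\flat}(x,D)\colon\HT^{s+\rho,p}_{FIO}(\Rn)\to\Hps$; here the hypotheses on $s$ guarantee $\sigma\in(-r/2,r)$, and the input exponent matches precisely because $\rho\ge 2s(p)-r/2$ by definition of $\rho$. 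When $r\in\N$ one replaces $\Cr$ by $W^{r,\infty}(\Rn)$ to reach the endpoint $\sigma=r$.

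The heart of the argument is the smooth part $a^{\sharp}\in\A^{r}S^{0}_{1,1/2}$, for which the case $\rho=0$, i.e.\ $a^{\sharp}(x,D)\colon\Hps\to\Hps$ for $-r/2+s(p)<s<r-s(p)$, is decisive. Using \eqref{eq:equivchar}, it suffices to bound the low-frequency term $q_{0}(D)a^{\sharp}(x,D)f$ in $L^{p}(\Rn)$ --- routine, as its input has bounded frequencies --- and the square function $\big(\int_{S^{n-1}}\|\ph_{\w}(D)a^{\sharp}(x,D)f\|_{\HT^{s,p}(\Rn)}^{p}\,\ud\w\big)^{1/p}$. For the latter I would run a second dyadic (parabolic) decomposition of $a^{\sharp}(x,D)$: on a plate $\{|\xi|\sim 2^{k},\ |\hat\xi-\nu|\lesssim 2^{-k/2}\}$ the localised symbol becomes, after parabolic rescaling, a uniformly bounded $S^{0}$ symbol whose $\Cr$ spatial regularity is degraded in a controlled way. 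The point that $\delta=\tfrac{1}{2}$ buys is that $a^{\sharp}(x,D)$ spreads frequency by only $O(\lb\xi\rb^{1/2})$, i.e.\ by $O(1)$ plates, so $\ph_{\w}(D)a^{\sharp}(x,D)$ is negligible unless $\w$ lies within $O(2^{-k/2})$ of the active plates; off-singularity bounds in the spirit of Proposition~\ref{prop:offsingbound}, together with the $\Upsilon$-decay across scales, then make the double sum over scales and plates summable, and on each plate one is left with the classical $L^{p}(\Rn)$ bound for an $S^{0}$ operator. Reassembling via the $L^{p}(S^{n-1})$-structure yields the estimate for $a^{\sharp}$.

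Finally I would recover the full range of $s$, the endpoint, and the loss by interpolating rough symbol classes, via Corollary~\ref{cor:interpolation}: at $p=2$ the statement is exactly the classical $L^{2}$ boundedness of $\Cr S^{0}_{1,1/2}$ on $H^{\sigma}(\Rn)$ for $-r/2<\sigma<r$, while a cruder bound with derivative loss holds over a wide range of $s$; interpolating these produces the sharp interval $-r/2+s(p)-\rho<s<r-s(p)$ and the loss $\rho=2s(p)-r/2+\veps$ when $2s(p)\ge r/2$, the $\veps$ accounting for a logarithmic divergence at the borderline of the angular/scale summation. The main obstacle I anticipate is precisely this bookkeeping in the second dyadic decomposition of $a^{\sharp}(x,D)$: separating the genuinely FIO-sensitive plate behaviour from the part controlled by plain $L^{p}$ theory, tracking how the $\Cr$ norm transforms under parabolic rescaling (which is what forces the interval and the appearance of $\rho$), and keeping every error sum convergent.
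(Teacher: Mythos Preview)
There is a genuine gap in the symbol smoothing step. Lemma~\ref{lem:smoothing} is stated for $a\in\Cr S^{m}_{1,0}$, and the order gain for $a^{\flat}$ depends critically on $\delta=0$: the estimate \eqref{eq:infestimate} reads $\|(1-\ph)(2^{-k/2}D)a(\cdot,\xi)\|_{L^{\infty}}\lesssim 2^{-kr/2}\|a(\cdot,\xi)\|_{\Cr}$, and for $a\in\Cr S^{0}_{1,1/2}$ one has $\|a(\cdot,\xi)\|_{\Cr}\lesssim\lb\xi\rb^{r/2}$ by Definition~\ref{def:rough}\,\eqref{it:symbol2}. With $|\xi|\sim 2^{k}$ this gives only $|a^{\flat}(x,\xi)|\lesssim 1$, not $\lb\xi\rb^{-r/2}$; the remainder stays in $\Cr S^{0}_{1,1/2}$ and gains no order. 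Smoothing at a finer scale $|\xi|^{\gamma}$ with $\gamma>1/2$ would recover a gain, but then $a^{\sharp}\in S^{0}_{1,\gamma}$, and boundedness of such operators on $\Hp$ is not available. So the splitting, as you use it, does not separate a smooth part from a genuinely lower-order rough part.

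This is compounded by an inversion of the difficulty. If the smoothing did produce $a^{\sharp}\in\A^{r}S^{0}_{1,1/2}\subseteq S^{0}_{1,1/2}$, then $a^{\sharp}(x,D)$ would be bounded on $\Hps$ for \emph{all} $s\in\R$ and $p\in[1,\infty]$ by the general result for smooth $S^{0}_{1,1/2}$ symbols (see the proof of Lemma~\ref{lem:inverse}, which cites \cite[Theorem~6.10]{HaPoRo20}); your elaborate plate-by-plate argument would be unnecessary there. The paper's own proof is a direct citation to \cite[Theorem~5.1]{Rozendaal21b}, and the strategy described for those companion papers---Littlewood--Paley theory, the characterization \eqref{eq:equivchar}, and interpolation of rough symbol classes---is applied to the \emph{rough} symbol $a$ itself, not to a smoothed proxy. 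Your plate decomposition and parabolic rescaling ideas are in the right spirit, but they must confront the limited $x$-regularity head-on; routing through Lemma~\ref{lem:smoothing} loses exactly the information you need.
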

\begin{proof}
The result follows from~\cite[Theorem 5.1] {Rozendaal22} upon noting that, by \eqref{eq:Zygmund1} and \eqref{eq:Zygmund2}, $\Cr S^{m}_{1,1/2}\subseteq C^{r}_{*}S^{m}_{1,1/2}$ for all $r>0$, and $\Cr S^{m}_{1,1/2}\subseteq \HT^{r,\infty}S^{m}_{1,1/2}$ if $r\in\N$. 
\end{proof}

Next, we prove a proposition, about approximate square roots of the smooth term in the symbol splitting, which will be used in Sections \ref{sec:divform} and \ref{sec:standardform} to move from a second order operator to a first order operator to which Theorem \ref{thm:mainsmooth} applies.

\begin{proposition}\label{prop:squareroot}
Let $A\in \Crtwo S^{2}_{1,0}$ be non-negative, elliptic and homogeneous of degree $2$ for $|\xi|\geq1$. Then there exist a real-valued elliptic $b\in \A^{2}S^{1}_{1,1/2}$ and an $e\in S^{1}_{1,1/2}$ with the following properties:
\begin{enumerate}
\item\label{it:squareroot1} $A^{\sharp}(x,D)=b(x,D)^{2}+e(x,D)$.
\item\label{it:squareroot2} $b$ is asymptotically homogeneous of degree $1$ with real-valued limit $a\in \Crtwo S^{1}_{1,0}$ such that $a(x,\xi)=\sqrt{A(x,\xi)}$ for all $x,\xi\in\Rn$ with $|\xi|\geq 1$.
\end{enumerate}
\end{proposition}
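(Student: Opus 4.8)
The plan is to take for $b$ a square root of the smoothed symbol $A^{\sharp}$; the crux is that, although a crude square root of an $S^{2}_{1,1/2}$ symbol only produces an identity of the form \eqref{it:squareroot1} with $e$ of order $3/2$, the extra structure $A^{\sharp}\in\A^{2}S^{2}_{1,1/2}$ makes the leading commutator error gain a full derivative, so that $e$ ends up of order $1$. First I would apply Lemma \ref{lem:smoothing} to $A\in\Crtwo S^{2}_{1,0}$: since $A$ is elliptic, homogeneous of degree $2$ for $|\xi|\geq1$, and $r=2$, the symbol $A^{\sharp}$ lies in $\A^{2}S^{2}_{1,1/2}$, is elliptic, and is asymptotically homogeneous of degree $2$ with limit $A$, while $A^{\flat}=A-A^{\sharp}\in\Crtwo S^{1}_{1,1/2}$. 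Non-negativity and ellipticity of $A$ give $A(x,\xi)\geq\kappa|\xi|^{2}$ for $|\xi|\geq1$, so a standard modification of $A$ on $\{|\xi|<1\}$ yields a positive elliptic $\tilde{A}\in\Crtwo S^{2}_{1,0}$ equal to $A$ for $|\xi|\geq1$; set $a:=\sqrt{\tilde{A}}$. Then $a\in\Crtwo S^{1}_{1,0}$ is real-valued, elliptic, homogeneous of degree $1$ for $|\xi|\geq1$, and $a(x,\xi)=\sqrt{A(x,\xi)}$ for $|\xi|\geq1$, as \eqref{it:squareroot2} requires of $a$. Next, ellipticity of $A^{\sharp}$ gives $R>0$ with $A^{\sharp}(x,\xi)\gtrsim\lb\xi\rb^{2}$ for $|\xi|\geq R$; fixing $g\in\Ccinf(\Rn)$ with $g\geq0$, $\supp g\subseteq\{|\xi|<2R\}$ and $A^{\sharp}+g\gtrsim\lb\xi\rb^{2}$ on all of $\Rn\times\Rn$, I set $b:=\sqrt{A^{\sharp}+g}$, so $b=\sqrt{A^{\sharp}}$ for $|\xi|\geq 2R$. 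Writing $b=\lb\xi\rb\sqrt{(A^{\sharp}+g)\lb\xi\rb^{-2}}$ and noting that $(A^{\sharp}+g)\lb\xi\rb^{-2}\in\A^{2}S^{0}_{1,1/2}$ takes values in a compact subset of $(0,\infty)$, the stability of the classes $\A^{r}S^{m}_{1,\delta}$ under division by elliptic symbols and composition with smooth functions (see \cite{Taylor91}) gives $b\in\A^{2}S^{1}_{1,1/2}$; it is real-valued and elliptic because $b^{2}=A^{\sharp}+g\gtrsim\lb\xi\rb^{2}$.

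Next I would verify that $b$ is asymptotically homogeneous of degree $1$ with limit $a$, i.e.~check \eqref{it:asymphom1} and \eqref{it:asymphom2} of Definition \ref{def:asymphom}. For $|\xi|\geq2R$ one has $b=\sqrt{A^{\sharp}}$, so
\[
(\xi\cdot\partial_{\xi}-1)b=\frac{(\xi\cdot\partial_{\xi}-2)A^{\sharp}}{2\sqrt{A^{\sharp}}}.
\]
Since $(\xi\cdot\partial_{\xi}-2)A^{\sharp}\in S^{1}_{1,1/2}$ by the asymptotic homogeneity of $A^{\sharp}$ and $\sqrt{A^{\sharp}}\gtrsim\lb\xi\rb$, this lies in $S^{0}_{1,1/2}$; on $\{|\xi|\leq 2R\}$, $b$ is a genuine smooth symbol and nothing is needed, so $(\xi\cdot\partial_{\xi}-1)b\in S^{0}_{1,1/2}$. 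Likewise, for $|\xi|\geq2R$,
\[
a-b=\sqrt{A}-\sqrt{A^{\sharp}}=\frac{A^{\flat}}{\sqrt{A}+\sqrt{A^{\sharp}}},
\]
with $A^{\flat}\in\Crtwo S^{1}_{1,1/2}$ and the denominator $\gtrsim\lb\xi\rb$ and $\Crtwo$ in $x$, so $a-b\in\Crtwo S^{0}_{1,1/2}$; the region $\{|\xi|\leq 2R\}$ is again automatic. This establishes \eqref{it:squareroot2}.

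For \eqref{it:squareroot1} I would invoke the composition calculus for $S^{1}_{1,1/2}$: the operator $b(x,D)^{2}$ has symbol
\[
c=b^{2}+\frac{1}{i}\sum_{j=1}^{n}\partial_{\xi_{j}}b\,\partial_{x_{j}}b+r_{2},
\]
where $r_{2}\in S^{1}_{1,1/2}$ is the remainder after the first two terms, of order $1+1-2(1-\tfrac{1}{2})=1$. Now $b^{2}=A^{\sharp}+g$ with $g\in\Ccinf(\Rn)\subseteq S^{1}_{1,1/2}$, and the decisive point is that $\partial_{\xi_{j}}b\in S^{0}_{1,1/2}$ while $\partial_{x_{j}}b\in S^{1}_{1,1/2}$ — the latter because $b\in\A^{2}S^{1}_{1,1/2}$ forces $\|\partial_{\xi}^{\alpha}b(\cdot,\xi)\|_{\Crtwo}\lesssim\lb\xi\rb^{1-|\alpha|}$, so two $x$-derivatives of $b$ stay of order $1$, not the order $3/2$ a generic $S^{1}_{1,1/2}$ symbol would allow. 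Hence $\frac{1}{i}\sum_{j}\partial_{\xi_{j}}b\,\partial_{x_{j}}b\in S^{1}_{1,1/2}$, and setting $e:=A^{\sharp}-c=-g-\frac{1}{i}\sum_{j}\partial_{\xi_{j}}b\,\partial_{x_{j}}b-r_{2}\in S^{1}_{1,1/2}$ we get $A^{\sharp}(x,D)=b(x,D)^{2}+e(x,D)$, which is \eqref{it:squareroot1}.

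The main obstacle — and the only place where the $C^{1,1}$, i.e.~$r=2$, hypothesis is genuinely used — is this last step: without the $\A^{2}$ structure one only has $\partial_{x}b\in S^{3/2}_{1,1/2}$, the commutator term is then of order $3/2$, and $e$ fails to be of order $1$. The remaining ingredients are routine symbol bookkeeping: stability of the classes $\A^{r}S^{m}_{1,\delta}$ and $\Cr S^{m}_{1,\delta}$ under products, reciprocals of elliptic elements, and composition with smooth functions, together with the harmless modification of $A$ near $\xi=0$.
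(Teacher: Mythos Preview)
Your proof is correct and follows essentially the same route as the paper: take $b$ to be (a regularized) square root of $A^{\sharp}$, use the $\A^{2}$ structure to see that the first-order term $\sum_{j}\partial_{\xi_{j}}b\,\partial_{x_{j}}b$ in the composition formula lands in $S^{1}_{1,1/2}$ rather than $S^{3/2}_{1,1/2}$, and verify asymptotic homogeneity via $(\xi\cdot\partial_{\xi}-2)A^{\sharp}\in S^{1}_{1,1/2}$ and $A^{\flat}\in\Crtwo S^{1}_{1,1/2}$. The only cosmetic difference is that the paper defines $b:=\sqrt{A^{\sharp}}\,\chi(\xi)$ with a high-frequency cutoff $\chi$, whereas you add a compactly supported bump $g$ to make $A^{\sharp}+g$ globally positive before taking the root; both yield $b^{2}-A^{\sharp}\in S^{-\infty}$ and the rest of the argument is identical.
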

\begin{proof}
Recall the formula for the product of two pseudodifferential operators (see e.g.~\cite[Proposition 0.3.C]{Taylor91}): for $m_{1},m_{2}\in\R$ and $a_{1}\in S^{m_{1}}_{1,1/2}$, $a_{2}\in S^{m_{2}}_{1,1/2}$, one has $a_{1}(x,D)a_{2}(x,D)=a_{3}(x,D)$, where $a_{3}\in S^{m_{1}+m_{2}}_{1,1/2}$ has the following asymptotic expansion:
\begin{align}\label{eq:asymprod}
&a_{3}(x,\xi) =  \sum_{\alpha\in\Z_{+}^{n},  |\alpha| \leq N}\frac{i^{|\alpha|}}{\alpha!}D_{\xi}^{\alpha}a_{1}(x,\xi)D_{x}^{\alpha}a_{2}(x,\xi) + e_N(x, \xi), \\
\nonumber e_N(x, \xi) &= \frac{i^{N+1}}{N!} \Big( \int_0^1 (1-t)^N e^{itD_\xi \cdot D_y} \ud t \,  (D_\xi \cdot D_y)^{N+1} \Big) a_1(x, \xi) a_2(y, \eta) \Big|_{x=y, \xi = \eta}
\end{align}
for all $N\in\Z_{+}$, where the remainder term $e_N$ is in general in $S^{m_{1} + m_{2} - (N+1)/2}_{1,1/2}$. By Lemma \ref{lem:smoothing}, $A^{\sharp}\in \A^{2}S^{2}_{1,1/2}$ is elliptic, and because $A^{\flat}\in \Crtwo S^{1}_{1,1/2}$ is lower order and $A$ is non-negative, there exist $\kappa,R>0$ such that $A^{\sharp}(x,\xi)=A(x,\xi)-A^{\flat}(x,\xi)\geq \kappa|\xi|^{2}$ for $x,\xi\in\Rn$ with $|\xi|>R$. Let $\chi\in C^{\infty}(\Rn)$ be real-valued and such that $\chi(\xi)=1$ for $|\xi|>2R$, and $\chi(\xi)=0$ for $|\xi|\leq R$. Now, for $x,\xi\in\Rn$ set 
\[
b(x,\xi):=\begin{cases}
\sqrt{A^{\sharp}(x,\xi)}\chi(\xi)&\text{if }|\xi|>R,\\
0&\text{if }|\xi|\leq R.
\end{cases}
\]
It is straightforward to check that $b\in A^{2}S^{1}_{1,1/2}$, and \eqref{eq:asymprod} with $N=0$ implies that in fact $A^{\sharp}(x,D)-b_{1}(x,D)^{2}=e(x,D)$ for some $e\in S^{1}_{1,1/2}$. Clearly $b$ is real-valued and elliptic. 

It remains to prove \eqref{it:squareroot2}. Since $A$ is non-negative, elliptic and homogeneous of degree $2$ for $|\xi|\geq1$, one has $A(x,\xi)\geq A(x,\hat{\xi})>0$ for all $x,\xi\in\Rn$ with $|\xi|\geq 1$. Hence $a(x,\xi):=\sqrt{A(x,\xi)}$ is well defined, and one can cut off $\sqrt{A(x,\xi)}$ smoothly for $|\xi|<1$ to extend $a$ to a real-valued element of $\Crtwo S^{1}_{1,0}$ that is homogeneous of degree $1$ for $|\xi|\geq1$ by definition. 

We have to show that $b$ is asymptotically homogeneous of degree $1$ with limit $a$. First note that for $x,\xi\in\Rn$ with $|\xi|>R$ one has, since $A^{\sharp}$ is asymptotically homogeneous of degree $2$ by Lemma \ref{lem:smoothing}, and because $A^{\sharp}(x,\xi)\geq \kappa \lb\xi\rb^{2}$,
\begin{align*}
|(\xi\cdot \partial_{\xi}-1)b(x,\xi)|&=\tfrac{1}{2}\Big|(A^{\sharp}(x,\xi))^{-1/2}\xi\cdot \partial_{\xi}A^{\sharp}(x,\xi)-\sqrt{A^{\sharp}(x,\xi)}\Big|\\
&=\tfrac{1}{2}|(A^{\sharp}(x,\xi))^{-1/2}(\xi\cdot \partial_{\xi}-2)A^{\sharp}(x,\xi)|\lesssim 1.
\end{align*}
Similar estimates hold for the derivatives of $(\xi\cdot \partial_{\xi}-1)b(x,\xi)$. This suffices to show that $(x,\xi)\mapsto (\xi\cdot \partial_{\xi}-1)b(x,\xi)$ is a symbol in $S^{0}_{1,1/2}$, as is needed for Definition \ref{def:asymphom} \eqref{it:asymphom1}.

Finally, to see that $a-b\in \Crtwo S^{0}_{1,\delta}$, it suffices to obtain the appropriate estimates for $x,\xi\in\Rn$ with $|\xi|>\max(R,1)$. Then we can use the definition of $a$ and $b$, the fact that $A^{\flat}\in \Crtwo S^{1}_{1,1/2}$ by Lemma \ref{lem:smoothing}, and that $A$ and $A^{\sharp}$ are elliptic, to estimate
\[
|a(x,\xi)-b(x,\xi)|=\Big|\sqrt{A(x,\xi)}-\sqrt{A^{\sharp}(x,\xi)}\Big|=\frac{|A^{\flat}(x,\xi)|}{\sqrt{A(x,\xi)}+\sqrt{A^{\sharp}(x,\xi)}}\lesssim 1.
\]
Similar estimates hold for the derivatives of $a-b$. This concludes the proof.
\end{proof}

We conclude this section with two basic lemmas about pseudodifferential calculus. The proofs are standard, but we include them for the convenience of the reader and because we work in a slightly nonstandard setting. The first lemma concerns the invertibility of elliptic pseudodifferential operators on Hardy spaces for Fourier integral operators. 

\begin{lemma}\label{lem:inverse}
Let $m\in\R$, and let $b\in S^{m}_{1,1/2}$ be real-valued and elliptic. Then for all $p\in[1,\infty]$ and $s\in\R$ there exists a $c>0$ such that $b(x,D)+ic:\HT^{s+m,p}_{FIO}(\Rn)\to\Hps$ is invertible.
\end{lemma}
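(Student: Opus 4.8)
We may assume $m\geq 0$, since for $m<0$ one has $\HT^{s+m,p}_{FIO}(\Rn)\supseteq\Hps$ and $b(x,D)+ic$ is really of order zero, a case in which the argument below only simplifies. The plan is to produce a symbolic two-sided parametrix for $b(x,D)+ic$, to show that $b(x,D)+ic$ is genuinely invertible on $L^{2}(\Rn)=\mathcal{H}^{2}_{FIO}(\Rn)$ once $c$ is large, to deduce that its inverse is again a pseudodifferential operator of order $-m$, and finally to transfer boundedness to all the spaces $\Hps$ using the mapping properties of smooth pseudodifferential operators on the Hardy spaces for Fourier integral operators. Since $b$ is real-valued, $\abs{b(x,\xi)+ic}\geq c>0$ for all $x,\xi\in\Rn$, so $g_{0}:=1/(b+ic)$ is smooth on $\R^{2n}$; combining this with the ellipticity of $b$ and differentiating, one gets $g_{0}\in S^{-m}_{1,1/2}$. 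Iterating the composition formula \eqref{eq:asymprod} for $S^{m}_{1,1/2}$-symbols (which gains half a derivative per term) starting from $g_{0}$, one constructs in the standard way a two-sided parametrix $q(x,D)$ with $q\in S^{-m}_{1,1/2}$, $q(x,D)(b(x,D)+ic)=\Id+R_{-}$ and $(b(x,D)+ic)q(x,D)=\Id+R_{+}$, where $R_{\pm}$ are regularizing (of order $-\infty$). In particular $b(x,D)+ic$ is elliptic of order $m$, and the parametrix gives the a priori estimate $\norm{u}_{\HT^{m,2}_{FIO}(\Rn)}\lesssim\norm{(b(x,D)+ic)u}_{L^{2}(\Rn)}+\norm{u}_{L^{2}(\Rn)}$.

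To see that $b(x,D)+ic:\HT^{m,2}_{FIO}(\Rn)\to L^{2}(\Rn)$ is bijective for $c$ large, use that, $b$ being real, $b(x,D)-b(x,D)^{*}$ has symbol in $S^{m-1/2}_{1,1/2}$; hence the self-adjoint operator $C:=\tfrac{1}{2i}(b(x,D)-b(x,D)^{*})$ has order $m-1/2$, and interpolation yields $\abs{\lb Cu,u\rb}\lesssim\norm{u}^{2}_{\HT^{(2m-1)/4,2}_{FIO}(\Rn)}\leq\eps\norm{u}^{2}_{\HT^{m,2}_{FIO}(\Rn)}+C_{\eps}\norm{u}^{2}_{L^{2}(\Rn)}$ for every $\eps>0$, where $C_{\eps}\lesssim\eps^{-\mu}$ with $\mu:=\tfrac{2m-1}{2m+1}<1$ (when $m<1/2$ the operator $C$ is bounded on $L^{2}(\Rn)$ and this is trivial). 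Now $\Imag\lb(b(x,D)+ic)u,u\rb=\lb Cu,u\rb+c\norm{u}^{2}_{L^{2}(\Rn)}$, so $c\norm{u}^{2}_{L^{2}}\leq\norm{(b(x,D)+ic)u}_{L^{2}}\norm{u}_{L^{2}}+\abs{\lb Cu,u\rb}$; inserting the two displayed estimates, taking $\eps=1/c$, and absorbing $\norm{u}_{\HT^{m,2}_{FIO}}$ via the a priori estimate, one obtains for $c$ large that $\norm{u}_{L^{2}}\lesssim c^{-1}\norm{(b(x,D)+ic)u}_{L^{2}}$, and hence $\norm{u}_{\HT^{m,2}_{FIO}}\lesssim\norm{(b(x,D)+ic)u}_{L^{2}}$. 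Running the same argument for $(b(x,D)+ic)^{*}=b(x,D)^{*}-ic$, which has the same structure, shows its kernel in $L^{2}(\Rn)$ is trivial, so the range of $b(x,D)+ic$ is dense as well as closed; thus $b(x,D)+ic$ is bijective.

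Let $T:=(b(x,D)+ic)^{-1}:L^{2}(\Rn)\to\HT^{m,2}_{FIO}(\Rn)$. From the left parametrix, $T=q(x,D)-R_{-}T$; since $T$ maps into $\HT^{m,2}_{FIO}(\Rn)$ and $R_{-}$ is of order $-\infty$, $R_{-}T$ is regularizing, so $T$ coincides with $q(x,D)$ modulo a regularizing operator, whence $T\in\Op S^{-m}_{1,1/2}$. By Theorem \ref{thm:roughpseudo}, applied with the smoothness parameter taken arbitrarily large so that $\rho=0$ (together with the corresponding statements for $p\in\{1,\infty\}$), every smooth symbol in $S^{k}_{1,1/2}$ induces a bounded operator $\HT^{t+k,p}_{FIO}(\Rn)\to\HT^{t,p}_{FIO}(\Rn)$ for all $p\in[1,\infty]$ and $t,k\in\R$; in particular $b(x,D)+ic:\HT^{s+m,p}_{FIO}(\Rn)\to\Hps$ and $T:\Hps\to\HT^{s+m,p}_{FIO}(\Rn)$ are bounded. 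Since $T$ and $b(x,D)+ic$ are mutually inverse on the dense subspace $\Sw(\Rn)$, they are inverse to each other as maps between $\HT^{s+m,p}_{FIO}(\Rn)$ and $\Hps$, as required. The only delicate point is the $L^{2}$-invertibility for $c$ large: as $b(x,D)+ic$ has order $m>0$, the zeroth-order term $ic$ does not on its own control the order-$m$ part, and one must combine the energy identity with the elliptic a priori estimate and a $c$-dependent choice of $\eps$, exploiting that the antisymmetric part of $b(x,D)$ has order strictly less than $m$; this is where $b$ being real, and the structure of an elliptic operator of order $m$ plus a purely imaginary constant, are used. Everything else is routine symbolic calculus and an appeal to the mapping properties established earlier on the Hardy spaces for Fourier integral operators.
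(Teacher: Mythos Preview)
Your approach is genuinely different from the paper's, and most of it is sound: the $L^{2}$-energy argument for injectivity and surjectivity, using that $b(x,D)-b(x,D)^{*}$ has order $m-\tfrac{1}{2}$ together with the uniform elliptic a priori estimate, is correct and gives bijectivity $\HT^{m,2}_{FIO}\to L^{2}$ for $c$ large.

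The gap is in the passage from $L^{2}$-invertibility to boundedness of the inverse on $\Hps$ for general $p$. You write $T=q(x,D)-R_{-}T$ and conclude ``$R_{-}T$ is regularizing, whence $T\in\Op S^{-m}_{1,1/2}$.'' But the fact that $R_{-}T:L^{2}\to H^{N}$ for all $N$ does \emph{not} imply $R_{-}T\in\Op S^{-\infty}$: an operator with smoothing Sobolev mapping properties need not have a symbol at all, let alone one with the required off-diagonal kernel decay. To conclude $T\in\Op S^{-m}_{1,1/2}$ you would need something like Beals' commutator characterization of pseudodifferential operators (which does hold in the $(1,1/2)$ calculus), or a careful argument showing $R_{-}TR_{+}$ has a Schwartz kernel in $S^{-\infty}$. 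Without this, you cannot transfer boundedness of $T$ to $\Hps$ for $p\neq 2$: there is no direct embedding between $\HT^{\cdot,2}_{FIO}$ and $\HT^{\cdot,p}_{FIO}$, and a Fredholm argument fails on $\Rn$ since smoothing operators are not compact.

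The paper avoids this entirely by a quantitative Neumann-series argument carried out directly on $\Hps$. One computes $(b(x,D)+ic)\,(b+ic)^{-1}(x,D)=I+b_{c}(x,D)$ via the composition formula, and shows by inspection of the asymptotic expansion that each $S^{0}_{1,1/2}$-seminorm of $b_{c}$ is bounded by a negative power of $c$ (the key point being $|b+ic|^{-1}\lesssim\min(c^{-1},\lb\xi\rb^{-m})$, so one can split off a factor $c^{-1/(2m)}$). Since the operator norm of $b_{c}(x,D)$ on $\Hps$ is controlled by finitely many of these seminorms, one gets $\|b_{c}(x,D)\|_{\La(\Hps)}<1$ for $c$ large, and $(b(x,D)+ic)$ has a bounded right inverse $(b+ic)^{-1}(x,D)(I+b_{c}(x,D))^{-1}$ on $\Hps$; the left inverse is obtained symmetrically. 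This is shorter and never requires identifying the inverse as a pseudodifferential operator.
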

\begin{proof}
It is a consequence (see~\cite[Lemma 3.2 and Remark 4.2]{Rozendaal20}) of Theorem 6.10 in~\cite{HaPoRo20} and the remark following it that each $a\in S^{m}_{1,1/2}$ satisfies $a(x,D):\HT^{s+m,p}_{FIO}(\Rn)\to\Hps$, and that the operator norm of $a(x,D)$ is bounded by finitely many of the $S^{m}_{1,1/2}$ seminorms of $a$. Since $b$ is real-valued and elliptic, $\{(b+ic)^{-1}\mid c\geq1\}$ is a uniformly bounded collection in $S^{-m}_{1,1/2}$, and $\{(b+ic)^{-1}(x,D)\mid c\geq1\}\subseteq\La(\Hps,\HT^{s+m,p}_{FIO}(\Rn))$ is uniformly bounded. Moreover, for each $c\geq 1$ it follows from \eqref{eq:asymprod} that 
\[
(b(x,D)+ic)(b+ic)^{-1}(x,D)=I+b_{c}(x,D)
\]
for some $b_{c}\in S^{0}_{1,1/2}$ with an asymptotic expansion given by
\begin{equation}\label{eq:asymptoticb}
b_{c}(x,\xi) = \sum_{\alpha\in\Z_{+}^{n}, 0 < |\alpha| \leq N}\frac{i^{|\alpha|}}{\alpha!}D_{\xi}^{\alpha}b(x,\xi)D_{x}^{\alpha}(b(x,\xi)+ic)^{-1} + e_{c, N}(x, \xi)
\end{equation}
where $e_{c, N}$ is an error term as in \eqref{eq:asymprod}. 
Now, for each $\alpha\in\Z_{+}$ with $|\alpha|=1$ we can use the ellipticity of $b\in S^{m}_{1,1/2}$ to write
\begin{align*}
&|D^{\alpha}_{\xi}b(x,\xi)D^{\alpha}_{x}(b(x,\xi)+ic)^{-1}|=\frac{|D^{\alpha}_{\xi}b(x,\xi)D^{\alpha}_{x}b(x,\xi)|}{|b(x,\xi)+ic|^{2}}\\
&\lesssim \frac{\lb \xi\rb^{m-1}\lb\xi\rb^{m+\frac{1}{2}}}{|b(x,\xi)+ic|^{2-\frac{1}{2m}}|b(x,\xi)+ic|^{\frac{1}{2m}}}\lesssim \frac{\lb \xi\rb^{2m-\frac{1}{2}}}{\lb\xi\rb^{2m-\frac{1}{2}}c^{\frac{1}{2m}}}=c^{-\frac{1}{2m}}
\end{align*}
for an implicit constant independent of $x,\xi\in\Rn$ and $c\geq 1$. Similar estimates hold for the derivatives of the terms with $|\alpha|=1$, and for the remaining terms in \eqref{eq:asymptoticb}. In particular, an analysis of the remainder term in \eqref{eq:asymptoticb} shows that the $S^{0}_{1,1/2}$ seminorms of $b_{c}$ tend to zero as $c\to\infty$. Hence $\|b_{c}(x,D)\|_{\La(\Hps)}\to0$ as $c\to\infty$, and the proof is concluded by choosing $c$ such that $\|b_{c}(x,D)\|_{\La(\Hps)}<1$.
\end{proof}

The following lemma will play a minor role in Section \ref{sec:firstorder}.

\begin{lemma}\label{lem:Aduality}
Let $r>0$, $m\in\R$, $\delta\in[0,1)$ and $b\in \A^{r}S^{m}_{1,\delta}$. Then there exists a $b_{1}\in \A^{r}S^{m}_{1,\delta}$ such that $b(x,D)^{*}=b_{1}(x,D)$. \end{lemma}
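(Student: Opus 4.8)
The plan is to apply the standard adjoint formula from pseudodifferential calculus and then check that the resulting symbol remains in the refined class $\A^{r}S^{m}_{1,\delta}$. Since $\A^{r}S^{m}_{1,\delta}\subseteq S^{m}_{1,\delta}$ and $\delta<1$, the classical calculus (see e.g.~\cite{Taylor91}) gives $b(x,D)^{*}=b_{1}(x,D)$ for a symbol $b_{1}\in S^{m}_{1,\delta}$ with the asymptotic expansion
\[
b_{1}(x,\xi)\sim\sum_{\beta\in\Z_{+}^{n}}\frac{i^{|\beta|}}{\beta!}\partial_{\xi}^{\beta}D_{x}^{\beta}\overline{b(x,\xi)},
\]
in the sense that for each $N\in\Z_{+}$ the remainder $r_{N}:=b_{1}-\sum_{|\beta|<N}\frac{i^{|\beta|}}{\beta!}\partial_{\xi}^{\beta}D_{x}^{\beta}\overline{b}$ lies in $S^{m-N(1-\delta)}_{1,\delta}$, with seminorms controlled by finitely many $S^{m}_{1,\delta}$ seminorms of $b$. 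Note that $\overline{b}\in\A^{r}S^{m}_{1,\delta}$ as well, since the conditions defining that class only involve moduli of $\xi$-derivatives and H\"{o}lder seminorms in $x$. Hence $b_{1}\in S^{m}_{1,\delta}$ is automatic, and the only remaining point is to prove that for all $\alpha\in\Z_{+}^{n}$ and $s\geq0$ one has $\|\partial_{\xi}^{\alpha}b_{1}(\cdot,\xi)\|_{C^{r+s}_{-}(\Rn)}\lesssim\lb\xi\rb^{m-|\alpha|+\delta s}$.

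Fix $\alpha$ and $s\geq0$ and write $r+s=l_{0}+\sigma_{0}$ with $l_{0}\in\Z_{+}$ and $\sigma_{0}\in(0,1]$. First I would dispose of the remainder: as $b$, and hence $r_{N}$, is smooth in $x$, the $S^{m-N(1-\delta)}_{1,\delta}$ bounds give $|\partial_{\xi}^{\alpha}\partial_{x}^{\gamma}r_{N}(x,\xi)|\lesssim\lb\xi\rb^{m-N(1-\delta)-|\alpha|+|\gamma|\delta}$ for all $|\gamma|\leq l_{0}+1$, and since a bounded function on $\Rn$ with bounded gradient is H\"{o}lder continuous of any exponent in $(0,1]$, this yields $\|\partial_{\xi}^{\alpha}r_{N}(\cdot,\xi)\|_{C^{r+s}_{-}}\lesssim\lb\xi\rb^{m-N(1-\delta)-|\alpha|+(l_{0}+1)\delta}$. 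Because $l_{0}<r+s$, choosing $N$ with $N(1-\delta)>\delta(r+1)$ — a choice that may be taken independently of $s$ — makes the right-hand side $\lesssim\lb\xi\rb^{m-|\alpha|+\delta s}$. It then remains to estimate the finitely many terms $\partial_{\xi}^{\beta}\partial_{x}^{\beta}\overline{b}$ with $|\beta|<N$.

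For such a term, applying $\partial_{\xi}^{\alpha}$ produces $\partial_{x}^{\beta}\partial_{\xi}^{\alpha+\beta}\overline{b}$. The key observation is that $\partial_{x}^{\beta}$ maps $C^{r+s+|\beta|}_{-}(\Rn)$ boundedly into $C^{r+s}_{-}(\Rn)$, with $\|\partial_{x}^{\beta}g\|_{C^{r+s}_{-}}\leq\|g\|_{C^{r+s+|\beta|}_{-}}$; here $r+s+|\beta|$ has the decomposition $(l_{0}+|\beta|,\sigma_{0})$, so the relevant number of $x$-derivatives, $l_{0}=(l_{0}+|\beta|)-|\beta|$, is nonnegative. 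Applying the bound defining $\A^{r}S^{m}_{1,\delta}$ to $\overline{b}$ with multi-index $\alpha+\beta$ and regularity parameter $s+|\beta|\geq0$ gives $\|\partial_{\xi}^{\alpha+\beta}\overline{b}(\cdot,\xi)\|_{C^{r+s+|\beta|}_{-}}\lesssim\lb\xi\rb^{m-|\alpha|-|\beta|+\delta(s+|\beta|)}$, whence
\[
\|\partial_{x}^{\beta}\partial_{\xi}^{\alpha+\beta}\overline{b}(\cdot,\xi)\|_{C^{r+s}_{-}}\lesssim\lb\xi\rb^{m-|\alpha|-|\beta|(1-\delta)+\delta s}\leq\lb\xi\rb^{m-|\alpha|+\delta s},
\]
using $\delta<1$. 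Summing the finitely many such terms together with the remainder estimate gives the desired bound on $\|\partial_{\xi}^{\alpha}b_{1}(\cdot,\xi)\|_{C^{r+s}_{-}}$, and since $\alpha$ and $s$ were arbitrary, $b_{1}\in\A^{r}S^{m}_{1,\delta}$.

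The part requiring genuine care — though it is not really an obstacle — is the trade-off in the last step: each of the $|\beta|$ derivatives in $x$ costs one unit of H\"{o}lder regularity, which is affordable precisely because membership in $\A^{r}S^{m}_{1,\delta}$ supplies control of arbitrarily high H\"{o}lder norms of $\overline{b}$ at the modest cost of a factor $\lb\xi\rb^{\delta s}$, while the accompanying $|\beta|$ extra $\xi$-derivatives produce a genuine gain $\lb\xi\rb^{-|\beta|(1-\delta)}$ since $\delta<1$. One also needs the choice of $N$ in the remainder step to be uniform in $s$, which is fine because $C^{r+s}_{-}$ involves only a bounded number of $x$-derivatives once $r$ is fixed.
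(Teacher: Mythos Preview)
Your proof is correct and follows essentially the same approach as the paper: both invoke the standard asymptotic expansion for the adjoint symbol and then verify that each term and the remainder lie in $\A^{r}S^{m}_{1,\delta}$. The paper packages this via the inclusions $\A^{r-j}S^{m-j}_{1,\delta}\subseteq \A^{r}S^{m}_{1,\delta}$ (for $j\leq r$) and $S^{m-(1-\delta)j-\delta r}_{1,\delta}\subseteq \A^{r}S^{m}_{1,\delta}$ (for $j>r$), whereas you unpack the $C^{r+s}_{-}$ estimates directly; your single argument for all $|\beta|<N$ in fact avoids the case split the paper makes.
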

\begin{proof}
By~\cite[Proposition 0.3.B]{Taylor91}, there exists a $b_{1}\in S^{m}_{1,\delta}$ with $b(x,D)^{*}=b_{1}(x,D)$ and with an asymptotic expansion given by
\[
b_{1}(x,\xi)\sim \sum_{\alpha\in\Z_{+}^{n}}\frac{i^{|\alpha|}}{\alpha!}D^{\alpha}_{\xi}D^{\alpha}_{x}\overline{b(x,\xi)}.
\]
For $j\leq r$, the terms in this expansion with $|\alpha|=j$ belong to $\A^{r-j}S^{m-j}_{1,\delta}\subseteq \A^{r}S^{m}_{1,\delta}$, and for $j>r$ the terms with $|\alpha|=j$ belong to $S^{m-j+\delta(j-r)}_{1,\delta}\subseteq \A^{r}S^{m}_{1,\delta}$. For $j$ large enough, the remainder term is also an element of $S^{m-(1-\delta)(j+1)}_{1,\delta}\subseteq\A^{r}S^{m}_{1,\delta}$.
\end{proof}

\section{Operators in divergence form}\label{sec:divform}

In this section we state and prove our main result for operators in divergence form. From this main result we derive a corollary for equations on Sobolev spaces over $L^{p}(\Rn)$, and one about oscillatory operators which arise from spectral calculus. 

We consider the following differential operator in divergence form: 
\[
Lf(x)=\sum_{i,j=1}^{n}D_{i}(a_{i,j}D_{j}f)(x).
\]
Here $a_{i,j}:\Rn\to\R$ is a bounded real-valued function for each $1\leq i,j\leq n$, and there exists a $\kappa_{0}>0$ such that
\[
\sum_{i,j=1}^{n}a_{i,j}(x)\eta_{i}\eta_{j}\geq \kappa_{0}|\eta|^{2}
\]
for all $x,\eta\in\Rn$. Crucially, we assume that there exists an $r\geq 2$ such that $a_{i,j}\in \Cr(\Rn)$ for all $1\leq i,j\leq n$. We consider the following Cauchy problem:
\begin{equation}\label{eq:divwave}
\begin{aligned}
(D_{t}^{2}-L)u(t,x)&=F(t,x),\\
u(0,x)&=u_{0}(x),\\
\partial_{t}u(0,x)&=u_{1}(x).
\end{aligned}
\end{equation}
We will prove existence and uniqueness of solutions to \eqref{eq:divwave}. To this end, a key tool is the symbol smoothing procedure from Section \ref{subsec:roughsymbols}. More precisely, as in Lemma \ref{lem:smoothing}, for $1\leq i,j\leq n$ we write $a_{i,j}=a_{i,j}^{\sharp}+a_{i,j}^{\flat}$ with $a_{i,j}^{\sharp}\in \A^{r}S^{0}_{1,1/2}\subseteq \A^{2}S^{0}_{1,1/2}$ and $a_{i,j}^{\flat}\in \Cr S^{-r/2}_{1,1/2}$. Write $L=L_{1}+L_{2}$, where
\begin{equation}\label{eq:Ldecomp}
L_{1}:=\sum_{i,j=1}^{n}D_{i}a^{\sharp}_{i,j}(x,D)D_{j}\text{ and }L_{2}:=\sum_{i,j=1}^{n}D_{i}a^{\flat}_{i,j}(x,D)D_{j}.
\end{equation}
Also let
\begin{equation}\label{eq:defA}
A(x,\xi):=\sum_{i,j=1}^{n}a_{i,j}(x)\xi_{i}\xi_{j}\quad(x,\xi\in\Rn).
\end{equation}
The following proposition records some important properties of these operators.

\begin{proposition}\label{prop:Lproperties}
Let $p\in(1,\infty)$ be such that $2s(p)<r-1$. Then the following statements hold.
\begin{enumerate}
\item\label{it:Lprop1} There exist a real-valued elliptic $b\in\A^{2}S^{1}_{1,1/2}$ and an $e\in S^{1}_{1,1/2}$ such that 
\[
L_{1}=b(x,D)^{2}+e(x,D).
\]
One may choose $b$ to be asymptotically homogeneous of degree $1$ with real-valued limit $a$ given by $a(x,\xi)=\sqrt{A(x,\xi)}$ for $x,\xi\in\Rn$ with $|\xi|\geq1$.
\item\label{it:Lprop2} For all $0\leq  s< r-s(p)$ one has
\begin{equation}\label{eq:Aflatbound}
L_{2}:\Hps\to\Hpsm.
\end{equation}
If $r\in\N$, then \eqref{eq:Aflatbound} also holds for $s=r-s(p)$.
\item\label{it:Lprop3} For all $-r+s(p)+1< s< r-s(p)+1$ one has
\begin{equation}\label{eq:Lmap}
L:\Hps\to\Hpsmm.
\end{equation}
If $r\in\N$, then \eqref{eq:Aflatbound} also holds for $s=-r+s(p)+1$ and $s=r-s(p)+1$.
\end{enumerate}
\end{proposition}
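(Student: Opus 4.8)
The plan is to derive all three statements from three ingredients: the pseudodifferential calculus computation of the full symbols of $L_{1}$ and $L_{2}$, Proposition \ref{prop:squareroot}, and Theorem \ref{thm:roughpseudo} (together with the boundedness of smooth $S^{m}_{1,1/2}$ operators on the scale $\HT^{s,p}_{FIO}(\Rn)$ recorded in the proof of Lemma \ref{lem:inverse}). As a preliminary I would record that, since the $a^{\sharp}_{i,j},a^{\flat}_{i,j}$ are independent of $\xi$ and $\ph(2^{-k/2}D)$ acts in $x$ only, the symbol smoothing of $A(x,\xi)=\sum_{i,j}a_{i,j}(x)\xi_{i}\xi_{j}$ is exactly $A^{\sharp}=\sum_{i,j}a^{\sharp}_{i,j}(x,\xi)\xi_{i}\xi_{j}$ and $A^{\flat}=\sum_{i,j}a^{\flat}_{i,j}(x,\xi)\xi_{i}\xi_{j}$; and, applying \eqref{eq:asymprod} to the compositions $D_{i}\circ a^{(\ell)}_{i,j}(x,D)\circ D_{j}$ (exact, as $D_{i},D_{j}$ have $x$-independent symbols), that $\sigma(L_{1})=A^{\sharp}+e_{0}$ and $\sigma(L_{2})=A^{\flat}+R$ with $e_{0}=\sum_{i,j}\xi_{j}D_{x_{i}}a^{\sharp}_{i,j}$ and $R=\sum_{i,j}\xi_{j}D_{x_{i}}a^{\flat}_{i,j}$; using Lemma \ref{lem:smoothing} for $D_{x_{i}}a_{i,j}\in C^{r-1}_{-}S^{0}_{1,0}$ one has $D_{x_{i}}a^{\sharp}_{i,j}=(D_{x_{i}}a_{i,j})^{\sharp}\in\A^{r-1}S^{0}_{1,1/2}$ and $D_{x_{i}}a^{\flat}_{i,j}=(D_{x_{i}}a_{i,j})^{\flat}\in C^{r-1}_{-}S^{-(r-1)/2}_{1,1/2}$, so $e_{0}\in S^{1}_{1,1/2}$ and $R\in C^{r-1}_{-}S^{3/2-r/2}_{1,1/2}$.

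For (i), I would note that $A\in\Crtwo S^{2}_{1,0}$ (as $a_{i,j}\in\Cr(\Rn)\subseteq\Crtwo(\Rn)$), is non-negative, elliptic and homogeneous of degree $2$ for $|\xi|\geq1$, so Proposition \ref{prop:squareroot} supplies a real-valued elliptic $b\in\A^{2}S^{1}_{1,1/2}$, asymptotically homogeneous of degree $1$ with limit $\sqrt{A}$ on $|\xi|\geq1$, and $e_{1}\in S^{1}_{1,1/2}$ with $A^{\sharp}(x,D)=b(x,D)^{2}+e_{1}(x,D)$; the preliminary computation then gives $L_{1}=b(x,D)^{2}+e(x,D)$ with $e:=e_{0}+e_{1}\in S^{1}_{1,1/2}$.

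For (ii) the plan is to bound $L_{2}$ in two complementary ways and take the union of the resulting ranges of $s$. First I would fix $\veps$ in Theorem \ref{thm:roughpseudo} so small that $\rho\leq r/2-1$, which is possible since $2s(p)<r-1$ forces either $\rho=0$ (when $2s(p)<r/2$, using $r\geq2$) or $\rho=2s(p)-r/2+\veps\leq r/2-1$. From the composition form $L_{2}=\sum_{i,j}D_{i}\circ a^{\flat}_{i,j}(x,D)\circ D_{j}$, bounding $D_{j},D_{i}$ as order-one multipliers and $a^{\flat}_{i,j}(x,D)\colon\HT^{s-1,p}_{FIO}\to\HT^{s-1+r/2-\rho,p}_{FIO}$ via Theorem \ref{thm:roughpseudo} with $m=-r/2$, I obtain $L_{2}\colon\Hps\to\HT^{s-2+r/2-\rho,p}_{FIO}\subseteq\Hpsm$, valid for $1-r+s(p)<s<r/2+1-s(p)$. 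Separately, from the symbol form $\sigma(L_{2})=A^{\flat}+R$ with $A^{\flat}\in\Cr S^{2-r/2}_{1,1/2}$ and $R\in C^{r-1}_{-}S^{3/2-r/2}_{1,1/2}$, applying Theorem \ref{thm:roughpseudo} to $A^{\flat}(x,D)$ ($m=2-r/2$, regularity $r$) and to $R(x,D)$ ($m=3/2-r/2$, regularity $r-1$), the same arithmetic gives $L_{2}\colon\Hps\to\Hpsm$ on a range $(c,r-s(p))$ with $c<r/2+1-s(p)$, closed at the top when $r\in\N$ (from the endpoint clause of Theorem \ref{thm:roughpseudo}, applied also with $r$ replaced by $r-1$). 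Since the two ranges overlap, their union contains $[0,r-s(p))$, respectively $[0,r-s(p)]$ when $r\in\N$.

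For (iii), I would first observe that $b(x,D)^{2}\in S^{2}_{1,1/2}$ and $e\in S^{1}_{1,1/2}$ act $\HT^{t,p}_{FIO}\to\HT^{t-2,p}_{FIO}$ for all $t$, so by (i) $L_{1}\colon\Hps\to\Hpsmm$ for every $s\in\R$; hence only $L_{2}$ needs attention. For $0\leq s<r-s(p)$ this is (ii) composed with $\Hpsm\subseteq\Hpsmm$; for the remaining $s$ I would rerun the two estimates of (ii) targeting $\HT^{s-2,p}_{FIO}$ in place of $\HT^{s-1,p}_{FIO}$: the composition-form estimate then only requires $\rho\leq r/2$ and extends down to $s>1-r+s(p)$, covering $(-r+s(p)+1,0)$, while the symbol-form estimate, with $A^{\flat}(x,D)$ and $R(x,D)$ now dropping two derivatives, extends up to $s<r-s(p)+1$, resp. $s\leq r-s(p)+1$ when $r\in\N$. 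Finally, for the lower endpoint $s=-r+s(p)+1$ when $r\in\N$ I would argue by duality: $L^{*}=\sum_{i,j}D_{i}(a_{j,i}D_{j}\,\cdot\,)$ is an operator of the same form, so the upper-endpoint case just established applies to it, and \eqref{eq:duality} together with $s(p)=s(p')$ transfers it to $L$. The main obstacle throughout is this bookkeeping: the commutator term $R$ carries one derivative of regularity less than $L_{2}$, which narrows its admissible Sobolev interval near the top, so one must play the composition form (where the rough factor $a^{\flat}_{i,j}(x,D)$ retains the full regularity $r$ but sits between two order-one multipliers) against the symbol form (where the principal part $A^{\flat}$ retains the full regularity but has order $2-r/2$) and check that the two $s$-ranges genuinely overlap; recovering the sharp endpoints when $r\in\N$ is the other point requiring care.
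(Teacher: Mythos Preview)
Your argument is essentially correct, but it is more involved than necessary, and the paper's proof is considerably more streamlined. The main difference is in how Theorem~\ref{thm:roughpseudo} is applied to the factor $a^{\flat}_{i,j}(x,D)$ in part \eqref{it:Lprop2}.

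You invoke Theorem~\ref{thm:roughpseudo} with \emph{target} index $\sigma=s-1+r/2-\rho$, which forces the upper constraint $\sigma<r-s(p)$ to read $s<r/2+1+\rho-s(p)$; this falls short of the needed $s<r-s(p)$, and you compensate by introducing a second ``symbol form'' decomposition $L_{2}=A^{\flat}(x,D)+R(x,D)$ and patching the two ranges together. The paper instead applies Theorem~\ref{thm:roughpseudo} with target index $\sigma=s$ and uses the \emph{source} embedding $\Hpsm\subseteq\HT^{s-r/2+\rho,p}_{FIO}$ (valid since $\rho\leq r/2-1$). The constraint then becomes simply $-r/2+s(p)-\rho<s<r-s(p)$, and since the lower bound is negative, the full interval $[0,r-s(p))$ drops out in one step. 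No second decomposition, no bookkeeping with the commutator term $R$ of regularity $r-1$, is needed.

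For part \eqref{it:Lprop3}, you again rerun the two-pronged estimate, now targeting $\Hpsmm$. The paper instead exploits the self-adjointness of $L$: by duality, part \eqref{it:Lprop2} yields $L_{2}^{*}:\Hps\to\Hpsm$ on $(-r+s(p)+1,1]$, and writing $L=L_{1}^{*}+L_{2}^{*}$ covers the low-$s$ range at once; the final sliver $[r-s(p),r-s(p)+1)$ is handled by the simple embedding $\Hps\subseteq\HT^{r-s(p),p}_{FIO}$. Your duality step for the lower endpoint when $r\in\N$ is the same idea, but the paper uses duality earlier and more systematically, which again avoids the need to track the commutator term $R$ separately.

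In short: your approach is valid, and the overlap checks you sketch do go through, but the paper's choice of target index in Theorem~\ref{thm:roughpseudo} and its use of self-adjointness eliminate the need for the symbol-form decomposition entirely.
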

\begin{proof}
\eqref{it:Lprop1}: The symbol of $L_{1}$ is given by
\[
(x,\xi)\mapsto\sum_{i,j=1}^{n}(D_{i}a^{\sharp}_{i,j})(x,\xi)\xi_{j}+\sum_{i,j=1}^{n}a^{\sharp}_{i,j}(x,\xi)\xi_{i}\xi_{j},
\]
where the first term is a symbol in $\A^{r-1}S^{1}_{1,1/2}\subseteq S^{1}_{1,1/2}$, and the second term is $A^{\sharp}(x,\xi)$ for $A$ as in \eqref{eq:defA}. It thus suffices to find a $b\in \A^{2}S^{1}_{1,1/2}$ with the stated properties, and an $e_{1}\in S^{1}_{1,1/2}$, such that $A^{\sharp}(x,D)=b(x,D)^{2}+e_{1}(x,D)$. But this follows directly from Proposition \ref{prop:squareroot}.

\eqref{it:Lprop2}: As noted before, by Lemma \ref{lem:smoothing} one has $a_{i,j}^{\flat}\in \Cr S^{-r/2}_{1,1/2}$ for all $1\leq i,j\leq n$. Now let $\rho$ and $\veps$ be as in Theorem \ref{thm:roughpseudo}, and note that $s-1\geq s-r/2+\rho$ for sufficiently small $\veps$. Here we used the assumption that $2s(p)<r-1$. Moreover, $-r/2+s(p)-\rho<0$. Hence Theorem \ref{thm:roughpseudo} yields
\[
a^{\flat}_{i,j}(x,D):\Hpsm\subseteq \HT^{s-r/2+\rho,p}_{FIO}(\Rn) \to\Hps
\]
for $s\in[0,r-s(p))\subseteq (-r/2+s(p)-\rho,r-s(p))$, and also for $s=r-s(p)$ if $r\in\N$. This yields the required statement, since $D_{i},D_{j}:\Hps\to \Hpsm$.

\eqref{it:Lprop3}: Combining \eqref{it:Lprop1} with Theorem \ref{thm:roughpseudo}, we obtain that $L_{1}:\Hps\to\Hpsmm$ for all $s\in\R$. By duality, cf.~\eqref{eq:duality}, one then also has $L_{1}^{*}:\Hps\to\Hpsmm$ for all $s\in\R$. Moreover, again by duality and because $s(p)=s(p')$, \eqref{it:Lprop2} yields $L_{2}^{*}:\Hps\to\Hpsm$ for $-r+s(p)+1< s\leq 1$. Since $L$ is self adjoint, we can now write $L=L_{1}+L_{2}=L_{1}^{*}+L_{2}^{*}$ to obtain \eqref{eq:Lmap} for all $-r+s(p)+1< s<r-s(p)$. To deal with the case where $r-s(p)<s< r-s(p)+1$, one additionally uses \eqref{it:Lprop2} to write
\[
L_{2}:\Hps\subseteq\HT^{r-s(p),p}_{FIO}(\Rn)\to\HT^{r-s(p)-1,p}_{FIO}(\Rn)\subseteq \Hpsmm.
\]
It follows from \eqref{it:Lprop2} that for $r\in\N$ one can include the endpoints $s=-r+s(p)+1$ and $s=r-s(p)+1$.
\end{proof}

We are now ready to prove our main result on existence and uniqueness of solutions to \eqref{eq:divwave}. 

\begin{theorem}\label{thm:divwave}
Let $p\in(1,\infty)$ be such that $2s(p)<r-1$, and let $-r+s(p)+1< s< r-s(p)$. 
Then for all $u_{0}\in\Hps$, $u_{1}\in \Hpsm$ and $F\in L^{1}_{\loc}(\R;\Hpsm)$, there exists a unique
\begin{equation}\label{eq:regularityu}
u\in C(\R;\Hps)\cap C^{1}(\R;\Hpsm)\cap W^{2,1}_{\loc}(\R;\Hpsmm)
\end{equation}
such that $u(0)=u_{0}$, $\partial_{t}u(0)=u_{1}$, and
\begin{equation}\label{eq:eqdistr}
(D_{t}^{2}-L)u(t)=F(t)
\end{equation}
in $\Hpsmm$ for almost all $t\in \R$. If $r\in\N$, then this statement holds for $-r+s(p)+1\leq s\leq r-s(p)$.
\end{theorem}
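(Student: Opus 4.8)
The plan is to use Proposition~\ref{prop:Lproperties}\,\eqref{it:Lprop1} to split off a smooth operator $b(x,D)^{2}$ for which Theorem~\ref{thm:mainsmooth} provides propagators $(\ex_{t})_{t\in\R}$, to reduce \eqref{eq:divwave} to a lower‑order perturbation of two decoupled first order half‑wave equations, and then to close the resulting fixed point on the single space $\Hps$, exploiting that the $\ex_{t}$ act boundedly on $\Hps$. Concretely, by Proposition~\ref{prop:Lproperties}\,\eqref{it:Lprop1} write $L=b(x,D)^{2}+R$ with $b\in\A^{2}S^{1}_{1,1/2}$ real‑valued, elliptic and asymptotically homogeneous of degree~$1$ with real‑valued limit, and $R:=L-b(x,D)^{2}=e(x,D)+L_{2}$; then Theorem~\ref{thm:mainsmooth} applies to $b$. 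By Proposition~\ref{prop:Lproperties}\,\eqref{it:Lprop2} (together with its dual, obtained from \eqref{eq:duality}, $s(p)=s(p')$ and the self‑adjointness of $L$, for the part of the interval where $s<0$) and the boundedness on the scale $\{\Hps\}_{s}$ of operators with symbols in $S^{1}_{1,1/2}$ recalled in the proof of Lemma~\ref{lem:inverse}, the operator $R$ maps $\Hps\to\Hpsm$ boundedly for every $s$ in the stated range, while $L:\Hps\to\Hpsmm$ by Proposition~\ref{prop:Lproperties}\,\eqref{it:Lprop3}. Using Lemma~\ref{lem:inverse} we also fix $c>0$ large enough that $\Lambda:=b(x,D)+ic$ is invertible from $\HT^{\sigma+1,p}_{FIO}(\Rn)$ to $\HT^{\sigma,p}_{FIO}(\Rn)$ for each of the finitely many values $\sigma\in\{s-2,s-1,s\}$.

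\emph{Half‑wave reduction.} For $u$ in the class \eqref{eq:regularityu} set $u_{\pm}:=\tfrac12\bigl(u\mp i\Lambda^{-1}\partial_{t}u\bigr)$; since $\partial_{t}u\in\Hpsm$ and $\Lambda^{-1}$ gains one derivative, $u_{\pm}\in\Hps$, and conversely $u=u_{+}+u_{-}$, $\partial_{t}u=i\Lambda(u_{+}-u_{-})$. Using only the exact identities $b(x,D)=\Lambda-ic$, $b(x,D)^{2}=\Lambda^{2}-2ic\Lambda-c^{2}$ and $\partial_{t}^{2}u=-b(x,D)^{2}u-Ru-F$, a direct computation turns \eqref{eq:divwave} into the system
\[
\partial_{t}u_{\pm}=\pm i\,b(x,D)u_{\pm}+\mathcal{R}_{\pm}(u_{+},u_{-})\pm\tfrac{i}{2}\Lambda^{-1}F,\qquad u_{\pm}(0)=\tfrac12\bigl(u_{0}\mp i\Lambda^{-1}u_{1}\bigr),
\]
where $\mathcal{R}_{\pm}:\Hps\times\Hps\to\Hps$ are bounded (their only non‑elementary ingredient being $\Lambda^{-1}R:\Hps\to\Hps$), the data lie in $\Hps$, and the inhomogeneity lies in $L^{1}_{\loc}(\R;\Hps)$.

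\emph{Solution of the system and reconstruction.} By parts \eqref{it:smooth1} and \eqref{it:smooth2} of Theorem~\ref{thm:mainsmooth} the $\ex_{t}$ are strongly continuous and locally uniformly bounded on $\Hps$, with $\ex_{-t}$ the propagator of $D_{t}+b(x,D)$; hence the Duhamel representation against $\ex_{\pm t}$ solves the unperturbed first order equations, and adding the bounded perturbations $\mathcal{R}_{\pm}$ the coupled integral equation has a unique solution in $C([-t_{0},t_{0}];\Hps\oplus\Hps)$ for every $t_{0}$, by a Picard/Volterra iteration whose iteration operator has spectral radius zero, and then globally in $t$. Setting $u:=u_{+}+u_{-}$ one checks, by integrating the system, that $u\in C(\R;\Hps)$ with derivative $i\Lambda(u_{+}-u_{-})\in C(\R;\Hpsm)$, so $u\in C^{1}(\R;\Hpsm)$; differentiating once more and again invoking $b(x,D)^{2}=\Lambda^{2}-2ic\Lambda-c^{2}$ recovers $\partial_{t}^{2}u=-Lu-F\in L^{1}_{\loc}(\R;\Hpsmm)$, whence $u\in W^{2,1}_{\loc}(\R;\Hpsmm)$, equation \eqref{eq:eqdistr} holds almost everywhere, and $u(0)=u_{0}$, $\partial_{t}u(0)=u_{1}$. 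Uniqueness follows by running the reduction on a solution of the homogeneous problem and combining the uniqueness assertion of Theorem~\ref{thm:mainsmooth} with the Volterra argument. When $r\in\N$, parts \eqref{it:Lprop2} and \eqref{it:Lprop3} of Proposition~\ref{prop:Lproperties} hold at the endpoints $s=-r+s(p)+1$ and $s=r-s(p)$, so the argument applies there as well.

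\emph{Main obstacle.} All the genuinely hard analysis is imported: Theorem~\ref{thm:mainsmooth} (the flow parametrix for the smooth first order equation) and Theorem~\ref{thm:roughpseudo} (the mapping properties of rough pseudodifferential operators on $\Hps$, entering through Proposition~\ref{prop:Lproperties}). Given these, the only real subtlety in the present proof is the bookkeeping of regularity in the reduction: one must arrange that both half‑wave unknowns \emph{and} every forcing term sit at the single level $\Hps$, which is exactly what allows the $\Hps$‑invariance of $\ex_{t}$ to close the iteration and would be impossible on $L^{p}$. This is achieved by inserting the order $\mp1$ operators $\Lambda^{\pm1}$, whose invertibility between the relevant spaces is furnished by Lemma~\ref{lem:inverse}.
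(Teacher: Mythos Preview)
Your reduction to a coupled first-order system via $u_{\pm}=\tfrac12(u\mp i\Lambda^{-1}\partial_{t}u)$ is sound, and once the perturbation $\mathcal{R}_{\pm}$ is bounded on $\Hps$ the Volterra iteration and reconstruction go through as you say. This is a genuine alternative to the paper's route: the paper builds cosine/sine-type operators $\mathbf{c}_{t},\mathbf{s}_{t}$ from $\tilde{\ex}_{t}=e^{-ct}\ex_{t}$, proves \emph{existence} only for $0\le s<r-s(p)$, then proves \emph{uniqueness} only for $1/2\le s<r-s(p)$ by solving an adjoint problem, and finally extends to the full interval $-r+s(p)+1<s<r-s(p)$ by duality and semigroup theory (showing that the solution operators form a $C_{0}$-group whose adjoint group covers the negative range). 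Your single-shot argument is more direct, but it needs more input at one point.

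The gap is your assertion that $R=e(x,D)+L_{2}:\Hps\to\Hpsm$ for \emph{every} $s$ in the stated range. Proposition~\ref{prop:Lproperties}\,\eqref{it:Lprop2} gives this only for $0\le s<r-s(p)$. Your appeal to duality and the self-adjointness of $L$ produces the mapping property for $L_{2}^{*}$, not for $L_{2}$; to pass back you need $L_{2}-L_{2}^{*}=L_{1}^{*}-L_{1}\in\mathrm{Op}\,S^{1}_{1,1/2}$. For a generic $S^{2}_{1,1/2}$ operator the self-commutator with its adjoint is only order $3/2$, which would make $\Lambda^{-1}R$ lose half a derivative and break your fixed point for $s<0$. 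The claim \emph{is} true here, but it hinges on the $\A^{2}$ structure of $b$: since $b\in\A^{2}S^{1}_{1,1/2}$ is real, the $|\alpha|=1$ term in the adjoint expansion of $b$ lies in $S^{0}_{1,1/2}$ (one $x$-derivative is free), and the generic remainder after one term is also $S^{0}_{1,1/2}$, so $b(x,D)^{*}-b(x,D)\in\mathrm{Op}\,S^{0}_{1,1/2}$; then $(b(x,D)^{*})^{2}-b(x,D)^{2}\in\mathrm{Op}\,S^{1}_{1,1/2}$ and hence $L_{1}^{*}-L_{1}\in\mathrm{Op}\,S^{1}_{1,1/2}$. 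This computation is not in the paper (Lemma~\ref{lem:Aduality} only gives $b_{1}\in\A^{2}S^{1}_{1,1/2}$, not $b_{1}-b\in S^{0}_{1,1/2}$), and you have not supplied it. The paper's three-step strategy is designed precisely to avoid needing this extension of Proposition~\ref{prop:Lproperties}\,\eqref{it:Lprop2} to negative~$s$. Either carry out that pseudodifferential argument explicitly, or restrict your direct construction to $0\le s<r-s(p)$ and add a duality/semigroup step as the paper does.
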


As is explained in Appendix \ref{app:regularity}, for $r>2$ the regularity assumptions on the coefficients can in fact be weakened slightly, by considering more refined forms of regularity. This is the reason that the result can be strengthened for $r\in\N$. Note that the conditions in Theorem \ref{thm:mainwave} are satisfied for all $p\in(1,\infty)$ whenever $r\geq \frac{n+1}{2}$. In particular, if $n\leq 3$ then the conclusion of Theorem \ref{thm:divwave} holds for all $p\in(1,\infty)$, given that $r\geq2$. The condition on $p$ arises from Theorem \ref{thm:roughpseudo}, through Proposition \ref{prop:Lproperties} \eqref{it:Lprop2}. It follows from \eqref{eq:regularityu} and Proposition \ref{prop:Lproperties} \eqref{it:Lprop3} that \eqref{eq:eqdistr} is well defined in $\Hpsmm$.

\begin{proof} We need to prove existence and uniqueness of solutions. We first prove existence for the interval $0\leq s< r-s(p)$ from Proposition \ref{prop:Lproperties} \eqref{it:Lprop2}. This is the main part of the proof. We then prove uniqueness for the smaller interval $1/2\leq  s< r-s(p)$, by solving an adjoint problem. Since $L$ is self adjoint, here we can rely on what we have already shown, and working with a smaller interval allows us to circumvent some difficulties related to the roughness of the coefficients. To get the full range of $s$, in the final step we use duality and some semigroup theory. 

\subsubsection{Existence for $0\leq s< r-s(p)$} The idea for this part of the proof is as follows. Write $L=L_{1}+L_{2}$ with $L_{1}$ and $L_{2}$ as in \eqref{eq:Ldecomp}. By Proposition \ref{prop:Lproperties} \eqref{it:Lprop2} one has $L_{2}:\Hps\to\Hpsm$, which will allow us to incorporate the term $L_{2}u$ into the inhomogeneous term in \eqref{eq:eqdistr}. Heuristically, by Duhamel's principle we may then consider the homogeneous second order equation $D_{t}^{2}w(t)=L_{1}w(t)$. Write $L_{1}=b(x,D)^{2}+e(x,D)$ as in Proposition \ref{prop:Lproperties} \eqref{it:Lprop1}. One has $e(x,D):\Hps\to\Hpsm$, so that we can again view $e(x,D)w$ as an inhomogeneous term. We can then use the solution operators $\{\ex_{t}\mid t\in\R\}$ to the first order equation associated with $b(x,D)$, arising from Theorem \ref{thm:mainsmooth}, to build a solution to our second order equation $D_{t}^{2}u(t)=Lu(t)$. However, it turns out to be more convenient to work with the operators $\tilde{\ex}_{t}=e^{-ct}\ex_{t}$, which satisfy $D_{t}\tilde{\ex}_{t}=(b(x,D)+ic)\tilde{\ex}_{t}$, where $c>0$ is such that $b(x,D)+ic$ is invertible, as in Lemma \ref{lem:inverse}. One encounters error terms when incorporating various factors into the inhomogeneous term; these error terms are dealt with using an iterative approximation scheme.

To make this idea precise, we first do some preliminary work. Write 
\begin{equation}\label{eq:Ldecomp2}
L=L_{1}+L_{2}=b(x,D)^{2}+e(x,D)+L_{2}
\end{equation}
as in Proposition \ref{prop:Lproperties}, with $b\in \A^{2}S^{1}_{1,1/2}$ real-valued, elliptic and asymptotically homogeneous of degree $1$ with real-valued limit $a\in \Crtwo S^{1}_{1,0}$, and $e\in S^{1}_{1,1/2}$. Let $\{\ex_{t}\mid t\in\R\}$ be the family of operators from Theorem \ref{thm:mainsmooth}, satisfying $D_{t}\ex_{t}f=b(x,D)\ex_{t}f$ for all $f\in\Hps$ and $t\in\R$. As in Lemma \ref{lem:inverse}, let $c>0$ be such that 
\[
\tilde{b}(x,D):=b(x,D)+ic:\Hps\to\Hpsm
\]
is invertible. Set $\tilde{\ex}_{t}:=e^{-ct}\ex_{t}$, so that
\begin{equation}\label{eq:deretilde}
\wt{e}_{0}f=f\quad\text{and}\quad D_{t}\tilde{\ex}_{t}f=(b(x,D)+ic)\tilde{\ex}_{t}f=\tilde{b}(x,D)\tilde{\ex}_{t}f.
\end{equation}
Using \eqref{eq:Ldecomp2}, we can now write
\begin{equation}\label{eq:factorA}
\begin{aligned}
\big(D_{t}^{2}-L\big)\tilde{\ex}_{t}f&=\big(D_{t}^{2}-b(x,D)^{2}-e(x,D)-L_{2}\big)\tilde{\ex}_{t}f\\
&=\big((D_{t}+b(x,D))(D_{t}-b(x,D))-e(x,D)-L_{2}\big)\tilde{\ex}_{t}f\\
&=\big((D_{t}+\tilde{b}(x,D))(D_{t}-\tilde{b}(x,D))+\tilde{e}(x,D)\big)\tilde{\ex}_{t}f=\tilde{e}(x,D)\tilde{\ex}_{t}f,
\end{aligned}
\end{equation}
where 
\begin{equation}\label{eq:etilde}
\tilde{e}(x,D):=-e(x,D)+2icb(x,D)-c^{2}-L_{2}.
\end{equation}
One obtains
\begin{equation}\label{eq:factorA2}
\begin{aligned}
\big(D_{t}^{2}-L\big)\tilde{\ex}_{-t}f&=\big((D_{t}-\tilde{b}(x,D))(D_{t}+\tilde{b}(x,D))+\tilde{e}(x,D)\big)\tilde{\ex}_{-t}f\\
&=\tilde{e}(x,D)\tilde{\ex}_{-t}f
\end{aligned}
\end{equation}
in the same manner.

By Proposition \ref{prop:Lproperties} \eqref{it:Lprop2}, $L_{2}:\Hps\to\Hpsm$. The same mapping property holds for $-e+2icb-c^{2}\in S^{1}_{1,1/2}$, without restrictions on $p$ and $s$, by Theorem \ref{thm:roughpseudo}. Hence
\begin{equation}\label{eq:mappinge}
\tilde{e}(x,D):\Hps\to\Hpsm
\end{equation}
as well. Note that \eqref{eq:mappinge} also holds for $s=r-s(p)$ if $r\in\N$, by Proposition \ref{prop:Lproperties} \eqref{it:Lprop2}.

Next, we construct approximate solutions to our equation. For $t\in\R$ set
\begin{equation}\label{eq:defcossin}
\cs_{t}:=\frac{\tilde{\ex}_{t}+\tilde{\ex}_{-t}}{2}\quad\text{and}\quad\sn_{t}:=\frac{\tilde{\ex}_{t}-\tilde{\ex}_{-t}}{2i}\tilde{b}(x,D)^{-1}.
\end{equation}
By \eqref{eq:deretilde}, for all $f\in\Hps$ and $g\in\Hpsm$ one has 
\begin{equation}\label{eq:cossinprop}
\cs_{0}f=f,\quad \partial_{t}\cs_{t}f|_{t=0}=0,\quad \sn_{0}g=0,\quad \partial_{t}\sn_{t}g|_{t=0}=g.
\end{equation}
Moreover, by \eqref{eq:factorA} and \eqref{eq:factorA2}, for all $t\in\R$ one has
\begin{equation}\label{eq:dercossin}
(D_{t}^{2}-L)\cs_{t}f=\tilde{e}(x,D)\cs_{t}f\quad\text{and}\quad(D_{t}^{2}-L)\sn_{t}g=\tilde{e}(x,D)\sn_{t}g.
\end{equation}
Finally, by \eqref{it:smooth1} and \eqref{it:smooth2} of Theorem \ref{thm:mainsmooth}, for all $t_{0}>0$ and $k\geq0$ one has
\begin{equation}\label{eq:cossinbounds}
\sup_{|t|\leq t_{0}}\|\partial_{t}^{k}\cs_{t}\|_{\La(\Hps,\HT^{s-k,p}_{FIO}(\Rn))}+\|\partial_{t}^{k}\sn_{t}\|_{\La(\Hpsm,\HT^{s-k,p}_{FIO}(\Rn))}<\infty,
\end{equation}
where the derivatives are taken in the strong operator topology, and these derivatives are strongly continuous as operators between the appropriate spaces. 

Next, we deal with the error term in our approximation and derive some of its properties. For $t\in\R$, set $v_{0}(t):=\tilde{e}(x,D)(\cs_{t}u_{0}+\sn_{t}u_{1})-F(t)$ and, recursively,
\[
v_{k+1}(t):=\int_{0}^{t}\tilde{e}(x,D)\sn_{t-\tau}v_{k}(\tau)\ud \tau
\] 
for $k\geq0$. By \eqref{eq:mappinge} and \eqref{eq:cossinbounds}, for each $t_{0}>0$ one has
\begin{align*}
&\|v_{0}\|_{L^{1}([-t_{0},t_{0}];\Hpsm)}\\
&\leq 2t_{0}\con_{0}(\|u_{0}\|_{\Hps}+\|u_{1}\|_{\Hpsm})+\|F\|_{L^{1}([-t_{0},t_{0}];\Hpsm)}
\end{align*}
for 
\[
\con_{0}:=\sup_{|t|\leq t_{0}}\|\tilde{e}(x,D)\cs_{t}\|_{\La(\Hps,\Hpsm)}+\|\tilde{e}(x,D)\sn_{t}\|_{\La(\Hpsm)}.
\]
Using that $\|v_{k+1}(t)\|_{\Hps}\leq \con_{0}\int_{0}^{t}\|v_{k}(\tau)\|_{\Hpsm}\ud \tau$ for all $k\geq0$ and $t\in[-t_{0},t_{0}]$, it follows by induction that
\[
\|v_{k+1}(t)\|_{\Hpsm}\leq \frac{\con_{0}^{k+1}|t|^{k}}{k!}\|v_{0}\|_{L^{1}([-t_{0},t_{0}];\Hpsm)}.
\]
In particular, $v:=\sum_{k=0}^{\infty}v_{k}$ is a well defined element of $L^{1}_{\loc}(\R;\Hpsm)$, with
\begin{equation}\label{eq:boundv}
\begin{aligned}
&\|v\|_{L^{1}([-t_{0},t_{0}];\Hpsm)}\leq \sum_{k=0}^{\infty}\|v_{k}\|_{L^{1}([-t_{0},t_{0}];\Hpsm)}\\
&\leq \con_{1}(\|F\|_{L^{1}([-t_{0},t_{0}];\Hpsm)}+\|u_{0}\|_{\Hps}+\|u_{1}\|_{\Hpsm})
\end{aligned}
\end{equation}
for each $t_{0}>0$ and some $\con_{1}=\con_{1}(t_{0})>0$. 

We are now ready to define our candidate solution $u$ and show that it has the required regularity. For $t\in\R$ set
\begin{equation}\label{eq:defu}
u(t):=\cs_{t}u_{0}+\sn_{t}u_{1}+\int_{0}^{t}\sn_{t-\tau}v(\tau)\ud \tau.
\end{equation}
It follows (see e.g.~\cite[Proposition 1.3.4]{ArBaHiNe11}) from the dominated convergence theorem, using \eqref{eq:boundv} and the strong continuity of $t\mapsto \cs_{t}$ and $t\mapsto \sn_{t}$, that $u\in C(\R;\Hps)$. Moreover, for $t\in\R$ and $h>0$ one can use the smoothness of $\sigma\mapsto \sn_{\sigma}$, and the fact that $\sn_{0}=0$, to write
\begin{align*}
&\frac{1}{h}\Big(\int_{0}^{t+h}\sn_{t+h-\tau}v(\tau)\ud\tau-\int_{0}^{t}\sn_{t-\tau}v(\tau)\ud\tau\Big)\\
&=\int_{0}^{t}\frac{\sn_{t+h-\tau}-\sn_{t-\tau}}{h}v(\tau)\ud\tau+\frac{1}{h}\int_{t}^{t+h}\sn_{t+h-\tau}v(\tau)\ud\tau\\
&=\int_{0}^{t}\frac{\sn_{t+h-\tau}-\sn_{t-\tau}}{h}v(\tau)\ud\tau+\int_{t}^{t+h}\frac{1}{h}\int_{0}^{t+h-\tau}\partial_{\sigma}\sn_{\sigma}v(\tau)\ud\sigma\ud\tau.
\end{align*}
Since $v\in L^{1}_{\loc}(\R;\Hpsm)$, \eqref{eq:cossinbounds} now implies that $u\in C^{1}(\R;\Hpsm)$ with
\begin{equation}\label{eq:firstderu}
\partial_{t}u(t)=\partial_{t}\cs_{t}u_{0}+\partial_{t}\sn_{t}u_{1}+\int_{0}^{t}\partial_{t}\sn_{t-\tau}v(\tau)\ud \tau.
\end{equation}
For the second derivative of $u$ we argue in a different manner, due to the low regularity of $v$. Let $h\in C^{\infty}_{c}(\R;C^{\infty}_{c}(\Rn))$. Then $t\mapsto \lb \partial_{t}\sn_{t-\tau}v(\tau),h(t)\rb_{\Rn}$ is a differentiable map for all $\tau\in\R$, and
\[
\partial_{t}\lb \partial_{t}\sn_{t-\tau}v(\tau),h(t)\rb_{\Rn}=\lb \partial_{t}\sn_{t-\tau}v(\tau),\partial_{t}h(t)\rb_{\Rn}+\lb \partial_{t}^{2}\sn_{t-\tau}v(\tau),h(t)\rb_{\Rn}.
\]
Write $\overline{v}(t):=\int_{0}^{t}\partial_{t}\sn_{t-\tau}v(\tau)\ud \tau$ for $t\in\R$. Then
\begin{align*}
&\lb\partial_{t}\overline{v},h\rb_{\R^{n+1}}=-\lb\overline{v},\partial_{t}h\rb_{\R^{n+1}}=-\int_{\R}\int_{0}^{t}\lb\partial_{t}\sn_{t-\tau}v(\tau),\partial_{t}h(t)\rb_{\Rn}\ud\tau\ud t\\
&=\int_{\R}\int_{0}^{t}\lb\partial_{t}^{2}\sn_{t-\tau}v(\tau),h(t)\rb_{\Rn}\ud\tau\ud t-\int_{\R}\int_{0}^{t}\partial_{t}\lb\partial_{t}\sn_{t-\tau}v(\tau),h(t)\rb_{\Rn}\ud\tau\ud t.
\end{align*}
Interchanging the order of integration in the final term and using the fundamental theorem of calculus, that $h\in C^{\infty}_{c}(\R;C^{\infty}_{c}(\Rn))$ and that $\partial_{t}\sn_{t}|_{t=0}=I$, we obtain that $\partial_{t}\overline{v}(t)=\int_{0}^{t}\partial_{t}^{2}\sn_{t-\tau}v(\tau)\ud \tau+v(t)$ as distributions on $\R\times\Rn$. By \eqref{eq:cossinbounds} with $k=2$, the right-hand side is an element of $L^{1}_{\loc}(\R;\Hpsmm)$. Hence 
\begin{equation}\label{eq:secondderu}
\partial_{t}^{2}u(t)=\partial_{t}^{2}\cs_{t}u_{0}+\partial_{t}^{2}\sn_{t}u_{1}+v(t)+\int_{0}^{t}\partial_{t}^{2}\sn_{t-\tau}v(\tau)\ud \tau
\end{equation}
in $L^{1}_{\loc}(\R;\Hpsmm)$, and $u\in W^{2,1}_{\loc}(\R;\Hpsmm)$. Note that at this point we cannot differentiate any more, due to the low regularity of $u$.

Finally, we show that $u$ solves \eqref{eq:divwave}. By \eqref{eq:cossinprop} and \eqref{eq:firstderu}, one has $u(0)=u_{0}$ and $\partial_{t} u(t)|_{t=0}=u_{1}$. Moreover, \eqref{eq:secondderu}, \eqref{eq:dercossin} and the definitions of $u$ and $v$ yield
\begin{align*}
(D_{t}^{2}-L)u(t)&=(D_{t}^{2}-L)\cs_{t}u_{0}+(D_{t}^{2}-L)\sn_{t}u_{1}-v(t)+\int_{0}^{t}(D_{t}^{2}-L)\sn_{t-\tau}v(\tau)\ud \tau\\
&=\tilde{e}(x,D)(\cs_{t}u_{0}+\sn_{t}u_{1})-\sum_{k=0}^{\infty}v_{k}(t)+\sum_{k=0}^{\infty}\int_{0}^{t}\tilde{e}(x,D)\sn_{t-\tau}v_{k}(\tau)\ud \tau\\
&=\tilde{e}(x,D)(\cs_{t}u_{0}+\sn_{t}u_{1})-v_{0}(t)=F(t)
\end{align*}
for almost all $t\in\R$. Thus $u$ is a solution with the required properties. 

If $r\in\N$, then for $s=r-s(p)$ the exact same construction, combined with the fact that \eqref{eq:mappinge} now holds for $s=r-s(p)$ as well, shows that $u$ is well defined and satisfies \eqref{eq:regularityu} and \eqref{eq:eqdistr}.

\subsubsection{Uniqueness for $1/2\leq  s< r-s(p)$} Next, we prove uniqueness, by solving an adjoint problem. 

We first reduce the problem somewhat. By linearity, we may suppose that
\[
u\in C(\R;\Hps)\cap C^{1}(\R;\Hpsm)\cap W^{2,1}_{\loc}(\R;\Hpsmm)
\]
satisfies $u(0)=\partial_{t}u(0)=0$ and $(D_{t}^{2}-L)u(t)=0$ in $\Hpsmm$ for almost all $t\in\R$, and show that $u=0$. For $t\in\R$ set
\[
u_{+}(t):=\ind_{(-\infty,0]}(t)u(t)\text{ and }u_{-}(t):=\ind_{[0,-\infty)}(t)u(t).
\]
By the assumptions on $u$, one has $u=u_{+}+u_{-}$,
\[
u_{+},u_{-}\in C(\R;\Hps)\cap C^{1}(\R;\Hpsm)\cap W^{2,1}_{\loc}(\R;\Hpsmm)
\]
and $(D_{t}^{2}-L)u_{+}(t)=(D_{t}^{2}-L)u_{-}(t)=0$ in $\Hpsmm$ for almost all $t\in\R$. In fact, the latter identity and Proposition \ref{prop:Lproperties} \eqref{it:Lprop3} imply that $u_{+},u_{-}\in C^{2}(\R;\Hpsmm)$. We need to show that $u_{+}=u_{-}=0$, and by symmetry, replacing $u(t)$ by $u(-t)$, it suffices to consider only $u_{+}$. We want to show that
\begin{equation}\label{eq:uniquetoshow}
\int_{\R}\lb u_{+}(t),F(t)\rb_{\Rn}\ud t=0
\end{equation}
for each $F\in C^{\infty}_{c}(\R;C^{\infty}_{c}(\Rn))$, and we will use that 
\begin{equation}\label{eq:uniquenesstouse}
\int_{\R}\lb u_{+}(t),(D_{t}^{2}-L)w(t)\rb_{\Rn}\ud t=\int_{\R}\lb (D_{t}^{2}-L)u_{+}(t),w(t)\rb_{\Rn}\ud t=0
\end{equation}
for all 
\begin{equation}\label{eq:regularitywneeded}
w\in C(\R;\HT^{2-s,p'}_{FIO}(\Rn))\cap W^{2,1}_{\loc}(\R;\HT^{-s,p'}_{FIO}(\Rn))
\end{equation}
with compact support in time, as follows from the properties of $u_{+}$. In fact, since $u_{+}(t)=0$ for $t\leq 0$, it suffices to assume only that there exists a $t_{0}>0$ such that $w(t)=0$ for all $t\geq t_{0}$. Note that the regularity of $w$ in \eqref{eq:regularitywneeded} is needed because of the limited regularity of $u_{+}$, and the support condition on $w$ is used in \eqref{eq:uniquenesstouse} to deal with the fact that $u$ need not be bounded or compactly supported in time.

For \eqref{eq:uniquetoshow} note that, by the first part of the proof, there exists a 
\begin{equation}\label{eq:regularityw}
w\in C(\R;\HT^{\sigma,p'}_{FIO}(\Rn))\cap C^{1}(\R;\HT^{\sigma-1,p'}_{FIO}(\Rn))\cap W^{2,1}_{\loc}(\R;\HT^{\sigma-2,p'}_{FIO}(\Rn))
\end{equation}
such that $(D_{t}^{2}-L)w(t)=F(t)$ in $\HT^{\sigma-2,p'}_{FIO}(\Rn)$, where $\sigma:=r-s(p)-\veps$ for some small $\veps>0$. Moreover, by the assumptions that $2s(p)<r-1$, $r\geq 2$ and $s\geq 1/2$, one has 
\begin{equation}\label{eq:sinequality}
r-s(p)=\tfrac{r}{2}-\tfrac{1}{2}-s(p)+\tfrac{r}{2}+\tfrac{1}{2}>\tfrac{r}{2}+\tfrac{1}{2}\geq \tfrac{3}{2}\geq 2-s.
\end{equation}
Hence, for $\eps$ sufficiently small, \eqref{eq:regularitywneeded} holds. Via \eqref{eq:uniquenesstouse}, this would seem to imply \eqref{eq:uniquetoshow}. However, a priori it is not clear whether $w(t)=0$ for some $t_{0}>0$ and all $t\geq t_{0}$, and the construction of a solution in \eqref{eq:defu} would not yield this. Hence we modify the argument slightly. 

Let $t_{0}>0$ be such that $F(t)=0$ for $t\geq t_{0}$. Using notation as in the first part of the proof, for $t\in\R$ set $v_{0}(t):=F(t)$ and, recursively,
\[
v_{k+1}(t):=-\int_{t}^{t_{0}}\tilde{e}(x,D)\sn_{t-\tau}v_{k}(\tau)\ud \tau
\] 
for $k\geq0$. As before, $v:=\sum_{k=0}^{\infty}v_{k}$ is a well defined element of $L^{1}_{\loc}(\R;\HT^{\sigma-1,p'}_{FIO}(\Rn))$, and one has $v(t)=0$ for $t\geq t_{0}$. Set $w(t):=\int_{t}^{t_{0}}\sn_{t-\tau}v(\tau)\ud\tau$ for $t\in\R$. As in the first part of the proof, it follows that $w$ satisfies \eqref{eq:regularityw}, $w(t)=0$ for $t\geq t_{0}$, and 
\[
(D_{t}^{2}-L)w(t)=v(t)+\int_{t}^{t_{0}}\wt{e}(x,D)\sn_{t-\tau}v(\tau)\ud\tau=F(t)
\]
for almost all $t$. Now \eqref{eq:regularityw} implies \eqref{eq:regularitywneeded}, and then \eqref{eq:uniquenesstouse} yields \eqref{eq:uniquetoshow}. 

Finally, since $\HT^{s_{1},p}_{FIO}(\Rn)\subseteq \HT^{s_{2},p}_{FIO}(\Rn)$ for $s_{1}\geq s_{2}$, if $r\in\N$ then one immediately obtains uniqueness for $s=r-s(p)$ from what we have already shown.

\subsubsection{Existence and uniqueness for all $-r+s(p)+1< s< r-s(p)$} We will use that the solution operators that we have constructed form a strongly continuous group which is independent of the choices of $p$ and $s$ considered until now. Then duality shows that these operators extend boundedly to the other values of $s$ as well, and basic results from semigroup theory conclude the proof. See~\cite{Engel-Nagel00} for some of the basics of semigroup theory that are used here.  

We first formulate our problem in terms of an evolution equation. For $-r+s(p)+1< s< r-s(p)$, set
\[
D_{s,p}:=\Big\{\!\left(\!\begin{array}{c}
\!w_{0}\!\\
\!w_{1}\!
\end{array}\!\right)\in \Hps\times\Hpsm\Big| w_{1}\in \Hps, Lw_{0}\in \Hpsm\Big\}.
\]
We claim that we must show that, for all $u_{0}\in\Hps$, $u_{1}\in \Hpsm$ and $F\in L^{1}_{\loc}(\R;\Hpsm)$, there exists a unique 
\begin{equation}\label{eq:regularityw2}
w\in C\big(\R;\Hps\times\Hpsm\big)
\end{equation}
such that for all $t\in\R$ one has 
\begin{equation}\label{eq:propertiesw1}
\int_{0}^{t}w(\tau)\ud \tau\in D_{s,p}
\end{equation}
and
\begin{equation}\label{eq:propertiesw2}
w(t)=\left(\!\begin{array}{c}
\!u_{0}\!\\
\!u_{1}\!
\end{array}\!\right)+\left(\!\begin{array}{cc}
\!0&I\!\\
\!-L&0\!
\end{array}\!\right)\int_{0}^{t}w(\tau)\ud\tau-\int_{0}^{t}\left(\!\begin{array}{c}
\!0\!\\
\!F(\tau)\!
\end{array}\!\right)\ud\tau
\end{equation}
in $\Hps\times\Hpsm$. 

To prove this claim, suppose that $w$ satisfies \eqref{eq:regularityw2}, \eqref{eq:propertiesw1} and \eqref{eq:propertiesw2}. Using the mapping properties of $L$ from Proposition \ref{prop:Lproperties} \eqref{it:Lprop3}, differentiate \eqref{eq:propertiesw2} to see that
\begin{equation}\label{eq:regularityw3}
w\in C\big(\R;\Hps\times\Hpsm\big)\cap W^{1,1}_{\loc}\big(\R;\Hpsm\times\Hpsmm\big)
\end{equation}
with 
\begin{equation}\label{eq:propertiesw4}
\partial_{t}w(t)=\left(\!\begin{array}{cc}
\!0&I\!\\
\!-L&0\!
\end{array}\!\right)w(t)-\left(\!\begin{array}{c}
\!0\!\\
\!F(t)\!
\end{array}\!\right)
\end{equation}
in $\Hpsm\times\Hpsmm$ for almost all $t\in\R$. Now let $u\in C(\R;\Hps)$ equal the first coordinate of $w$. Then $u(0)=u_{0}$ by \eqref{eq:propertiesw2}. Moreover, \eqref{eq:regularityw3}, \eqref{eq:propertiesw4} and \eqref{eq:propertiesw2} show that 
\[
u\in C(\R;\Hps)\cap C^{1}(\R;\Hpsm)\cap W^{2,1}_{\loc}(\R;\Hpsmm),
\]
$\partial_{t}u(0)=u_{1}$, and $(D_{t}^{2}-L)u(t)=F(t)$ in $\Hpsmm$ for almost all $t\in \R$.

Conversely, suppose that $u$ is as in the statement of the theorem, and set $w(t):=\left(\!\begin{array}{c}
\!u(t)\!\\
\!\partial_{t}u(t)\!
\end{array}\!\right)$ for $t\in\R$. Then $w$ satisfies \eqref{eq:regularityw3} and \eqref{eq:propertiesw4}. By integrating the latter identity one obtains \eqref{eq:propertiesw2}. Finally, \eqref{eq:propertiesw1} follows by rewriting \eqref{eq:propertiesw2} as
\[
w(t)-\left(\!\begin{array}{c}
\!u_{0}\!\\
\!u_{1}\!
\end{array}\!\right)+\int_{0}^{t}\left(\!\begin{array}{c}
\!0\!\\
\!F(\tau)\!
\end{array}\!\right)\ud\tau=\left(\!\begin{array}{cc}
\!0&I\!\\
\!-L&0\!
\end{array}\!\right)\int_{0}^{t}w(\tau)\ud\tau
\]
and noting that the left-hand side is an element of $\Hps\times\Hpsm$. This proves the claim. By Proposition \ref{prop:Lproperties} \eqref{it:Lprop3}, for $r\in\N$ the same argument works for $s= -r+s(p)+1$ and $s=r-s(p)$.

Having reformulated our problem in terms of an evolution equation, we now construct an appropriate strongly continuous group. For $1/2\leq s<r-s(p)$, $u_{0}\in\Hps$ and $u_{1}\in\Hpsm$, let
\[
u\in C(\R;\Hps)\cap C^{1}(\R;\Hpsm)\cap W^{2,1}_{\loc}(\R;\Hpsmm)
\]
be as in \eqref{eq:defu} for $F\equiv 0$. In this case one in fact has
\begin{equation}\label{eq:betterregu}
u\in C(\R;\Hps)\cap C^{1}(\R;\Hpsm)\cap C^{2}(\R;\Hpsmm),
\end{equation}
since $(D_{t}^{2}-L)u(t)=0$ in $\Hpsmm$ and because $Lu\in C(\R;\Hpsmm)$, by Proposition \ref{prop:Lproperties} \eqref{it:Lprop3}. Next, for $t\in\R$ set 
\[
S(t)\left(\!\begin{array}{c}
\!u_{0}\!\\
\!u_{1}\!
\end{array}\!\right):=\left(\!\begin{array}{c}
\!u(t)\!\\
\!\partial_{t}u(t)\!
\end{array}\!\right).
\]
It follows from \eqref{eq:cossinbounds}, \eqref{eq:boundv}, \eqref{eq:firstderu} and \eqref{eq:betterregu} that $(S(t))_{t\in\R}$ is a strongly continuous family of bounded linear operators on $X_{s,p}:=\Hps\times\Hpsm$. Moreover, by the uniqueness statement that we have already shown, $(S(t))_{t\in\R}$ is a strongly continuous group. This group does not depend on the choice of $p$ or $s$ within the allowed range, since the operators that occur in \eqref{eq:defu} are independent of those choices. The generator of $(S(t))_{t\in\R}$ is $\mathcal{A}:=\left(\!\begin{array}{cc}
\!0&I\!\\
\!-L&0\!
\end{array}\!\right)$, with domain
\begin{equation}\label{eq:domain}
\begin{aligned}
D(\mathcal{A})&=\Big\{\!\left(\!\begin{array}{c}
\!u_{0}\!\\
\!u_{1}\!
\end{array}\!\right)\in X_{s,p}\Big|\lim_{h\to0}\tfrac{1}{h}(S(h)-I)\left(\!\begin{array}{c}
\!u_{0}\!\\
\!u_{1}\!
\end{array}\!\right)\text{ exists in }X_{s,p}\Big\}\\
&=\Big\{\!\left(\!\begin{array}{c}
\!u_{0}\!\\
\!u_{1}\!
\end{array}\!\right)\in X_{s,p}\Big| u_{1}\in \Hps, Lu_{0}\in \Hpsm\Big\}=D_{s,p}
\end{aligned}
\end{equation}
on $X_{s,p}$, as follows from \eqref{eq:betterregu} and from the definition of $(S(t))_{t\in\R}$.

By duality (see~\cite[Sections I.5.14, II.2.5 and II.3.11]{Engel-Nagel00} and \eqref{eq:duality}), the transpose operators $(S(t)')_{t\in\R}$ form a strongly continuous group on $X^{*}_{s,p}=\HT^{-s,p'}_{FIO}(\Rn)\times\HT^{1-s,p'}_{FIO}(\Rn)$ with generator $\mathcal{A}'=\left(\!\begin{array}{cc}
\!0&-L\!\\
\!I&0\!
\end{array}\!\right)$
with domain
\[
D_{-s,p'}':=D(\mathcal{A}')=\Big\{\!\left(\!\begin{array}{c}
\!v_{0}\!\\
\!v_{1}\!
\end{array}\!\right)\in X^{*}_{s,p}\Big| v_{0}\in \HT^{1-s,p'}_{FIO}(\Rn), Lv_{1}\in \HT^{-s,p'}_{FIO}(\Rn)\Big\}.
\] 
By Proposition \ref{prop:Lproperties} \eqref{it:Lprop3}, $\Sw(\Rn)\times \Sw(\Rn)\subseteq\D_{s,p}\cap D_{-s,p'}'$, and for all $u_{0},u_{1}\in\Sw(\Rn)$ and $t\in\R$ one has $S(t)'\left(\!\begin{array}{c}
\!u_{0}\!\\
\!u_{1}\!
\end{array}\!\right)=\left(\!\begin{array}{c}
\!\partial_{t}u(t)\!\\
\!u(t)\!
\end{array}\!\right)$. Since $\Sw(\Rn)\times\Sw(\Rn)$ is dense in $X_{s,p}$ and $X_{s,p}^{*}$, it follows by interchanging the factors in $X^{*}_{s,p}=\HT^{-s,p'}_{FIO}(\Rn)\times\HT^{1-s,p'}_{FIO}(\Rn)$, as well as the roles of $p$ and $p'$ and of $s$ and $1-s$, that $(S(t))_{t\in\R}$ extends to a strongly continuous group on $\Hps\times\Hpsm$ for $-r+s(p)+1<s\leq 1/2$. Moreover, it follows from what we have already shown that, for all $-r+s(p)+1< s<r-s(p)$, its generator is $\mathcal{A}$ with domain $D_{s,p}$. 

Now~\cite[Proposition I.3.1.16]{ArBaHiNe11} implies that for all $-r+s(p)+1< s< r-s(p)$, $u_{0}\in\Hps$, $u_{1}\in \Hpsm$ and $F\in L^{1}_{\loc}(\R;\Hpsm)$, there exists a unique $w$ satisfying \eqref{eq:regularityw2}, \eqref{eq:propertiesw1} and \eqref{eq:propertiesw2}, given by
\begin{equation}\label{eq:defw}
w(t)=S(t)\left(\!\begin{array}{c}
\!u_{0}\!\\
\!u_{1}\!
\end{array}\!\right)+\int_{0}^{t}S(t-\tau)\left(\!\begin{array}{c}
\!0\!\\
\!F(\tau)\!
\end{array}\!\right)\ud\tau
\end{equation}
for $t\in\R$. This concludes the proof for $r\notin\N$. For $r\in\N$ the exact same arguments yield the required statement for $-r+s(p)+1\leq s\leq r-s(p)$.
\end{proof}

\begin{remark}\label{rem:additionalreg}
If, in Theorem \ref{thm:mainwave}, one additionally has $F\in L^{q}_{\loc}(\R;\Hpsmm)$ for some $q\in(1,\infty]$, then $u\in W^{2,q}_{\loc}(\R;\Hpsmm)$. Similarly, if $F\in C(\R;\Hpsmm)$ then $u\in C^{2}((-t_{0},t_{0});\Hpsmm)$. This follows from the identity $D_{t}^{2}u(t)=Lu(t)+F(t)$ and the fact that $Lu\in C(\R;\Hpsmm)$, by Proposition \ref{prop:Lproperties} \eqref{it:Lprop3}.
\end{remark}

\begin{remark}\label{rem:formula}
Note that we have in fact proved a stronger statement than Theorem \ref{thm:divwave}. Namely, we have shown that there exists a single collection $(S(t))_{t\in\R}$ that forms a strongly continuous group on $\Hps\times \Hpsm$ for all $p$ and $s$ from the theorem, such that for all $u_{0}\in\Hps$, $u_{1}\in \Hpsm$ and $F\in L^{1}_{\loc}(\R;\Hpsm)$, the function in \eqref{eq:defw} is the unique $w$ satisfying \eqref{eq:regularityw3} and \eqref{eq:propertiesw4}. Apart from merely proving existence and uniqueness of solutions to \eqref{eq:divwave}, this stronger statement also yields norm bounds for the solutions and consistency for initial data contained in $\Hps$ for multiple values of $p$ and $s$. 

Moreover, since $u$ is defined in \eqref{eq:defu} (see also \eqref{eq:defcossin}) in terms of the operators $\ex_{t}$ from Theorem \ref{thm:mainsmooth}, and because these operators can be expressed through a flow parametrix (see Remark \ref{rem:parametrix}), we have in fact also obtained a parametrix for \eqref{eq:eqdistr}.
\end{remark}

We can combine Theorem \ref{thm:divwave} with the Sobolev embeddings for $\Hp$ to obtain a corollary about wave equations with rough coefficients in the $L^{p}$-scale.

\begin{corollary}\label{cor:divLp}
Let $p\in(1,\infty)$ be such that $2s(p)<r-1$, and let $-r+s(p)+1< s< r-s(p)$. Then for all $u_{0}\in W^{s+s(p),p}(\Rn)$, $u_{1}\in W^{s-1+s(p),p}(\Rn)$ and $F\in L^{1}_{\loc}(\R;W^{s-1+s(p),p}(\Rn))$, there exists a
\[
u\in C(\R;W^{s-s(p),p}(\Rn))\cap C^{1}(\R;W^{s-1-s(p),p}(\Rn))\cap W^{2,1}_{\loc}(\R;W^{s-2-s(p),p}(\Rn))
\]
such that $u(0)=u_{0}$, $\partial_{t}u(0)=u_{1}$, and
\[
(D_{t}^{2}-L)u(t)=F(t)
\]
for almost all $t\in \R$. If, additionally, $s>1-r+3s(p)$, then such a $u$ is unique. If $r\in\N$, then the first statement holds for $-r+s(p)+1\leq s\leq r-s(p)$, and the second statement holds for $s\geq 1-r+3s(p)$.
\end{corollary}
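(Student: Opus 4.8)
The plan is to deduce the corollary from Theorem~\ref{thm:divwave} by sandwiching the $L^{p}$-based Sobolev spaces between Hardy spaces for Fourier integral operators via the embeddings~\eqref{eq:Sobolev}, which for $1<p<\infty$ say that $W^{t+s(p),p}(\Rn)\subseteq\HT^{t,p}_{FIO}(\Rn)\subseteq W^{t-s(p),p}(\Rn)$ for every $t\in\R$. No genuinely new analysis is needed; the proof is bookkeeping of Sobolev indices, and the only subtle point is that uniqueness in the enlarged $L^{p}$-class is not immediate and forces a shift of the smoothness index by $2s(p)$.

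\emph{Existence.} Apply the first embedding with $t=s$ and $t=s-1$ to see that $u_{0}\in\Hps$, $u_{1}\in\Hpsm$ and $F\in L^{1}_{\loc}(\R;\Hpsm)$. Since the hypotheses on $p$ and $s$ here are exactly those of Theorem~\ref{thm:divwave}, that theorem produces $u\in C(\R;\Hps)\cap C^{1}(\R;\Hpsm)\cap W^{2,1}_{\loc}(\R;\Hpsmm)$ with $u(0)=u_{0}$, $\partial_{t}u(0)=u_{1}$ and $(D_{t}^{2}-L)u(t)=F(t)$ in $\Hpsmm$ for almost all $t$. Applying the second embedding with $t=s$, $t=s-1$ and $t=s-2$ then gives $u\in C(\R;W^{s-s(p),p}(\Rn))\cap C^{1}(\R;W^{s-1-s(p),p}(\Rn))\cap W^{2,1}_{\loc}(\R;W^{s-2-s(p),p}(\Rn))$, and the identity $(D_{t}^{2}-L)u(t)=F(t)$ holds a fortiori in $W^{s-2-s(p),p}(\Rn)$ for almost all $t$. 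For $r\in\N$ the same argument works on the closed interval $-r+s(p)+1\leq s\leq r-s(p)$, since Theorem~\ref{thm:divwave} does.

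\emph{Uniqueness.} Suppose $u$ and $\tilde u$ both satisfy the conclusion; then $w:=u-\tilde u$ lies in $C(\R;W^{s-s(p),p}(\Rn))\cap C^{1}(\R;W^{s-1-s(p),p}(\Rn))\cap W^{2,1}_{\loc}(\R;W^{s-2-s(p),p}(\Rn))$, satisfies $w(0)=\partial_{t}w(0)=0$, and solves $(D_{t}^{2}-L)w(t)=0$ for almost all $t$. Set $\tilde{s}:=s-2s(p)$. Using the embedding $W^{t+s(p),p}(\Rn)\subseteq\HT^{t,p}_{FIO}(\Rn)$ with $t=\tilde{s},\tilde{s}-1,\tilde{s}-2$ shows that $w\in C(\R;\HT^{\tilde{s},p}_{FIO}(\Rn))\cap C^{1}(\R;\HT^{\tilde{s}-1,p}_{FIO}(\Rn))\cap W^{2,1}_{\loc}(\R;\HT^{\tilde{s}-2,p}_{FIO}(\Rn))$ and that the equation holds in $\HT^{\tilde{s}-2,p}_{FIO}(\Rn)$ for almost all $t$. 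The hypothesis $s>1-r+3s(p)$ is precisely $-r+s(p)+1<\tilde{s}$, while $\tilde{s}<s<r-s(p)$ gives the upper bound, so $\tilde{s}$ lies in the admissible range of Theorem~\ref{thm:divwave}; its uniqueness part, applied with $s$ replaced by $\tilde{s}$ and $u_{0}=u_{1}=0$, $F\equiv0$, forces $w=0$. For $r\in\N$ one invokes the closed-interval version of Theorem~\ref{thm:divwave}, which yields uniqueness as soon as $\tilde{s}\in[-r+s(p)+1,r-s(p)]$, that is, $s\geq1-r+3s(p)$.

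\emph{Main obstacle.} The only thing requiring care is the uniqueness step: because the class in the corollary is strictly larger than the one in Theorem~\ref{thm:divwave}, one cannot apply that theorem directly with the given $s$, but must first re-enter the $\HT^{\bullet,p}_{FIO}$-scale at the lowered index $\tilde{s}=s-2s(p)$ — which is exactly where the extra condition $s>1-r+3s(p)$ (respectively $s\geq1-r+3s(p)$ when $r\in\N$) becomes necessary. Everything else is a routine application of the Sobolev embeddings~\eqref{eq:Sobolev}.
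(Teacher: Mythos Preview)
Your proof is correct and follows essentially the same approach as the paper's own proof: use the Sobolev embeddings~\eqref{eq:Sobolev} to transfer the data into $\Hps$, apply Theorem~\ref{thm:divwave}, and embed back for existence; for uniqueness, re-enter the $\HT^{\bullet,p}_{FIO}$-scale at the shifted index $\tilde s=s-2s(p)$ and invoke the uniqueness part of Theorem~\ref{thm:divwave} there. The paper states this more tersely but the logic is identical.
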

\begin{proof}
The existence of such a $u$ is an immediate consequence of Theorem \ref{thm:divwave} and the Sobolev embeddings for $\Hp$ in \eqref{eq:Sobolev}. The uniqueness statement follows in the same manner, since such a $u$ satisfies
\[
u\in  C(\R;\HT^{s-2s(p),p}_{FIO}(\Rn))\cap C^{1}(\R;\HT^{s-1-2s(p),p}_{FIO}(\Rn))\cap W^{2,1}_{\loc}(\R;\HT^{s-2-2s(p),p}_{FIO}(\Rn)).
\]
For $-r+s(p)+1< s-2s(p)$, one now obtains uniqueness from Theorem \ref{thm:divwave}.
\end{proof}

\begin{remark}\label{rem:uniqueLp}
It is not clear whether the extra condition $3s(p)<r-1+s$ is needed for uniqueness in Corollary \ref{cor:divLp}. We choose not to pursue this matter any further, since under the assumptions of the corollary, Theorem \ref{thm:divwave} in fact yields a solution that has the stronger regularity property in \eqref{eq:regularityu}, and such a $u$ is unique.
\end{remark}

From Theorem \ref{thm:divwave} we can also derive a corollary about the boundedness of certain operators that arise in the spectral calculus of $L$. More precisely, $L$ is a positive operator on $L^{2}(\Rn)$, and therefore the operators $\cos(t\sqrt{L})$ and $\sin(t\sqrt{L})L^{-1/2}$ are well defined and bounded on $L^{2}(\Rn)=\HT^{2}_{FIO}(\Rn)$ for all $t\in\R$. Since $\Sw(\Rn)\subseteq \Hps\cap L^{2}(\Rn)$ is dense in $\Hps$ for all $p\in[1,\infty)$ and $s\in\R$, cf.~\cite[Proposition 6.6]{HaPoRo20}, it is natural to wonder whether these operators extend boundedly to $\Hps$ for $p\neq 2$. The following corollary gives conditions on $L$, $p$ and $s$ such that this is the case. 

\begin{corollary}\label{cor:spectral}
Let $p\in(1,\infty)$ be such that $2s(p)<r-1$. Then for all $-r+s(p)< s< r-s(p)$ and $t\in\R$, the operator
\begin{equation}\label{eq:cosbounded}
\cos(t\sqrt{L}):\Hps\to\Hps
\end{equation}
is bounded, with locally uniform bounds in $t$. Moreover, for all $-r+s(p)+1< s< r-s(p)$ and $t\in\R$, the operator
\begin{equation}\label{eq:sinbounded}
\sin(t\sqrt{L})L^{-1/2}:\Hpsm\to\Hps
\end{equation}
is bounded, with locally uniform bounds in $t$. If $r\in\N$, then \eqref{eq:cosbounded} holds for $-r+s(p)\leq s\leq r-s(p)$, and \eqref{eq:sinbounded} holds for $-r+s(p)+1\leq s\leq r-s(p)$.
\end{corollary}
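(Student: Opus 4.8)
The plan is to identify, on the dense subspace $\Sw(\Rn)\subseteq\Hps$ (\cite[Proposition 6.6]{HaPoRo20}), the operators $\cos(t\sqrt{L})$ and $\sin(t\sqrt{L})L^{-1/2}$ with the solution operators produced by Theorem~\ref{thm:divwave}, using the uniqueness part of that theorem; this will yield the stated boundedness for every $s$ in the range of Theorem~\ref{thm:divwave}, and I would then recover the wider lower range for $\cos(t\sqrt{L})$ in \eqref{eq:cosbounded} by a duality argument using the self-adjointness of $\cos(t\sqrt{L})$.

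First I would recall from Remark~\ref{rem:formula} that there is a single strongly continuous group $(S(t))_{t\in\R}$ on $\Hps\times\Hpsm$ — the same family for all $p,s$ admissible in Theorem~\ref{thm:divwave} — whose first component $\big[S(t)\binom{u_{0}}{u_{1}}\big]_{1}$ is, when $F\equiv0$, the unique solution of \eqref{eq:divwave} with data $(u_{0},u_{1})$; being a strongly continuous group it satisfies $\|S(t)\|_{\La(\Hps\times\Hpsm)}\le M e^{\w|t|}$. On the other hand, as noted before the corollary, $L$ is a non-negative self-adjoint operator on $L^{2}(\Rn)=\mathcal{H}^{0,2}_{FIO}(\Rn)$ that commutes with complex conjugation (the $a_{i,j}$ being real), so $\cos(t\sqrt{L})$ and $\sin(t\sqrt{L})L^{-1/2}$ (the latter defined via the bounded Borel function $\lambda\mapsto\sin(t\sqrt\lambda)/\sqrt\lambda$) are bounded and strongly continuous on $L^{2}(\Rn)$. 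For $u_{0}\in\Sw(\Rn)$ one checks, using that $r\ge2$ so $L$ maps $\Sw(\Rn)$ into $L^{2}$-Sobolev spaces, that $t\mapsto\cos(t\sqrt{L})u_{0}$ lies in $C(\R;L^{2})\cap C^{1}(\R;H^{-1})\cap C^{2}(\R;H^{-2})$ and solves $D_{t}^{2}u=Lu$ with data $(u_{0},0)$; by \eqref{eq:Sobolev} with $p=2$ (so $s(2)=0$) these are exactly the spaces $\mathcal{H}^{0,2}_{FIO}(\Rn)$, $\mathcal{H}^{-1,2}_{FIO}(\Rn)$, $\mathcal{H}^{-2,2}_{FIO}(\Rn)$, so this is a solution of \eqref{eq:divwave} in the sense of Theorem~\ref{thm:divwave} with $(p,s)=(2,0)$ (note $2s(2)=0<r-1$ and $0\in(-r+1,r)$ since $r\ge2$). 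By uniqueness, $\cos(t\sqrt{L})u_{0}=\big[S(t)\binom{u_{0}}{0}\big]_{1}$; the same argument with $(p,s)=(2,1)$ gives $\sin(t\sqrt{L})L^{-1/2}u_{1}=\big[S(t)\binom{0}{u_{1}}\big]_{1}$ for $u_{1}\in\Sw(\Rn)$.

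Combining these identities with the density of $\Sw(\Rn)$ in $\Hps$ and the local uniform bound on $\|S(t)\|_{\La(\Hps\times\Hpsm)}$, I would conclude that $\cos(t\sqrt{L})$ extends to a bounded operator on $\Hps$, and $\sin(t\sqrt{L})L^{-1/2}$ to a bounded operator $\Hpsm\to\Hps$, both with locally uniform bounds in $t$, whenever $p\in(1,\infty)$ satisfies $2s(p)<r-1$ and $-r+s(p)+1<s<r-s(p)$ (endpoints included when $r\in\N$). This is exactly the range claimed in \eqref{eq:sinbounded}, so that part is finished; and since $2s(p)<r-1$ and $r\ge2$ force $s(p)+1<(r+1)/2<r$, the range $0\le s<r-s(p)$ (resp.\ $0\le s\le r-s(p)$) of \eqref{eq:cosbounded} is covered as well. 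It remains to handle $-r+s(p)<s<0$ in \eqref{eq:cosbounded}: for such $s$ I would set $\sigma:=-s\in(0,r-s(p))$, note that $p'\in(1,\infty)$, $s(p')=s(p)$ and $2s(p')<r-1$, so that $\cos(t\sqrt{L})$ is bounded on $\mathcal{H}^{\sigma,p'}_{FIO}(\Rn)$ with locally uniform bounds in $t$ by what was just shown. Because $\cos(t\sqrt{L})$ is self-adjoint on $L^{2}(\Rn)$ and $L$ has real coefficients, $\cos(t\sqrt{L})$ is symmetric for the bilinear pairing $\lb\cdot,\cdot\rb$ on $\Sw(\Rn)$; since \eqref{eq:duality} identifies $\Hps$ with the dual of $\mathcal{H}^{\sigma,p'}_{FIO}(\Rn)$ via $\lb\cdot,\cdot\rb$ and $\Sw(\Rn)$ is dense in $\mathcal{H}^{\sigma,p'}_{FIO}(\Rn)$, testing $\cos(t\sqrt{L})f$ against $g\in\Sw(\Rn)$ gives $\|\cos(t\sqrt{L})f\|_{\Hps}\le\|\cos(t\sqrt{L})\|_{\La(\mathcal{H}^{\sigma,p'}_{FIO}(\Rn))}\|f\|_{\Hps}$ for all $f\in\Sw(\Rn)$, and density of $\Sw(\Rn)$ in $\Hps$ then extends $\cos(t\sqrt{L})$ boundedly to $\Hps$. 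Together with the previous step this gives \eqref{eq:cosbounded} on the full range $-r+s(p)<s<r-s(p)$, the endpoints $s=\pm(r-s(p))$ being included when $r\in\N$.

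The step I expect to require the most care is the regularity verification in the second paragraph: one must check that, for Schwartz data, the spectral-calculus solutions $\cos(t\sqrt{L})u_{0}$ and $\sin(t\sqrt{L})L^{-1/2}u_{1}$ have just enough time and space regularity to belong to the uniqueness class of Theorem~\ref{thm:divwave} — no more than a finite amount can be expected because $L$ has merely $C^{r}$ coefficients, so $L^{k}$ need not preserve $\Sw(\Rn)$ — which is why one works at the modest Sobolev levels $(p,s)=(2,0)$ and $(2,1)$. Once the identification with the group $(S(t))_{t\in\R}$ is in place, the norm bounds, the local uniformity in $t$, and the consistency of the various extensions (they all agree with $\cos(t\sqrt{L})$, resp.\ $\sin(t\sqrt{L})L^{-1/2}$, on the common dense subspace $\Sw(\Rn)$) follow formally; the remaining work is only the bookkeeping with the $s$-ranges and the standard transpose computation.
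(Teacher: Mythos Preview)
Your proposal is correct and follows essentially the same approach as the paper: identify the spectral-calculus solutions with the group $(S(t))_{t\in\R}$ of Remark~\ref{rem:formula} by checking the regularity at $(p,s)=(2,0)$ and invoking uniqueness, use density to transfer the group bounds to $\cos(t\sqrt{L})$ and $\sin(t\sqrt{L})L^{-1/2}$ for all admissible $(p,s)$, and finally enlarge the $s$-range for $\cos(t\sqrt{L})$ by duality via \eqref{eq:duality}. The paper is slightly terser (it invokes abstract form theory to record $\sqrt{L}:L^{2}\to W^{-1,2}$ before the regularity check, and simply writes ``duality'' for the final step), but the argument is the same.
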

\begin{proof}
It follows from abstract form theory (see e.g.~\cite[Chapter 6]{Kato95}) that $\sqrt{L}:L^{2}(\Rn)\to W^{-1,2}(\Rn)$ boundedly. Moreover, $L:L^{2}(\Rn)\to W^{-2,2}(\Rn)$, as follows directly as well as from Proposition \ref{prop:Lproperties} \eqref{it:Lprop3}. Now the spectral calculus shows that, for $f\in \Sw(\Rn)$, the function $t\mapsto u(t):=\cos(t\sqrt{L})f$ satisfies
\[
u\in C(\R;L^{2}(\Rn))\cap C^{1}(\R;W^{-1,2}(\Rn))\cap C^{2}(\R;W^{-2,2}(\Rn)),
\]
$u(0)=f$, $\partial_{t}u(0)=0$ and $(D_{t}^{2}-L)u(t)=0$. For $-r+s(p)+1\leq s\leq r-s(p)$ it thus follows from Remarks \ref{rem:additionalreg} and \ref{rem:formula} that
\[
u\in C(\R;\Hps)\cap C^{1}(\R;\Hpsm)\cap C^{2}(\R;\Hpsmm),
\]
and that for each $t_{0}>0$ there exists a $\con>0$, independent of $f$, such that $\|u(t)\|_{\Hps}\leq \con\|f\|_{\Hps}$ for all $t\in[-t_{0},t_{0}]$. Since $\Sw(\Rn)\subseteq\Hps$ is dense, this yields \eqref{eq:cosbounded} for $-r+s(p)+1< s< r-s(p)$. Then duality, cf.~\eqref{eq:duality}, extends \eqref{eq:cosbounded} to all $-r+s(p)< s< r-s(p)$. 

Applying the same reasoning to the function $t\mapsto v(t):=\sin(t\sqrt{L})L^{-1/2}f$, one obtains \eqref{eq:sinbounded} for all $-r+s(p)+1< s< r-s(p)$. The statements for $r\in\N$ follow in the same manner.
\end{proof}

\section{Operators in standard form}\label{sec:standardform}

This section contains our main result for operators in standard form.

The bounded real-valued functions $a_{i,j}:\Rn\to\R$, $1\leq i,j\leq n$, are as before. That is, we assume that there exists a $\kappa_{0}>0$ such that $\sum_{i,j=1}^{n}a_{i,j}(x)\xi_{i}\xi_{j}\geq \kappa_{0}|\xi|^{2}$ for all $x,\xi\in\Rn$, and that $a_{i,j}\in \Cr(\Rn)$ for all $1\leq i,j\leq n$ and some $r\geq2$. Let $A(x,D):=\sum_{i,j=1}^{n}a_{i,j}(x)D_{i}D_{j}$. Our main result for such differential operators in standard form is then as follows.

\begin{theorem}\label{thm:mainwave}
Let $p\in(1,\infty)$ be such that $2s(p)<r-1$, and let $-r+s(p)+2< s< r-s(p)+1$. Then for all $u_{0}\in\Hps$, $u_{1}\in \Hpsm$ and $F\in L^{1}_{\loc}(\R;\Hpsm)$, there exists a unique
\[
u\in C(\R;\Hps)\cap C^{1}(\R;\Hpsm)\cap W^{2,1}_{\loc}(\R;\Hpsmm)
\]
such that $u(0)=u_{0}$, $\partial_{t}u(0)=u_{1}$, and
\begin{equation}\label{eq:eqdistr2}
(D_{t}^{2}-A(x,D))u(t)=F(t)
\end{equation}
in $\Hpsmm$ for almost all $t\in\R$. If $r\in\N$, then this statement holds for $-r+s(p)+2\leq s\leq r-s(p)+1$.
\end{theorem}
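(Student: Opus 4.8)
The plan is to follow the proof of Theorem \ref{thm:divwave} essentially verbatim, making only the modifications dictated by the standard form. As there, begin by smoothing the coefficients: write $a_{i,j}=a_{i,j}^{\sharp}+a_{i,j}^{\flat}$ with $a_{i,j}^{\sharp}\in\A^{r}S^{0}_{1,1/2}\subseteq\A^{2}S^{0}_{1,1/2}$ and $a_{i,j}^{\flat}\in\Cr S^{-r/2}_{1,1/2}$, and set $A(x,D)=A_{1}+A_{2}$ with $A_{1}:=\sum_{i,j}a_{i,j}^{\sharp}(x,D)D_{i}D_{j}$ and $A_{2}:=\sum_{i,j}a_{i,j}^{\flat}(x,D)D_{i}D_{j}$. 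Since $D_{i}D_{j}$ is a Fourier multiplier applied on the right, the full symbol of $A_{1}$ is exactly $A^{\sharp}(x,\xi)=\sum_{i,j}a_{i,j}^{\sharp}(x,\xi)\xi_{i}\xi_{j}$; in particular, unlike in the divergence-form case, no sub-principal term is produced. Because $A(x,\xi)=\sum_{i,j}a_{i,j}(x)\xi_{i}\xi_{j}$ is a degree-two polynomial in $\xi$ with coefficients in $\Cr(\Rn)\subseteq\Crtwo(\Rn)$, it belongs to $\Crtwo S^{2}_{1,0}$ and is non-negative, elliptic and homogeneous of degree $2$ for $|\xi|\geq1$, so Proposition \ref{prop:squareroot} applies and yields a real-valued elliptic $b\in\A^{2}S^{1}_{1,1/2}$, asymptotically homogeneous of degree $1$ with real-valued limit $a\in\Crtwo S^{1}_{1,0}$ satisfying $a(x,\xi)=\sqrt{A(x,\xi)}$ for $|\xi|\geq1$, together with an $e\in S^{1}_{1,1/2}$, such that $A_{1}=b(x,D)^{2}+e(x,D)$.

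Next I would establish the analog of Proposition \ref{prop:Lproperties}. The only substantive change is that the Sobolev intervals in parts \eqref{it:Lprop2} and \eqref{it:Lprop3} are shifted upward by $1$, because $A_{2}$ carries two derivative factors $D_{i}D_{j}$ rather than one factor on each side of the rough symbol. Concretely, $D_{i}D_{j}:\Hps\to\Hpsmm$, and since $2s(p)<r-1$ one has $r/2-\rho\geq1$ for $\rho$ as in Theorem \ref{thm:roughpseudo} (exactly as in the proof of Proposition \ref{prop:Lproperties}\eqref{it:Lprop2}), so $a_{i,j}^{\flat}(x,D):\Hpsmm\subseteq\HT^{s-1-r/2+\rho,p}_{FIO}(\Rn)\to\Hpsm$ for $1\leq s<r-s(p)+1$ (with the endpoint included when $r\in\N$); hence $A_{2}:\Hps\to\Hpsm$ on that interval. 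Combined with $A_{1}:\Hps\to\Hpsmm$ for all $p$ and $s$ (from $b(x,D):\HT^{s+1,p}_{FIO}(\Rn)\to\Hps$ and $e\in S^{1}_{1,1/2}$), this gives $A(x,D):\Hps\to\Hpsmm$ for the right-hand part of the interval. To reach down to $s>-r+s(p)+2$ one argues by duality, as in Proposition \ref{prop:Lproperties}\eqref{it:Lprop3}; the point is that $A(x,D)$ is not self-adjoint, but the identity $\sum_{i,j}a_{i,j}D_{i}D_{j}=\sum_{i,j}D_{i}(a_{i,j}D_{j}\,\cdot\,)-\sum_{i,j}(D_{i}a_{i,j})D_{j}$ shows that $A(x,D)^{*}=\widehat{L}+\widehat{R}$, where $\widehat{L}=\sum_{i,j}D_{i}(a_{i,j}D_{j}\,\cdot\,)$ is a divergence-form operator of the type treated in Section \ref{sec:divform} and $\widehat{R}=\sum_{i}D_{i}(c_{i}(x)\,\cdot\,)$ is first order with coefficients $c_{i}=\sum_{j}D_{j}a_{i,j}\in C^{r-1}_{-}(\Rn)$. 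Keeping $\widehat{R}$ as a composition of multiplications by $C^{r-1}_{-}$-functions with the $D_{i}$, and symbol-smoothing those multiplications, Theorem \ref{thm:roughpseudo} gives $\widehat{R}:\Hps\to\Hpsm$ on a suitable interval just as for $L_{2}$; dualizing back yields $A(x,D):\Hps\to\Hpsmm$ for all $-r+s(p)+2<s<r-s(p)+2$, with endpoints when $r\in\N$.

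With these mapping properties available, the existence and uniqueness proofs copy those of Theorem \ref{thm:divwave}. For existence on $1\leq s<r-s(p)+1$, one factors $D_{t}^{2}-A_{1}=(D_{t}+\widetilde{b}(x,D))(D_{t}-\widetilde{b}(x,D))+\widetilde{e}(x,D)$ with $\widetilde{b}=b+ic$ chosen invertible via Lemma \ref{lem:inverse}, builds the operators $\cs_{t}$ and $\sn_{t}$ out of the solution operators $\ex_{t}$ from Theorem \ref{thm:mainsmooth}, and absorbs $\widetilde{e}(x,D)$ — which now includes $A_{2}$ — into the inhomogeneity through the same convergent iteration; the low-regularity argument for $\partial_{t}^{2}u$ is unchanged. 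Uniqueness is obtained first on a subinterval by solving the adjoint Cauchy problem for $A(x,D)^{*}=\widehat{L}+\widehat{R}$ (using the existence result for $\widehat{L}$ and absorbing $\widehat{R}$), and the full interval $-r+s(p)+2<s<r-s(p)+1$ is then recovered by the duality-plus-strongly-continuous-group argument from the last step of the proof of Theorem \ref{thm:divwave}. I expect the main obstacle to be essentially bookkeeping: one must keep composite operators such as $D_{i}D_{j}\circ(\text{rough multiplication})$ and $D_{i}\circ(\text{rough multiplication})$ unexpanded, so as to apply Theorem \ref{thm:roughpseudo} only to symbols of the positive-regularity classes $C^{\ell}_{-}S^{m}_{1,1/2}$ and never to the merely $L^{\infty}$ coefficients that appear if one writes out the adjoint symbol directly, and one must check that the shift by $1$ is threaded consistently through every invocation of Theorem \ref{thm:roughpseudo}. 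The delicate endpoint is $r=2$ (the $C^{1,1}$ case), but it causes no difficulty because then $2s(p)<r-1=1=r/2$ forces $\rho=0$, so no derivatives are lost in the relevant estimates.
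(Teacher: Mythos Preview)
Your proposal is essentially correct and follows the paper's strategy: symbol-smooth, factor the smooth part via Proposition~\ref{prop:squareroot}, absorb the rough remainder into the inhomogeneity on a shifted Sobolev interval, and extend by duality and semigroup theory. The one substantive point of departure is your treatment of the adjoint $A(x,D)^{*}$.

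The paper does \emph{not} reduce $A(x,D)^{*}$ to divergence form plus a lower-order perturbation. Instead it writes $A(x,D)^{*}=B_{1}+B_{2}$ with $B_{1}:=\sum_{i,j}D_{i}D_{j}a_{i,j}^{\sharp}(x,D)$ and $B_{2}:=\sum_{i,j}D_{i}D_{j}a_{i,j}^{\flat}(x,D)$, i.e.\ the same smoothing but with the derivatives placed on the left. Because $a_{i,j}^{\sharp}\in\A^{r}S^{0}_{1,1/2}$, the commutators $[D_{i}D_{j},a_{i,j}^{\sharp}(x,D)]$ are genuinely of order $1$, so $B_{1}=b(x,D)^{2}+e_{2}(x,D)$ for the \emph{same} $b$ as for $A^{\sharp}(x,D)$ and a new $e_{2}\in S^{1}_{1,1/2}$; and $B_{2}$ keeps the full $\Cr S^{-r/2}_{1,1/2}$ regularity, so its mapping properties are exactly those of $L_{2}$ shifted by $-1$. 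This yields existence for the adjoint on $[-1,r-s(p)-1)$ and makes the subsequent duality/semigroup step completely parallel to the divergence-form case.

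Your route via $A(x,D)^{*}=\widehat{L}+\widehat{R}$ also works, but note two things. First, the identity you wrote is for $A(x,D)$, not its adjoint; taking adjoints (using that $L$ is self-adjoint and that $D_{i}a_{i,j}$ is purely imaginary) gives $A(x,D)^{*}=\widehat{L}+\widehat{R}$ with $\widehat{R}=\sum_{j}D_{j}(c_{j}\,\cdot\,)$, $c_{j}=\sum_{i}D_{i}a_{i,j}$, as you intended. Second, since $c_{j}\in C^{r-1}_{-}(\Rn)$ only, applying Theorem~\ref{thm:roughpseudo} to the smoothed multiplication by $c_{j}$ uses the parameter $r-1$, and you must verify that the hypothesis $2s(p)<r-1$ is exactly what absorbs the resulting $\rho$; moreover, to cover the full dual interval you have to combine the direct interval for $\widehat{R}$ with the one obtained from $\widehat{R}^{*}=-\sum_{j}c_{j}D_{j}$. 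This is all fine, but it is more bookkeeping than the paper's symmetric $B_{1}+B_{2}$ decomposition. The trade-off is that your version recycles the divergence-form existence result for $\widehat{L}$ as a black box, whereas the paper runs the iterative construction once more for $B_{1}+B_{2}$ directly.
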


 Note that the condition $p$ is identical to that in Theorem \ref{thm:divwave}, but the Sobolev interval for $s$ is shifted by one. As was the case for Theorem \ref{thm:divwave}, for $r>2$ the regularity assumptions on the coefficients can be weakened, cf.~Appendix \ref{app:regularity}. 

As in Corollary \ref{cor:divLp}, the Sobolev embeddings and Theorem \ref{thm:mainwave} combine to yield the following corollary on the $L^{p}$ regularity of wave equations in standard form.

\begin{corollary}\label{cor:mainLp}
Let $p\in(1,\infty)$ be such that $2s(p)<r-1$, and let $-r+s(p)+2\leq s\leq r-s(p)+1$. Then for all $u_{0}\in W^{s+s(p),p}(\Rn)$, $u_{1}\in W^{s-1+s(p),p}(\Rn)$ and $F\in L^{1}_{\loc}(\R;W^{s-1+s(p),p}(\Rn))$, there exists a
\[
u\in C(\R;W^{s-s(p),p}(\Rn))\cap C^{1}(\R;W^{s-1-s(p),p}(\Rn))\cap W^{2,1}_{\loc}(\R;W^{s-2-s(p),p}(\Rn))
\]
such that $u(0)=u_{0}$, $\partial_{t}u(0)=u_{1}$, and
\[
(D_{t}^{2}-A(x,D))u(t)=F(t)
\]
for almost all $t\in \R$. If, additionally, $s> -r+3s(p)+2$, then such a $u$ is unique. If $r\in\N$, then the first statement holds for $-r+s(p)+1\leq s\leq r-s(p)$, and the second statement holds for $s\geq -r+3s(p)+1$.
\end{corollary}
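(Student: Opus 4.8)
The plan is to deduce Corollary \ref{cor:mainLp} from Theorem \ref{thm:mainwave} in exactly the way Corollary \ref{cor:divLp} was deduced from Theorem \ref{thm:divwave}, using the Sobolev embeddings \eqref{eq:Sobolev} to pass between the $L^{p}$-Sobolev scale and the scale $\Hps$. For the existence statement I would first apply the left-hand embedding $W^{s+s(p),p}(\Rn)\subseteq\Hps$, together with its analogue with $s$ replaced by $s-1$ (applied pointwise in $t$ to $F$), to see that the hypothesized data satisfy $u_{0}\in\Hps$, $u_{1}\in\Hpsm$ and $F\in L^{1}_{\loc}(\R;\Hpsm)$. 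Since $2s(p)<r-1$ and $-r+s(p)+2<s<r-s(p)+1$, Theorem \ref{thm:mainwave} then yields a solution $u\in C(\R;\Hps)\cap C^{1}(\R;\Hpsm)\cap W^{2,1}_{\loc}(\R;\Hpsmm)$ of \eqref{eq:eqdistr2} with $u(0)=u_{0}$ and $\partial_{t}u(0)=u_{1}$. Finally the right-hand embeddings $\Hps\subseteq W^{s-s(p),p}(\Rn)$, $\Hpsm\subseteq W^{s-1-s(p),p}(\Rn)$, $\Hpsmm\subseteq W^{s-2-s(p),p}(\Rn)$ give at once the claimed regularity of $u$ in the $L^{p}$-scale, and the equation continues to hold for a.e.\ $t$ because the embedded spaces are larger.

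For the uniqueness statement under the extra hypothesis $s>-r+3s(p)+2$, I would argue as in Corollary \ref{cor:divLp}: any $u$ in the asserted $L^{p}$-regularity class satisfies, by applying the left-hand embedding with parameters $s-s(p)$, $s-1-s(p)$ and $s-2-s(p)$,
\[
u\in C(\R;\HT^{s-2s(p),p}_{FIO}(\Rn))\cap C^{1}(\R;\HT^{s-1-2s(p),p}_{FIO}(\Rn))\cap W^{2,1}_{\loc}(\R;\HT^{s-2-2s(p),p}_{FIO}(\Rn)),
\]
and likewise $(D_{t}^{2}-A(x,D))u(t)=F(t)$ in $\HT^{s-2-2s(p),p}_{FIO}(\Rn)$ for a.e.\ $t$. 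The shifted exponent $\tilde s:=s-2s(p)$ lies in the admissible range of Theorem \ref{thm:mainwave}: the condition $s<r-s(p)+1$ forces $\tilde s<r-3s(p)+1\leq r-s(p)+1$, and the extra hypothesis $s>-r+3s(p)+2$ is precisely $\tilde s>-r+s(p)+2$. Hence the uniqueness part of Theorem \ref{thm:mainwave}, applied with $s$ replaced by $\tilde s$, identifies $u$ with the unique solution, so two solutions of the original $L^{p}$ problem coincide.

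For $r\in\N$ the same two arguments apply verbatim, now using the endpoint cases of the Sobolev embeddings and of Theorem \ref{thm:mainwave}, giving the first statement on the closed Sobolev interval and the second on the corresponding closed half-line. I do not expect a genuine obstacle here; the only thing requiring care is bookkeeping the $s(p)$-shifts so that the parameters fed into Theorem \ref{thm:mainwave} remain within its hypotheses, which is exactly where the auxiliary condition $s>-r+3s(p)+2$ is forced (cf.\ Remark \ref{rem:uniqueLp}).
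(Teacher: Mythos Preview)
Your proposal is correct and follows exactly the approach the paper takes: the paper does not give a separate proof of Corollary~\ref{cor:mainLp}, but simply says it follows from Theorem~\ref{thm:mainwave} and the Sobolev embeddings \eqref{eq:Sobolev} in the same way that Corollary~\ref{cor:divLp} follows from Theorem~\ref{thm:divwave}. Your bookkeeping of the $s(p)$-shifts, in particular the identification of $\tilde s=s-2s(p)$ as the parameter to which the uniqueness part of Theorem~\ref{thm:mainwave} is applied, matches the argument given for Corollary~\ref{cor:divLp} line for line.
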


\begin{proof}[Proof of Theorem \ref{thm:mainwave}]
The proof is analogous to that of Theorem \ref{thm:divwave}, and we just summarize some key steps here. For details, we refer to the earlier preprint version \cite{Hassell-Rozendaal20extra} of the present article. 

First apply the symbol smoothing procedure from Lemma \ref{lem:smoothing}, to write $a_{i,j}=a_{i,j}^{\sharp}+a_{i,j}^{\flat}$ for each $1\leq i,j\leq n$, with $a_{i,j}^{\sharp}\in \A^{r}S^{0}_{1,1/2}\subseteq \A^{2}S^{0}_{1,1/2}$ and $a_{i,j}^{\flat}\in \Cr S^{-r/2}_{1,1/2}$. Equivalently, applying this decomposition to the symbol $A$ of $A(x,D)$ yields
\begin{equation}\label{eq:Asharpflat}
A^{\sharp}(x,D)=\sum_{i,j=1}^{n}a^{\sharp}_{i,j}(x,D)D_{i}D_{j}\text{ and }A^{\flat}(x,D)=\sum_{i,j=1}^{n}a^{\flat}_{i,j}(x,D)D_{i}D_{j}.
\end{equation}
Also set $B_{1}:=\sum_{i,j=1}^{n}D_{i}D_{j}a^{\sharp}_{i,j}(x,D)$ and $B_{2}:=\sum_{i,j=1}^{n}D_{i}D_{j}a^{\flat}_{i,j}(x,D)$, so that $A(x,D)^{*}=B_{1}+B_{2}$. Next, let $b\in\A^{2}S^{1}_{1,1/2}$ be as in Proposition \ref{prop:Lproperties} \eqref{it:Lprop1}. Then Proposition \ref{prop:squareroot} yields $e_{1},e_{2}\in S^{1}_{1,1/2}$ such that
\[
A^{\sharp}(x,D)=b(x,D)^{2}+e_{1}(x,D)\text{ and }B_{1}=b(x,D)^{2}+e_{2}(x,D).
\]
Moreover, by Theorem \ref{thm:roughpseudo}, the operators $A^{\flat}(x,D)$ and $B_{2}$ have similar mapping properties as the operator $L_{2}$ in Proposition \ref{prop:Lproperties} \eqref{it:Lprop2}, albeit with Sobolev intervals shifted by $1$ and $-1$, respectively. By duality, similarly shifted versions of the mapping properties in Proposition \ref{prop:Lproperties} \eqref{it:Lprop3} hold for $A(x,D)$ and $A(x,D)^{*}$.

Now, one first proves existence of solutions to \eqref{eq:eqdistr2} for a restricted Sobolev range, the interval $1\leq  s< r-s(p)+1$. This part of the argument is completely analogous to that in Theorem \ref{thm:divwave}, with the only difference being that the Sobolev interval is shifted by $1$ when incorporating the factor $A^{\flat}(x,D)u$ into the inhomogeneous term. Then, with an eye towards deriving uniqueness of solutions by solving an adjoint problem, one proves in the same manner the existence of solutions to
\begin{equation}\label{eq:eqdistr3}
(D_{t}^{2}-A(x,D)^{*})u(t)=F(t)
\end{equation}
for $-1\leq  s< r-s(p)-1$. With this second existence result in hand, one can prove uniqueness of solutions to \eqref{eq:eqdistr2} for $3/2\leq  s< r-s(p)+1$. Working with this smaller interval again allows us to circumvent some subtleties that arise from the roughness of the coefficients. In the same manner, one proves uniqueness of solutions to \eqref{eq:eqdistr3} for $-1/2\leq s< r-s(p)-1$. Finally, having established existence and uniqueness of solutions to both \eqref{eq:eqdistr2} and \eqref{eq:eqdistr3} in restricted Sobolev ranges, one extends the range of existence and uniqueness for \eqref{eq:eqdistr2} to $-r+s(p)+2< s< r-s(p)+1$ using duality and semigroup theory.
\end{proof}

\begin{remark}\label{rem:lowerorder}
In this article we consider pure second order operators, but our techniques also allow for lower order perturbations. Indeed, in the proof of Theorem \ref{thm:divwave}, one can include in the error term $\tilde{e}(x,D)$ from \eqref{eq:etilde} an additional factor of the form $\sum_{i=1}^{n}b_{i}(x)D_{i}+c_{0}(x)$, as long as the mapping property $\tilde{e}(x,D):\Hps\to\Hpsm$ from \eqref{eq:mappinge} still holds. Using results about multiplication operators on $\Hps$ from \cite{Rozendaal22}, one can then determine conditions on $p$ and $s$ that guarantee solvability and uniqueness of the corresponding wave equation in $\Hps$. These conditions will depend on the regularity of the lower order terms.

We leave the details of the various cases to the reader. We do note that, although operators in standard form are first order perturbations of operators in divergence form, and vice versa, one cannot directly deduce Theorem \ref{thm:mainwave} from Theorem \ref{thm:divwave} in this manner, or the other way around. Indeed, doing so would lead to a smaller range of Sobolev indices. Instead, we used the specific structure of the operators in Theorems \ref{thm:divwave} and \ref{thm:mainwave} to obtain well-posedness for these larger Sobolev intervals.
\end{remark}

\section{First order equations}\label{sec:firstorder}

In this section we reduce Theorem \ref{thm:mainsmooth}, our main result for smooth first order equations, to the following theorem about parametrices for such equations. This theorem will be proved in Sections \ref{sec:flow} and \ref{sec:parametrix}.

\begin{theorem}\label{thm:parametrix}
Let $b\in \A^{2}S^{1}_{1,1/2}$ be real-valued, elliptic and asymptotically homogeneous of degree $1$ with real-valued limit $a \in \Crtwo S^{1}_{1,0}$. Then there exists a family $(U_{t})_{t\in\R}$ of operators on $\Sw'(\Rn)$ such that, for all $p\in[1,\infty)$, $s\in\R$ and $t_{0}>0$, there exists an $\con>0$ such that the following properties hold for all $f\in\Hps$:
\begin{enumerate}
\item\label{it:par1} $[t\mapsto U_{t}f]\in C^{k}(\R;\HT^{s-k,p}_{FIO}(\Rn))$ for $k\in\{0,1\}$;
\item\label{it:par2} $\|\partial_{t}^{k}U_{t}f\|_{\HT^{s-k,p}_{FIO}(\Rn)}\leq \con\|f\|_{\Hps}$ for $k\in\{0,1\}$ and $t\in[-t_{0},t_{0}]$;
\item\label{it:par3} $U_{0}f=f$, 
\begin{equation}\label{eq:boundederror}
\|(D_{t}-b(x,D))U_{t}f\|_{\Hps}\leq \con\|f\|_{\Hps}
\end{equation}
for all $t\in[-t_{0},t_{0}]$, and $[t\mapsto(D_{t}-b(x,D))U_{t}f]\in C(\R;\Hps)$.
\end{enumerate}
\end{theorem}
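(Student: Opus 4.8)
The plan is to construct $(U_t)$ by conjugating the bicharacteristic flow of $b$ with the wave packet transform $W$, transferring everything to the weighted tent spaces $T^p_s(\Sp)$ via Proposition \ref{prop:transformsandweights}. Throughout, I would split off the frequencies $|\xi|\le 1$, which carry no geometry and can be handled by a routine separate argument (mirroring the role of $q(D)$ in the definition of $W$), and concentrate on $|\xi|\ge 1$.

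\emph{Step 1 (the flow, Section \ref{sec:flow}).} Since $b\in\A^2 S^1_{1,1/2}$ is real-valued, elliptic and asymptotically homogeneous of degree $1$ with limit $a\in\Crtwo S^1_{1,0}$, its Hamilton vector field $H_b=\partial_\xi b\cdot\partial_x-\partial_x b\cdot\partial_\xi$ has $\xi$-components in $S^0_{1,1/2}$ and $x$-components one order lower in $\xi$ but with the same, merely $C^{1,1}$-type, spatial regularity; crucially, two $x$-derivatives of $b$ remain bounded in $S^1$ (this is exactly where the $\A^2$, i.e.\ $C^{1,1}$, hypothesis enters; cf.\ Remark \ref{rem:C11needed}). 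A Gr\"{o}nwall estimate then yields a flow $\chi_t$ of $H_b$ on $\{|\xi|\ge 1\}$ that is defined for all $t$, is a bi-Lipschitz diffeomorphism onto its image with $|\xi\circ\chi_t|\eqsim|\xi|$, satisfies $\partial_t\chi_t=H_b\circ\chi_t$, and distorts the cone structure underlying $T^p_s(\Sp)$ by an amount bounded locally uniformly in $t$; asymptotic homogeneity makes the maps induced on $\Sp$ at each dyadic scale converge, as $|\xi|\to\infty$, to the shadow on $\Sp$ of the homogeneous flow of $a$, and the $C^{1,1}$ regularity gives the required uniformity.

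\emph{Step 2 (the conjugated propagator).} On $T^p_s(\Sp)$ set $\Theta_t F(x,\xi):=e^{i\Phi_t(x,\xi)}F(\chi_{-t}(x,\xi))$ for $|\xi|$ large, where $\Phi_t$ is the real-valued action phase obtained by integrating the appropriate combination of $b$ and $\xi\cdot\partial_\xi b$ along the bicharacteristics; $\Phi_t$ is chosen so that $D_t\Theta_t$ agrees with $(Wb(x,D)W^*)\Theta_t$ to principal order. By the change-of-variables estimate from Step 1, $\Theta_t$ is bounded $T^p_s(\Sp)\to T^p_s(\Sp)$ and $\partial_t\Theta_t$ is bounded $T^p_s(\Sp)\to T^p_{s-1}(\Sp)$ (the $t$-derivative falls on $\Phi_t$, producing a factor of size $\eqsim|\xi|$), with locally uniform bounds in $t$, and $t\mapsto\Theta_t$ is strongly continuous; since $\chi_0=\id$ and $\Phi_0=0$ we have $\Theta_0=I$. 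Define $U_t:=W^*\Theta_t W$ on the high-frequency part (together with the low-frequency piece noted above). Then $U_0=W^*W=\id$ by \eqref{eq:WstarW}, and properties \eqref{it:par1} and \eqref{it:par2} follow from the mapping properties of $\Theta_t$ and $\partial_t\Theta_t$ combined with the boundedness of $W:\Hps\to T^p_s(\Sp)$ and $W^*:T^p_s(\Sp)\to\Hps$ from Proposition \ref{prop:transformsandweights}.

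\emph{Step 3 (the error, Section \ref{sec:parametrix}).} Using $W^*W=\id$ one has, for $f\in\Hps$,
\[
(D_t-b(x,D))U_t f = W^*\bigl[D_t\Theta_t-(Wb(x,D)W^*)\Theta_t\bigr]Wf,
\]
so it suffices to show the bracketed operator is bounded on $T^p_s(\Sp)$, uniformly on compact time intervals and with continuous dependence on $t$. This is where precise kernel bounds for $Wb(x,D)W^*$ on phase space enter (Appendix \ref{app:kernel}): modulo the term cancelled by the choice of $\Phi_t$, what remains is an order-$0$ part plus a rapidly off-diagonal part, and its composition with $\Theta_t$ has a kernel satisfying the off-singularity bounds of Proposition \ref{prop:offsingbound} (with $\hchi=\id$) and only $|\xi|^{0}$ growth. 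That this gain is a \emph{full} order — rather than the $\tfrac12$ order a generic $S^1_{1,1/2}$ symbol would give — uses that $b\in\A^2 S^1_{1,1/2}$ (cf.\ Remark \ref{rem:sharpreg}); should an intermediate order-$\tfrac12$ term survive, it is removed by a single transport correction $\Theta_t\rightsquigarrow\Theta_t+\Theta_t^{(1)}$ solving a transport equation with that term as source. This yields \eqref{eq:boundederror}, and continuity of the error in $t$ follows from the same kernel estimates together with the continuity of $\Theta_t$. The main obstacle is the combination of Step 1 with the kernel analysis of Step 3: proving that the bicharacteristic flow of a non-homogeneous, only $C^{1,1}$-regular symbol acts boundedly on the anisotropic spaces $T^p_s(\Sp)$, and that the resulting parametrix error genuinely gains a full derivative — both resting squarely on the $C^{1,1}$ hypothesis.
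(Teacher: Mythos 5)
Your construction is essentially the paper's: the parametrix is $U_t=W^*\F_t W$ with $\F_t$ the pullback by the bicharacteristic flow of (a high-frequency cutoff of) $b$, boundedness of the flow on $T^p_s(\Sp)$ is proved by a Gr\"onwall argument exploiting the $\A^2$ hypothesis and asymptotic homogeneity, and the error $(D_t-b(x,D))U_t$ is controlled by writing $\partial_t\F_t=V\F_t$ and showing that $-iW^*V-b(x,D)W^*$ has a phase-space kernel satisfying the off-singularity bounds of Proposition \ref{prop:offsingbound}, with the full-order gain resting on the uniform second-derivative bounds for $b$ exactly as in Remark \ref{rem:sharpreg}. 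The one structural difference is your action phase $e^{i\Phi_t}$: for a symbol asymptotically homogeneous of degree $1$ the action density $\xi\cdot\partial_\xi b-b=c$ lies in $S^0_{1,1/2}$ by \eqref{it:asymphom1}, so $\Phi_t$ is \emph{bounded} (not of size $|\xi|$, contrary to your parenthetical) and including it changes $U_t$ only by an acceptable order-zero term. The paper omits the phase entirely and instead absorbs this contribution inside the kernel computation, via the substitution $\eta\cdot\partial_\eta b=b+c$ followed by a second-order Taylor expansion matching $b(x,\zeta)$ against $b(y,\eta)+(\zeta-\eta)\cdot\partial_\eta b+(x-y)\cdot\partial_y b$; your version buys a cleaner "quantum correspondence" heuristic at the cost of having to verify that the phase does not disturb the tent-space bounds, while the paper's keeps the operator bare and pushes all cancellation into Proposition \ref{prop:s-symbol}.

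One point in your Step 2 needs repair as stated: $\partial_t\Theta_t$ cannot be a bounded operator from $T^p_s(\Sp)$ to $T^p_{s-1}(\Sp)$, because differentiating $F\circ\chi_{-t}$ in $t$ produces $H_bF$, and a generic element of $T^p_s(\Sp)$ is only $L^2_{\loc}$ and cannot be differentiated. The differentiability is available only on the range of $W$, where $Wf$ is smooth in $(x,\xi)$ for $|\xi|\geq 1$; one then uses $\partial_t\F_tWf=\F_tVWf$ (the vector field commutes with its own time-independent flow) together with the boundedness of $VW:\Hps\to T^p_{s-1}(\Sp)$, which is the content of Lemma \ref{lem:beta} and is where the one-derivative loss in \eqref{it:par1}--\eqref{it:par2} actually comes from. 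This is exactly how Theorem \ref{thm:flowprop}\eqref{it:Lipflow3} is formulated, and restricting your claim to $F=Wf$ closes the gap.
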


We will now use Theorem \ref{thm:parametrix} to prove Theorem \ref{thm:mainsmooth}, the statement of which we recall here.

\begin{theorem}\label{thm:mainsmooth2}
Let $b\in \A^{2}S^{1}_{1,1/2}$ be real-valued, elliptic and asymptotically homogeneous of degree one with real-valued limit $a\in \Crtwo S^{1}_{1,0}$. Then there exists a unique family $(\ex_{t})_{t\in\R}$ of operators on $\Sw'(\Rn)$ such that, for all $p\in[1,\infty)$, $s\in\R$, $k\in\Z_{+}$ and $t_{0}>0$, there exists an $\con>0$ such that the following properties hold for all $f\in\Hps$:
\begin{enumerate}
\item\label{it:smooth21} $[t\mapsto \ex_{t}f]\in C^{k}(\R;\HT^{s-k,p}_{FIO}(\Rn))$;
\item\label{it:smooth22} $\|\partial_{t}^{k}\ex_{t}f\|_{\HT^{s-k,p}_{FIO}(\Rn)}\leq \con\|f\|_{\Hps}$ for all $t\in[-t_{0},t_{0}]$;
\item\label{it:smooth23} $\ex_{0}f=f$, and $D_{t}\ex_{t}f=b(x,D)\ex_{t}f$ for all $t\in\R$.
\end{enumerate}
\end{theorem}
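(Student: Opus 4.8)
The plan is to manufacture $(\ex_t)_{t\in\R}$ from the parametrix $(U_t)_{t\in\R}$ of Theorem~\ref{thm:parametrix} by correcting its bounded error term with a Volterra--Neumann series, and then to upgrade the regularity in $t$ from the equation itself; the substance of the theorem sits in Theorem~\ref{thm:parametrix}, so this is a Duhamel-plus-bootstrap argument. Fixing $p\in[1,\infty)$, $s\in\R$, $t_0>0$, I would set $E_t:=(D_t-b(x,D))U_t$, which by \eqref{it:par3} is bounded on $\Hps$ with $\con_0:=\sup_{|t|\le t_0}\|E_t\|_{\La(\Hps)}<\infty$ and $t\mapsto E_tf$ continuous into $\Hps$. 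On $C([-t_0,t_0];\Hps)$ define the Duhamel operators
\[
\Da h(t):=i\int_0^t U_{t-\tau}h(\tau)\,\ud\tau,\qquad \Ra h(t):=i\int_0^t E_{t-\tau}h(\tau)\,\ud\tau .
\]
Differentiating in $t$ and using $D_sU_s=b(x,D)U_s+E_s$ gives $\Da h(0)=0$, $\Da h\in C([-t_0,t_0];\Hps)\cap C^{1}([-t_0,t_0];\Hpsm)$, and
\[
(D_t-b(x,D))\Da h(t)=h(t)+\Ra h(t).
\]
Since $\Ra$ is Volterra, $\|\Ra^{k}\|_{\La(C([-t_0,t_0];\Hps))}\le(\con_0 t_0)^{k}/k!$, so $I+\Ra$ is invertible there; equivalently the second-kind Volterra equation $h+\Ra h=E_\cdot f$ has a unique solution $h_f\in C(\R;\Hps)$, independent of $t_0$. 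I would then set $\ex_t f:=U_tf-\Da h_f(t)$, so that $\ex_0f=f$ and $(D_t-b(x,D))\ex_\cdot f=E_\cdot f-(h_f+\Ra h_f)=0$, giving \eqref{it:smooth23}. Since $U_t$, $b(x,D)$, $\Da$, $\Ra$ are $(p,s)$-independent operators and $\Sw(\Rn)$ lies in every $\Hps$, this defines one consistent family; combining the bounds of \eqref{it:par1}--\eqref{it:par2} with those for $\Da$ and $(I+\Ra)^{-1}$ yields $\ex_\cdot f\in C(\R;\Hps)\cap C^{1}(\R;\Hpsm)$ and $\sup_{|t|\le t_0}\|\ex_tf\|_{\Hps}\le\con\|f\|_{\Hps}$, i.e.\ \eqref{it:smooth21}--\eqref{it:smooth22} for $k\in\{0,1\}$.

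For general $k$ in \eqref{it:smooth21}--\eqref{it:smooth22} I would bootstrap using the equation. Since $b\in S^{1}_{1,1/2}$, the operator $b(x,D)$ is bounded $\HT^{\sigma,p}_{FIO}(\Rn)\to\HT^{\sigma-1,p}_{FIO}(\Rn)$ for every $\sigma$, as already used in Lemma~\ref{lem:inverse}. Applying the bounded operator $(ib(x,D))^{k}$ to the identity $\partial_t\ex_tf=ib(x,D)\ex_tf$ in $C^{1}(\R;\Hpsm)$, an induction on $k$ gives $\partial_t^{k}\ex_tf=(ib(x,D))^{k}\ex_tf$, $[t\mapsto\ex_tf]\in C^{k}(\R;\HT^{s-k,p}_{FIO}(\Rn))$, and the bound $\|\partial_t^{k}\ex_tf\|_{\HT^{s-k,p}_{FIO}(\Rn)}\le\|b(x,D)^{k}\|_{\La(\Hps,\HT^{s-k,p}_{FIO}(\Rn))}\sup_{|t|\le t_0}\|\ex_tf\|_{\Hps}\le\con\|f\|_{\Hps}$ for $|t|\le t_0$.

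For uniqueness I would take another family $(\ex_t')_{t\in\R}$ with the stated properties and test against $f\in\Sw(\Rn)\subseteq\bigcap_{p,s}\Hps$. Writing $w(t):=\ex_tf-\ex_t'f$ and reading off \eqref{it:smooth21}--\eqref{it:smooth23} with $p=2$, $s=1$, so that $\HT^{1,2}_{FIO}(\Rn)=W^{1,2}(\Rn)$ and $\HT^{0,2}_{FIO}(\Rn)=L^{2}(\Rn)$ (recall $s(2)=0$), one has $w\in C(\R;W^{1,2}(\Rn))\cap C^{1}(\R;L^{2}(\Rn))$, $w(0)=0$, $\partial_tw(t)=ib(x,D)w(t)$, hence
\[
\partial_t\|w(t)\|_{L^{2}(\Rn)}^{2}=-2\Imag\lb b(x,D)w(t),w(t)\rb .
\]
Because $b$ is real-valued, the adjoint expansion (Lemma~\ref{lem:Aduality}) shows $b(x,D)-b(x,D)^{*}$ has symbol in $S^{0}_{1,1/2}$ and is therefore bounded on $L^{2}(\Rn)$, so $|\Imag\lb b(x,D)w,w\rb|=\tfrac12|\lb(b(x,D)-b(x,D)^{*})w,w\rb|\le\con\|w\|_{L^{2}(\Rn)}^{2}$; Gr\"onwall's inequality and $w(0)=0$ force $w\equiv0$. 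Thus the two families agree on $\Sw(\Rn)$, and by density of $\Sw(\Rn)$ in each $\Hps$ and the boundedness from Step~1 they agree on every $\Hps$. The only genuinely delicate point in this reduction is the bootstrap in the middle step, which works precisely because $\Hps$ is the scale on which the order-one operator $b(x,D)$ loses exactly one derivative; on $W^{s,p}(\Rn)$ the same iteration would leak $(n-1)|\tfrac12-\tfrac1p|$ extra derivatives per step. Everything else defers to Theorem~\ref{thm:parametrix}, which I expect to be the real obstacle.
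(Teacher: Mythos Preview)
Your existence argument is essentially the paper's: your Volterra inverse $(I+\Ra)^{-1}$ applied to $E_\cdot f$ is exactly the paper's Neumann series $V=\sum_{j\ge0}V_j$ (up to a factor of $i$), and your $\ex_t f=U_tf-\Da h_f(t)$ coincides with the paper's \eqref{eq:defe}. The bootstrap $\partial_t^k\ex_tf=(ib(x,D))^k\ex_tf$ for $k\ge2$ is also identical.

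The uniqueness arguments differ. The paper solves the adjoint problem: using Lemma~\ref{lem:Aduality} it writes $b(x,D)^*=b_1(x,D)$ with $b_1\in\A^2S^1_{1,1/2}$, reruns the existence construction for $-b_1$, and pairs $u_+$ against the resulting backward solution $w$ to obtain $\int\lb u_+,F\rb=0$. Your route is an $L^2$ energy estimate for $p=2$, $s=1$, followed by density. This is valid: the key point, which you state tersely, is that $b-b_1\in S^0_{1,1/2}$ rather than the $S^{1/2}_{1,1/2}$ one would get from a generic $S^1_{1,1/2}$ symbol --- this improvement comes precisely from $b\in\A^2S^1_{1,1/2}$ (two $x$-derivatives cost nothing in $\xi$), and is in fact contained in the proof of Lemma~\ref{lem:Aduality}, which places the $|\alpha|=j$ terms of the expansion in $\A^{r-j}S^{m-j}_{1,\delta}$. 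Your argument is more elementary and avoids constructing the adjoint parametrix; the paper's duality argument has the advantage of giving uniqueness in \emph{each} $\Hps$ separately, whereas yours only establishes agreement on $\Sw(\Rn)$ and then propagates by density using the assumed boundedness of both families. For the theorem as stated either suffices. (A minor point: your symbol $\Da$ for the Duhamel map clashes with the paper's $\Da(\Tp)$ for test functions.)
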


\begin{remark}\label{rem:parametrix}
We prove uniqueness of the family $(\ex_{t})_{t\in\R}$ because it is of independent interest. However, in Sections \ref{sec:divform} and \ref{sec:standardform} we only used the existence of such a family.
In fact, we will also show (see \eqref{eq:defUt}) that one may let
\[
\ex_{t}f=U_{t}f+\int_{0}^{t}U_{t-\tau}Vf(\tau)\ud \tau,
\]
for each $f\in\Hps$, where $U_{t}$ is as in Theorem \ref{thm:parametrix}, and $V:\Hps\to C([-t_{0},t_{0}];\Hps)$ is bounded for each $t_{0}>0$. Moreover, it will be shown in Section \ref{sec:parametrix} (see \eqref{eq:defUt}) that one may choose $U_{t}=W^{*}\F_{t}W$, for $W$ the wave packet transform from \eqref{eq:deftransform}, and $\F_{t}$ the pull-back from \eqref{flowdefn} with respect to the bicharacteristic flow map $\Phi_{t}$ associated to the symbol $ \chi(\xi) b(x,\xi)$, for a $\chi\in C^{\infty}(\Rn)$ with $\chi(\xi)=0$ for $|\xi|\leq 8$, and $\chi(\xi)=1$ for $|\xi|\geq 16$ (note that $\Phi_{t}$ solves \eqref{eq:ODE} with $b$ replaced by $\chi b$).
\end{remark}

\begin{proof}
We first prove existence of solutions in a manner similar to, but somewhat different from, the first step of the proofs of Theorems \ref{thm:divwave} and \ref{thm:mainwave}. Then we solve an adjoint problem to show uniqueness of solutions. Since $b$ is a smooth symbol, there are no subtleties related to the Sobolev interval on which we prove existence and uniqueness. In particular, one does not need a final step as in the proofs of Theorems \ref{thm:divwave} and \ref{thm:mainwave}.

\subsubsection{Existence} 
The point is, of course, to deal with the error term associated with the parametrix in Theorem \ref{thm:parametrix}. To do so we again use an iterative construction. 

Fix $p\in[1,\infty)$ and $s\in\R$, with the construction of the family $\{\ex_{t}\mid t\in\R\}$ being independent of these parameters. First we explicitly construct the error term. For $f\in\Hps$ and $t\in\R$, set $V_{0}f(t):=-i(D_{t}-b(x,D))U_{t}f$ and, recursively,
\[
V_{j+1}f(t):=-i\int_{0}^{t}(D_{t}-b(x,D))U_{t-\tau}V_{j}f(\tau)\ud \tau
\]
for $j\geq0$. By Theorem \ref{thm:parametrix} \eqref{it:par3}, $V_{0}f\in C(\R;\Hps)$, and for each $t_{0}>0$ there exists an $\con_{0}>0$ such that $\|V_{0}f(t)\|_{\Hps}\leq \con_{0}\|f\|_{\Hps}$ for all $t\in[-t_{0},t_{0}]$. Using Theorem \ref{thm:parametrix} \eqref{it:par3} again, to obtain $\|V_{j+1}f(t)\|_{\Hps}\leq \int_{0}^{t}\con_{0}\|V_{j}f(\tau)\|_{\Hps}\ud \tau$, it now follows by induction from the dominated convergence theorem that $V_{j}f\in C(\R;\Hps)$ for all $j\geq0$, with
\[
\|V_{j}f(t)\|_{\Hps}\leq \frac{\con_{0}^{j+1}t^{j}}{j!}\|f\|_{\Hps}
\]
for $t\in[-t_{0},t_{0}]$. Hence $V:=\sum_{k=0}^{\infty}V_{k}$ defines a bounded operator
\begin{equation}\label{eq:Vbound}
V:\Hps\to C([-t_{0},t_{0}];\Hps)\subseteq L^{1}([-t_{0},t_{0}];\Hps)
\end{equation}
for each $t_{0}>0$. In particular, $Vf\in C(\R;\Hps)\subseteq L^{1}_{\loc}(\R;\Hps)$ for all $f\in\Hps$.

Next, we construct the solution operators. For $f\in\Hps$ and $t\in\R$, set
\begin{equation}\label{eq:defe}
\ex_{t}f:=U_{t}f+\int_{0}^{t}U_{t-\tau}Vf(\tau)\ud \tau.
\end{equation}
It follows from the dominated convergence theorem, using the strong continuity of $t\mapsto U_{t}$ from Theorem \ref{thm:parametrix} \eqref{it:par1} and that $Vf\in L^{1}_{\loc}(\R;\Hps)$, that $[t\mapsto \ex_{t}f]\in C(\R;\Hps)$. Moreover, by Theorem \ref{thm:parametrix} \eqref{it:par2} and \eqref{eq:Vbound}, one has $\sup_{|t|\leq t_{0}}\|\ex_{t}\|_{\La(\Hps)}<\infty$ for each $t_{0}>0$. This proves \eqref{it:smooth21} and \eqref{it:smooth22} for $k=0$. 

For $k=1$, let $h>0$ and write
\begin{align*}
&\frac{1}{h}\Big(\int_{0}^{t+h}U_{t+h-\tau}Vf(\tau)\ud\tau-\int_{0}^{t}U_{t-\tau}Vf(\tau)\ud\tau\Big)\\
&=\int_{0}^{t}\frac{U_{t+h-\tau}-U_{t-\tau}}{h}Vf(\tau)\ud\tau+\frac{1}{h}\int_{t}^{t+h}U_{t+h-\tau}Vf(\tau)\ud\tau.
\end{align*}
By Theorem \ref{thm:parametrix} \eqref{it:par1} and \eqref{it:par2} with $k=1$, the first term on the last line converges in $\Hpsm$ to $\int_{0}^{t}\partial_{t}U_{t-\tau}Vf(\tau)\ud\tau$ as $h\to0$. Since $U_{0}$ is the identity operator $I$, we can write the second term as
\begin{align*}
&\frac{1}{h}\int_{t}^{t+h}(U_{t+h-\tau}-I)Vf(\tau)\ud\tau+\frac{1}{h}\int_{t}^{t+h}Vf(\tau)\ud\tau\\
&=\int_{t}^{t+h}\frac{1}{h}\int_{0}^{t+h-\tau}\partial_{\sigma}U_{\sigma}Vf(\tau)\ud\sigma\ud\tau+\frac{1}{h}\int_{t}^{t+h}(Vf(\tau)-Vf(t))\ud\tau+Vf(t).
\end{align*}
By Theorem \ref{thm:parametrix} \eqref{it:par2} with $k=1$, and because $Vf\in C(\R;\Hps)$, the first two terms on the last line converge to zero in $\Hpsm$ as $h\to0$. We thus conclude that $[t\mapsto \ex_{t}f]\in C^{1}(\R;\Hpsm)$ with
\begin{equation}\label{eq:firstdere}
\partial_{t}\ex_{t}f=\partial_{t}U_{t}f+Vf(t)+\int_{0}^{t}\partial_{t}U_{t-\tau}Vf(\tau)\ud\tau
\end{equation}
for all $t\in\R$. Using again Theorem \ref{thm:parametrix} \eqref{it:par2} with $k=1$, and \eqref{eq:Vbound}, one obtains the necessary norm bounds to conclude the proof of \eqref{it:smooth21} for $k=1$.

 To deal with \eqref{it:smooth21} for $k\geq 2$, it suffices to prove part \eqref{it:smooth22}. Indeed, by Theorem \ref{thm:roughpseudo} one has $b(x,D)^{k}:\Hps\to\HT^{s-k,p}_{FIO}(\Rn)$, from which one would then obtain $[t\mapsto \ex_{t}f]\in C^{k}(\R;\HT^{s-k,p}_{FIO}(\Rn))$ with $\partial_{t}^{k}\ex_{t}f=i^{k}b(x,D)^{k}\ex_{t}f$ for all $t\in\R$. The bounds for $\ex_{t}$ that we already obtained then conclude the proof of \eqref{it:smooth21} for all $k\geq2$.

Hence it remains to prove \eqref{it:smooth22}. First note that, by definition of $\ex_{t}$ in \eqref{eq:defe} and by Theorem \ref{thm:parametrix} \eqref{it:par3}, one has $\ex_{0}f=U_{0}f=f$ for all $f\in\Hps$. Also, by \eqref{eq:firstdere} and by definition of $V$, we obtain for each $t\in\R$ that
\begin{align*}
&(D_{t}-b(x,D))\ex_{t}f=(D_{t}-b(x,D))U_{t}f-iVf(t)+\int_{0}^{t}(D_{t}-b(x,D))U_{t-\tau}Vf(\tau)\ud\tau\\
&=(D_{t}-b(x,D))U_{t}f-i\sum_{j=0}^{\infty}V_{j}f(t)+\sum_{j=0}^{\infty}\int_{0}^{t}(D_{t}-b(x,D))U_{t-\tau}V_{j}f(\tau)\ud\tau\\
&=(D_{t}-b(x,D))U_{t}f-iV_{0}f(t)=0.
\end{align*}
This proves \eqref{it:smooth22} and concludes the proof of existence of the family $\{\ex_{t}\mid t\in\R\}$.

\subsubsection{Uniqueness}
We prove uniqueness of a family with the properties in the statement of the theorem. For $s\in\R$, it suffices to show that if 
\[
u\in C(\R;\Hps)\cap C^{1}(\R;\Hpsm)
\]
is such that $u(0)=0$ and $(D_{t}-b(x,D))u(t)=0$ for all $t\in\R$, then $u\equiv 0$. We again write $u_{+}(t):=\ind_{[0,\infty)}(t)u(t)$ and $u_{-}(t):=u(t)-u_{+}(t)$ for $t\in\R$. Then
\[
u_{+},u_{-}\in C(\R;\Hps)\cap W^{1,1}_{\loc}(\R;\Hpsm)
\]
with $(D_{t}-b(x,D))u_{+}(t)=(D_{t}-b(x,D))u_{-}(t)=0$ for almost all $t\in\R$. The latter identity shows that in fact $u_{+},u_{-}\in C^{1}(\R;\Hpsm)$. It suffices to show that
\[
\int_{\R}\lb u_{+}(t),F(t)\rb_{\Rn}\ud t=0
\]
for all $F\in C^{\infty}_{c}(\R;C^{\infty}_{c}(\Rn))$. Let $b_{1}\in\A^{2}S^{1}_{1,1/2}$ be as in Lemma \ref{lem:Aduality}, with $b(x,D)^{*}=b_{1}(x,D)$, and let $\{\tilde{\ex}_{t}\mid t\in\R\}$ be the family obtained in the previous part of the proof with $b$ replaced by $-b_{1}$. Let $t_{0}>0$ be such that $F(t)=0$ for $t\geq t_{0}$. Now set $w(t):=i\int_{t}^{t_{0}}\tilde{\ex}_{t-\tau}F(\tau)\ud\tau$ for $t\in\R$. Then $w\in C^{k}(\R;\HT^{\sigma,p'}_{FIO}(\Rn))$ for all $k\geq0$ and $\sigma\in\R$, with $w(t)=0$ for $t\geq t_{0}$ and $(D_{t}+b_{1}(x,D))w(t)=F(t)$ for all $t\in\R$. Hence
\begin{align*}
\int_{\R}\lb u_{+}(t),F(t)\rb_{\Rn}\ud t&=\int_{\R}\lb u_{+}(t),(D_{t}+b_{1}(x,D))w(t)\rb_{\Rn}\ud t\\
&=-\int_{\R}\lb (D_{t}-b_{1}(x,D))u_{+}(t),w(t)\rb_{\Rn}\ud t=0,
\end{align*}
where we used the regularity and support conditions of $u_{+}$ and $w$ to see that all the integrals converge. This concludes the proof.
\end{proof}

\section{The flow on phase space}\label{sec:flow}

In this section we prove some properties of flow maps on phase space which will be needed for the proof of Theorem \ref{thm:mainsmooth} in the next section.

Let $b\in\A^{2}S^{1}_{1,1/2}$ be real-valued, elliptic and asymptotically homogeneous of degree $1$ with real-valued limit $\ahom \in \Crtwo S^{1}_{1,0}$ in the sense of Definition \ref{def:asymphom}. Throughout this section, we suppose additionally that $b(x,\xi)=0$ for all $(x,\xi)\in\Tp$ with $|\xi|\leq 8$. Such a $b$ will arise in the next section as $\chi b$ for a $b$ as in Theorem \ref{thm:mainsmooth} and a $\chi\in C^{\infty}(\Rn)$ with $\chi(\xi)=0$ for $|\xi|\leq 8$, and $\chi(\xi)=1$ for $|\xi|\geq 16$, but for simplicity of notation we will simply denote such a reduced symbol by $b$ in the present section, and merely assume that $b(x,\xi)=0$ for $|\xi|\leq 8$.

We recall that $\ahom$ is, by definition, homogeneous of degree one for $|\xi| \geq 1$. We shall also compute with the completely homogeneous function determined by $\ahom$, which we denote $\bhom$. Note that $\bhom$ is not in $\Crtwo S^1_{1,0}$, since second $\xi$-derivatives of $\bhom$ blow up at $\xi = 0$; however, we certainly have this regularity away from $\xi = 0$. We also record that 
\begin{equation}\label{b-bhom}
b - \bhom = (b - \ahom) + (\ahom - \bhom) \in C^1_- S^0_{1,1/2},
\end{equation}
which follows since the first derivatives of $\bhom$ are uniformly bounded near $\xi = 0$.  

Our parametrix for the first order equation $(D_t - b(x, D))u = 0$  involves the flow of the Hamilton vector field $V$ of $b$ on phase space $T^* \R^n$, which we denote by $(\Phi_t)_{t\in\R}$. Recall that for all $(x,\xi)\in\Tp$ one has
\[
V(x,\xi) := \partial_{\xi} b(x,\xi)\cdot \partial_{x}-\partial_{x}b(x,\xi)\cdot\partial_{\xi},
\]
and that $\Phi_{t}(x,\xi)\in\Tp$ satisfies $\partial_{t}\Phi_{t}(x,\xi)=V(\Phi_{t}(x,\xi))$, with $\Phi_{0}(x,\xi)=(x,\xi)$. That is, the $\Phi_{t}$ are the solution operators to the ODE
\begin{equation}\label{eq:ODE}
\begin{pmatrix}
\partial_{t}x(t)\\
\partial_{t}\xi(t)
\end{pmatrix}
=\begin{pmatrix}
\partial_{\xi}b(x(t),\xi(t))\\
-\partial_{x}b(x(t),\xi(t))
\end{pmatrix}\text{ and }\begin{pmatrix}
x(0)\\
\xi(0)
\end{pmatrix}=\begin{pmatrix}
x\\
\xi
\end{pmatrix}.
\end{equation}
We recall for later use that each $\Phi_t$ is a symplectomorphism, that is, it preserves the symplectic form $\sum_{j=1}^{n} d\xi_j \wedge dx_j$. Note also that the low-frequency part of the flow is trivial. That is, since $b(x,\xi)=0$ for $|\xi|\leq 8$, one has $V(x,\xi)=0$ and $\Phi_{t}(x,\xi)=(x,\xi)$ for all $|\xi|<8$ and $t\in\R$. 

To prepare for the parametrix construction, in this section we study the properties of this flow, and the induced operators $\F_t$ on the weighted tent space $\Tentps$ from Definition \ref{def:tentspaces}, given by pullback:
\begin{equation}
\F_t G = G \circ \Phi_{t}
\label{flowdefn}\end{equation}
for $p\in[1,\infty)$, $s\in\R$ and $G\in \Tentps$.  
The parametrix itself will be defined in terms of $\F_t$ by 
\[
U_t:= W^* \F_t W,
\]
where $W$ is the wave packet transform from \eqref{eq:deftransform}.

Throughout, fix $p\in[1,\infty)$ and $s\in\R$. The main result of this section is as follows.

\begin{theorem}\label{thm:flowprop} 
For each $t_{0}>0$ there exists an $M\geq0$ such that the following properties hold.
\begin{enumerate}
\item\label{it:Lipflow1} For all $t\in\R$ one has $\F_t\in\La(\Tentps)$, and $\|\F_{t}\|_{\La(\Tentps)}\leq M$ for all $t\in[-t_{0},t_{0}]$.
\item\label{it:Lipflow2} The map $t\mapsto \F_{t}$ is continuous in the strong operator topology from $\R$ into $\Tentps$. 
\item\label{it:Lipflow3} For all $f \in \Hps$, the map $t \mapsto \F_t W f$ is continuously differentiable from $\R$ into $\Tentpsm$, and 
\[
\|\partial_{t}\F_{t}Wf\|_{\Tentpsm}\leq M\|f\|_{\Hps}
\] 
for all $t\in[-t_{0},t_{0}]$.
\end{enumerate}
\end{theorem}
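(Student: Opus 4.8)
The strategy is to establish boundedness of $\F_t$ on $\Tentps$ via the off-singularity bound criterion of Proposition \ref{prop:offsingbound}, then obtain strong continuity and differentiability by a density argument combined with uniform bounds. The key point for (1) is that, since $\Phi_t$ is a symplectomorphism and $\F_t$ acts by pullback, $\F_t^{-1} = \F_{-t}$ is also pullback by a symplectomorphism, so it suffices to bound $\F_t$ uniformly for $t$ in a compact interval. Although $\F_t$ is not literally an integral operator with a nice kernel (it is a composition operator), the standard trick is to write $\F_t = W \F_t W^* \cdot (\text{something})$ — more precisely, to pass through the wave packet transform. However, since we ultimately only need $U_t = W^*\F_t W$ to be bounded on $\Hps$, what we genuinely must prove here is boundedness of $\F_t$ directly on the tent space. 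The cleanest route is: show that $W \F_t W^*$, as an operator on $T^p_s(\Sp)$, has a kernel $K_t(x,\xi,y,\eta)$ satisfying the hypotheses of Proposition \ref{prop:offsingbound}, with constants uniform for $|t|\le t_0$; but since $W^*W = I$ this is not quite what we want either. The actual argument must be that $\F_t$ itself preserves the cone structure defining $\A_s$ up to bounded distortion.

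\textbf{Main argument for (1).} Concretely, I would proceed as follows. First, analyze the flow $\Phi_t$ geometrically. Since $b$ is asymptotically homogeneous of degree $1$ and vanishes for $|\xi|\le 8$, the quantity $|\xi|$ along the flow is comparable to its initial value: $|\xi(t)| \eqsim |\xi(0)|$ with constants depending only on $t_0$ and finitely many symbol seminorms of $b$ (this uses $|\partial_x b| \lesssim \langle\xi\rangle^{3/2}$ together with a Gronwall argument on $\log|\xi(t)|$, or better, the homogeneity which gives $\xi\cdot\partial_\xi b - b \in S^0$). Second — and this is the heart of the matter — show that $\Phi_t$ is bi-Lipschitz on $(\Sp, d)$ after the natural identification, or more precisely that the induced map $\hat\Phi_t$ on the cosphere bundle (obtained by projecting, using $|\xi(t)|\eqsim|\xi(0)|$) is bi-Lipschitz with respect to the contact metric $d$, with uniform Lipschitz constants for $|t|\le t_0$. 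This requires controlling $\partial_{(x,\xi)}\Phi_t$: differentiating \eqref{eq:ODE} gives a linear ODE for the Jacobian whose coefficients involve second derivatives of $b$, and here the $C^{1,1}$ hypothesis (encoded in $b\in\A^2 S^1_{1,1/2}$, so that two $x$-derivatives of $b$ cost only $\langle\xi\rangle^{3/2}$ rather than blowing up) is exactly what makes the Jacobian bounded after rescaling in the anisotropic coordinates adapted to the metric $d$. The subtle anisotropy — that the ball $B_\tau(x,\omega)$ is a paraboloid, elongated in the codirection $\omega$ — must be matched against the flow: one checks the flow roughly preserves the splitting into the "good" directions (perpendicular to $\hat\xi$, where the metric scales like $|\xi|^{-1/2}$) and the "bad" direction (along $\hat\xi$ and the $x\cdot\hat\xi$ direction, scaling like $|\xi|^{-1}$), using that $V = \partial_\xi b \cdot \partial_x - \partial_x b \cdot \partial_\xi$ with $\partial_\xi b$ roughly homogeneous of degree $0$. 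Granting the bi-Lipschitz property with $d(\hat\Phi_t(y,\hat\eta), \hat\Phi_t(y',\hat{\eta}')) \eqsim d((y,\hat\eta),(y',\hat{\eta}'))$ and $|\xi(t)|\eqsim|\eta(t)|$, the change of variables $(y,\eta)\mapsto \Phi_t(y,\eta)$ in the defining integral $\A_s^\alpha(\F_t G)(x,\omega) = (\int_{\Gamma_\alpha(x,\omega)} |G(\Phi_t(y,\eta))|^2 \mu(|\eta|)|\eta|^{2s} dy\,d\eta)^{1/2}$ — which has Jacobian $1$ since $\Phi_t$ is symplectic, hence volume-preserving — shows that $\A_s^\alpha(\F_t G)(x,\omega) \lesssim \A_s^{\alpha'}(G)(\hat\Phi_t(x,\omega))$ for some $\alpha'$ comparable to $\alpha$ (here the weight $|\eta|^{2s}$ transforms harmlessly since $|\eta|\eqsim|\Phi_t(\cdot,\eta)_\xi|$, absorbing the power into the implicit constant). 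Taking $L^p$ norms in $(x,\omega)$ and changing variables by the bi-Lipschitz map $\hat\Phi_t$ (whose Jacobian is bounded above and below) gives $\|\F_t G\|_{T^p_s} \lesssim \|G\|_{T^p_s}$ via Lemma \ref{lem:aperture}, with constants uniform for $|t|\le t_0$ as claimed; the reverse-time bound follows identically.

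\textbf{Parts (2) and (3).} For strong continuity (2), note that for $G \in \Da(\Sp)$, a dense subclass of $\Tentps$ for $p<\infty$ by \eqref{eq:testfunctions}, continuity of $t\mapsto \F_t G$ in $\Tentps$ follows from smoothness of $(t,(x,\xi))\mapsto \Phi_t(x,\xi)$ and dominated convergence, using the uniform bounds from (1) to control tails; then the uniform boundedness from (1) upgrades this to strong continuity on all of $\Tentps$ by the standard $3\varepsilon$-argument. For differentiability (3), fix $f\in\Hps$, so $Wf\in T^p_s(\Sp)$ by Proposition \ref{prop:transformsandweights}; the candidate derivative is $\partial_t \F_t W f = \F_t(V \cdot \nabla(Wf))$, the pullback of the directional derivative along the Hamilton field. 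The issue is regularity: $Wf$ need not be differentiable. This is handled by first checking the identity for $f\in\Sw(\Rn)$ (where $Wf\in\Da(\Tp)$ and everything is classical), establishing the bound $\|\partial_t \F_t Wf\|_{\Tentpsm}\le M\|f\|_{\Hps}$ in that case — here one loses exactly one order because $V$ involves $\partial_x b \in S^{3/2}$-type and $\partial_\xi b\in S^0$-type factors acting on $Wf$, but the crucial gain is that the "bad" direction $\partial_x b\cdot\partial_\xi$ hits $Wf$ in the $\xi$-variable where the wave packet has width $|\xi|$, and the anisotropic structure converts this into a net loss of one full derivative matching the passage from $T^p_s$ to $T^p_{s-1}$; one must verify this by expressing $V\cdot\nabla W f$ in terms of $W(b(x,D)f)$ plus lower-order tent-space-bounded errors, using the defining relation of the flow and the commutation properties of $W$ with $b(x,D)$ established via Proposition \ref{prop:offsingbound}-type kernel estimates. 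Then the bound extends to all $f\in\Hps$ by density and the uniform estimates, and the difference quotient $h^{-1}(\F_{t+h}-\F_t)Wf$ converges in $\Tentpsm$ to $\partial_t\F_t Wf$ uniformly on $[-t_0,t_0]$, giving $C^1$ dependence.

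\textbf{Expected main obstacle.} The hard part is unquestionably the bi-Lipschitz estimate for $\hat\Phi_t$ on the contact metric $d$ — controlling how the anisotropic, parabolically-scaled balls $B_\tau(x,\omega)$ are distorted by the Hamiltonian flow. This requires a careful analysis of the linearized flow (the Jacobian ODE) in coordinates adapted to the contact structure, showing that the "forbidden" mixing of the $|\xi|^{-1}$-scale directions into the $|\xi|^{-1/2}$-scale directions (which would destroy the estimate) does not occur at leading order, precisely because $a^{\mathrm{hom}}$ is homogeneous and $C^{1,1}$ — the $C^{1,\alpha}$ counterexamples of Smith–Sogge and Smith–Tataru show this is genuinely sharp. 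This is also where the non-homogeneity of $b$ (as opposed to $a^{\mathrm{hom}}$) enters as a technical complication, handled via the decomposition \eqref{b-bhom}, since $b - b^{\mathrm{hom}} \in C^1_- S^0_{1,1/2}$ contributes only a bounded, lower-order perturbation to the Hamilton vector field that can be absorbed.
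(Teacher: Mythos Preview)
Your overall strategy matches the paper's for all three parts: for \eqref{it:Lipflow1}, show that $\Phi_t$ maps cones $\Gamma_1$ into cones $\Gamma_{\alpha'}$ of bounded aperture, then use the symplectic change of variables (Jacobian $1$) together with Lemma~\ref{lem:aperture}; for \eqref{it:Lipflow2}, density plus dominated convergence; for \eqref{it:Lipflow3}, the commutation identity $\partial_t\F_t Wf = \F_t(VWf)$ combined with a bound $VW:\Hps\to\Tentpsm$.

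There is, however, one point where your sketch is imprecise and the paper's execution differs. You speak of ``the induced map $\hat\Phi_t$ on the cosphere bundle obtained by projecting'', but because $b$ is not homogeneous, $\Phi_t$ does \emph{not} project to a single well-defined map on $\Sp$: the projection of $\Phi_t(x,\sigma^{-1}\omega)$ genuinely depends on $\sigma$. The paper circumvents this by taking as reference the projection $\chi_t$ of the \emph{homogeneous} flow $\Phihom_t$ (generated by $\bhom$), which \emph{is} well-defined and is $d$-bi-Lipschitz by \cite[Proposition~2.4]{HaPoRo20}. It then proves, via a Gronwall argument, that $\Phi_t$ stays within contact-distance $O(\sigma_0^{1/2})$ of $\Phihom_t$ along each trajectory: writing $(x,\omega,\sigma)=\Phi_t(x_0,\omega_0,\sigma_0)$ and $(y,\nu,\tau)=\Phihom_t(x_0,\omega_0,\sigma_0)$, one shows
\[
\mcD(t) := |x-y|^2+|\omega-\nu|^2+\Big(1-\frac{\sigma}{\tau}\Big)^2+|\nu\cdot(y-x)| \lesssim \sigma_0
\]
by establishing $\tfrac{d}{dt}\mcD \le C(\mcD+\sigma_0)$. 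The delicate term is $|\nu\cdot(y-x)|$, whose control requires a second-order Taylor expansion of $\bhom$ and uses the full $\Crtwo$ regularity of $\bhom$ --- this is exactly the ``bad-direction'' anisotropic estimate you correctly identify as the main obstacle. Your proposed direct Jacobian-ODE analysis could in principle be made to work, but you would still need a well-defined cosphere-bundle base map to change variables against in the $L^p(\Sp)$ norm (i.e.\ $\chi_t$, whose Jacobian with respect to $\ud x\,\ud\omega$ is bounded since $\Phihom_t$ is standard-Lipschitz), and the same anisotropic estimate would reappear in that framework.

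For part \eqref{it:Lipflow3}, the paper's Lemma~\ref{lem:beta} bounds $VW:\Hps\to\Tentpsm$ directly: one expands $VWf$ as in \eqref{eq:PtWf}, observes that $\partial_{x_j}Wf$ and $\partial_{\xi_j}Wf$ are themselves modified wave-packet transforms (with packets $|\xi|^{-1}\eta_j\psi_\xi(\eta)$ and $\partial_{\xi_j}\psi_\xi(\eta)$, which satisfy the same bounds as $\psi_\xi$ by Lemma~\ref{lem:packets}), and concludes via Remark~\ref{rem:transformbounded}. This is more direct than your suggested detour through $W(b(x,D)f)$.
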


The proof of this theorem essentially follows from the results in~\cite[Section 6]{Geba-Tataru07}, which considers operators on $L^{2}(\Tp)$. As our setup, involving flows on tent spaces, is a bit different, we provide full details. It is also very closely related to~\cite[Section 3]{Smith98b}. 

We prepare for the proof of Theorem \ref{thm:flowprop} by  deriving some properties of the Hamilton vector field $V$. To do so, we use coordinates $(x, \w, \sigma)\in\Sp\times(0,\infty)$, where $\w=\xihat$ and $\sigma = |\xi|^{-1}$ for $\xi\neq0$, and we write
\begin{equation}\begin{gathered}
V = V_x \cdot \partial_{x} - V_{\w} \cdot \partial_{\w} - V_\sigma \sigma \partial_{\sigma}, \\
V_x := \partial_{\xi}b, \quad V_{\w} := \sigma(\partial_{x}b)^\perp, \quad V_\sigma := - \sigma\partial_{x}b \cdot \w,
\end{gathered}\label{Vcomp}\end{equation}
where $\perp$ denotes the orthogonal component of a vector relative to $\w$. There is a slight abuse of notation in writing $V_{\w} \cdot \partial_{\w}$, which we ignore. Keep in mind that $V_\sigma$ is the component of $\sigma \partial_\sigma$, not of $\partial_\sigma$. Note that the components of $V_{x}$, $V_{\w}$ and $V_{\sigma}$ are all $\A^{1}S^{0}_{1,1/2}$ symbols, since $b\in \A^{2}S^{1}_{1,1/2}$ and $V(x,\w,\sigma)=0$ for $\sigma\geq 1/8$.

 We will also work with the Hamilton vector field of the homogeneous symbol $\bhom$, which we denote by $\Vhom$ and decompose in a similar way as $V$, in terms of $\Vhom_{x}$, $\Vhom_{\w}$ and $\Vhom_{\sigma}$. Note that $\Vhom$ is homogeneous of order zero.

The following lemma will be used in the proof of part \eqref{it:Lipflow1} of Theorem \ref{thm:flowprop}. 

\begin{lemma}\label{lem:Lipbounds} 
Set $V_{\bullet}:=(V_{x},V_{\hat{\xi}},V_{\sigma})$. Then there exists an $M\geq0$ such that
\[
\big| \partial_{x} V_\bullet (x,\w,\sigma)\big| + \big| \partial_{\w} V_\bullet (x,\w,\sigma)\big| + \big| \partial_{\sigma}V_\bullet (x,\w,\sigma)\big| \leq \con
\]
for all $(x,\w,\sigma)\in\Sp\times(0,\infty)$.
\end{lemma}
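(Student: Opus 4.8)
\textbf{Proof plan for Lemma \ref{lem:Lipbounds}.}

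The plan is to unwind the definitions in \eqref{Vcomp} and reduce everything to symbol estimates for $b$ in the class $\A^{2}S^{1}_{1,1/2}$, using the crucial fact that $b$ is supported in $|\xi|\geq 8$, so that on the support of $V_\bullet$ we have $|\xi|\eqsim \sigma^{-1}\gtrsim 1$, i.e.~$\sigma\leq 1/8$ and all bounds involving $\langle\xi\rangle$ may be replaced by bounds involving $|\xi|=\sigma^{-1}$. First I would record that $b\in\A^{2}S^{1}_{1,1/2}$ means in particular $|\partial_\xi^\alpha\partial_x^\beta b(x,\xi)|\lesssim |\xi|^{1-|\alpha|+|\beta|/2}$ for $|\xi|\geq 8$ and all $\alpha$ with $|\beta|\le 2$ (more precisely, the mixed $x$-$\xi$ bounds that follow from Definition \ref{def:rough}\eqref{it:symbol2} together with the $\A^r$ condition, which controls up to $r=2$ $x$-derivatives with the mild $1/2$-loss). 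The components are $V_x=\partial_\xi b$, $V_{\hat\xi}=\sigma(\partial_x b)^\perp$, $V_\sigma=-\sigma\,\partial_x b\cdot\w$; note each is a symbol of order $0$ in the $S^{m}_{1,1/2}$-scale, homogeneous-degree-$0$ behaviour built in through the explicit factors of $\sigma$.

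The main step is then a careful bookkeeping of how the coordinate change $\xi\leftrightarrow(\w,\sigma)$ (with $\w=\xi/|\xi|$, $\sigma=|\xi|^{-1}$) converts $\xi$-derivatives into $\w$- and $\sigma$-derivatives and produces the relevant powers of $|\xi|$. Using $\partial_{\sigma}=-\sigma^{-2}\,\hat\xi\cdot\partial_\xi|_{\text{radial}}$ and $\partial_{\w_j}=|\xi|\,(\delta_{jk}-\hat\xi_j\hat\xi_k)\partial_{\xi_k}$ (the standard spherical identities, cf.~\eqref{eq:diffidentities}), a $\partial_\sigma$ costs a factor $|\xi|^{2}=\sigma^{-2}$ acting on a radial $\partial_\xi$ which itself gains $|\xi|^{-1}$, for a net $\sigma^{-1}$; a $\partial_\w$ acting on something costs $|\xi|$ times a $\partial_\xi$ which gains $|\xi|^{-1}$, for a net factor $1$; and a plain $\partial_x$ costs a factor $|\xi|^{1/2}=\sigma^{-1/2}$ by the $1/2$-loss in the $x$-variable. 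One then checks each of the nine quantities: e.g. $\partial_x V_x=\partial_x\partial_\xi b$ is $O(|\xi|^{1-1+1/2})=O(\sigma^{-1/2})$ --- wait, this needs the sharper $\A^{2}$ bound to be $O(1)$; indeed since $b\in\A^2 S^1_{1,1/2}$, taking one $x$-derivative of $\partial_\xi b$ (order $0$) stays at order $0$ without the $1/2$-loss for up to two $x$-derivatives, giving $O(1)$. Similarly $\partial_\w V_x$, $\partial_\sigma V_x$ pick up the factors computed above against $\partial_\xi^2 b=O(|\xi|^{-1})$ and land at $O(1)$; and for $V_{\hat\xi},V_\sigma$ the explicit $\sigma$ out front combines with $\partial_x b=O(|\xi|^{3/2})=O(\sigma^{-3/2})$ so that $V_{\hat\xi},V_\sigma=O(\sigma^{-1/2})$, and each further derivative again lands at worst at $O(1)$ after the same accounting --- with the key point that the $\A^{2}$ (rather than merely $S^{1}_{1,1/2}$) hypothesis is exactly what prevents a genuine blow-up in the two $x$-derivatives that appear. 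I would organise this as a short case analysis, or more slickly observe that $V_\bullet$ has components that are symbols in $\A^{1}S^{0}_{1,1/2}$ supported away from $\xi=0$, and that the $\Sp\times(0,\infty)$-derivatives $\partial_x,\partial_\w,\partial_\sigma$ map $\A^{1}S^{0}_{1,1/2}$ into $\A^{0}S^{0}_{1,1/2}+S^{0}_{1,1/2}=S^{0}_{1,1/2}$ on the region $\sigma\le 1/8$, which is uniformly bounded there.

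The main obstacle I anticipate is not conceptual but the precise tracking of the $1/2$-loss: one must verify that each of the nine derivatives costs at most the $1/2$-power of $|\xi|$ that is compensated by the order-$0$ starting point (equivalently, that no term genuinely requires a \emph{third} $x$-derivative of $b$, which is where $\A^{2}$, rather than $\A^{r}$ with larger $r$, would be the true requirement). A clean way to handle this uniformly is to note $V_\bullet$ is supported in $\sigma\leq 1/8$ and, on that region, to rescale: writing $b(x,\xi)=|\xi|\,\beta(x,\hat\xi,|\xi|^{-1/2}\cdot)$-type homogeneity in the second-dyadic sense is overkill; instead I would simply invoke \eqref{b-bhom}, which says $b-\bhom\in C^{1}_{-}S^{0}_{1,1/2}$, to reduce the claim for $V_\bullet$ to the corresponding claim for $\Vhom_\bullet$ (which is homogeneous of degree $0$, hence its $\Sp$-derivatives and its $\sigma\partial_\sigma$-derivative are automatically bounded on $|\xi|\ge 8$ by homogeneity and smoothness) plus an error term coming from $b-\bhom$, whose one $x$-derivative and any number of $\xi$-derivatives are controlled with the stated $1/2$-loss, which is harmless on $\sigma\le 1/8$. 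Assembling the homogeneous part (bounded by compactness of $S^{n-1}$ and degree-$0$ homogeneity) and the lower-order error (bounded by the $C^{1}_{-}S^{0}_{1,1/2}$ estimate restricted to $\sigma\le 1/8$) yields the uniform constant $\con=\con(n, b)$, and the lemma follows.
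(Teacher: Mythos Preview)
Your central claim---that the membership $V_\bullet\in\A^{1}S^{0}_{1,1/2}$ (equivalently, direct symbol accounting from $b\in\A^{2}S^{1}_{1,1/2}$) already yields $|\partial_\sigma V_\bullet|=O(1)$---is where the argument breaks. From $V_\bullet\in\A^{1}S^{0}_{1,1/2}$ one gets boundedness of $\partial_x V_\bullet$, $\partial_\omega V_\bullet$ and $\sigma\partial_\sigma V_\bullet$, but only $|\partial_\sigma V_\bullet|=O(\sigma^{-1})=O(|\xi|)$. Your own bookkeeping says $\partial_\sigma$ carries a ``net $\sigma^{-1}$'' cost; applied to the order-$0$ symbol $V_x=\partial_\xi b$ that gives order $+1$, i.e.\ $O(|\xi|)$, not the $O(1)$ you then assert. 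The $\A^{2}$ hypothesis removes the $1/2$-loss for $x$-derivatives but does nothing for $\partial_\sigma$.

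This is precisely the point the paper isolates: symbol membership alone only controls $\sigma\partial_\sigma V_\bullet$, and the upgrade to $\partial_\sigma V_\bullet$ requires the \emph{asymptotic homogeneity} of $b$. For $V_x$ the paper writes
\[
\partial_\sigma V_x=-|\xi|\,(\xi\cdot\partial_\xi)\partial_\xi b=-|\xi|\,\partial_\xi\big[(\xi\cdot\partial_\xi-1)b\big]
\]
and invokes \eqref{it:asymphom1} to place $(\xi\cdot\partial_\xi-1)b$ in $S^{0}_{1,1/2}$, so that its $\xi$-gradient is $O(|\xi|^{-1})$ and the product is $O(1)$. Your backup idea---splitting $V_\bullet=\Vhom_\bullet+(V-\Vhom)_\bullet$ via \eqref{b-bhom}, using $\partial_\sigma\Vhom_\bullet=0$ by homogeneity, and estimating the remainder---is an equivalent repackaging of exactly this mechanism (indeed $(\xi\cdot\partial_\xi-1)b=(\xi\cdot\partial_\xi-1)(b-\bhom)$), and it does give the correct bound for $\partial_\sigma V_x$. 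So the right ingredient appears in your final paragraph, but it is not the symbol-class argument you lead with; asymptotic homogeneity has to be invoked explicitly, not absorbed into a generic $\A^{1}S^{0}_{1,1/2}\to S^{0}_{1,1/2}$ mapping property that does not hold for $\partial_\sigma$.
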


\begin{proof} 
As already noted, the components of $V_{\bullet}$ are all $\A^{1}S^{0}_{1,1/2}$ symbols. Hence we can apply the vector fields $\partial_x$, $\partial_w$ and $\sigma \partial_\sigma$, and the resulting symbols will be uniformly bounded. However, we are claiming more, namely that the same holds if we apply $\partial_\sigma$, not just $\sigma \partial_\sigma$. The reason for this is the asymptotic homogeneity of $b$. For example, we apply $\partial_\sigma = - |\xi| \xi \cdot \partial_\xi$ to $V_x$. We obtain 
\begin{align*}
\partial_\sigma V_x(x,\w,\sigma) &= \partial_{\sigma}\partial_{\xi}b(x, \xi) = - |\xi| ( \xi \cdot \partial_{\xi}) \partial_{\xi}b(x,\xi) \\
&= - |\xi| \partial_{\xi}( \xi \cdot \partial_{ \xi} - 1 ) b(x,\xi) 
\end{align*}
for all $(x,\w,\sigma)\in \Sp\times(0,\infty)$, where $\xi=\sigma^{-1}\w$. 
Using the first asymptotic homogeneity property \eqref{it:asymphom1} of $b$, we see that 
$$
|\xi| \partial_{\xi} ( \xi \cdot \partial_{\xi} - 1 ) b(x, \xi) 
$$
is uniformly bounded. The argument is similar for the other components of $V$. 
\end{proof}

We will also need the following lemma, in the proof of part \eqref{it:Lipflow3} of Theorem \ref{thm:flowprop}.

\begin{lemma}\label{lem:beta} The map $VW:\Hps\to\Tentpsm$ is bounded.
\end{lemma}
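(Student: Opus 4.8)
The plan is to reduce the claim to a kernel bound on phase space and apply Proposition~\ref{prop:offsingbound}. Recall that $VWf(x,\xi) = V(\psi_\xi(D)f)(x)$ where $V$ is the Hamilton vector field written as in \eqref{Vcomp}; since all components of $V_x, V_{\w}, V_\sigma$ are $\A^1 S^0_{1,1/2}$ symbols supported in $|\xi| \geq 8$, the operator $VW$ sends $f$ to a phase-space function built from $V$ applied to the wave packet coefficients. The key point is that $V$ is a vector field of ``order zero'' in the appropriate dyadic-parabolic sense: each application of $\partial_x$ costs $|\xi|^{1/2}$, each application of $\partial_\xi$ gains $|\xi|^{-1}$, and in $\Omega_\xi$-type combinations these balance so that $V$ loses only one full derivative (hence the shift from $\Tentps$ to $\Tentpsm$). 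So I would first reorganize: writing $V b$-derivatives in terms of the building blocks, $VWf(x,\xi)$ can be expressed as $\widetilde{W}_1 f(x,\xi)$ for a modified wave packet transform $\widetilde W_1$ whose packets satisfy the same estimates as in Lemma~\ref{lem:packets} but with one extra power of $|\xi|$.

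More precisely, the strategy is: first, use Proposition~\ref{prop:transformsandweights} to identify $\Hps$ with a subspace of $\Tentps$ via $W$, so that it suffices to show $VW^*$ is bounded from $\Tentps$ to $\Tentpsm$ (since $W^*W = \id$, composing gives the claim; alternatively work directly with $VW W^* W = VW$ on the range of $W$). Second, compute the Schwartz kernel $K(x,\xi,y,\eta)$ of the operator $|\xi|\, V W^* $ acting between weighted tent spaces: using \eqref{eq:defadjoint}, $W^* G(x) = \int \psi_\eta(D) G(\cdot,\eta)(x)\, d\eta$ plus the low-frequency piece, so applying $V$ (a first-order differential operator with $\A^1 S^0_{1,1/2}$ coefficients) and composing with the $|\xi|$ weight and the packet $\psi_\xi$ produces a kernel of the form
\[
K(x,\xi,y,\eta) = |\xi|\, \psi_\xi(D)\big[ V\big(\psi_\eta(D)\,\delta_y\big)\big](x),
\]
up to the low-frequency correction. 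Third, verify this kernel satisfies the off-singularity bounds required in Proposition~\ref{prop:offsingbound}: namely $|K(x,\xi,y,\eta)| \lesssim \Upsilon(|\xi|/|\eta|)^N (1 + \rho^{-1} d((x,\hat\xi),(y,\hat\eta))^2)^{-N}$ for every $N$, where $\rho = \min(|\xi|^{-1},|\eta|^{-1})$. The frequency-localization factor $\Upsilon(|\xi|/|\eta|)^N$ comes from the nearly-disjoint supports of $\psi_\xi$ and $\psi_\eta$ when $|\xi|, |\eta|$ are dyadically separated (as in Lemma~\ref{lem:packets}); the spatial decay in the contact metric $d$ comes from integration by parts in the oscillatory integral defining the composition, exploiting exactly the packet derivative bounds \eqref{eq:packetbounds1}--\eqref{eq:packetbounds3}, with the crucial observation that the $|\xi|$ weight is absorbed because $V$ applied to a packet picks up at most a compensating $|\xi|^{-1}$ (from the $\A^1 S^0_{1,1/2}$ structure of $V$'s coefficients combined with the parabolic scaling of the packets — this is morally the content of \eqref{eq:packetbounds3} and the asymptotic homogeneity bound used in Lemma~\ref{lem:Lipbounds}). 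Once the kernel bound is established, Proposition~\ref{prop:offsingbound} with $\hchi = \id$ immediately gives boundedness of $|\xi| V W^*$ on $\Tentps$, equivalently $VW^*: \Tentps \to \Tentpsm$, and precomposing with $W: \Hps \to \Tentps$ (bounded by Proposition~\ref{prop:transformsandweights}) finishes the proof. The low-frequency part, where $\Phi_t$ and hence $V$ act trivially, contributes nothing and is handled separately by the support condition $b(x,\xi) = 0$ for $|\xi| \leq 8$.

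The main obstacle I anticipate is the kernel estimate in step three: one must carefully track how $V$ interacts with the anisotropic packets. Writing $V = V_x \cdot \partial_x - V_{\w}\cdot\partial_{\w} - V_\sigma \sigma\partial_\sigma$, the $\partial_x$ term acting on $\psi_\eta(D)\delta_y$ brings down a factor of $\eta$, i.e. size $|\eta|$, which is too much by itself — but it must be paired with the fact that the coefficient $V_x = \partial_\xi b$ is a symbol of order $0$, and more importantly one gets to integrate by parts against the oscillation, and the anisotropic support of $\psi_\xi$ (width $|\xi|^{1/2}$ in the angular directions) localizes $\eta$ to a parabolic neighborhood of the direction $\hat\xi$, where the dangerous component of $\eta$ effectively has size $|\eta|^{1/2}$ after projecting off $\hat\xi$. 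This is precisely the mechanism that makes $V$ lose only one derivative rather than two, and it is the same phenomenon encoded in the bound $|\xi|^{-(n+1)/4 - 1}$ in \eqref{eq:packetbounds3} for $\Omega_\xi$. Making this rigorous requires a somewhat delicate stationary-phase/integration-by-parts argument in the variable $\eta$, splitting into the regime where $|\xi| \sim |\eta|$ (where one integrates by parts in the physical variable to get spatial decay in $d$) and the regime where they are dyadically separated (where the support mismatch of $\psi_\xi(D)$ and $\psi_\eta(D)$, together with the $\ph$-cutoffs, gives the $\Upsilon$ factor). I expect the bookkeeping, rather than any conceptual difficulty, to be the bulk of the work, with the asymptotic homogeneity of $b$ (property \eqref{it:asymphom1}) entering exactly as in Lemma~\ref{lem:Lipbounds} to control the $\partial_\sigma$-type derivatives uniformly up to $\sigma \to 0$.
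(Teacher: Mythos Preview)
Your strategy of reducing to off-singularity kernel bounds and invoking Proposition~\ref{prop:offsingbound} is in principle sound, and is ultimately what the paper does too (via Remark~\ref{rem:transformbounded}). But there is a sign error in your weight that sends you down a false path: to pass from $\Tentpsm$ to $\Tentps$ one multiplies by $|\xi|^{-1}$, not $|\xi|$, so the operator whose kernel you should examine is $|\xi|^{-1}VWW^{*}$ (also note that ``$VW^{*}$'' does not type-check, since $W^{*}$ lands in functions on $\Rn$ while $V$ is a vector field on $\Tp$; the correct object is $VWW^{*}$). Once the weight is corrected, the $\partial_{x}$-piece that you flag as ``too much by itself'' is in fact exactly right: $\partial_{\xi}b\in S^{0}_{1,1/2}$ is bounded and $\partial_{x_{j}}Wf(x,\xi)=i|\xi|W_{1}f(x,\xi)$ for a modified transform $W_{1}$ with packets $|\xi|^{-1}\eta_{j}\psi_{\xi}(\eta)$ obeying the same estimates as $\psi_{\xi}$, so $|\xi|^{-1}\partial_{\xi}b\cdot\partial_{x}Wf$ is of order zero with no further work. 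No anisotropic gain is needed; the one-derivative loss is the \emph{statement} of the lemma, not an obstacle. Your appeal to $\Omega_{\xi}$ and \eqref{eq:packetbounds3} is misplaced here: $\Omega_{\xi}$ combines $\partial_{\xi}$ and $\partial_{\zeta}$ derivatives of the packet, not the Hamilton vector field in $(x,\xi)$, and plays its role only later in Section~\ref{sec:parametrix}.

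The paper's argument exploits this directly and bypasses any explicit kernel computation for $V$. One writes $VWf=\partial_{\xi}b\cdot\partial_{x}Wf-\partial_{x}b\cdot\partial_{\xi}Wf$, recognizes each summand as a pointwise multiplier (of symbol order $0$ and $1$, respectively) times a modified wave packet transform ($\partial_{x_{j}}Wf=i|\xi|W_{1}f$ as above, and $\partial_{\xi_{j}}Wf=W_{2}f$ with $W_{2}$ built from $\partial_{\xi_{j}}\psi_{\xi}$, which by \eqref{eq:packetbounds2} has even better decay), and then invokes only the boundedness of $W_{1}W^{*}$ and $W_{2}W^{*}$ on $\Tentps$ from Remark~\ref{rem:transformbounded}. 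The pointwise multipliers shift the tent-space index by exactly the required amount. This is a short argument with no stationary phase and no integration by parts beyond what is already packaged in Remark~\ref{rem:transformbounded}.
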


\begin{proof}
Let $f\in\Hps$. Note that $VWf(x,\xi)=0$ for $|\xi|<8$, since we already observed that $V(x,\xi)=0$ for such $(x,\xi)\in\Tp$. On the other hand, $Wf(x,\xi)$ is smooth for $|\xi|\geq8$, by Lemma \ref{lem:packets} and the definition of $W$ in \eqref{eq:deftransform}. Hence
\begin{equation}\label{eq:PtWf}
VWf(x,\xi)= 
(\partial_{\xi} b\cdot \partial_{x}Wf)(x,\xi)- (\partial_{x}b\cdot \partial_{\xi}Wf)(x,\xi)
\end{equation}
makes sense classically for such $(x,\xi)$. Now fix $1\leq j\leq n$ and first note that
\begin{equation}\label{eq:boundb}
\partial_{\xi_{j}}b:\Tentpsm\to\Tentpsm\text{ and }\partial_{x_{j}}b:\Tentps\to\Tentpsm,
\end{equation}
as follows from pointwise estimates, using that $\partial_{\xi}b\in S^{0}_{1,1/2}$ and $\partial_{x}b\in S^{1}_{1,1/2}$.

Next, consider 
\begin{equation}\label{eq:GW}
\begin{aligned}
\partial_{x_{j}} Wf(x,\xi) &=\partial_{x_{j}} \psi_{\xi}(D)f(x)=  (2\pi)^{-n} \partial_{x_{j}} \int_{\R^{2n}} e^{ix \cdot \eta} \psi_\xi(\eta) \wh{f}(\eta) \ud\eta\\
&=i(2\pi)^{-n}\int_{\R^{2n}} e^{ix \cdot \eta} \eta_{j}\psi_\xi(\eta) \wh{f}(\eta) \ud\eta
\end{aligned}
\end{equation}
and note that the wave packets $\psi_{\xi,1}(\eta):=|\xi|^{-1}\eta_{j}\psi_{\xi}(\eta)$ have similar support and boundedness properties as the $\psi_{\xi}$. Hence, by Remark \ref{rem:transformbounded}, one has $W_{1}W^{*}:\Tentps\to \Tentpsm$ for the new wave packet transform 
\[
W_{1}g(x,\xi):=\begin{cases}
\psi_{\xi,1}(D)f(x)&\text{if }|\xi|>1,\\
0&\text{if }|\xi|\leq 1.\end{cases}
\]
Combined with \eqref{eq:boundb} and the fact that $W$ is an isometry, this shows that
\begin{align*}
\|\ind_{[8,\infty)}(\xi)\partial_{\xi_{j}}b\partial_{x_{j}}Wf\|_{\Tentpsm}&\lesssim \|\ind_{[8,\infty)}(\xi)\partial_{x_{j}}Wf\|_{\Tentpsm}\\
&=\|\ind_{[8,\infty)}(\xi)W_{1}f\|_{\Tentps}\\
&=\|\ind_{[8,\infty)}(\xi)W_{1}W^{*}Wf\|_{\Tentps}\\
&\lesssim \|Wf\|_{\Tentps}=\|f\|_{\Hps}.
\end{align*}
This suffices to deal with the first term on the right-hand side of \eqref{eq:boundb}.

The argument for the second term on the right-hand side of \eqref{eq:boundb} is similar. One differentiates with respect to $\xi_{j}$ instead of $x_{j}$ in \eqref{eq:GW}, and applies Lemma \ref{lem:packets} to see that the wave packets $\psi_{\xi,2}(\eta):=\partial_{x_{j}}\psi_{\xi}(\eta)$ satisfy similar (and in fact slightly better) support and boundedness properties as the $\psi_{\xi}$. Then the natural associated wave packet transform $W_{2}$ satisfies $W_{2}W^{*}:\Tentps\to \Tentps$, and a similar argument as before concludes the proof.
\end{proof}

We are now ready to prove Theorem \ref{thm:flowprop}.

\begin{proof}[Proof of Theorem \ref{thm:flowprop}]
Most of the work will go into proving part \eqref{it:Lipflow1}.

\subsubsection{Proof of \eqref{it:Lipflow1}}

First note that it suffices to consider only the high-frequency part of the flow. More precisely, one can write any $G\in T^{p}_{s}(\Sp)$ as $G=\ind_{|\xi|\leq 1}G+\ind_{|\xi|>1}G$. As already remarked, one has $\Phi_{t}(x,\xi)=(x,\xi)$ for all $|\xi|<8$. Hence the required bounds are trivial for $\ind_{|\xi|\leq 1}G$, and we may consider in the remainder only $|\xi|>1$.

\subsubsection{The homogeneous flow}

 The function $\bhom$ is homogeneous of degree one and is $\Crtwo S^{1}_{1,0}$ away from $\xi = 0$, thus it generates a flow $(\Phihom_t)_{t\in\R}$ which is homogeneous of degree one and is Lipschitz away from $\xi = 0$. As shown in \cite[Proposition 2.4]{HaPoRo20}, by projection such a flow induces a flow on the cosphere bundle, denoted $(\chi_{t})_{t\in\R}$, that is not only Lipschitz with respect to the usual metric on the cosphere bundle, but Lipschitz\footnote{Technically speaking, in \cite[Proposition 2.4]{HaPoRo20} the maps are assumed to be (infinitely) smooth. However, the relevant bounds only involve the supremum of the Jacobian of $\chi_{t}$, and the same proof works for bi-Lipschitz maps (see also footnote \ref{foot:curves}).\label{foot:Lipschitz}} with respect to the contact metric $d$. Moreover, the $d$-Lipschitz bounds for $\chi_t$ are locally uniform in $t$, since the Lipschitz bounds of the $\Phihom_{t}$ are.
 
For $(x_{0},\w_{0},\sigma_{0})\in \Sp\times(0,1)$, write $\Phihom_{t}(x_{0},\w_{0},\sigma_{0})=(y(t),\nu(t),\tau(t))$ for $t\in\R$. Since $\Vhom_\sigma$ is bounded, for each $t_{0}>0$ there exists an $M=M_{t_{0}}\geq0$, independent of $(x_{0},\w_{0},\sigma_{0})$, such that
$$
\Big|\frac{d}{d t}\log \tau(t) \Big| =\Big|\frac{\dot{\tau}(t)}{\tau(t)}\Big| \leq \con
$$
for all $t\in[-t_{0},t_{0}]$. This in turn implies that $\tau(t)$ does not vary much on $[-t_{0},t_{0}]$:
\begin{equation}
e^{-\con t_{0}} \leq \frac{\tau(t)}{\sigma_{0}}\leq e^{\con t_{0}}.
\label{sigmacomp}\end{equation}
Now the combination of the $d$-Lipschitz property of $\chi_t$ and \eqref{sigmacomp} shows that $\Phihom_t$ maps the parabolic region $\Gamma_1(x_{0}, \omega_{0})$, defined by \eqref{Gammadefn}, into $\Gamma_{\alpha(t)}(\chi_t(x_{0}, \omega_{0}))$ for some $\alpha(t)>0$ which is locally uniformly bounded in $t$, from above and below. Lemma \ref{lem:aperture}, on comparability of the tent space norm under change of aperture, then implies locally uniform boundedness of the homogeneous flow $(\Phihom_t)_{t\in\R}$ on $\Tentps$. 

\subsubsection{The smoothed flow}

We are, however, interested in the smoothed flow $(\Phi_t)_{t\in\R}$. To analyze it we estimate the difference between $\Phi_t$ and $\Phihom_{t}$,
using the metric $d$. Recall that 
\begin{equation}
\big(d((x, \w), (y, \nu))\big)^{2} \eqsim |x-y|^2 + |\w - \nu|^2 + |\nu \cdot (y-x)|
\label{d2equiv}\end{equation}
for all $(x,\w),(y,\nu)\in\Sp$, by \eqref{eq:metric}. Note that the low-frequency parts of these flows are very different; however, our interest is in the high-frequency part.

For a given $(x_{0},\w_{0},\sigma_{0})\in\Sp\times(0,1)$, write $\Phi_{t}(x_{0},\w_{0},\sigma_{0})=(x(t), \w(t), \sigma(t))=(x(t),\xi(t))$ and $\Phihom_{t}(x_{0}, \w_{0}, \sigma_{0}) = (y(t), \nu(t), \tau(t))=(y(t),\eta(t))$ as before. We fix $t_{0}>0$ and consider $t\in[-t_{0},t_{0}]$. The bounds which we will obtain are uniform in such $t$ and in the initial data. For simplicity of notation, we will typically omit the variable $t$. 

We begin by observing that $V_\sigma$ is bounded, since $V_\sigma$ is in $\A^{1}S^{0}_{1,1/2}$, and therefore, similar to \eqref{sigmacomp}, we have 
\begin{equation}
e^{-\con t_{0}} \leq \frac{\sigma(t)}{\sigma_{0}}\leq e^{\con t_{0}}.
\label{sigmacomp2}\end{equation}

Next, we claim that \emph{it suffices to show that the right-hand side of \eqref{d2equiv}, that is, 
$$
|x(t)-y(t)|^2 + |\w(t) - \nu(t)|^2 + |\nu(t) \cdot (y(t)-x(t))|,
$$
 is bounded by a constant multiple of $\sigma_0$}, as a function of $t$. Indeed, suppose that we have proved this. 
By comparing with the homogeneous flow $\Phihom_{t}$, and using again that $\chi_{t}$ is $d$-Lipschitz, it then follows that the ball $B_{\sqrt{\sigma_{0}}}(x_{0},\w_{0})\subseteq\Sp$ gets mapped into $B_{\beta(t)\sqrt{\sigma_{0}}}(x(t),\w(t))$ for some locally uniformly bounded $\beta(t)>0$. This shows that $\Phi_{t}$ maps the high frequency part of the paraboloid $\Gamma_{1}(x_{0},\w_{0})$ into $\Gamma_{\beta(t)}(x(t),\w(t))$. 
Since $\Phi_{t}$ is also Lipschitz with respect to the standard metric, the Jacobian factor in the change of variables between $\Gamma_{1}(x_{0},\w_{0})$ and $\Gamma_{\beta(t)}(x(t),\w(t))$ is bounded. 
Lemma \ref{lem:aperture} then completes the proof.

In fact, we will prove the stronger statement that
\begin{equation}
\mcD(t) := |x-y|^2 + |\w - \nu|^2 + \Big( 1- \frac{\sigma}{\tau} \Big)^2 +  |\nu \cdot (y-x)| \lesssim \sigma_0.
\label{D2} \end{equation}
By Gronwall's lemma, it suffices to prove an inequality of the form 
\begin{equation}
\frac{d}{dt} \mcD(t) \leq C(\mcD(t) + \sigma_{0})
\label{enoughD2}\end{equation}
for some $C>0$, and then multiply $\mcD(t)$ by $e^{-Ct}$ and integrate from $0$ to $t$. In turn, to show \eqref{enoughD2} we take the time derivative of each term in \eqref{D2}, and show that it is bounded by the right-hand side of \eqref{enoughD2}. 

\subsubsection{The first three terms}
We start with the time derivative of the first term in \eqref{D2},  $|x-y|^2$. It is equal to 
$$
2(x-y) \cdot \big(V_x(x, \w, \sigma) - \Vhom_x(y, \nu, \tau) \big)
$$
which we estimate by expressing it as 
$$
2(x-y) \cdot \big(V_x(x, \w, \sigma) - V_x(y, \nu, \tau) + V_x(y, \nu, \tau) - \Vhom_x(y, \nu, \tau) \big).
$$
By Lemma \ref{lem:Lipbounds}, $V_x$ is Lipschitz in $x$, $\w$ and $\sigma$. Hence one can use \eqref{sigmacomp}, \eqref{sigmacomp2} for $\sigma$ and $\tau$ to bound the difference between the first two terms in brackets by a multiple of $|x-y| + |\w - \nu| + \sigma_{0}$. To estimate the second term, we use the fact that the difference between the vector fields $V - \Vhom$ is the Hamilton vector field of $b - \bhom$, which satisfies \eqref{b-bhom}. 
The $x$-component is the $\xi$-derivative of this vector field, which is therefore $O(|\xi|^{-1}) = O(\sigma_{0})$. So we get an estimate for this time derivative by a multiple of
$$
|x-y| \big( |x-y| + |\w - \nu| + \sigma_{0}),
$$
which is bounded by the right-hand side of \eqref{enoughD2} since $\sigma_{0}<1$. 

The argument to bound the time derivative of the second term on the right-hand side of \eqref{D2} is similar to the argument for the first term, so we omit the details. 

Next, consider the third term of \eqref{D2}. The time derivative is 
\begin{equation*}\begin{gathered}
2 \Big( 1- \frac{\sigma}{\tau} \Big)\Big( - \frac{\sigma V_\sigma(x, \w, \sigma)}{\tau} + \frac{\sigma \Vhom_\sigma(y, \nu, \tau)}{\tau} \Big)  \\
= 2 \Big( 1- \frac{\sigma}{\tau} \Big) \frac{\sigma}{\tau} \big( \Vhom_\sigma(y, \nu, \tau) - \Vhom_\sigma(x, \w, \sigma) + \Vhom_\sigma(x, \w, \sigma) -V_\sigma(x, \w, \sigma) \big) .
\end{gathered}\end{equation*}
Using the Lipschitz property of $\Vhom$ with respect to the standard metric, and the fact that $\Vhom$ is independent of $\tau$, we can estimate 
$$
\big|  \Vhom_\sigma(y, \nu, \tau) - \Vhom_\sigma(x, \w, \sigma) \big| \lesssim |x-y| + |\w - \nu|.
$$
For the final two terms, we use the fact that $V - \Vhom$ is the Hamilton vector field of \eqref{b-bhom}, and therefore we can bound 
$$
\big| \Vhom_\sigma(x, \w, \sigma) - V_\sigma(x, \w, \sigma) \big| \lesssim \sqrt{\sigma}. 
$$
Hence, using \eqref{sigmacomp2}, the time derivative of the third term in \eqref{D2} is bounded by a multiple of
$$
\Big( 1- \frac{\sigma}{\tau} \Big) \big( |x-y| + |\w - \nu| + \sqrt{\sigma_{0}} \big)
$$
which is bounded by the right-hand side of \eqref{enoughD2}. 

\subsubsection{The fourth term}

This term is more interesting. We follow a similar calculation by Smith in \cite{Smith98b}, although our setting is slightly different. Smith treats a sequence of approximate Hamiltonians ${H}_k$, each of which is homogeneous, while we have in effect `pasted together' all these into a single Hamiltonian $b$ which is only asymptotically homogeneous. So we need to take care with the dependence on $\sigma$. We compute the absolute value of the time derivative as 
\begin{equation*}\begin{gathered}
|-\Vhom_\sigma(y, \nu, \tau) \big(\nu \cdot (y-x)\big) + \tau \frac{d}{dt} \big(\tau^{-1}\nu \cdot (y-x)\big)| \\
\lesssim |\nu \cdot (y-x)| + \tau\big| \partial_{y} \bhom(y,\nu,\tau)\cdot (x-y) + \tau^{-1}\nu \cdot \big(  \partial_{\eta} \bhom(y, \nu,\tau) - \partial_{\xi} b(x, \w,\sigma) \big)\big|
\end{gathered}\end{equation*}
and note that the first term is bounded by $\mcD$. The other terms can be rewritten as 
\begin{align*}
&\tau \big( \partial_{y} \bhom(y,\eta)\cdot (x-y) + \eta \cdot \big(\partial_{\eta} \bhom(y, \eta) - \partial_{\xi} \bhom(x, \xi)  - \partial_{\xi} (b-\bhom)(x, \xi) \big) \big) \\
&= \tau \big( \bhom(y, \eta) - \bhom(x, \xi)  + (x-y) \cdot \partial_{y} \bhom(y, \eta) + (\xi - \eta) \cdot \partial_{\xi} \bhom(x, \xi)\\
&\ \ \ -\eta \cdot  \partial_{\xi} (b-\bhom)(x, \xi) \big)  
\\
&=  \tau \big( \bhom(y, \eta) - \bhom(x, \xi)  + (x-y) \cdot \partial_{y} \bhom(x, \xi) + (\xi - \eta) \cdot \partial_{\xi} \bhom(x, \xi) \big) \\
&\ \ \ + \tau  (x-y) \cdot \big( \partial_{y} \bhom(y, \eta) - \partial_{y} \bhom(x, \xi) \big) - \hat{\eta} \cdot \partial_{\xi} (b-\bhom)(x, \xi) .
\end{align*}
In the last line, the term $\hat{\eta}\cdot \partial_\xi (b-\bhom)$ is $O(\sigma)$, using \eqref{b-bhom}, so this term is acceptable. To treat the two terms in large parentheses, 
we first consider the case where $|\hat \eta - \hat \xi|=|\nu-\w|$ is large, say $|\hat \eta - \hat \xi|\geq 1/4$. In that case, $\mcD \geq 1/16$, so we need only show that these two terms are $O(1)$. 
This is easy to check, noting that $|x-y| = O(1)$ since both $\dot x$ and $\dot y$ are $O(1)$. 

The remaining case is $|\hat \eta - \hat \xi| \leq 1/4$. Here we treat the first of the
terms in large parentheses as follows (the other one is treated analogously; we omit the details). We use Taylor's formula to express this term as 
\begin{align}
-\tau &\bigg( \sum_{jk} (y-x)_j (y-x)_k \int_0^1 (1-r) \partial_{x_{j}x_{k}}^2\bhom(x + r(y-x), \xi + r(\eta - \xi)) \ud r\nonumber\\ 
\label{Taylorhom}&+ 2 \sum_{jk} (y-x)_j (\eta - \xi)_k \int_0^1  (1-r) \partial_{x_{j}\xi_{k}}^2  \bhom(x+r(y-x),\xi + r(\eta - \xi) ) \ud r \\
&+ \sum_{jk} (\eta - \xi)_j (\eta - \xi)_k \int_0^1 (1-r) \partial_{\xi_{j}\xi_{k}}^2  \bhom(x + r(y-x), \xi + r(\eta - \xi)) \ud r \bigg)\nonumber. 
\end{align}
We note that the condition $|\hat \eta - \hat \xi| \leq 1/4$ implies that $|\xi + r(\eta - \xi)|^{-1}$ is comparable to $\sigma$ and $\tau$ (and hence, to $\sigma_0$ thanks to \eqref{sigmacomp}, \eqref{sigmacomp2}) for $0 \leq r \leq 1$. 
Using the fact that $\bhom \in \Crtwo S^1_{1,0}$ away from $\xi = 0$, and the homogeneity of degree $1$ in $\xi$, as well as \eqref{sigmacomp}, \eqref{sigmacomp2} and the inequality
$$
|\eta - \xi|^2 \leq 2\sigma^{-2} \Big( |\hat\eta - \xihat|^2 + \Big( 1 - \frac{\sigma}{\tau} \Big)^2 \Big),
$$
  we see that this is bounded by a multiple of
$$
|y-x|^2 + \sigma_{0} |y-x| |\eta - \xi| + \sigma_{0}^2 |\eta - \xi|^2 \lesssim  |y-x|^2  + |\hat\eta - \xihat|^2 +  \Big( 1- \frac{\sigma}{\tau} \Big)^2,  
$$
which is an acceptable error term. This completes the proof of \eqref{enoughD2}, and thus also of \eqref{it:Lipflow1}. 

\subsubsection{Proof of \eqref{it:Lipflow2}}  In view of part \eqref{it:Lipflow1}, we only need to show the continuity of $\F_t G$ for $G$ in a dense subset $S$ of $\Tentps$. We choose for our dense subset $S$ the continuous functions of compact support. Then, for all $G\in S$ and $t_{0}>0$, there is a fixed compact set containing the support of $\F_t G$ for all $t\in[-t_{0},t_{0}]$. 
We can thus choose an integrable function that dominates all the $\F_t G$. It follows, using the dominated convergence theorem,  that for all $(x,\w)\in\Sp$ the integrals 
$$
| \mathcal{A}_{s}(\F_t G - \F_{t'} G) (x, \w) |^2 =  \int_{\Gamma(x, \w)} \ang{\eta}^{2s}  \mu(|\eta|) |\F_t G - \F_{t'} G|^2(y,\eta) \ud y \ud\eta
$$
converge to zero as $t' \to t$, where $\mu$ is as in \eqref{eq:defmu}. Moreover, one has $\F_{t}G(y,\eta)-\F_{t'}G(y,\eta)=G(y,\eta)-G(y,\eta)=0$ for $|\eta|<8$. Hence there is also a fixed compact subset of $\Sp$ containing the support of all the functions $\A_{s} (\F_t G-F_{t'}G)$, for $t,t'\in[-t_{0},t_{0}]$. So, using the dominated convergence theorem again, the $L^p(\Sp)$ norm of $\mathcal{A}_{s}(\F_t G - \F_{t'} G)$ converges to zero as $t' \to t$.
It now follows from Definition~\ref{def:tentspaces} that the $\Tentps$ norm of $\F_t G - \F_{t'} G$ converges to zero as $t' \to t$.
This establishes the strong continuity of $\F_t$ on $\Tentps$.

\subsubsection{Proof of \eqref{it:Lipflow3}}  Let $f\in\Hps$. Recall that $Wf$ is smooth in $(x,\xi)$ for $|\xi|\geq 8$, and $\F_{t}Wf(x,\xi)=Wf(x,\xi)$ for $|\xi|<8$. Hence $\F_t Wf$ is pointwise continuously differentiable in time, with derivative $V \F_t Wf $. This is equal to $\F_t V Wf$, since $V$ commutes with the flow that it generates. Now Lemma \ref{lem:beta}, combined with parts \eqref{it:Lipflow1} and \eqref{it:Lipflow2}, concludes the proof of \eqref{it:Lipflow3}, and thereby of Theorem~\ref{thm:flowprop}. 
\end{proof}

\begin{remark}\label{rem:time-dep} 
Note that, if the symbol $b$ is time-dependent, then $V$ no longer commutes with the flow, and a more elaborate argument is needed to show \eqref{it:Lipflow3}. This is one of the additional difficulties in dealing with equations with time-dependent coefficients.
\end{remark}

\begin{remark}\label{rem:C11needed}
We used the assumption that $\bhom\in \Crtwo S^{1}_{1,0}$, and in particular the uniform bounds for the second derivatives of $\bhom$ for $|\xi| \gtrsim 1$, in several key places in the proof of Theorem \ref{thm:flowprop}. For example, these bounds imply that the homogeneous flow $(\Phihom_{t})_{t\in\R}$ is Lipschitz, and they were also crucial in \eqref{Taylorhom}.
\end{remark}

\section{Parametrix for the first order equation}\label{sec:parametrix}

Let $b$ be as in Theorem \ref{thm:parametrix}. That is, $b\in\A^{2}S^{1}_{1,1/2}$ is real-valued, elliptic and asymptotically homogeneous of degree one with real-valued limit $a\in \Crtwo S^{1}_{1,0}$. Fix $p\in[1,\infty)$ and $s\in\R$. In this section we prove Theorem~\ref{thm:parametrix} for the parametrix  $(U_t)_{t\in\R}$ defined on $\Hps$ by 
\begin{equation}\label{eq:defUt}
U_t := W^* \F_t W
\end{equation}
for $t\in\R$. Here $W$ is the wave packet transform from \eqref{eq:deftransform}, and $\F_t$ is the flow map (see \eqref{flowdefn}) induced by the Hamilton vector field $V$ of $\chi b$, for a $\chi\in C^{\infty}(\Rn)$ such that $\chi(\xi)=0$ for $|\xi|\leq 8$ and $\chi(\xi)=1$ for $|\xi|\geq 16$. 

\begin{proof}[Proof of Theorem~\ref{thm:parametrix}]
Almost all of the work will go into proving property \eqref{it:par3}.

\subsubsection{Reductions}

Let $f \in \Hps$. Properties \eqref{it:par1} and \eqref{it:par2} in the statement of Theorem~\ref{thm:parametrix} are an almost immediate consequence of Theorem~\ref{thm:flowprop}. Indeed, recall that, by Proposition \ref{prop:transformsandweights},
the $\Hps$ norm of $f$ is equivalent to the $\Tentps$ norm of $Wf$. Thus, the strong continuity of $t\mapsto W^* \F_t W$ on $\Hps$ is implied by the strong continuity of $t\mapsto W W^* \F_t$
on $\Tentps$, and a similar statement holds for the time derivatives of these maps, and for the bounds in \eqref{it:par2}. Theorem~\ref{thm:flowprop} demonstrates the boundedness and continuity of $\F_t$, while Corollary \ref{cor:transformbounded} states that $W W^*$ is bounded.

The first statement of property \eqref{it:par3}, namely that $U_0 f = f$, is trivially satisfied. Thus, it remains to show that
for $f \in \Hps$, the error term 
\begin{equation}\label{eq:errortoprove}
(D_t - b(x, D)) U_t f 
\end{equation}
is bounded, locally uniformly in $t$, and continuous as a map from $\R$ into $\Hps$.  
The remainder of this section will be devoted to  proving this. 

We begin with a convenient reduction: it suffices to prove instead that
\[
(D_t - (\chi b)(x, D)) U_t f=(D_t - b(x, D)\chi(D)) U_t f
\]
is bounded, locally uniformly in $t$, and continuous as a map from $\R$ into $\Hps$. Here, as before, $\chi\in C^{\infty}(\Rn)$ is such that $\chi(\xi)=0$ for $|\xi|\leq 8$ and $\chi(\xi)=1$ for $|\xi|\geq 16$. Indeed, suppose that we can prove the theorem for $\tilde b$, and write
$$
(D_t - b(x, D)) U_t = (D_t - (\chi b)(x, D)) U_t + ((\chi b)(x, D) - b(x, D)) U_t.
$$
By assumption, the first term on the right-hand side, $(D_t - (\chi b)(x, D)) U_t$, satisfies the conditions of the theorem. On the other hand, the term $((\chi b)(x, D) - b(x, D)) U_t$ does so as well, since $(\chi b)(x, D) - b(x, D)=((\chi-1)b)(x,D)$ is a pseudodifferential operator of order zero (in fact, it has order $-\infty$), and it is therefore bounded on $\Hps$, as follows from Theorem \ref{thm:roughpseudo}. This shows that it suffices to prove \eqref{eq:errortoprove} with $b$ replaced by $\chi b$. However, for simplicity of notation, in the remainder of this section we will simply denote this reduced symbol by $b$, and merely assume in addition that $b(\chi)=0$ for $|\xi|\leq 8$, as we also did in Section \ref{sec:flow}.

Having made this reduction, we claim that it suffices to consider \eqref{eq:errortoprove} under the assumption that the function $f$ has Fourier transform supported in the region $\{ |\xi| \geq 2 \}$. In fact, we can write $f = \tilde \chi(D) f + (1 - \tilde \chi)(D) f =: f_1 + f_2$ for a $\tilde \chi \in C^\infty(\R^n)$ which is equal to $0$ for $|\xi| \leq 2$ and equal to $1$ for $|\xi| \geq 4$. Then $Wf_2$ is supported where $|\xi| \leq 8$ due to the support properties of the $\psi_\xi$. Hence $\F_t W f_2 = W f_2$, since $b$ vanishes where $|\xi| \leq 8$. Thus $U(t) f_2 = f_2$ for all $t$, and $b(x,D)f_{2}=0$. Hence the conclusions of the theorem are trivial for $f_2$, and we only need to check them for $f_1$, which proves the claim.

The virtue of these reductions is that we can now disregard the component of $W$ on the second line of \eqref{eq:deftransform} entirely. To see this, recall first that the function $q$ defined in \eqref{r-defn} is supported where $|\zeta| \leq 2$, cf.~\eqref{r-support}. Hence, if $f$ has Fourier transform supported where $\{|\xi| \geq 2 \}$, then $Wf(\xi)=0$ for $|\xi|\leq1$. The same holds true for $\F_t Wf$ for all $t\in\R$, as $\F_{t}$ is the identity where $|\xi| \leq 1$. Finally, the low-frequency component of $W^*$ applied to $\F_t Wf$ then vanishes as well. 

For the rest of the proof we may thus assume that the quantity on the second line of \eqref{eq:deftransform} is in fact zero, and similarly for $W^{*}$. We may also assume that $f\in\Sw(\Rn)$ and obtain appropriate $\Hps$-bounds for $f$, since the Schwartz functions lie dense. Then $Wf(x,\xi)$ is smooth for $|\xi|>1$, cf.~Lemma \ref{lem:packets}.

\subsubsection{Kernel of the error term}

Next, we calculate the kernel of the error term. 

Recall that $\partial_t \F_t = V \F_t$ for all $t$. Hence the error term can be written as
\[
( -i W^* V - b(x, D) W^* ) \F_t Wf.
\]
We therefore begin our analysis by considering the operator $-i W^* V - b(x, D) W^*$ acting on $Wf$. First note that $-iW^* VWf(x)$ equals 
\[
(2\pi)^{-n} \int_{\R^{3n}}  (-i) e^{i(x-y)\cdot \zeta} \psi_\eta(\zeta) (\partial_{\eta} b\cdot \partial_{y} - \partial_{y} b\cdot \partial_{\eta})Wf(y, \eta) \ud y\ud\eta\ud\zeta,
\]
where the integrals are absolutely convergent because $F$ is an element of the class of test functions $\Da(\Tp)$ from Section \ref{sec:tentspaces}, as was remarked above \eqref{eq:defadjoint}. It follows that the Schwartz kernel of this operator is 
\[
 (2\pi)^{-n}  \int_{\Rn}  e^{i(x-y)\cdot \zeta}  \big( \zeta \cdot \partial_{\eta} b(y,\eta) \psi_\eta(\zeta) -i \partial_{y} b(y,\eta)\cdot \partial_{\eta} \psi_\eta(\zeta) \big) \ud\zeta
\]
for $|\eta|>1$. On the other hand, the kernel of the operator $b(x, D) W^*$ takes the form of an oscillatory integral 
\[
 (2\pi)^{-2n} \int_{\R^{3n}} e^{i(x-z) \cdot \xi} b(x, \xi) e^{i(z-y) \cdot \zeta} \psi_\eta(\zeta) \ud\xi\ud z \ud\zeta,
\]
 which simplifies to
\[
 (2\pi)^{-n} \int_{\Rn} e^{i(x-y) \cdot \zeta} b(x, \zeta) \psi_\eta(\zeta)  \ud \zeta.
\]
 Combining these we find the kernel of $(2\pi)^{n}(-i W^* V - b(x, D) W^*)$:
 \begin{equation}
\int_{\Rn}  e^{i(x-y)\cdot \zeta}  \big( \zeta \cdot \partial_{\eta} b(y,\eta) \psi_\eta(\zeta) -i \partial_{y} b(y,\eta) \cdot \partial_{\eta} \psi_\eta(\zeta) - b(x, \zeta) \psi_\eta(\zeta) \big) \ud\zeta
\label{initial}\end{equation}
for $|\eta|>1$. For the rest of the proof we will only deal with such $\eta$, as is allowed by our assumption on $W$. Moreover, from here on the function $f$ will no longer play any role in the proof, and we will only work with the kernel of the error term.

In this expression, we will use the identity 
$$
\partial_\eta \psi_\eta(\zeta) = -|\eta|^{-1} |\zeta|  \partial_\zeta \psi_\eta(\zeta)  + \Omega_{\eta}( \zeta),
$$
where $\Omega_{\eta}$ is defined in \eqref{Omega-defn} and satisfies the bounds in \eqref{eq:packetbounds3}. In particular, by the support properties of $\psi_{\eta}$ in Lemma \ref{lem:packets}, $\Omega_{\eta}(\zeta)$ decays a full power of  $\lb\zeta\rb$ faster than $\psi_{\eta}(\zeta)$. We  substitute this into \eqref{initial}, and integrate by parts in $\zeta$. At the same time, we write $\zeta \cdot \partial_\eta b = (\zeta - \eta) \cdot \partial_\eta b + \eta \cdot \partial_\eta b$. 
This gives us
 \begin{equation}\label{initial*}
 \begin{aligned}
\int_{\Rn} e^{i(x-y)\cdot \zeta}  \Big[&\Big( (\zeta - \eta) \cdot \partial_\eta b(y, \eta) + \eta \cdot \partial_\eta b(y, \eta) +(x-y) \cdot \partial_y b(y, \eta)
\\
&-\frac{|\eta| - |\zeta|}{|\eta|} (x-y) \cdot \partial_{y} b(y, \eta)  - i \frac{\hat \zeta}{|\eta|}  \cdot \partial_{y} b(y, \eta) - b(x, \zeta) \Big) \psi_\eta(\zeta)\\
&-i \partial_y b(y, \eta) \cdot  \Omega_\eta(\zeta) \Big] \ud \zeta. 
\end{aligned}\end{equation}

\subsubsection{The main part of the proof}

We now get to the heart of the proof, which revolves around the expression in \eqref{initial*}.

To deal with this expression, we first make a definition. We say that $s\in C^{\infty}(\R^{4n})$ is an \emph{acceptable} symbol of order $\delta\in\R$ 
if the following two conditions hold:
\begin{itemize}
\item For all $\alpha,\beta\in\Z_{+}^{n}$ and $\gamma\in \Z_{+}$ there exists an $M_{\alpha,\beta,\gamma}\geq0$ such that
\[
\big| \partial_{x}^\alpha \partial_{\zeta}^\beta (\hat{\zeta}\cdot\partial_{\zeta})^\gamma s(x, y, \eta, \zeta) \big| \leq \con_{\alpha, \beta, \gamma} \langle \zeta \rangle^{-\delta + |\alpha|/2 - |\beta|/2 - \gamma}
\]
for all $x,y,\eta,\zeta\in\Rn$;
\item For all $x,y,\eta,\zeta\in\Rn$ one has 
\[
s(x,y,\eta,\zeta)=0\text{ if }\zeta\notin \supp(\psi_{\eta}).
\]
\end{itemize}
The rest of the proof will mainly concern the following proposition.

\begin{proposition}\label{prop:s-symbol}
The expression \eqref{initial*} is of the form 
 \begin{equation}
\int_{\Rn}  e^{i(x-y)\cdot \zeta}  s(x, y, \eta, \zeta)\ud\zeta
\label{initial3}\end{equation}
for an acceptable symbol $s$ of order $-(n+1)/4$.
\end{proposition}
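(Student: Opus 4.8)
\textbf{Proof proposal for Proposition~\ref{prop:s-symbol}.}

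The plan is to collect every term inside the bracket in \eqref{initial*} and show that each one, after multiplication by $\psi_\eta(\zeta)$ (so that the support condition on $\zeta$ is automatic) or by $\Omega_\eta(\zeta)$, is an acceptable symbol of order $-(n+1)/4$. The guiding principle is a bookkeeping of powers of $\langle\zeta\rangle\eqsim\langle\eta\rangle$: by Lemma~\ref{lem:packets}, $\psi_\eta(\zeta)$ itself contributes $\langle\zeta\rangle^{-(n+1)/4}$ and behaves like an acceptable symbol of order $(n+1)/4$ (this is precisely the content of \eqref{eq:packetbounds1}, with the $x$-derivative clause vacuous and $y,\eta$ treated as parameters), while $\Omega_\eta(\zeta)$ contributes an extra decaying power, i.e. it is acceptable of order $(n+1)/4+1$ by \eqref{eq:packetbounds3}. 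So the task reduces to checking that each scalar coefficient multiplying $\psi_\eta$ is $O(1)$ with the right derivative bounds, and that the coefficient multiplying $\Omega_\eta$ grows at most like $\langle\zeta\rangle$. First I would record, as a lemma or inline observation, the two closure properties of the acceptable symbol class that make this routine: (i) the product of an acceptable symbol of order $\delta_1$ with one of order $\delta_2$ is acceptable of order $\delta_1+\delta_2$ (Leibniz, using that $|\alpha|/2-|\beta|/2-\gamma$ is additive); and (ii) on the support of $\psi_\eta$ one has $\langle\zeta\rangle\eqsim\langle\eta\rangle$ and $|\hat\zeta-\hat\eta|\lesssim\langle\eta\rangle^{-1/2}$, so that factors like $\hat\zeta-\hat\eta$, $|\eta|-|\zeta|$ relative to $|\eta|$, etc., all gain a half-power.

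Then I would go through the five terms multiplying $\psi_\eta(\zeta)$ in \eqref{initial*} one at a time. The term $\eta\cdot\partial_\eta b(y,\eta)+(x-y)\cdot\partial_y b(y,\eta)-b(x,\zeta)$ needs the most care: it is the ``principal'' cancellation. Here I would Taylor-expand $b(x,\zeta)$ about $(y,\eta)$ — writing $b(x,\zeta)-b(y,\eta)=(x-y)\cdot\partial_y b(y,\eta)+(\zeta-\eta)\cdot\partial_\eta b(y,\eta)+(\text{second order remainder})$ — so that the first-order terms cancel against $(x-y)\cdot\partial_y b(y,\eta)$ and, after combining with the remaining $(\zeta-\eta)\cdot\partial_\eta b(y,\eta)$ piece already present, against Euler's relation for $b$. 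The key input is the asymptotic homogeneity of $b$: $\eta\cdot\partial_\eta b(y,\eta)=b(y,\eta)+r(y,\eta)$ with $r\in S^0_{1,1/2}$ by \eqref{it:asymphom1}, so $\eta\cdot\partial_\eta b(y,\eta)-b(y,\eta)=O(\langle\eta\rangle^0)$, which on $\supp\psi_\eta$ is $O(1)$. The quadratic remainder in the Taylor expansion of $b(x,\zeta)$ involves two derivatives; on $\supp\psi_\eta$ one has $|x-y|$ and $|\zeta-\eta|$ controlled — this is exactly where the $C^{1,1}$/$\A^2$ hypothesis enters, via the uniform second-derivative bounds for $b\in\A^2S^1_{1,1/2}$ (and the homogeneous limit $\bhom\in\Crtwo S^1_{1,0}$ away from $\xi=0$, cf. Remark~\ref{rem:C11needed}). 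The remaining four coefficients — $(\zeta-\eta)\cdot\partial_\eta b(y,\eta)$, $-\tfrac{|\eta|-|\zeta|}{|\eta|}(x-y)\cdot\partial_y b(y,\eta)$, $-i\tfrac{\hat\zeta}{|\eta|}\cdot\partial_y b(y,\eta)$, and (for the $\Omega_\eta$ term) $-i\partial_y b(y,\eta)$ — are all directly $O(1)$ (resp. $O(\langle\zeta\rangle)$ for the last, matching the extra decay of $\Omega_\eta$): $\partial_\eta b\in S^0$ and $(\zeta-\eta)$ is bounded on $\supp\psi_\eta$; $\partial_y b\in S^1$ but is divided by $|\eta|$; $|\eta|-|\zeta|$ is bounded; and $\partial_y b$ times $\Omega_\eta$ of order $(n+1)/4+1$ gives total order $-(n+1)/4$ after the $S^1$ growth is absorbed. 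For the derivative bounds one differentiates these explicit coefficients in $x$ and $\zeta$ — noting that $x$ appears only in the second-order Taylor remainder, so $\partial_x^\alpha$ costs at most $\langle\zeta\rangle^{|\alpha|/2}$ because each extra $x$-derivative still hits a $C^{1,1}$-regular symbol and is multiplied against factors that are already half-power small — and invokes \eqref{eq:diffidentities} for derivatives of $\hat\zeta$, $|\zeta|$.

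I expect the main obstacle to be the principal cancellation term $\eta\cdot\partial_\eta b(y,\eta)+(x-y)\cdot\partial_y b(y,\eta)-b(x,\zeta)$: organizing the Taylor expansion so that the first-order terms cancel cleanly and then verifying that the second-order remainder, together with all its $x$- and $\zeta$-derivatives, obeys the acceptable-symbol estimates uniformly — in particular handling the regime $|\hat\zeta-\hat\eta|$ not small, where $|x-y|$ and $|\zeta-\eta|$ need not be small but the whole expression is still $O(\langle\zeta\rangle^{0})$ because of the rapid decay of $\psi_\eta$ off $\supp\psi_\eta$ and, within $\supp\psi_\eta$, the fact that $|x-y|=O(1)$ along the flow (this last observation mirrors the split into the cases $|\hat\eta-\hat\xi|\geq 1/4$ and $\leq 1/4$ in the proof of Theorem~\ref{thm:flowprop}). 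The remaining terms are genuinely routine given the closure properties (i)–(ii) and Lemma~\ref{lem:packets}. Once Proposition~\ref{prop:s-symbol} is established, expression \eqref{initial3} is a standard oscillatory-integral kernel to which the kernel bounds of Appendix~\ref{app:kernel} apply, yielding off-singularity bounds of the type in Proposition~\ref{prop:offsingbound}, and hence boundedness of the error term on $\Tentps$ and thus on $\Hps$; I would defer that deduction to the lines following the proposition.
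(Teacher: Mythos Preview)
Your outline has the right overall shape---Taylor-expand $b(x,\zeta)$ about $(y,\eta)$ and use asymptotic homogeneity to absorb $\eta\cdot\partial_\eta b-b$---but there are two genuine gaps that make the argument fail as written.

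First, neither $|x-y|$ nor $|\zeta-\eta|$ is bounded on $\supp\psi_\eta$. The variables $x$ and $y$ are free kernel variables, unrelated to $\zeta,\eta$; the remark that ``$|x-y|=O(1)$ along the flow'' conflates this with the estimates of Section~\ref{sec:flow}, which do not apply here. And on $\supp\psi_\eta$ one only has $\tfrac12|\eta|\le|\zeta|\le2|\eta|$, so the radial component of $\zeta-\eta$ can be as large as $|\eta|$ (the transverse component is $O(|\eta|^{1/2})$). Consequently the quadratic Taylor remainder $(x-y)_j(x-y)_k\partial^2_{y_jy_k}b$ is, a priori, of order $-(n+1)/4+1$, and the term $(\zeta-\eta)_j(\zeta-\eta)_k\partial^2_{\eta_j\eta_k}b$ can be as bad. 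The paper deals with the first by trading each factor $(x-y)_j$ for $D_{\zeta_j}$ acting on $e^{i(x-y)\cdot\zeta}$ and integrating by parts, which gains the missing power of $\langle\zeta\rangle^{-1}$; this device is essential and you do not mention it. The second requires an anisotropic splitting: for the radial components one uses \eqref{it:asymphom1} again, in the form $\hat\xi\cdot\partial_\xi(\partial_{\xi_k}b)=|\xi|^{-1}\partial_{\xi_k}c\in S^{-2}_{1,1/2}$, so that a radial $\xi$-derivative gains a \emph{full} power rather than a half-power, compensating for the large radial size of $\zeta-\eta$.

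Second, several of the terms you list as ``directly $O(1)$'' are not. The term $-\tfrac{|\eta|-|\zeta|}{|\eta|}(x-y)\cdot\partial_y b(y,\eta)$ has coefficient of order $\langle\eta\rangle$ and an unbounded $(x-y)$ factor; in the paper this is rewritten using asymptotic homogeneity (to extract a $\partial_y c$ piece, handled by integration by parts) and what remains combines nontrivially with the mixed Taylor term $(x-y)_j(\zeta-\eta)_k\partial^2_{y_j\eta_k}b$ to produce a factor $|\zeta|(\hat\eta_k-\hat\zeta_k)$, which is acceptable only because $|\hat\eta-\hat\zeta|\lesssim|\eta|^{-1/2}$. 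Your proposal does not identify this cancellation. (Also, your ``remaining four coefficients'' list double-counts $(\zeta-\eta)\cdot\partial_\eta b$, which you already consumed in the principal cancellation.)
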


\begin{proof}
Proving the proposition requires finding cancellations between the individual terms, as most of these have order $-(n+1)/4 + 1$. 

\paragraph{\textbf{First steps}}

Note that, by Lemma \ref{lem:packets}, $\psi_{\eta}$ is an acceptable symbol of order $-(n+1)/4$, and $\Omega_{\eta}$ is an acceptable symbol of order $-(n+1)/4-1$. 
We start by using the asymptotic homogeneity \eqref{it:asymphom1}  of $b$, which we write as
\begin{equation}
c(y, \eta):=(\eta \cdot \partial_\eta - 1) b(y, \eta) \in S^0_{1,1/2}.
\label{Bsharp-quasihom}\end{equation}
We use \eqref{Bsharp-quasihom}, twice,  to write \eqref{initial*} in the form
 \begin{align*}
\int_{\Rn}e^{i(x-y)\cdot \zeta}\bigg[&   \Big( (\zeta - \eta) \cdot \partial_\eta b(y, \eta) + b(y, \eta) + c(y, \eta) +(x-y) \cdot \partial_y b(y, \eta)\\
&-    \frac{|\eta| - |\zeta|}{|\eta|} \sum_{jk}(x-y)_j \eta_k \cdot \partial^2_{y_j \eta_k} b(y, \eta) + \frac{|\eta| - |\zeta|}{|\eta|} (x-y) \cdot \partial_{y} c(y, \eta)\\ 
 &- b(x, \zeta) \Big) \psi_\eta(\zeta)- i \frac{\hat \zeta}{|\eta|}  \cdot \partial_{y} b(y, \eta)\psi_{\eta}(\zeta) -i \partial_y b(y, \eta) \cdot  \Omega_{\eta}( \zeta) \bigg]
 \ud\zeta. 
\end{align*}
Since $c\in S^{0}_{1,1/2}$, we can check that the terms involving $c$, as well as the final two terms on the last line, are individually acceptable. For the term involving $\partial_{y}c$, this requires integrating by parts with respect to $\zeta$ to deal with the factor $x-y$. 

Thus we can write \eqref{initial*} in the form 
 \begin{equation}\label{initial2}
 \begin{aligned}
 \int_{\Rn} e^{i(x-y)\cdot \zeta}\bigg[  &\Big( (\zeta - \eta) \cdot \partial_\eta b(y, \eta) + b(y, \eta)+(x-y) \cdot \partial_y b(y, \eta)- b(x, \zeta)\\
 &-  \frac{|\eta| - |\zeta|}{|\eta|} \sum_{jk}(x-y)_j \eta_k \cdot \partial^2_{y_j \eta_k} b(y, \eta)   
  \Big) \psi_\eta(\zeta) \\
  & + s_{0}(x,y,\eta,\zeta)
 \bigg]
 \ud\zeta
\end{aligned}
\end{equation}
for some acceptable $s_{0}$ of order $-(n+1)/4$.
We now apply Taylor's formula to the first line of \eqref{initial2}, obtaining a similar expression to that in \eqref{Taylorhom}:
\begin{equation}\label{Taylor}
\begin{aligned}
&b(y, \eta) +  (\zeta - \eta) \cdot \partial_\eta b(y, \eta) +  (x-y) \cdot \partial_y b(y, \eta) -  b(x, \zeta) \\
&= -\sum_{jk} \int_0^1 (1-r)(x-y)_j (x-y)_k \partial^2_{y_j y_k} b(y + r(x-y), \eta + r(\zeta - \eta)) \ud r \\
&\ \ \ - 2\sum_{jk} \int_0^1 (1-r)(x-y)_j (\zeta - \eta)_k \partial^2_{y_j \eta_k} b(y + r(x-y), \eta + r(\zeta - \eta))\ud r\\
&\ \ \ - \sum_{jk} \int_0^1 (1-r)(\zeta - \eta)_j (\zeta - \eta)_k \partial^2_{\eta_j \eta_k} b(y + r(x-y), \eta + r(\zeta - \eta))\ud r.
\end{aligned}
\end{equation}
We examine the terms in \eqref{initial2}, using \eqref{Taylor} to substitute for the first line.

\paragraph{\textbf{The second line of \eqref{Taylor}}} As it stands, it appears to contribute a symbol of order $-(n+1)/4+1$. However, if we substitute this expression into the integral in \eqref{initial2}, we see that the factors of $(x-y)_j$ can be expressed as $D_{\zeta_j}$ hitting the exponential factor. Since we only need to consider $\zeta\in\supp(\psi_{\eta})$, integrating by parts in $\zeta$ yields a decay factor of $\langle \zeta \rangle^{-1}$. Indeed, each $\zeta$-derivative gains us $\langle \zeta \rangle^{-1}$ when it hits the $b$ factor, and $\lb \eta+r(\zeta-\eta)\rb^{-1/2}\eqsim\lb \zeta \rb^{-1/2}$ when it hits the $\psi_\eta$ factor, due to the symbol estimates in \eqref{eq:packetbounds1}. Using these bounds and  the fact that $\partial^2_{y_j y_k} b$ is a symbol of order $S^1_{1, 1/2}$, since $b\in\A^{2}S^{1}_{1,1/2}$, we conclude that we obtain an acceptable symbol of order $-(n+1)/4$.

\paragraph{\textbf{The fourth line of \eqref{Taylor}}}

Without loss of generality, we may suppose that $\hat{\eta}=(1,0,\ldots,0)$ is the first basis vector of $\Rn$. Fix $r\in[0,1]$, let $\zeta\in\supp(\psi_{\eta})$, and set $\xi:=\eta+r(\zeta-\eta)$. Then the symbol we have to consider is 
\begin{equation}\label{eq:termfour}
(\zeta - \eta)_j (\zeta - \eta)_k \partial^2_{\xi_j \xi_k} b(y + r(x-y), \xi)\psi_{\eta}(\zeta)
\end{equation}
for $1\leq j,k\leq n$. Note that $|\eta|\eqsim |\zeta|\eqsim |\xi|$ and $|\hat{\eta}-\hat{\xi}|\lesssim |\eta|^{-1/2}\eqsim|\zeta|^{-1/2}$. 

Now, if both $j$ and $k$ are at least $2$, then 
\[
|(\zeta-\eta)_{j}|\lesssim |\eta|^{1/2}\eqsim|\zeta|^{1/2},
\]
and similarly for $k$. Since $\partial^2_{\xi_j \xi_k} b\in S^{-1}_{1,1/2}$ and $|\xi|^{-1}\eqsim |\zeta|^{-1}$, this shows that \eqref{eq:termfour} is an acceptable symbol of order $-(n+1)/4$. In fact, there is a slight subtlety here, when differentiating the factors $(\zeta-\eta)_{j}$ and $(\zeta-\eta)_{k}$. Any nonradial derivative of these factors leads to additional decay of order $|\zeta|^{-1/2}$, and at first sight the same seems to hold for the radial derivative. However, one can write
\[
\hat{\zeta}\cdot\partial_{\zeta}=(\hat{\zeta}-\hat{\eta})\cdot\partial_{\zeta}+\hat{\eta}\cdot\partial_{\zeta}=(\hat{\zeta}-\hat{\eta})\cdot\partial_{\zeta}+\partial_{\zeta_{1}}.
\]
The first term applied to either $(\zeta-\eta)_{j}$ or $(\zeta-\eta)_{k}$ does yield additional decay of order $|\zeta|^{-1}$, whereas the second term annihilates these factors.

Next, suppose that $j=1$ and $k\geq2$. Note that
\begin{equation}\label{eq:termfour2}
\begin{aligned}
\hat \xi \cdot \partial_\xi \big( \partial_{\xi_{k}} b\big)&= \frac1{|\xi|} \xi \cdot \partial_\xi \big( \partial_{\xi_{k}} b\big)= \frac1{|\xi|} \partial_{\xi_k} \big( (\xi \cdot \partial_\xi - 1) b \big)\\
&= \frac1{|\xi|} \partial_{\xi_{k}}c \in S^{-2}_{1, 1/2}.
\end{aligned}
\end{equation}
Now write
\begin{equation}\label{eq:termfour22}
\partial^{2}_{\xi_{j}\xi_{k}}b=(\hat{\eta}-\hat{\xi})\cdot\partial_{\xi}(\partial_{\xi_{k}}b)+\hat{\xi}\cdot\partial_{\xi}(\partial_{\xi_{k}}b).
\end{equation}
The first term on the right-hand side leads to a symbol of order $-(n+1)/4$ in \eqref{eq:termfour}, since $|\hat{\eta}-\hat{\xi}|\lesssim |\zeta|^{-1/2}$ and $|(\zeta-\eta)_{j}(\zeta-\eta)_{k}|\lesssim |\zeta|^{3/2}$.  Again, there is a subtlety when differentiating the term $\hat{\eta}-\hat{\xi}$ in the radial direction, since $\xi$ depends on $\zeta$. To get the required decay, this time one writes 
\[
\hat{\zeta}\cdot\partial_{\zeta}=(\hat{\zeta}-\hat{\xi})\cdot\partial_{\zeta}+\hat{\xi}\cdot\partial_{\zeta},
\]
and notes that the first term yields a full gain of order $|\zeta|^{-1}$ when applied to $\hat{\eta}-\hat{\xi}$. On the other hand, $\hat{\xi}\cdot\partial_{\zeta}(\hat{\xi})=r\hat{\xi}\cdot\partial_{\xi}(\hat{\xi})=0$ since $\xi=r\zeta+(1-r)\eta$. Note that, when differentiating $(\zeta-\eta)_{1}$ in the radial direction, one automatically gains a full factor of $|\zeta|^{-1}$, since $|(\zeta-\eta)_{1}|\eqsim |\zeta|$.

In a similar manner, the second term in \eqref{eq:termfour22} leads to a symbol of order $-(n+1)/4-1/2$, by \eqref{eq:termfour2}. For $k=1$ and $j\geq2$, exchange the order of differentiation first.

Finally, suppose that $j=k=1$, and write
\[
\partial_{\xi_{1}}^{2}b=(\hat{\eta}-\hat{\xi}+\hat{\xi})\cdot\partial_{\xi}\big((\hat{\eta}-\hat{\xi}+\hat{\xi})\cdot \partial_{\xi}b\big).
\]
After expanding and cleaning up, we are left with
\begin{equation}\label{eq:termfour3}
(\hat{\eta}-\hat{\xi})\cdot\big(((\hat{\eta}-\hat{\xi})\cdot\partial_{\xi})\partial_{\xi}b\big)+2(\hat{\eta}-\hat{\xi})\cdot ((\hat{\xi}\cdot\partial_{\xi})\partial_{\xi}b)+(\hat{\xi}\cdot\partial_{\xi})^{2}b.
\end{equation}
The first term leads to an acceptable symbol of order $-(n+1)/4$, since $|\hat{\eta}-\hat{\xi}|^{2}\lesssim |\zeta|^{-1}$. For the second term, it follows from \eqref{eq:termfour2} that
\[
(\hat{\xi}\cdot\partial_{\xi})\partial_{\xi}b=\frac{1}{|\xi|}\partial_{\xi}c,
\]
the components of which are all $S^{-2}_{1,1/2}$ symbols. Hence this term leads to an acceptable symbol of order $-(n+1)/4-1/2$. The same argument applies to the double radial derivative in \eqref{eq:termfour3}, since $\hat{\xi}\cdot\partial_{\xi}(\hat{\xi})=0$ and
\[
(\hat{\xi}\cdot\partial_{\xi})^{2}b=\hat{\xi}\cdot((\hat{\xi}\cdot\partial_{\xi})\partial_{\xi}b).
\]

\paragraph{\textbf{The third line of \eqref{Taylor}}} 
We write this as
\begin{align*}
&-2\int_{0}^{1}(1-r)\sum_{jk} (x-y)_j (\zeta - \eta)_k \partial^2_{y_j \eta_k} b(y + r(x-y), \eta + r(\zeta - \eta))\ud r  \\
&= -2\int_{0}^{1}(1-r)\sum_{jk} (x-y)_j (\zeta - \eta)_k \partial^2_{y_j \eta_k} b(y, \eta) \ud r\\ 
&-2\int_{0}^{r}(1-r)
\sum_{jk}  (x-y)_j (\zeta - \eta)_k \partial^2_{y_j \eta_k} \big( b(y + r(x-y), \eta + r(\zeta - \eta)) - b(y, \eta) \big)\ud r.
\end{align*}
Note that the second line is equal to
\begin{equation}\label{eq:final}
-\sum_{jk} (x-y)_j (\zeta - \eta)_k \partial^2_{y_j \eta_k} b(y, \eta)
\end{equation}
On the other hand, the third line is treated by using Taylor's formula to write 
$$
\partial^2_{y_j \eta_k} \big( b(y + r(x-y), \eta + r(\zeta - \eta)) - b(y, \eta) \big)
$$
as an integral involving third partial derivatives of $b$. Notice that these third partial derivatives involve either two $y$-partial derivatives and one $\eta$-partial derivative, or one $y$-partial derivatives and two $\eta$-partial derivatives. In the former case we apply the same argument as for the second line of \eqref{Taylor}, while in the latter case we apply the same argument as for the fourth line of \eqref{Taylor}. 

\paragraph{\textbf{The third line of \eqref{initial2}, and \eqref{eq:final}}}
These are the final terms to be dealt with. The combination of these terms, after reinserting the factor $\psi_{\eta}(\zeta)$ in \eqref{eq:final}, is
\[
\begin{gathered}
- \psi_\eta(\zeta) \sum_{jk}(x-y)_j \Big(  \frac{|\eta| - |\zeta|}{|\eta|}\eta_k  + (\zeta - \eta)_k \Big) \partial^2_{y_j \eta_k} b(y, \eta) \\
 =  -\psi_\eta(\zeta) \sum_{jk}(x-y)_j \Big(  |\zeta| (\hat \eta_k - \hat \zeta_k) \Big) \partial^2_{y_j \eta_k} b(y, \eta) .
 \end{gathered}
\]
We note that $(\hat \zeta - \hat \eta ) \psi_\eta(\zeta)$ is an acceptable term of order $-(n+3)/4$, so $\psi_\eta(\zeta)|\zeta| (\hat \eta_k - \hat \zeta_k) \partial^2_{y_j \eta_k} b(y, \eta)$ has order $-(n-1)/4$. We  then integrate by parts to trade $(x-y)_j$ for $D_{\zeta_j}$, which decreases the order of this expression by a further $1/2$, using \eqref{eq:packetbounds1}. This shows that the overall order of this term is $-(n+1)/4$.

 This completes the proof of the proposition. 
\end{proof}

\subsubsection{Comletion of the proof of Theorem~\ref{thm:parametrix}} 
We have seen that the error term is
$$
( -i W^* V - b(x, D) W^* ) \F_tW,
$$
and we also know that $\F_tW$ is bounded from $\Hps$ to $\Tentps$, locally uniformly in $t$, by Theorem \ref{thm:flowprop}. So it suffices to show that the factor $-i W^* V - b(x, D) W^*$ is a bounded map from $\Tentps$ to $\Hps$. Since $W$ is in fact an isometry from $\Hps$ to $\Tentps$, it is enough to show that $W ( -i W^* V - b(x, D) W^* )$ is a bounded map on $\Tentps$. 

Recall that  the kernel of $(2\pi)^{n}( -i W^* V - b(x, D) W^* )$ can be written in the form \eqref{initial}, which was manipulated to \eqref{initial*}. Proposition~\ref{prop:s-symbol} then shows it can be written in the form \eqref{initial3}, with amplitude $s$ an acceptable symbol of order $-(n+1)/4$. Composing on the left with $W$ gives an operator with kernel of the form 
$$
K(x, \xi,y, \eta) :=\int_{\R^{3n}} e^{i(x-z) \cdot \theta} \psi_\xi(\theta) e^{i(z-y) \cdot \zeta} s(z, y, \eta, \zeta) \ud\theta\ud z\ud\zeta.
$$
This oscillatory integral is of the form treated in Proposition~\ref{prop:oscilbdd}, and this proposition then implies that $-i W^* V - b(x, D) W^*$ is bounded on $\Tentps$, which in turn concludes the proof of the bounds in \eqref{eq:boundederror}. 

Finally, the strong continuity of the error term follows immediately from the boundedness of $W ( -i W^* V - b(x, D) W^* )$ and the strong continuity of $t\mapsto \F_t$, proved in Theorem~\ref{thm:flowprop}. 
\end{proof}

\begin{remark}\label{rem:sharpreg}
The fact that $b\in\A^{2}S^{1}_{1,1/2}$, and specifically that one has uniform bounds for the second derivatives of $b$, was crucial in dealing with the second and third lines of \eqref{Taylor}.
\end{remark}

\addtocontents{toc}{\protect\setcounter{tocdepth}{0}}

\section*{Acknowledgements}

The authors would like to thank Pierre Portal for valuable advice about this manuscript and the encompassing project, and Hart Smith for useful conversations about the topic of the article. The authors are also grateful to the anonymous referee, for carefully reading the manuscript and for their suggestions.

\addtocontents{toc}{\protect\setcounter{tocdepth}{2}}

\appendix

\section{Kernel bounds}\label{app:kernel}

In this appendix we collect a result about kernel bounds for oscillatory integrals which has been used at various points in this article. In very similar settings, these kernel bounds can be found in~\cite{Smith98a} and~\cite{Geba-Tataru07}. They were then used in connection with the theory of tent spaces in~\cite{HaPoRo20}. Recall that $\Upsilon$ was defined in \eqref{eq:Upsilon}.

\begin{proposition}\label{prop:oscilbdd}
Let $a:\R^{6n}\to\C$ be measurable and such that the following properties hold:
\begin{enumerate}
\item For all $\xi,y,\eta\in\Rn$ one has $a(\xi,\cdot,\cdot,\cdot,y,\eta)\in C^{\infty}(\R^{3n})$. Moreover, for all $\alpha,\beta,\delta\in\Z_{+}^{n}$ and $\gamma,\veps\in \Z_{+}$ there exists an $\con\geq0$ such that
\[
|\partial_{z}^{\alpha}\partial_{\theta}^{\beta}( \hat{\theta}\cdot\partial_{\theta})^{\gamma}\partial_{\zeta}^{\delta}( \hat{\zeta}\cdot\partial_{\zeta})^{\veps}a(\xi,\zeta,z,\theta,y,\eta)|\leq \con\lb \theta\rb^{-\frac{n+1}{4}+\frac{|\alpha|}{2}-\frac{|\beta|}{2}-\gamma}\lb\zeta\rb^{-\frac{n+1}{4}-\frac{|\delta|}{2}-\veps}
\]
for all $\xi,\zeta,z,\theta,y,\eta\in\Rn$.
\item For all $(\xi,\zeta,z,\theta,y,\eta)\in \supp(a)$ with $\xi\neq0\neq \eta$ one has 
\[
\frac{1}{2}|\xi|\leq |\zeta|\leq 2|\xi|\text{ and }\frac{1}{2}|\eta|\leq |\theta|\leq 2|\theta|,
\]
as well as 
\[
|\hat{\zeta}-\hat{\xi}|\leq |\xi|^{-1/2}\text{ and }|\hat{\theta}-\hat{\eta}|\leq |\eta|^{-1/2}.
\]
\end{enumerate}
For $(x,\xi),(y,\eta)\in\Tp$ set
\begin{equation}
K(x,\xi,y,\eta):=\int_{\R^{3n}}e^{i((x-z)\cdot\zeta+(z-y)\cdot\theta)}a(\xi,\zeta,z,\theta,y,\eta)\ud \theta\ud z\ud\zeta.
\label{K-kernel}\end{equation}
Then for each $N\geq0$ there exists an $\con_{N}\geq0$ such that
\begin{equation}\label{eq:offsing}
|K(x,\xi,y,\eta)|\leq \con_{N}\Upsilon\big(\tfrac{|\xi|}{|\eta|}\big)^{N}\big(1+\rho^{-1}d((x,\hat{\xi}),(y,\hat{\eta}))^{2}\big)^{-N}
\end{equation}
for all $(x,\xi),(y,\eta)\in\Tp\setminus o$, where $\rho:=\min(|\xi|^{-1},|\eta|^{-1})$. In particular, the integral operator with kernel $K$ is bounded on $T^{p}_{s}(\Sp)$ for all $p\in[1,\infty]$ and $s\in\R$.
\end{proposition}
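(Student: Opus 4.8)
The last assertion is immediate from \eqref{eq:offsing}: the bound \eqref{eq:offsing} is precisely the hypothesis of Proposition~\ref{prop:offsingbound} with $\hchi$ equal to the identity map, which is trivially bi-Lipschitz for $d$, and the conclusion of that proposition is exactly the claimed boundedness of the integral operator with kernel $K$ on $T^{p}_{s}(\Sp)$ for all $p\in[1,\infty]$ and $s\in\R$. So the content is the pointwise kernel estimate \eqref{eq:offsing}, and by the ambient hypotheses we may assume $\xi\neq0\neq\eta$. Since the amplitude $a$ in \eqref{K-kernel} is compactly supported in $\zeta$ (in the shell $|\zeta|\eqsim|\xi|$) and in $\theta$ (in the shell $|\theta|\eqsim|\eta|$) by hypothesis~(2), the $\zeta$- and $\theta$-integrals converge absolutely; a preliminary non-stationary phase argument in $\zeta$ (against $e^{i(x-z)\cdot\zeta}$) and in $\theta$ (against $e^{i(z-y)\cdot\theta}$), using $|\zeta|\eqsim|\xi|$ and $|\theta|\eqsim|\eta|$, shows that the resulting function of $z$ decays rapidly in $|x-z|$ and in $|z-y|$, so the $z$-integral in \eqref{K-kernel} is absolutely convergent and $K$ is well defined. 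The estimates below simplify considerably when $|\xi|,|\eta|\lesssim 1$ (the supports are then ordinary shells, with no parabolic anisotropy), so we concentrate on the regime $|\xi|,|\eta|\gtrsim1$.

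The workhorse is integration by parts in $z$, against the oscillation $e^{iz\cdot(\zeta-\theta)}$. On $\supp(a)$ one has $|\zeta-\theta|\gtrsim \big|\,|\zeta|-|\theta|\,\big|+\min(|\xi|,|\eta|)\,|\hat\zeta-\hat\theta|$, and moreover $|\zeta-\theta|\gtrsim\max(|\xi|,|\eta|)$ whenever $|\xi|$ and $|\eta|$ are of very different size. Applying the operator $L_{z}:=|\zeta-\theta|^{-2}(\zeta-\theta)\cdot\tfrac{1}{i}\nabla_{z}$, which fixes the phase, repeatedly, each application gains a factor $|\zeta-\theta|^{-1}$ while producing one $z$-derivative of $a$, which by hypothesis~(1) costs at most a factor $\lesssim\lb\theta\rb^{1/2}\eqsim\min(|\xi|,|\eta|)^{1/2}$ after balancing against the $\lb\zeta\rb$-weight. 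Carrying this out $2M$ times therefore produces, for each $M$, both the factor $\Upsilon\big(\tfrac{|\xi|}{|\eta|}\big)^{M}$ (using the lower bound $|\zeta-\theta|\gtrsim\max(|\xi|,|\eta|)$ when the frequencies are far apart) and, in the regime $|\xi|\eqsim|\eta|\eqsim\rho^{-1}$, a factor rapidly decaying in $\rho^{-1/2}|\hat\zeta-\hat\theta|$; the latter will be used to control the angular part of the contact distance.

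It remains to produce the spatial decay, so assume $|\xi|\eqsim|\eta|\eqsim|\zeta|\eqsim|\theta|\eqsim\rho^{-1}$. Integrating by parts in $\zeta$ against $e^{i(x-z)\cdot\zeta}$ and splitting $\nabla_{\zeta}$ into the radial derivative $\hat\zeta\cdot\partial_{\zeta}$ and the tangential derivatives $\partial_{\zeta}$ — which is exactly the mix of derivatives for which hypothesis~(1) provides bounds, a radial derivative of $a$ gaining $\lb\zeta\rb^{-1}\eqsim\rho$ and a tangential one gaining $\lb\zeta\rb^{-1/2}\eqsim\rho^{1/2}$, against phase factors $\hat\zeta\cdot(x-z)$ and the component of $x-z$ orthogonal to $\hat\zeta$ — one gains, for each $N$, a factor $\big(1+\rho^{-1}d((x,\hat\zeta),(z,\hat\zeta))^{2}\big)^{-N}$ by the equivalent expression \eqref{eq:metric} for $d$ (note $(x,\hat\zeta)$ and $(z,\hat\zeta)$ have equal angular coordinate). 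The symmetric integration by parts in $\theta$ gains $\big(1+\rho^{-1}d((z,\hat\theta),(y,\hat\theta))^{2}\big)^{-N}$. Feeding these gains, the amplitude bound $|a|\lesssim\rho^{(n+1)/2}$, and the support constraints into \eqref{K-kernel}, a count of the powers of $\rho$ (amplitude size $\rho^{(n+1)/2}$, two parabolic-cap volumes $\rho^{-(n+1)/2}$ from the $\zeta$- and $\theta$-integrals, and effective $z$-volume $\rho^{(n+1)/2}$) cancels to the constant $O(1)$ in \eqref{eq:offsing}; the decay factor $\big(1+\rho^{-1}d((x,\hat\xi),(y,\hat\eta))^{2}\big)^{-N}$ arises from the triangle inequality
\[
d((x,\hat\xi),(y,\hat\eta))\le d((x,\hat\xi),(z,\hat\zeta))+d((z,\hat\zeta),(z,\hat\theta))+d((z,\hat\theta),(y,\hat\eta)),
\]
together with $d((z,\hat\zeta),(z,\hat\theta))\eqsim|\hat\zeta-\hat\theta|$, the bounds $|\hat\zeta-\hat\xi|\lesssim|\xi|^{-1/2}$, $|\hat\theta-\hat\eta|\lesssim|\eta|^{-1/2}$ on $\supp(a)$, and the decay in $\rho^{-1/2}|\hat\zeta-\hat\theta|$ extracted from the $z$-integration by parts to absorb the middle term.

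The main obstacle is the bookkeeping in the last two paragraphs: one must choose the integration-by-parts vector fields in $z$, $\zeta$ and $\theta$ so that every derivative landing on $a$ stays within the anisotropic, parabolically scaled derivative bounds allowed by hypothesis~(1) — in particular carefully separating radial from tangential $\zeta$- and $\theta$-derivatives — and one must organize the $z$-integration so that the two half-distances $d((x,\hat\xi),(z,\cdot))$ and $d((z,\cdot),(y,\hat\eta))$ glue to the single contact distance $d((x,\hat\xi),(y,\hat\eta))$ while simultaneously carrying the $\Upsilon$-factor. This is exactly the type of computation performed in the closely related settings of \cite{Smith98a} and \cite{Geba-Tataru07}, whose scheme we follow.
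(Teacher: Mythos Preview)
Your plan is correct in outline and follows the same integration-by-parts strategy as the paper; indeed the paper's own proof is more terse than yours, simply observing that the argument is contained in \cite[Section~5]{HaPoRo20} (after reducing to amplitudes compactly supported in~$z$) and that Proposition~\ref{prop:offsingbound} then gives the tent-space boundedness. So at the level of a proof \emph{plan} there is nothing to correct.

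One point worth flagging, since you identify it yourself as ``the main obstacle'': the sequential scheme you describe --- first integrate by parts in $z$ via $L_z$, then separately in $\zeta$ and $\theta$ --- does not quite work as stated, because after the $z$-step the amplitude carries factors of $|\zeta-\theta|^{-2M}$, and a subsequent $\zeta$- or $\theta$-derivative landing on these factors is not controlled by the parabolic symbol estimates in hypothesis~(1) when $|\zeta-\theta|$ is small (which happens in the regime $|\xi|\eqsim|\eta|$, $\hat\zeta\approx\hat\theta$). The resolution in \cite{HaPoRo20} is not sequential but simultaneous: one introduces a family of six first-order operators $D_1,\dots,D_6$ (in $z$, $\zeta$, $\theta$, with coefficients depending on the phase and on $\zeta-\theta$, $x-z$, $z-y$, $\Proj(\zeta-\theta)$, etc.) each of which fixes $e^{i\Pi}$, chosen so that applying any $L_k$ to the coefficient functions $f_{j,1},f_{j,2}$ of any other $D_j$ preserves the required decay. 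One then applies a product $\prod_{j\in J} D_j^N$ for suitable index sets $J$ depending on the relative sizes of $|\xi|,|\eta|$ and on which piece of $d((x,\hat\xi),(y,\hat\eta))^2$ dominates. Your plan is correct provided this is what you mean by ``the type of computation performed in \cite{Smith98a} and \cite{Geba-Tataru07}''.
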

One could also allow for the appropriate dependence of $a$ on $x$, but we will not need such generality. 
\begin{proof}
First note that $K:\Tp\times\Tp\to \C$ is well defined and measurable. Indeed, the integrals in $\theta$ and $\zeta$ are absolutely convergent since $a$ is compactly supported in these variables, and to see that the integral in $z$ converges one can integrate by parts with respect to $\theta$ to gain factors of $(1+|z|^{2})^{-1}$.

By Proposition \ref{prop:offsingbound}, it suffices to prove the first statement. To this end we may in turn suppose that $a$ is compactly supported in the $z$ variable, as follows by multiplying by appropriate cut-offs and then applying the dominated convergence theorem, using an expression for $K$ where one has integrated by parts in $\theta$ sufficiently many times to make the integral absolutely convergent. 

Now the proof is contained in~\cite[Section 5]{HaPoRo20}. Indeed, the proof of~\cite[Theorem 5.1]{HaPoRo20} relies on showing that a kernel as in the proposition (see~\cite[Equation (5.6)]{HaPoRo20}), compactly supported in $z$, satisfies the assumptions of Proposition \ref{prop:offsingbound}. The only differences between our setting and~\cite{HaPoRo20} are that we have expressed $\xi$ and $\eta$ in Cartesian coordinates as opposed to spherical coordinates, that we consider a special case of~\cite[Equation (5.6)]{HaPoRo20} where $\Phi(z,\theta)=z\cdot\theta$ (with the roles of $\theta$ and $\zeta$ reversed), and that we allow $a$ to depend on $y$. The first difference leads to kernel bounds that look different at first sight but are in fact equivalent (see the proof of Proposition \ref{prop:offsingbound}), the second difference slightly simplifies the proof, and the dependence on $y$ plays no role in the proof. \vanish{However, in order to make this paper more self-contained, we indicate the general strategy for proving the estimate, and give some, but not all, of the details. 

First, one splits the integral into two terms, with the splitting depending on $\hat{\xi}$ and $|\zeta|$, to deal with e.g.~small and large $|\zeta|$ separately. Then, for the main part of the proof, we define $\Pi=\Pi(x, z, y, \theta, \zeta) := (x-z) \cdot \theta + (z-y) \cdot \zeta$, the phase function in \eqref{K-kernel}. We also use $\Proj$ to denote the orthogonal projection in $\R^n$ onto the subspace orthogonal to $\xihat$. We observe that the following six differential operators leave the exponential factor $e^{i\Pi}$ in \eqref{K-kernel} invariant:
\[
\begin{aligned}
D_1 &= \big( 1 + \ang{\eta} |x-z|^2 \big)^{-1} \big( 1 -i \ang{\eta}^{1/2} (x-z) \cdot \ang{\eta}^{1/2} \partial_\theta \big), \\
D_2 &= \big( 1 + \ang{\eta} |z-y|^2 \big)^{-1} \big( 1 -i \ang{\eta}^{1/2} (z-y) \cdot \ang{\eta}^{1/2} \partial_\zeta \big), \\
D_3 &= \Big( 1 + \ang{\eta} \Big| \Proj  \Big( \frac{\zeta-\theta}{|\zeta|} \Big) \Big|^2 \Big)^{-1}    \Big( 1 -i \frac{\ang{\eta}^{3/2}}{|\zeta|} \Proj  \Big( \frac{\zeta-\theta}{|\zeta|} \Big) \cdot \ang{\eta}^{-1/2} \partial_z \Big), \\
D_4 &= \big( 1 + \ang{\eta}^2 \big( (x-z) \cdot \xihat \big)^2 \big)^{-1} \big( 1 -i\ang{\eta} \big((x-z) \cdot \xihat \big) \ang{\eta} (\xihat \cdot \partial_\theta) \big), \\
D_5 &= \Big( 1 + \frac{\ang{\eta}^2}{|\zeta|^2} \Pi^2 \Big)^{-1} \Big( 1 -i\frac{\ang{\eta}}{|\zeta|} \Pi \frac{\ang{\eta}}{|\zeta|} \big(\theta \cdot \partial_\theta + \zeta \cdot \partial_\zeta \big) \Big), \\
D_6 &= \big( 1 + \ang{\eta}^{-1} |\theta - \zeta|^2 \big)^{-1} \big( 1 -i\ang{\eta}^{-1/2} (\zeta - \theta) \cdot \ang{\eta}^{-1/2} \partial_z \big).
\end{aligned}
\]
We may introduce any number of factors of each $D_j$ applied to the exponential factor, and then integrate by parts, to get additional decay in the resulting integrand. 
This process is a little subtle, however, because to obtain any of the estimates as explained below, we need several different $D_j$ simultaneously. The differential operators will hit not only the amplitude $a$, but also the coefficients of the other differential operators. We have carefully chosen a set of $D_j$ so that the decay obtained from one factor is not lost when it is hit by another differential operator. This is fully explained in~\cite[Section 5]{HaPoRo20}, but we give the idea here. We write $D_1$ in the form 
\[
\begin{gathered}
D_1 = f_{1,1}\big( f_{1,1} -i f_{1,2} \cdot L_1 \big), \\
 f_{1,1} = \frac1{\big( 1 + \ang{\eta} |x-z|^2 \big)^{1/2}}, \quad f_{2,1} = \frac{\ang{\eta}^{1/2} (x-z)}{\big( 1 + \ang{\eta} |x-z|^2 \big)^{1/2}},\quad L_1 = \ang{\eta}^{1/2} \partial_\theta . 
\end{gathered}
\]
Following the same pattern, we write $D_j = f_{j,1}( f_{j,1} -i f_{j,2} \cdot L_j )$ for $j = 2, \dots ,6$. The coefficient functions $f_{j,1}$ and $f_{j,2}$ have the property that for suitable subsets $J, K$ of $\{1, \dots, 6 \}$, both depending on $\xi$ and $\eta$,  
\begin{itemize}
\item application of any number of the $L_k$, $k \in K$, to $f_{j,2}$, $j \in J$, produces a bounded function;
\item application of any number of the $L_k$, $k \in K$, to $f_{j,1}$, $j \in J$, produces a function $g f_{j, 1}$ where $g$ is bounded. 
\end{itemize}
Suppose that $J = K$. Then we can express the exponential $e^{i\Pi}$ as $(\Pi_{j \in J} D_j)^N e^{i\Pi}$, integrate by parts, and thereby gain the decaying factor $(\Pi_{j \in J} f_{j,1})^N$, while $a$ is merely replaced by an amplitude with similar properties. For appropriate choices of $J$ and $K$, one can apply this procedure to prove \eqref{eq:offsing}. We refer to \cite[Theorem 5.1]{HaPoRo20} for the details.} 
\end{proof}

Proposition \ref{prop:oscilbdd} applies in particular to operators of the form $WTW^{*}$, for suitable operators $T$ on function spaces over $\Rn$. The simplest nontrivial example of this is the case where $T$ is the identity operator, and the boundedness of the resulting operator $WW^{*}$ on tent spaces, guaranteed by the following corollary, is used at various points throughout this article.

\begin{corollary}\label{cor:transformbounded}
For all $p\in[1,\infty]$ and $s\in\R$ one has $WW^{*}\in\La(T^{p}_{s}(\Sp))$.
\end{corollary}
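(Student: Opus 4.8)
The plan is to realise $WW^{*}$ as an integral operator whose kernel is, up to trivial low-frequency pieces, of the oscillatory form \eqref{K-kernel}, and then to invoke Proposition \ref{prop:oscilbdd}. Note that we cannot shortcut this through Proposition \ref{prop:transformsandweights}, whose proof already uses the present corollary. First I would split $W$ and $W^{*}$ into their high- and low-frequency parts according to the two cases in \eqref{eq:deftransform} and the two terms in \eqref{eq:defadjoint}: write $W=W_{1}+W_{0}$, where $W_{1}f(x,\xi)=\ind_{\{|\xi|>1\}}(\xi)\,\psi_{\xi}(D)f(x)$ and $W_{0}f(x,\xi)=\ind_{\{1/2\le|\xi|\le1\}}(\xi)\,q(D)f(x)$, and correspondingly $W^{*}=W_{1}^{*}+W_{0}^{*}$, so that $WW^{*}=W_{1}W_{1}^{*}+W_{1}W_{0}^{*}+W_{0}W_{1}^{*}+W_{0}W_{0}^{*}$. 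Since Fourier multipliers compose by multiplying symbols, on the test class $\Da(\Tp)$ (cf.\ the remark after Proposition \ref{prop:offsingbound}) each summand is an integral operator; for instance $W_{1}W_{1}^{*}$ has kernel
\[
K(x,\xi,y,\eta)=\ind_{\{|\xi|>1\}}(\xi)\,\ind_{\{|\eta|>1\}}(\eta)\,(2\pi)^{-n}\int_{\Rn}e^{i(x-y)\cdot\theta}\psi_{\xi}(\theta)\psi_{\eta}(\theta)\,\ud\theta,
\]
and the kernels of $W_{1}W_{0}^{*}$, $W_{0}W_{1}^{*}$, $W_{0}W_{0}^{*}$ arise by replacing one or both of the packets $\psi_{\xi}(\theta),\psi_{\eta}(\theta)$ by $q(\theta)$ and adjusting the frequency cut-offs.

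For the main term $W_{1}W_{1}^{*}$ I would observe that $K$ is exactly of the form \eqref{K-kernel}: inserting the amplitude $a(\xi,\zeta,z,\theta,y,\eta):=(2\pi)^{-2n}\,\ind_{\{|\xi|>1\}}(\xi)\,\ind_{\{|\eta|>1\}}(\eta)\,\psi_{\xi}(\zeta)\psi_{\eta}(\theta)$ into \eqref{K-kernel} and carrying out the $z$-integration produces $(2\pi)^{n}$ times a Dirac mass in $\theta-\zeta$, which after integration in $\zeta$ returns $K$. It then remains to check that $a$ is admissible in Proposition \ref{prop:oscilbdd}. Condition (2) is immediate from the support statement in Lemma \ref{lem:packets}, and the frequency cut-offs only shrink the support and may be absorbed into $a$ without affecting condition (1), which concerns only the dependence on $(\zeta,z,\theta)$ (alternatively, multiplication by the characteristic functions of $\{|\xi|>1\}$ and $\{|\eta|>1\}$ is a contraction on $T^{p}_{s}(\Sp)$, directly from Definition \ref{def:tentspaces}). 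Since $a$ is independent of $z$ and factorises, condition (1) reduces to $|\partial_{\zeta}^{\delta}(\hat{\zeta}\cdot\partial_{\zeta})^{\veps}\psi_{\xi}(\zeta)|\lesssim\lb\zeta\rb^{-(n+1)/4-|\delta|/2-\veps}$ and its analogue for $\psi_{\eta}(\theta)$. These follow from \eqref{eq:packetbounds1} together with $|\zeta|\eqsim|\xi|$ on $\supp(\psi_{\xi})$: writing $\hat{\zeta}\cdot\partial_{\zeta}=\hat{\xi}\cdot\partial_{\zeta}+(\hat{\zeta}-\hat{\xi})\cdot\partial_{\zeta}$ and using $|\hat{\zeta}-\hat{\xi}|\lesssim|\xi|^{-1/2}$ there, the second summand contributes an extra $|\zeta|^{-1/2}$ per plain $\zeta$-derivative, while radial derivatives of $\hat{\zeta}-\hat{\xi}$ are handled via \eqref{eq:diffidentities}, exactly as in the proof of Lemma \ref{lem:packets}. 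Proposition \ref{prop:oscilbdd} then gives that $W_{1}W_{1}^{*}$ extends to a bounded operator on $T^{p}_{s}(\Sp)$ for all $p\in[1,\infty]$ and $s\in\R$.

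For the three remaining terms, the requirement that $\supp(\psi_{\xi})$ meet $\supp(q)\subseteq\{|\theta|\le2\}$ forces $|\xi|\lesssim1$, and similarly $|\eta|\lesssim1$; hence each of their kernels is supported where $|\xi|,|\eta|\in[\tfrac12,4]$, and for fixed such $(\xi,\eta)$ it is a Schwartz function of $x-y$ with seminorms bounded uniformly in $(\xi,\eta)$ (using Lemma \ref{lem:packets} for the uniform bounds on $\psi_{\xi}$ and its derivatives). On this region $\tfrac{|\xi|}{|\eta|}\eqsim1$ and $\rho:=\min(|\xi|^{-1},|\eta|^{-1})\eqsim1$, while $d((x,\hat{\xi}),(y,\hat{\eta}))^{2}\lesssim1+|x-y|^{2}$ by \eqref{eq:metric}; the rapid decay in $x-y$ therefore yields, for every $N\ge0$, the off-singularity bound $|K(x,\xi,y,\eta)|\lesssim_{N}\Upsilon\big(\tfrac{|\xi|}{|\eta|}\big)^{N}\big(1+\rho^{-1}d((x,\hat{\xi}),(y,\hat{\eta}))^{2}\big)^{-N}$ required in Proposition \ref{prop:offsingbound} (with $\hat{\chi}$ the identity map), so these operators are bounded on $T^{p}_{s}(\Sp)$ as well.

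Summing the four contributions gives $WW^{*}\in\La(T^{p}_{s}(\Sp))$ for all $p\in[1,\infty]$ and $s\in\R$. The only genuinely technical step is the verification of condition (1) of Proposition \ref{prop:oscilbdd} for the main term $W_{1}W_{1}^{*}$ — that is, controlling variable-direction (as opposed to centre-direction) derivatives of the packets $\psi_{\xi}$ — but this is essentially a repetition of estimates already carried out in Section \ref{sec:HpFIO}, so I do not expect it to present a real obstacle.
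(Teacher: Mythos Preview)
Your proposal is correct and follows essentially the same approach as the paper: realise $WW^{*}$ as an integral operator with oscillatory kernel of the form \eqref{K-kernel} with amplitude $\psi_{\xi}(\zeta)\psi_{\eta}(\theta)$, verify the hypotheses of Proposition~\ref{prop:oscilbdd} via Lemma~\ref{lem:packets} and the identity $\hat{\zeta}\cdot\partial_{\zeta}=(\hat{\zeta}-\hat{\xi})\cdot\partial_{\zeta}+\hat{\xi}\cdot\partial_{\zeta}$, and then invoke that proposition. The paper's proof is terser (it folds the low-frequency cross terms into the phrase ``and similarly for other values of $\xi$ and $\eta$''), whereas you spell out the four-term decomposition and handle the low-frequency pieces directly via Proposition~\ref{prop:offsingbound}; this is a harmless elaboration rather than a different argument.
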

\begin{proof}
By definition of $W$ and by the expression for $W^{*}$ in \eqref{eq:defadjoint}, $WW^{*}$ is an integral operator with kernel $K:\Tp\times\Tp\to\R$ given by 
\begin{equation}\label{eq:kernelWWstar}
K(x,\xi,y,\eta)=
(2\pi)^{-2n}\int_{\R^{3n}}e^{i((x-z)\cdot\zeta+(z-y)\cdot\theta)}\psi_{\xi}(\zeta)\psi_{\eta}(\theta)\ud\theta\ud z\ud\zeta
\end{equation}
for $(x,\xi),(y,\eta)\in\Tp$ with $|\xi|,|\eta|\geq1$, and similarly for other values of $\xi$ and $\eta$. Moreover, by writing $ \hat{\zeta}\cdot\partial_{\zeta}=(\hat{\zeta}-\hat{\xi})\cdot\partial_{\zeta}+ \hat{\xi}\cdot\partial_{\zeta}$, it follows from Lemma \ref{lem:packets} that for all $\alpha\in\Z_{+}^{n}$, $\beta\in\Z_{+}$ and $\xi,\zeta\in\Rn\setminus\{0\}$ one has 
\[
|(\hat{\zeta}\cdot\partial_{\zeta})^{\beta}\partial_{\zeta}^{\alpha}\psi_{\xi}(\zeta)|\lesssim |\zeta|^{-\frac{n+1}{4}-\frac{|\alpha|}{2}-\beta}.
\]
Now the proof is concluded by combining Proposition \ref{prop:oscilbdd} and Lemma \ref{lem:packets}.
\end{proof}

Of course, the expression for the kernel of $WW^{*}$ in \eqref{eq:kernelWWstar} can be simplified, and in fact for this specific kernel the proof of the bounds in \eqref{eq:offsing} can be simplified considerably, as shown in \cite[Proposition 3.6]{Rozendaal21}.

\begin{remark}\label{rem:transformbounded}
Proposition \ref{prop:oscilbdd} applies more generally to operators of the form $\wt{W}W^{*}$, where $\wt{W}$ is a modified wave packet transform that involves wave packets $\wt{\psi}_{\xi}$ and $\tilde{r}$ with similar support and boundedness properties as $\psi_{\xi}$ and $r$. For example, for $t\in\R$, $f\in\Sw'(\Rn)$ and $(x,\xi)\in\Tp$ one may set 
\[
\wt{W}f(x,\xi):=\begin{cases}
\wt{\psi}_{\xi}(D)f(x)&\text{if }|\xi|>1,\\
\ind_{[1/2,1]}(|\xi|)|\xi|^{-t}\tilde{r}(D)f(x)&\text{if }|\xi|
\leq 1,\end{cases}
\]
where $\wt{\psi}_{\xi}(\zeta):=|\xi|^{-t}\lb\zeta\rb^{t}\psi_{\xi}(\zeta)$ and $\tilde{r}(\zeta):=\lb \zeta\rb^{t}r(\zeta)$ for $\zeta\in\Rn$. The proof of Corollary \ref{cor:transformbounded} then shows that $\wt{W}W^{*}$ is bounded on $\La(T^{p}_{s}(\Sp))$ for all $p\in[1,\infty]$ and $s\in\R$. This fact is used in Proposition \ref{prop:transformsandweights}. 
\end{remark}

\section{Other regularity assumptions}\label{app:regularity}

In this appendix we indicate how our main results can be improved slightly by making more refined regularity assumptions on the differential operators involved.

Let $(\psi_{j})_{j=0}^{\infty}\subseteq C^{\infty}_{c}(\Rn)$ be the Littlewood--Paley decomposition from \eqref{eq:LitPaldecomp}. For $r>0$ we let the \emph{Zygmund space} $C^{r}_{*}(\Rn)$ consist of all $f\in\Sw'(\Rn)$ such that 
\[
\|f\|_{C^{r}_{*}(\Rn)}:=\sup_{j\in\Z_{+}}2^{jr}\|\psi_{j}(D)f\|_{L^{\infty}(\Rn)}<\infty.
\]
Then (see e.g.~\cite{Triebel10})
\begin{equation}\label{eq:Zygmund1}
\HT^{r,\infty}(\Rn)\subsetneq C^{r}_{*}(\Rn)=\Cr(\Rn)
\end{equation}
if $r\notin\N$, and
\begin{equation}\label{eq:Zygmund2}
\Cr(\Rn)\subsetneq \HT^{r,\infty}(\Rn)\subsetneq C^{r}_{*}(\Rn)
\end{equation}
if $r\in\N$. Here $\HT^{r,\infty}(\Rn)$ is as in \eqref{eq:classical}.

There are associated classes of rough symbols.

\begin{definition}\label{def:rough2}
Let $r>0$, $m\in\R$ and $\delta\in[0,1]$. Then $C^{r}_{*} S^{m}_{1,\delta}$ is the collection of $a:\R^{2n}\to\C$ such that for each $\alpha\in\Z_{+}^{n}$ there exists an $\con_{\alpha}\geq 0$ with the following properties:
\begin{enumerate}
\item\label{it:symbol21} For all $x,\xi\in\Rn$ one has $a(x,\cdot)\in C^{\infty}(\Rn)$ and $|\partial_{\eta}^{\alpha}a(x,\xi)|\leq \con_{\alpha}\lb\xi\rb^{m-|\alpha|}$. 
\item\label{it:symbol22} For all $\xi\in\Rn$ one has $\partial_{\xi}^{\alpha}a(\cdot,\xi)\in C^{r}_{*}(\Rn)$ and
\[
\|\partial_{\xi}^{\alpha}a(\cdot,\xi)\|_{C^{r}_{*}(\Rn)}\leq \con_{\alpha}\lb\xi\rb^{m-|\alpha|+r\delta}.
\]
\end{enumerate} 
Also, $\HT^{r,\infty}S^{m}_{1,\delta}$ is the class of $a\in C^{r}_{*}S^{m}_{1,\delta}$ such that, in \eqref{it:symbol22}, one has $\partial_{\xi}^{\alpha}a(\cdot,\xi)\in \HT^{r,\infty}(\Rn)$ and $\|\partial_{\xi}^{\alpha}a(\cdot,\xi)\|_{\HT^{r,\infty}(\Rn)}\leq \con_{\alpha}\lb\xi\rb^{m-|\alpha|+r\delta}$ for all $\xi\in\Rn$.
\end{definition}

These symbol classes are related to the results in this article through Theorem \ref{thm:roughpseudo}. Indeed,~\cite[Theorem 5.1]{Rozendaal22} shows that, under the assumptions of Theorem \ref{thm:roughpseudo}, 
\begin{equation}\label{eq:roughpseudo2}
a(x,D):\HT^{s+m+\rho,p}_{FIO}(\Rn)\to \Hps
\end{equation}
for all $-r/2+s(p)-\rho<s<r-s(p)$ if one merely assumes that $a\in C^{r}_{*}S^{m}_{1,1/2}$. Moreover, \eqref{eq:roughpseudo2} also holds for $s=r-s(p)$ if $a\in \HT^{r,\infty}S^{m}_{1,1/2}$. By \eqref{eq:Zygmund1} and \eqref{eq:Zygmund2}, also keeping in mind the additional bounds that are required of $a$ in Definition \ref{def:rough} \eqref{it:symbol2}, this generalizes Theorem \ref{thm:roughpseudo} for all $r>2$.

Using the same arguments as in Section \ref{sec:divform}, one can now show the following. For $r>2$, the existence and uniqueness statement in Theorem \ref{thm:divwave} holds more generally for $-r+s(p)+1<s<r-s(p)$ under the assumption that the coefficients $a_{i,j}$, $1\leq i,j\leq n$, of $L$ satisfy $a_{i,j}\in C^{r}_{*}(\Rn)$. Moreover, if $a_{i,j}\in \HT^{r,\infty}(\Rn)$ then the same statement holds for $s=-r+s(p)+1$ and $s=r-s(p)$ as well. One can obtain similar extensions of the other results in Sections \ref{sec:divform} and \ref{sec:standardform}. 

Note that, using these extensions of our results, \eqref{eq:Zygmund1} and \eqref{eq:Zygmund2} show why the case $r\in\N$ is special in the results of Sections \ref{sec:divform} and \ref{sec:standardform}.

\bibliographystyle{plain}
\bibliography{Bibliography}

\end{document}